\numberwithin{equation}{section}%
\newcommand{\Z}{\mathbb{Z}}
\renewcommand{\C}{\mathbb{C}}
\DeclareMathOperator{\E}{\mathbb{E}}
\DeclareMathOperator{\sgn}{\mathrm{sgn}}
\DeclareMathOperator{\Span}{\mathrm{span}}
\DeclareMathOperator*{\Res}{\mathsf{Res}}
\renewcommand{\i}{\mathbf{i}}
\newcommand{\al}{\alpha}
\newcommand{\la}{\lambda}
\newcommand{\be}{\beta}
\newcommand{\conj}{\mathsf{c}}
\newcommand{\F}{\mathsf{F}}
\newcommand{\G}{\mathsf{G}}
\newcommand{\sign}[1]{\mathsf{Sign}_{#1}}
\newcommand{\signp}[1]{\mathsf{Sign}_{#1}^{\scriptscriptstyle+}}
\newcommand{\signpe}{\mathsf{Sign}^{\scriptscriptstyle+}}
\newcommand{\md}{\,|\,{}}
\newcommand{\Sym}{\mathfrak{S}}
\let\oldphi\phi \let\phi\varphi \let\varphi\oldphi
\newcommand{\Lmatr}{\mathsf{L}}
\newcommand{\LJ}[1]{\mathsf{L}^{{\scriptscriptstyle(#1)}}}
\newcommand{\WJ}[1]{w^{{\scriptscriptstyle(#1)}}}
\newcommand{\WTJ}[1]{{\widetilde w}^{{\scriptscriptstyle(#1)}}}
\newcommand{\ybspec}{u}			
\newcommand{\AY}{\mathsf{A}}
\newcommand{\BY}{\mathsf{B}}
\newcommand{\CY}{\mathsf{C}}
\newcommand{\DY}{\mathsf{D}}
\newcommand{\ay}{\mathsf{a}}	
\newcommand{\dy}{\mathsf{d}}
\newcommand{\Prob}{\mathsf{Prob}}
\newcommand{\TY}{\mathsf{T}}
\newcommand{\bv}{\mathsf{e}}
\newcommand{\adm}[2]{({#1},{#2})\in\mathsf{Adm}}
\newcommand{\aind}{{\upalpha}}
\newcommand{\bind}{{\upbeta}}
\newcommand{\IS}{\mathcal{I}}
\newcommand{\JS}{\mathcal{J}}
\newcommand{\KS}{\mathcal{K}}
\newcommand{\ks}{\mathsf{k}}
\newcommand{\KSB}{{\mathcal{K}^{c}}}
\newcommand{\JSB}{{\mathcal{J}^{c}}}
\newcommand{\LS}{\mathcal{L}}
\newcommand{\PI}[1]{{0^{\scriptscriptstyle{#1}}}}
\newcommand{\RHO}{{\boldsymbol\varrho}}
\newcommand{\MM}{\mathscr{M}}
\newcommand{\UU}{\mathbf{u}}
\newcommand{\VV}{\mathbf{v}}
\newcommand{\ZZZ}{\mathbf{z}}
\newcommand{\equalsignforQLa}{\circ}
\newcommand{\plusQLa}{{\scriptscriptstyle+}}
\newcommand{\minusQLa}{{\scriptscriptstyle-}}
\newcommand{\Qe}{\mathscr{Q}^{\equalsignforQLa}}
\newcommand{\Qp}{\mathscr{Q}^{\plusQLa}}
\newcommand{\Lae}{\Lambda^{\equalsignforQLa}}
\newcommand{\Lam}{\Lambda^{\minusQLa}}
\newcommand{\tnu}{{\tilde\nu}}
\newcommand{\EF}[2]{\Psi^{#1}_{#2}}
\newcommand{\Xe}{\mathscr{X}^{\equalsignforQLa}}
\newcommand{\Xp}{\mathscr{X}^{\plusQLa}}
\newcommand{\Xeh}{\mathscr{X}_{\textnormal{$q$-Hahn}}^{\equalsignforQLa}}
\newcommand{\Xph}{\mathscr{X}_{\textnormal{$q$-Hahn}}^{\plusQLa}}
\newcommand{\Xih}{\mathscr{X}_{\textnormal{$q$-Hahn}}^{{\infty}}}
\newcommand{\smu}{\upmu}
\newcommand{\snu}{\upnu}
\newcommand{\gap}{\mathrm{gap}}
\newcommand{\mm}{\upvartheta} 
\newcommand{\corr}[3]{\mathcal{Q}_{#1}(#2)} 
\newcommand{\corre}[2]{\mathcal{Q}_{#1}} 
\newcommand{\II}{\mathsf{Int}}
\newcommand{\HT}{\mathfrak{h}}
\newcommand{\expancoeff}[1]{r_{#1}}
\newcommand{\inv}{\mathsf{inv}}
\newcommand{\funspat}[1]{\mathcal{W}^{#1}}
\newcommand{\funspec}[1]{\mathcal{C}^{#1}}
\newcommand{\Pli}[1]{\mathscr{T}^{(#1)}}
\newcommand{\Pltrans}[1]{\mathscr{F}}
\newcommand{\Pltransi}[1]{\mathscr{J}}
\newcommand{\contq}[2]{{\boldsymbol\gamma}^{\scriptscriptstyle+}_{#2}[#1]}
\newcommand{\contqe}[1]{{\boldsymbol\gamma}^{\scriptscriptstyle+}_{#1}}
\newcommand{\contqi}[2]{{\boldsymbol\gamma}^{\scriptscriptstyle-}_{#2}[#1]}
\newcommand{\contqie}[1]{{\boldsymbol\gamma}^{\scriptscriptstyle-}_{#1}}
\newcommand{\contn}[1]{{\boldsymbol\gamma}[#1]}
\newcommand{\contnz}[2]{{\boldsymbol\gamma}[#1|{#2}]}
\newcommand{\contqiz}[2]{{\boldsymbol\gamma}^{\scriptscriptstyle-}_{#2}[#1|{#2}]}
\def\epsx{.12}
\newcommand{\vertexoo}[1]{\scalebox{#1}{\begin{tikzpicture}
[scale=1, very thick]
\node (i1) at (0,-1) {$g$};
\node (j1) at (-1,0) {$0$};
\node (i2) at (0,1) {$g$};
\node (j2) at (1,0) {$0$};
\draw[densely dotted] (j1) -- (j2);
\foreach \shi in {(0,0), (\epsx,0), (-\epsx,0)}
{\begin{scope}[shift=\shi]
\node (shi1) at (0,-1) {\phantom{$g$}};
\node (shi2) at (0,1) {\phantom{$g$}};
\draw[->] (shi1) -- (shi2);
\draw[->] (shi1) --++ (0,.5);
\end{scope}}
\end{tikzpicture}}}
\newcommand{\vertexol}[1]{\scalebox{#1}{\begin{tikzpicture}
[scale=1, very thick]
\node (i1) at (0,-1) {$g$};
\node (j1) at (-1,0) {$0$};
\node (i2) at (0,1) {$g-1$};
\node (j2) at (1,0) {$1$};
\foreach \shi in {(0,0), (-\epsx,0)}
{\begin{scope}[shift=\shi]
\node (shi1) at (0,-1) {\phantom{$g$}};
\node (shi2) at (0,1) {\phantom{$g$}};
\draw[->] (shi1) -- (shi2);
\draw[->] (shi1) --++ (0,.5);
\end{scope}}
\foreach \shi in {(\epsx,0)}
{\begin{scope}[shift=\shi]
\node (shi1) at (0,-1) {\phantom{$g$}};
\node (shi2) at (0,1) {\phantom{$g$}};
\draw[->] (shi1) -- (0,0) -- (j2);
\draw[->] (shi1) --++ (0,.5);
\end{scope}}
\draw[densely dotted] (j1) -- (j2);
\end{tikzpicture}}}
\newcommand{\vertexoll}[1]{\scalebox{#1}{\begin{tikzpicture}
[scale=1, very thick]
\node (i1) at (0,-1) {$g+1$};
\node (j1) at (-1,0) {$0$};
\node (i2) at (0,1) {$g$};
\node (j2) at (1,0) {$1$};
\foreach \shi in {(-1.5*\epsx,0), (.5*\epsx,0),(-.5*\epsx,0)}
{\begin{scope}[shift=\shi]
\node (shi1) at (0,-1) {\phantom{$g$}};
\node (shi2) at (0,1) {\phantom{$g$}};
\draw[->] (shi1) -- (shi2);
\draw[->] (shi1) --++ (0,.5);
\end{scope}}
\foreach \shi in {(1.5*\epsx,0)}
{\begin{scope}[shift=\shi]
\node (shi1) at (0,-1) {\phantom{$g$}};
\node (shi2) at (0,1) {\phantom{$g$}};
\draw[->] (shi1) -- (0,0) -- (j2);
\draw[->] (shi1) --++ (0,.5);
\end{scope}}
\draw[densely dotted] (j1) -- (j2);
\end{tikzpicture}}}
\newcommand{\vertexll}[1]{\scalebox{#1}{\begin{tikzpicture}
[scale=1, very thick]
\node (i1) at (0,-1) {$g$};
\node (i1shm1) at (-\epsx,-1) {\phantom{$g$}};
\node (i1sh1) at (\epsx,-1) {\phantom{$g$}};
\node (j1) at (-1,0) {$1$};
\node (i2) at (0,1) {$g$};
\node (i2shm1) at (-\epsx,1) {\phantom{$g$}};
\node (i2sh1) at (\epsx,1) {\phantom{$g$}};
\node (j2) at (1,0) {$1$};
\draw[densely dotted] (j1) -- (j2);
\draw[densely dotted] (i1) -- (i2);
\draw[->] (j1) -- (-\epsx*1.5,0)--++(.5*\epsx,.5*\epsx) -- (i2shm1);
\draw[->] (i1shm1) -- (-\epsx,-\epsx) -- (0,\epsx) -- (i2);
\draw[->] (i1) -- (0,-\epsx) -- (\epsx,\epsx) -- (i2sh1);
\draw[->] (i1sh1) -- (\epsx,-.5*\epsx)--++(.5*\epsx,.5*\epsx) -- (j2);
\draw[->] (j1) --++ (.5,0);
\draw[->] (i1) --++ (0,.5);
\draw[->] (i1sh1) --++ (0,.5);
\draw[->] (i1shm1) --++ (0,.5);
\end{tikzpicture}}}
\newcommand{\vertexlo}[1]{\scalebox{#1}{\begin{tikzpicture}
[scale=1, very thick]
\node (i1) at (0,-1) {$g$};
\node (j1) at (-1,0) {$1$};
\node (i2) at (0,1) {$g+1$};
\node (i2shm2) at (-3/2*\epsx,1) {\phantom{$g$}};
\node (j2) at (1,0) {$0$};
\draw[densely dotted] (j1) -- (j2);
\draw[->] (j1) -- (-3/2*\epsx,0) -- (i2shm2);
\foreach \shi in {(1/2*\epsx,0), (3/2*\epsx,0), (-1/2*\epsx,0)}
{\begin{scope}[shift=\shi]
\node (shi1) at (0,-1) {\phantom{$g$}};
\node (shi2) at (0,1) {\phantom{$g$}};
\draw[->] (shi1) -- (shi2);
\draw[->] (shi1) --++ (0,.5);
\end{scope}}
\draw[->] (j1) --++ (.5,0);
\end{tikzpicture}}}
\newcommand{\emptyvertex}[1]{\raisebox{-1.5pt}{\scalebox{#1}{\begin{tikzpicture}
	[scale=1,very thick]
	\draw[dotted] (0,0)--++(0,.5);
	\draw[dotted] (-.25,.25)--++(.5,0);
\end{tikzpicture}}}}
\newtheorem{proposition}{Proposition}[section]
\newtheorem{lemma}[proposition]{Lemma}
\newtheorem{corollary}[proposition]{Corollary}
\newtheorem{theorem}[proposition]{Theorem}
\newtheorem*{theorem*}{Theorem}
\theoremstyle{definition}
\newtheorem{definition}[proposition]{Definition}
\newtheorem{remark}[proposition]{Remark}
\newtheorem*{example}{Example}
\begin{document}

\title[Stochastic vertex models]
{Lectures on Integrable probability:\\Stochastic vertex models and symmetric functions}
\author[A. Borodin]{Alexei Borodin}
\address{A. Borodin, Department of Mathematics, 
Massachusetts Institute of Technology,
77 Massachusetts ave.,
Cambridge, MA 02139, USA,
\newline{}and Institute for Information Transmission Problems, Bolshoy Karetny per. 19, Moscow, 127994, Russia}
\email{borodin@math.mit.edu}

\author[L. Petrov]{Leonid Petrov}
\address{L. Petrov, Department of Mathematics, University of Virginia, 
141 Cabell Drive, Kerchof Hall,
P.O. Box 400137,
Charlottesville, VA 22904, USA,
\newline{}and Institute for Information Transmission Problems, Bolshoy Karetny per. 19, Moscow, 127994, Russia}
\email{lenia.petrov@gmail.com}
\date{}
\begin{abstract}
	We consider a homogeneous stochastic higher spin six vertex model in a quadrant. For this model we derive concise integral representations for multi-point $q$-moments of the height function and for the $q$-correlation functions. At least in the case of the step initial condition, our formulas degenerate in appropriate limits to many known formulas of such type for integrable probabilistic systems in the (1+1)d KPZ universality class, including the stochastic six vertex model, ASEP, various $q$-TASEPs, and associated zero range processes. 

	Our arguments are largely based on properties of a family of symmetric rational functions (introduced in \cite{Borodin2014vertex}) that can be defined as partition functions of the higher spin six vertex model for suitable domains; they generalize classical Hall--Littlewood and Schur polynomials. 
	A key role is played by Cauchy-like summation identities for these functions, which are obtained as a direct corollary of the Yang--Baxter equation for the higher spin six vertex model.

	These are lecture notes for a course given by A.~B. at the Ecole de Physique des Houches in July of 2015. All the results and proofs presented here generalize to the setting of the fully inhomogeneous higher spin six vertex model, see \cite{BorodinPetrov2016inhom} for a detailed exposition of the inhomogeneous case.
\end{abstract}
\maketitle

\setcounter{tocdepth}{1}
\tableofcontents
\setcounter{tocdepth}{3}

\section{Introduction} 
\label{sec:introduction}

\subsection{Preface}
The last two decades have seen remarkable progress in understanding the so-called KPZ universality class in (1+1) dimensions. This is a rather broad and somewhat vaguely defined class of probabilistic systems describing random interface growth, named after a seminal physics paper of Kardar--Parisi--Zhang of 1986 \cite{KPZ1986}. A key conjectural property of the systems in this class is that the large time fluctuations of the interfaces should be the same for all of them. 
See Corwin \cite{CorwinKPZ} for an extensive survey. 

While proving such a universality principle remains largely out of reach, by now many concrete systems have been found, for which the needed asymptotics was actually computed (the universality principle appears to hold so far). 

The first wave of these solved systems started in late 1990's with the papers of Johansson \cite{Johansson1999} and Baik--Deift--Johansson \cite{baik1999distribution}, and the key to their solvability, or \emph{integrability}, was in (highly non-obvious) reductions to what physicists would call \emph{free-fermion models} --- probabilistic systems, many of whose observables are expressed in terms of determinants and Pfaffians. Another domain where free-fermion models are extremely important is Random Matrix Theory. Perhaps not surprisingly, the large time fluctuations of the (1+1)d KPZ models are very similar to those arising in (largest eigenvalues of) random matrices with real spectra. 

The second wave of integrable (1+1)d KPZ systems started in late 2000's. The reasons for their solvability are harder to see, but one way or another they can be traced to quantum integrable systems. For example, looking at the earlier papers of the second wave we see that: (a) The pioneering work of Tracy--Widom \cite{TW_ASEP1}, \cite{TW_ASEP2}, \cite{TW_ASEP4} on the Asymmetric Simple Exclusion Process (ASEP) was based on the famous idea of Bethe \cite{Bethe1931} of looking for eigenfunctions of a quantum many-body system in the form of superposition of those for noninteracting bodies (coordinate Bethe ansatz); (b) The work of O'Connell \cite{Oconnell2009_Toda} and Borodin--Corwin \cite{BorodinCorwin2011Macdonald} on semi-discrete Brownian polymers utilized properties of eigenfunctions of the Macdonald--Ruijsenaars quantum integrable system --- the celebrated Macdonald polynomials and their degenerations; (c) The physics papers of Dotsenko \cite{Dotsenko} and Calabrese--Le Doussal--Rosso \cite{Calabrese_LeDoussal_Rosso}, and a later work of Borodin--Corwin--Sasamoto \cite{BorodinCorwinSasamoto2012} used a duality trick to show that certain observables of infinite-dimensional models solve finite-dimensional quantum many-body systems that are, in their turn, solvable by the coordinate Bethe ansatz; etc.

In fact, most currently known integrable (1+1)d KPZ models of the second wave come from \emph{one and the same} quantum integrable system: Corwin--Petrov \cite{CorwinPetrov2015} recently showed that they can be realized as suitable limits of what they called a \emph{stochastic higher spin vertex model}, see the introduction to their paper for a description of degenerations.\footnote{The integrable (1+1)d KPZ models that have not yet been shown to arise as limits of the stochastic vertex models are various versions of PushTASEP, cf. \cite{CorwinPetrov2013}, \cite{MatveevPetrov2014}. This appears to be simply an oversight as those models are diagonalized by the same wavefunctions, which means that needed reductions should also exist.}  They used duality and coordinate Bethe ansatz to show the integrability of their model; the Bethe ansatz part relied on previous works of Borodin--Corwin--Petrov--Sasamoto \cite{BorodinCorwinPetrovSasamoto2013}, \cite{BCPS2014}, and Borodin \cite{Borodin2014vertex}. 

The main subject of the present paper is exactly that higher spin six vertex model.
Our main result is an integral representation for certain multi-point $q$-moments of this model. Such formulas are well known to be a source of meaningful asymptotic results, but we leave asymptotic questions outside of the scope of these lectures. 

The results and the proofs presented in these notes generalize to the setting of the fully inhomogeneous higher spin six vertex model; those are presented in our recent work \cite{BorodinPetrov2016inhom}. 

The core of our proofs consists of the so-called (skew) Cauchy identities for certain rational symmetric functions which were introduced in \cite{Borodin2014vertex}. For special parameter values, these functions turn into (skew) Hall--Littlewood and Schur symmetric functions, and the name ``Cauchy identities'' is borrowed from the theory of those, cf. Macdonald \cite{Macdonald1995}. 

Our symmetric rational functions can be defined as partition functions of the higher spin six vertex model for domains with special boundary conditions. Following \cite{Borodin2014vertex}, we use the \emph{Yang--Baxter equation}, or rather its infinite-volume limit, to derive the Cauchy identities. A similar approach to Cauchy-like identities for the Hall--Littlewood polynomials was also realized by Wheeler--Zinn--Justin in \cite{wheeler2015refined}.

Remarkably, the Cauchy identities themselves are essentially sufficient to define our probabilistic models, show their connection to KPZ interfaces, prove orthogonality and completeness of our symmetric rational functions, and evaluate averages of a large family of observables with respect to our measures. The last bit can be derived from comparing Cauchy identities with different sets of parameters. 

While the Cauchy identities also played an important role in the theory of Schur and Macdonald processes, they were never the main heroes there. Here they really take the central stage. Given their direct relation to the Yang--Baxter equation, one could thus say that the \emph{integrability of the (1+1)d KPZ models takes its origin in the Yang--Baxter integrability of the six vertex model.} 

Our present approach circumvents the duality trick that has been so powerful in treating the integrable (1+1)d KPZ models (including that of \cite{CorwinPetrov2015}). We do explain how the duality can be discovered from our results, but we do not prove or rely on it. Unfortunately, for the moment the use and success of the duality approach remains somewhat mysterious and ad hoc; the form of the duality functional needs to be guessed from previously known examples (some of which have better explanations, cf. Sch\"utz \cite{schutz1997dualityASEP}, Borodin--Corwin \cite{BorodinCorwin2013discrete}). We hope that the path that we present here is more straightforward, and that it can be used to shed further light on the existence of nontrivial dualities.  

Let us now describe our results in more detail. 

\subsection{Our model in a quadrant}\label{sec:intro-model}
Consider an ensemble $\mathcal P$ of infinite oriented up-right paths drawn in the first quadrant $\Z_{\ge 1}^2$ of the square lattice, with all the paths starting from a left-to-right arrow entering at each of the points $\{(1,m):m\in\Z_{\ge 1}\}$ on the left boundary (no path enters through the bottom boundary). Assume that no two paths share any horizontal piece (but common vertices and vertical pieces are allowed). See Fig.~\ref{fig:intro}. 

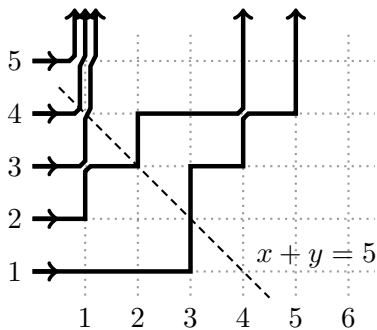
\begin{figure}[htb]
	\begin{tikzpicture}
		[scale=.7,thick]
		\def\d{.1}
		\foreach \xxx in {1,...,6}
		{
		\draw[dotted, opacity=.4] (\xxx-1,5.5)--++(0,-5);
		\node[below] at (\xxx-1,.5) {$\xxx$};
		}
		\foreach \xxx in {1,2,3,4,5}
		{
		\draw[dotted, opacity=.4] (0,\xxx)--++(5.5,0);
		\node[left] at (-1,\xxx) {$\xxx$};
		\draw[->, line width=1.7pt] (-1,\xxx)--++(.5,0);
		}
		\draw[->, line width=1.7pt] (-1,5)--++(1-3*\d,0)--++(\d,\d)--++(0,1-\d);
		\draw[->, line width=1.7pt] (-1,4)--++(1-2*\d,0)--++(\d,\d)--++(0,1-2*\d)--++(\d,2*\d)--++(0,1-\d);
		\draw[->, line width=1.7pt] (-1,3)--++(1-\d,0)--++(\d,\d)--++(0,1-2*\d)--++(\d,2*\d)--++(0,1-2*\d)--++(\d,2*\d)--++(0,1-\d);
		\draw[->, line width=1.7pt] (-1,2)--++(1,0)--++(0,1-\d)--++(\d,\d)--++(1-\d,0)
		--++(0,1)--++(2-\d,0)--++(\d,\d)--++(0,2-\d);
		\draw[->, line width=1.7pt] 
		(-1,1)--++(3,0)--++(0,2)--++(1,0)--++(0,1-\d)--++(\d,\d)--++(1-\d,0)
		--++(0,2);
		\draw[densely dashed] (-.5,4.5)--++(4,-4) node[above,anchor=west,yshift=16,xshift=-9] {$x+y=5$};
	\end{tikzpicture}
	\caption{A path collection $\mathcal P$.}
	\label{fig:intro}
\end{figure}
Define a probability measure on the set of such path ensembles in the following Markovian way. For any $n\ge 2$, assume that we already have a probability distribution on the intersections $\mathcal P_n$ of $\mathcal P$ with the triangle $T_n=\{(x,y)\in \Z_{\ge 1}^2: x+y\le n\}$. We are going to increase $n$ by 1. For each point $(x,y)$ on the upper boundary of $T_n$, i.e., for $x+y=n$, every $\mathcal P_n$ supplies us with two inputs: (1) The number of paths that enter $(x,y)$ from the bottom --- denote it by $i_1\in\Z_{\ge 0}$; (2) The number of paths $j_1\in\{0,1\}$ that enter $(x,y)$ from the left. Now choose, independently for all $(x,y)$ on the upper boundary of $T_n$, the number of paths $i_2$ that leave $(x,y)$ in the upward direction, and the number of paths $j_2$ that leave $(x,y)$ in the rightward direction, using the probability distribution with weights
of the transitions $(i_1,j_1)\to (i_2,j_2)$ given by 
(throughout the text $\mathbf{1}_{A}$ stands for the indicator function of the event $A$)
\begin{align}
	\label{intro-weights}
	\begin{array}{rclrcl}
		\Prob((i_1,0)\to (i_2,0))=&\dfrac{1-q^{i_1} s u_y}{1-su_y}\,\mathbf 1_{i_1=i_2},\\
		\rule{0pt}{22pt}
		\Prob((i_1,0)\to (i_2,1))=&\dfrac{(q^{i_1}-1)su_y}{1-su_y}\,\mathbf 1_{i_1=i_2+1},\\
		\rule{0pt}{22pt}
		\Prob((i_1,1)\to (i_2,1))=&\dfrac{s^2q^{i_1}-su_y}{1-su_y}\,\,\mathbf 1_{i_1=i_2},\\ 
		\rule{0pt}{22pt}
		\Prob((i_1,1)\to (i_2,0))=&\dfrac{1-s^2q^{i_1}}{1-su_y}\,\mathbf 1_{i_1=i_2-1}.
	\end{array}
\end{align}
Assuming that all above expressions are nonnegative, which happens e.g. if $q\in (0,1)$, $u_y> 0$, $s\in (-1,0)$, this procedure defines a probability measure on the set of all $\mathcal P$'s because we always have $\sum_{i_2,j_2} \Prob((i_1,j_1)\to (i_2,j_2))=1$, and $\Prob((i_1,j_1)\to (i_2,j_2))$ vanishes unless $i_1+j_1=i_2+j_2$. 

The right-hand sides in \eqref{intro-weights} are closely related to matrix elements of the R-matrix for $U_q(\widehat{\mathfrak{sl}_2})$, with one representation being an arbitrary Verma module and the other one being tautological. 

Parameter $s$ is related to the value of the \emph{spin}. If $s^2=q^{-I}$ for a positive integer $I$, then we have $\Prob((I,1)\to (I+1,0))=0$, which means that no more than $I$ paths can share the same vertical piece. This corresponds to replacing the arbitrary Verma module in the R-matrix with its $(I+1)$-dimensional irreducible quotient. The spin $\frac12$ situation $s=q^{-\frac 12}$ gives rise to the stochastic six vertex introduced over 20 years ago by Gwa--Spohn \cite{GwaSpohn1992} (see \cite{BCG6V} for its detailed treatment).

The spin parameter $s$ is related to columns, and there is no similar row parameter: recall that no two paths can share the same horizontal piece. This restriction can be repaired using the procedure of \emph{fusion} that goes back to \cite{KulishReshSkl1981yang}. In plain words, fusion means grouping the spectral parameters $\{u_y\}$ into subsequences of the form $\{u,qu,\dots,q^{J-1}u\}$, and collapsing the corresponding $J$ rows onto a single one. Here the positive integer $J$ plays the same role as $I$ in the previous paragraph. The reason we did not use the second spin parameter in \eqref{intro-weights} is that the transition probabilities then become rather cumbersome, and one needs to specialize other parameters to achieve simpler expressions. A detailed exposition of the fusion is contained in \S\ref{sec:stochastic_weights_and_fusion} below. 

Let us also note that there are several substantially different possibilities of making the weights \eqref{intro-weights} nonnegative; some of those we consider in detail. Since our techniques are algebraic, our results actually apply to any generic parameter values, with typically only minor modifications needed in case of some denominators vanishing. 

\subsection{The main result} Encode each path ensemble $\mathcal P$ by a \emph{height function}
$\mathfrak h:\Z_{\ge 1}^{2} \to \Z_{\ge 0}$ which assigns to each vertex $(x,y)$ the number $\mathfrak h(x,y)$ of paths in $\mathcal P$ that pass through or to the right of this vertex. 

\begin{theorem*}[Theorem \ref{thm:multi_moments} in the text]Assume that $q\in (0,1)$, $s\in (-1,0)$, $u_y>0$ for all $y\in \Z_{\ge 1}$, and $u_i\ne qu_j$ for any $i,j\ge 1$.
Then for any integers $x_1\ge \ldots\ge x_{\ell}\ge1$ and $y\ge 1$,
\begin{multline}\label{intro-multi_moments}
		\E \prod_{i=1}^{\ell}q^{\HT(x_i,y)}=
		q^{\frac{\ell(\ell-1)}2}
		\oint\limits_{\contnz{\bar\UU}1}\frac{d w_1}{2\pi\i}
		\ldots
		\oint\limits_{\contnz{\bar\UU}\ell}\frac{d w_\ell}{2\pi\i}
		\prod_{1\le \aind<\bind\le \ell}\frac{w_\aind-w_\bind}{w_\aind-qw_\bind}
		\\\times
		\prod_{i=1}^{\ell}\bigg(
		w_i^{-1}
		\left(
		\frac{1-sw_i}{1-s^{-1}w_i}\right)^{x_i-1}
		\prod_{j=1}^{y}\frac{1-qu_jw_i}{1-u_jw_i}
		\bigg),
	\end{multline}
where the expectation is taken with respect to the probability measure defined in \S\ref{sec:intro-model} above, and the integration contours are described in Definitions \ref{def:cont_gat_U} and \ref{def:circular_contours_around_0} and pictured in Fig.~\ref{fig:big_Gamma_contours} below.
\end{theorem*}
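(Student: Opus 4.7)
The plan is to derive \eqref{intro-multi_moments} from the Cauchy-type summation identities for the rational symmetric functions $\F_\lambda$ and $\G_\lambda$ from \cite{Borodin2014vertex}, which serve as partition functions of the higher spin six vertex model on half-strip domains. The global strategy follows the preface: realize the measure on path ensembles in terms of $\F_\lambda$ and $\G_\lambda$, and extract the $q$-moments by comparing two versions of the Cauchy identity related by a prescribed twist of spectral parameters.

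First I would identify the distribution of the path ensemble $\mathcal{P}$ at row $y$ with $\F_\lambda(u_1,\ldots,u_y)\,\G_\lambda(\rho_{\mathrm{step}})/Z_y$, where $\lambda=(\lambda_1\ge\lambda_2\ge\ldots)$ records the horizontal positions occupied by paths at the top of row $y$, $\rho_{\mathrm{step}}$ is the specialization that encodes the step boundary condition of Fig.~\ref{fig:intro}, and $Z_y$ is the normalization given by the Cauchy identity. The row-by-row Markov structure of \eqref{intro-weights} corresponds to the branching rule $\F_\lambda(u_1,\ldots,u_y)=\sum_\mu \F_\mu(u_1,\ldots,u_{y-1})\,\F_{\lambda/\mu}(u_y)$, itself an immediate consequence of the Yang--Baxter equation when one full row of vertex weights is stacked onto a given configuration. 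Under this identification, the observable $q^{\HT(x,y)}$ is the simple function $q^{\#\{i:\lambda_i\ge x\}}$ of $\lambda$.

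The heart of the proof is to evaluate
\[
\E\prod_{i=1}^{\ell}q^{\HT(x_i,y)}=\frac{1}{Z_y}\sum_\lambda \F_\lambda(u_1,\ldots,u_y)\,\G_\lambda(\rho_{\mathrm{step}})\prod_{i=1}^{\ell}q^{\#\{j:\lambda_j\ge x_i\}}
\]
as a nested contour integral. I would introduce auxiliary variables $w_1,\ldots,w_\ell$ and use the Yang--Baxter-derived symmetrization formula for $\F_\lambda$, which expresses it as a symmetric sum over products of per-variable factors $\bigl((1-sw)/(1-s^{-1}w)\bigr)^{\lambda_i}$ paired with the Hall--Littlewood-type kernel $\prod_{\aind<\bind}(w_\aind-w_\bind)/(w_\aind-qw_\bind)$. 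Using this representation to encode the observable and then applying the skew Cauchy identity to resum over $\lambda$, the factor $\prod_j(1-qu_jw_i)/(1-u_jw_i)$ emerges from the interaction of the new $w_i$'s with the existing parameters $u_1,\ldots,u_y$, while $\bigl((1-sw_i)/(1-s^{-1}w_i)\bigr)^{x_i-1}$ appears as the transfer factor propagating the observable from column $1$ to column $x_i$. The overall prefactor $q^{\ell(\ell-1)/2}$ records the normalization of the symmetrization and the nesting of contours.

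The main obstacle is the contour analysis. The contours $\contnz{\bar\UU}{\alpha}$ must enclose precisely the poles at $w_\alpha=u_1^{-1},\ldots,u_y^{-1}$ while excluding those at $s^{-1}$ and at the origin, and they must be nested so that the additional poles at $w_\aind=qw_\bind$ are handled consistently; the ordering $x_1\ge\ldots\ge x_\ell$ is exactly what dictates the admissible direction of contour deformation and ensures that residues collected while moving between orderings reorganize into the symmetrized expression for $\F_\lambda$. A secondary concern is justifying the exchange of $\lambda$-summation and contour integration, which requires geometric bounds on $\F_\lambda$ and $\G_\lambda(\rho_{\mathrm{step}})$ for $|\lambda|\to\infty$; these are provided by the explicit form of the step specialization, which suppresses contributions from large partitions exponentially.
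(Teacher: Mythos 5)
There is a genuine gap, and it is precisely the one the paper itself flags. Your plan---encode the observable $\prod_i q^{\HT(x_i,y)}$ via an integral in $w$-variables against (a version of) $\F_\lambda(w_1,\ldots,w_\ell)$, then resum over $\lambda$ by the Cauchy identity---is essentially the mechanism the paper uses only for the \emph{one-point} formula (Proposition \ref{prop:one_moment}), where the observable is reduced to the $q$-correlation functions $\corre{\mm}{q}$ and those are generated by $\G_\nu(\RHO,w_1,\ldots,w_k)/\G_\nu(\RHO)$, which the Cauchy identity sums in closed form. The link between $q^{\ell\,\HT_\nu(x)}$ and the $q$-correlations is Lemma \ref{lemma:moments_from_qcorr}, which is a nontrivial inversion. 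The paper explicitly remarks that the analogous inversion for multi-point $\prod_i q^{\HT_\nu(x_i)}$ is available but not usable, and switches to a verification-style proof for Theorem \ref{thm:multi_moments}. Your proposal never says how the multi-point observable is to be ``encoded'' in $w$-variables, and that is exactly where the hard combinatorics lives; asserting that ``the transfer factor $((1-sw_i)/(1-s^{-1}w_i))^{x_i-1}$ appears'' does not address it.

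Your contour description is also factually incorrect and misses a load-bearing ingredient. The contours $\contnz{\bar\UU}{j}=\contn{\bar\UU}\cup r^{j}c_0$ \emph{do} include a circle $r^{j}c_0$ around the origin; this is not optional. Removing those pieces changes the left-hand side from $\E\prod_i q^{\HT(x_i)}$ to $\E\prod_i(q^{i-1}-q^{\HT(x_i)})$ (Lemma \ref{lemma:rid_of_around_0}), and the residues at $0$ are where the $q^{i-1}$ corrections come from. (Also, the excluded pole inside the circle part is at $w=s$, coming from $1-s^{-1}w$ in the denominator---not at $s^{-1}$.) Finally, the paper's actual argument hinges on expanding the integral $R(u_1,\ldots,u_n)$ in the basis $\{\F_\lambda^{\conj}\}$ and recovering the coefficients via the spatial orthogonality / inverse Plancherel transform of Theorem \ref{thm:orthogonality_F} (worked through in Lemmas \ref{lemma:computing_multi_moment_integral} and \ref{lemma:about_inversions_summation}), which then factorize into $(-s)^{|\lambda|}\prod_i(q^{i-1}-q^{\HT_\lambda(x_i-1)})$. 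None of that machinery---the removal of the $0$-contours, the inverse Plancherel step, the combinatorial factorization over injections $\sigma$---appears in your outline, and without it the resummation you invoke does not go through for $\ell\ge 2$.
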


Let us emphasize that the inequalities on the parameters here are exceedingly restrictive; the statement can be analytically continued with suitable modifications of the contours and the integrand. Examples of such analytic continuation can be found in \S\ref{sec:degenerations_of_moment_formulas}, where they are used to degenerate the above result to various $q$-versions of the T(otally)ASEP. 

We also prove integral formulas similar to \eqref{intro-multi_moments} for another set of observables of our model that we call \emph{$q$-correlation functions}. The two are related, but in a rather nontrivial way, and one set of formulas does not immediately imply the other. 

While at the moment averages \eqref{intro-multi_moments} seem more useful for asymptotic analysis (and that is the reason we list them as our main result), it is entirely possible that the $q$-correlation functions will become useful for other asymptotic regimes. The definition and the expressions for the $q$-correlation functions can be found in \S\ref{sec:observables_of_interacting_particle_systems} below. 

\subsection{Symmetric rational functions} One consequence of the Yang--Baxter integrability of our model is that one can explicitly compute the distribution of intersection points of the paths in $\mathcal P$ with any horizontal line. More exactly, let $X_1\ge \ldots\ge X_n\ge 1$ be the $x$-coordinates of the points where our paths intersect the line $y=const$ with $n<const<n+1$; there are exactly $n$ of those, counting the multiplicities. Then
\begin{equation}\label{intro-section_probabilities}
\Prob\{X_1=\nu_1+1,\dots,X_n=\nu_n+1\}=\prod_{k\ge 0} \frac{(-s)^{n_k}(s;q)_{n_k}}{(q;q)_{n_k}}\cdot \F_\nu(u_1,\dots,u_n),
\end{equation}
where $(a;q)_{m}=(1-a)(1-aq)\ldots(1-aq^{m-1})$ are the $q$-Pochhammer symbols, 
$n_k$ is the multiplicity of $k$ in the sequence $\nu=(\nu_1\ge\ldots\ge\nu_n)=0^{n_0}1^{n_1}\cdots$, and for any $\mu=(\mu_1\ge\dots\ge\mu_M\ge 0)$ we define
\begin{align}\label{intro-F_symm_formula}
		\F_\mu(u_1,\ldots,u_M)= \frac{(1-q)^M}{\prod_{i=1}^M (1-su_i)}
		\sum_{\sigma\in\Sym_M}
		\sigma\Bigg(
		\prod_{1\le \aind<\bind\le M}\frac{u_\aind-qu_\bind}{u_\aind-u_\bind}
		\,\prod_{i=1}^{M}
		\left(\frac{u_i-s}{1-su_i}\right)^{\mu_i}\Bigg).
\end{align}
Here $\Sym_M$ is the symmetric group of degree $M$, and its elements $\sigma$ permute the variables $\{u_i\}_{i=1}^M$ in the right-hand side of \eqref{intro-F_symm_formula}.

The symmetric rational functions $\{\F_\mu\}$ play a central role in our work. The right-hand side of \eqref{intro-F_symm_formula} can be viewed as a coordinate Bethe ansatz 
expression for the eigenfunctions of the transfer-matrix of the higher spin six vertex model. Note that one would need to additionally impose Bethe equations on the $u$'s for periodic in the $x$-direction boundary conditions.

The probabilistic interpretation of $\F_\nu$ given above is equivalent to saying that $\F_\nu$ is the partition function for ensembles of $n$ up-right lattice paths that enter the semi-infinite strip $\Z_{\ge 0}\times\{1,\dots,n\}$ at the left boundary at $(0,1),\dots,(0,n)$ and exit at the top of the strip at locations $(\nu_1,n),\dots,(\nu_n,n)$. The weight of such an ensemble is equal to the product of weights over all vertices of the strip. The vertex weights for $\F_\nu$ itself are slightly modified right-hand sides of \eqref{intro-weights} given by \eqref{weights} below. Allowing some paths to enter at the bottom boundary gives a definition of the skew functions $\F_{\mu/\la}$; removing the paths entering from the left gives a definition of the skew functions $\G_{\mu/\la}$ and non-skew $\G_\mu=\G_{\mu/0^M}$, cf. Fig.~\ref{fig:paths_FG} below. A symmetrization formula for $\G_\mu$ which is similar to \eqref{intro-F_symm_formula} is given in Theorem \ref{thm:symmetrization} below.  

The functions $\F$ and $\G$ were introduced in \cite{Borodin2014vertex}. As explained there, further degenerations turn them into skew and non-skew Hall--Littlewood and Schur symmetric polynomials. 

\subsection{Cauchy identities} A basic fact about functions $\F$ and $\G$ that we heavily 
use is the following skew Cauchy identity.
Let $u,v\in\C$ satisfy
\begin{align*}
	\left|\frac{u-s}{1-s u}
	\cdot
	\frac{v-s}{1-sv}\right|<1.
\end{align*}
Then for any nonincreasing integer sequences 
$\la$ and $\nu$ as above, 
we have
\begin{align}\label{intro-skew_Cauchy_good}
	\sum_{\kappa}
	\frac{\conj(\kappa)}{\conj(\la)}\G_{\kappa/\la}(v)
	\F_{\kappa/\nu}(u)
	= \frac{1-quv}{1-uv}
	\sum_{\mu}
	\F_{\la/\mu}(u)
	\frac{\conj(\nu)}{\conj(\mu)}\G_{\nu/\mu}(v)
	,
\end{align}
where $\conj(\al)=\displaystyle
\prod\nolimits_{i\ge 0}\frac{(s^2;q)_{a_i}}{(q;q)_{a_i}}$ for $\al=0^{a_0}1^{a_1}\cdots$.
This identity is a direct consequence of the Yang--Baxter equation for the R-matrix of the higher spin six vertex model. It involves only two spectral parameters $u$ and $v$ and corresponds
to permuting two single-row transfer matrices. 
Identity \eqref{intro-skew_Cauchy_good} can be immediately iterated to include any finite number of $u$'s and $v$'s, and also to involve non-skew functions (by setting $\nu$ to $\varnothing$ and/or $\lambda$ to $0^L$).

We put different versions of Cauchy identities to multiple uses:

\begin{enumerate}[\bf1.]
	\item The fact that probabilities \eqref{intro-section_probabilities} add up to 1 is a limiting instance of a Cauchy identity. Thus, we can think of the weights of the probability measures we are interested in as of (normalized) terms in a Cauchy identity. Such an interpretation (for other Cauchy identities) lies at the basis of the theory of Schur and Macdonald measures and processes \cite{okounkov2001infinite}, \cite{okounkov2003correlation}, \cite{BorodinCorwin2011Macdonald}. 

	\item Markov chains that connect measures of the form \eqref{intro-section_probabilities} with different values of $n$ are instances of skew-Cauchy identities. Such an interpretation was also previously used in the Schur/Macdonald setting, cf. \cite{BorFerr2008DF}, \cite{Borodin2010Schur}, \cite{BorodinCorwin2011Macdonald}.

	\item Comparing two Cauchy identities which differ by adding a few extra variables 
	leads to the average of an observable with respect to the measure whose weights are given by the terms of the other identity. This fact by itself is a triviality, but we show that it can be used to extract nontrivial consequences. To our knowledge,
	such use of Cauchy identities is new. 

	\item In extracting those consequences, a key role is played by a Plancherel theory for the functions $\{\F_\mu\}$, and Cauchy identities can be employed to establish certain orthogonality properties of the $\F_\mu$'s. These orthogonality relations were first proved in \cite{BCPS2014}, and the link to Cauchy identities goes back to \cite{Borodin2014vertex}.
\end{enumerate} 

\subsection{Organization of the paper} 
In \S\ref{sec:vertex_weights} we define the higher spin six vertex model in the language which is used throughout the paper. 
The Yang--Baxter integrability of our model is discussed in \S\ref{sec:yang_baxter_relation}. 
In \S\ref{sec:symmetric_rational_functions} we take the infinite volume limit of the Yang--Baxter equation, introduce functions $\F$ and $\G$, and derive Cauchy identities and symmetrization type formulas for them. 
Fusion --- a procedure of collapsing several horizontal rows with suitable spectral parameters onto a single one
--- is discussed in \S\ref{sec:stochastic_weights_and_fusion}. 
In \S\ref{sec:markov_kernels_and_stochastic_dynamics} we use skew-Cauchy identities to define various Markov dynamics for our model, and also show how known integrable (1+1)d KPZ models can be obtained from those.  
In \S\ref{sec:orthogonality_relations} we prove the Plancherel isomorphisms (equivalently, two types of orthogonality relations for the $\F_\mu$'s).
In \S\ref{sec:observables_of_interacting_particle_systems} we derive integral representations for the $q$-correlation functions. 
In \S\ref{sec:_q_moments_of_the_height_function_of_interacting_particle_systems} we prove our main result --- the integral formula \eqref{intro-multi_moments} for the $q$-moments of the height function. 
The final \S\ref{sec:degenerations_of_moment_formulas} demonstrates how our main result degenerates to various similar known results for the models which are hierarchically lower: stochastic six vertex model, ASEP, various $q$-TASEPs, and associated zero range processes.

\subsection*{Acknowledgments} 
We are grateful to Ivan Corwin and Vadim Gorin for valuable discussions. 
A.~B. was partially supported by the NSF grant DMS-1056390.


\section{Vertex weights} 
\label{sec:vertex_weights}

\subsection{Higher spin six vertex model} 
\label{sub:higher_spin_vertex_model}

The higher spin six vertex model can be viewed as a way of assigning weights to collections of up-right paths
in a finite region of $\Z^{2}$, subject to certain boundary conditions. 
An example of such a collection of paths is given in Fig.~\ref{fig:paths_example}.
The \emph{weight} of a path collection is equal to the product of weights of all the vertices that
belong to the paths.
We will always assume that the weight of the empty vertex
\emptyvertex{.6}
is equal to $1$. Thus, the weight of a path collection
can be equivalently defined
as
the product of weights of all vertices in $\Z^{2}$.

Note that the weight of a collection of paths is in general
not equal to the product of weights of individual paths (defined in an obvious way). But if
the paths in a collection have no 
vertices in common, then the weight of this collection
will in fact be equal to the product of weights of individual paths.


\begin{figure}[htpb]
		\begin{tikzpicture}
			[scale=.6,thick]
			\foreach \xxx in {0,4,5,1,2,3,6,7}
			{
				\draw[dotted] (\xxx,5.5)--++(0,-5);
			}
			\foreach \xxx in {1,2,3,4,5}
			{
				\draw[dotted] (-.5,\xxx)--++(8,0);
			}
			\def\dist{.1}
			\draw[line width=1.7pt] (-1,1)--++(1,0);
			\draw[line width=1.7pt] (-1,2)--++(1,0);
			\draw[->, line width=1.7pt] 
			(0,2)--++(1,0)--++(1-\dist,0)--++(\dist,\dist)--++(0,1-\dist)
			--++(1,0)--++(1-\dist,0)--++(\dist,\dist)--++(0,1-\dist)
			--++(0,1)--++(0,1);
			\draw[->, line width=1.7pt] (0,1)--++(1,0)--++(1-2*\dist,0)--++(\dist,\dist)--++(0,1-3*\dist)--++(3*\dist,3*\dist)
			--++(1,0)--++(1-3*\dist,0)--++(\dist,\dist)--++(0,1-3*\dist)--++(\dist,\dist)--++(1-\dist,0)--++(0,1)--++(1-\dist,0)--++(\dist,\dist)
			--++(0,1-\dist)--++(0,1);
			\draw[->, line width=1.7pt] 
			(2,0)--++(0,1-\dist)--++(\dist,2*\dist)--++(0,1-3*\dist)--++(\dist,\dist)--++(1,0)--++(1-3*\dist,0)
			--++(2*\dist,\dist)
			--++(1,0)--++(1-2*\dist,0)--++(\dist,\dist)--++(0,1-\dist)--++(0,1-\dist)--++(\dist,\dist)--++(1-3*\dist,0)
			--++(\dist,\dist)--++(0,2-\dist);
			\draw[->, line width=1.7pt] (5,0)--++(0,1)--++(1,0)--++(0,1-\dist)--++(\dist,\dist)
			--++(1-\dist,0)--++(0,1)--++(0,1-1*\dist)--++(\dist,2*\dist)--++(0,2-\dist);
			\draw[->, line width=1.7pt] (2,0)--++(0,.5);
			\draw[->, line width=1.7pt] (5,0)--++(0,.5);
			\draw[->, line width=1.7pt] (-1,1)--++(.5,0);
			\draw[->, line width=1.7pt] (-1,2)--++(.5,0);
			\foreach \ppt in {(0,1),(1,1),(5,1),(6,1),
			(0,2),(1,2),(2,3),(3,3),(4,4),(4,5),(5,2),(5,3),(5,4),
			(6,3),(6,5),(7,2),(7,3)}
			{
				\draw \ppt circle(5pt);
			}
			\foreach \ppt in {(2,1),(2,2),(3,2),(4,2),(4,3),(6,2),(6,4),(7,4),(7,5)}
			{
				\draw \ppt circle(10pt);
			}
		\end{tikzpicture}
	\caption{An example of a collection of up-right paths in a region in $\Z^{2}$.
	Note that several paths are allowed to pass along the same edge.
	At each vertex the total number of incoming arrows ($=$~coming from the left or from below)
	must be equal to the total number of outgoing ones ($=$~pointing to the right or upwards), cf. Fig.~\ref{fig:vertex}.
	The circles indicate nonempty vertices
	which contribute
	to the weight of the path collection.}
	\label{fig:paths_example}
\end{figure}

\subsection{Vertex weights} 
\label{sub:vertex_weights}

We choose the weights of vertices in a special way. 
First, we postulate that the number of incoming arrows $i_1+j_1$ into any vertex
must be the same as the number of outgoing arrows $i_2+j_2$, see Fig.~\ref{fig:vertex}. 
\begin{figure}[htbp]
	\begin{tikzpicture}
		[scale=1.5, ultra thick]
		\def\d{.1}
		\draw[->] (-1,3*\d)--++(1,0)--++(0,1);
		\draw[->] (-1,2*\d)--++(1,0)--++(\d,\d)--++(0,1);
		\draw[->] (-1,\d)--++(1,0)--++(2*\d,2*\d)--++(0,1);
		\draw[->] (-1,0)--++(1,0)--++(3*\d,3*\d)--++(0,1);
		\draw[->] (-1,-\d)--++(1,0)--++(4*\d,4*\d)--++(0,1);
		\draw[->] (-1,-2*\d)--++(1,0)--++(4*\d,4*\d)--++(1,0);
		\draw[->] (-1,-3*\d)--++(1,0)--++(4*\d,4*\d)--++(1,0);
		\draw[->] (1.5*\d,-1-2.5*\d)--++(0,1)--++(2.5*\d,2.5*\d)--++(1,0);
		\draw[->] (2.5*\d,-1-2.5*\d)--++(0,1)--++(1.5*\d,1.5*\d)--++(1,0);
		\node at (.2,-1.55) {$i_1=2$};
		\node at (-2,0) {$j_1=7$};
		\node at (.2,1.65) {$i_2=5$};
		\node at (2.2,0) {$j_2=4$};
		\node[rotate=-45] at (-.75,-1) {input};
		\node[rotate=-45] at (1.1,1.1) {output};
		\draw[->] (-1,3*\d)--++(.2,0);
		\draw[->] (-1,2*\d)--++(.2,0);
		\draw[->] (-1,1*\d)--++(.2,0);
		\draw[->] (-1,0*\d)--++(.2,0);
		\draw[->] (-1,-1*\d)--++(.2,0);
		\draw[->] (-1,-2*\d)--++(.2,0);
		\draw[->] (-1,-3*\d)--++(.2,0);
		\draw[->] (1.5*\d,-1-2.5*\d)--++(0,.2);
		\draw[->] (2.5*\d,-1-2.5*\d)--++(0,.2);
		\draw[very thick] (1.75*\d,0) circle (15pt);
	\end{tikzpicture}
	\caption{Incoming and outgoing vertical and horizontal arrows
	at a vertex
	which we will denote by
	$(i_1,j_1;i_2,j_2)=(2,7;5,4)$.}
	\label{fig:vertex}
\end{figure}
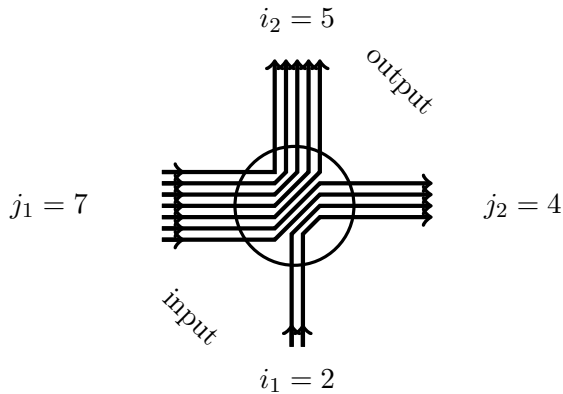
\begin{remark}
	This \emph{arrow preservation} condition obviously fails at the boundaries, so one should either
	fix boundary conditions
	in some way, or specify weights on the boundary independently.
\end{remark}
The vertex weights will depend on two (generally speaking, complex)
parameters that we denote by $q$ and $s$,
and 
on an additional
\emph{spectral parameter}
$u\in\C$.
All these parameters 
are assumed to be \emph{generic}\footnote{That is, vanishing of certain algebraic expressions 
in the parameters
may make some of our statements meaningless.
We will not focus on these special cases.}.
The vertex weights are explicitly given by (see also Fig.~\ref{fig:vertex_weights})
\begin{align}\label{weights}
	\begin{array}{rclrcl}
		w_{u}(g,0;g,0)&:=&\dfrac{1-sq^{g}u}{1-su},&\qquad \qquad
		\rule{0pt}{22pt}
		w_{u}(g+1,0;g,1)&:=&\dfrac{(1-s^{2}q^{g})u}{1-su},\\
		\rule{0pt}{22pt}
		w_{u}(g,1;g,1)&:=&\dfrac{u-sq^{g}}{1-su},&\qquad \qquad
		\rule{0pt}{22pt}
		w_{u}(g,1;g+1,0)&:=&\dfrac{1-q^{g+1}}{1-su},
	\end{array}
\end{align}
where $g$ is any nonnegative integer.
All other weights are assumed to be zero.
Note that 
the weight of the empty vertex
\emptyvertex{.6} (that is, $(0,0;0,0)$)
is indeed equal to $1$.
Throughout the text, 
the parameters $q$ and $s$ are assumed fixed, and 
$u$ will be regarded as an indeterminate, which is reflected in the notation.

Observe that the weights $w_u$ \eqref{weights} are nonzero
only for $j_1,j_2\in\{0,1\}$, that is, the multiplicities of 
horizontal edges are bounded by $1$. This restriction will be 
removed later (in \S\ref{sec:stochastic_weights_and_fusion}).
\begin{figure}[htbp]
	\begin{tabular}{c|c|c|c|c}
	&\vertexoo{1}
	&\vertexol{1}
	&\vertexll{1}
	&\vertexlo{1}
	\\\hline\rule{0pt}{20pt}
	$w_u$&
	$\dfrac{1-sq^{g}u}{1-su}$&
	$\dfrac{(1-s^{2}q^{g-1})u}{1-su}$&
	$\dfrac{u-sq^{g}}{1-su}$&
	$\dfrac{1-q^{g+1}}{1-su}$
	\phantom{\Bigg|}\\
	\end{tabular}
	\caption{Vertex weights
	\eqref{weights}.
	Here $g\in\Z_{\ge0}$ and
	by agreement, $w_{u}(0,0;-1,1)=0$.}
	\label{fig:vertex_weights}
\end{figure}


\subsection{Motivation} 
\label{sub:motivation}

Weights defined in \eqref{weights} are closely related to matrix
elements of the higher spin R-matrix 
associated with $U_q(\widehat{\mathfrak{sl}_2})$
(e.g., see \cite{Mangazeev2014} and also \cite{baxter2007exactly}, \cite{reshetikhin2010lectures} for a general introduction).
Because of this, they satisfy
a version of the Yang--Baxter equation
which we discuss in \S \ref{sec:yang_baxter_relation} below.
The exact connection of weights \eqref{weights} with R-matrices is written down
in \cite[\S2]{Borodin2014vertex}, and here we follow the notation of that paper.

For the weights \eqref{weights}, the R-matrix in question
corresponds to one of the highest weight representations (the ``vertical'' one) being 
a generic Verma module (associated with the parameter~$s$),
while the other representation (``horizontal'')
is two-dimensional. This choice of the ``horizontal''
representation dictates the restriction on the horizontal multiplicities 
$j_1,j_2\in\{0,1\}$. Vertex weights associated with other ``horizontal'' 
representations 
(finite-dimensional of dimension $J+1$, or generic Verma modules)
are discussed in \S \ref{sec:stochastic_weights_and_fusion}
below.

If we set 
$s^{2}=q^{-I}$ with $I$ a positive integer,
then matrix elements of the 
generic Verma module turn into those of the
$(I+1)$-dimensional
highest weight representation (of weight $I$), and thus the 
multiplicities of vertical edges will be bounded by $I$.
In particular, setting $I=1$ leads to the well-known six vertex
model (we discuss it in \S \ref{sub:asep_degeneration}). Throughout most of the text we will assume,
however, that the parameter $s$ is generic, and so 
there is no restriction on the vertical multiplicity.


\subsection{Conjugated weights and stochastic weights} 
\label{sub:S2_stochastic_weights}

Here we write down two related versions 
of the vertex weights which will be later useful for probabilistic
applications.

Throughout the text we will employ the $q$-Pochhammer symbols
\begin{align*}
	(z;q)_n := \begin{cases} \prod_{k=0}^{n-1} (1-zq^k), & n>0,\\ 1,& n=0,\\ \prod_{k=0}^{-n-1}(1-zq^{n+k})^{-1}, &n<0.\end{cases}
\end{align*}
If $|q|<1$ and $n=+\infty$, then the $q$-Pochhammer 
symbol $(z;q)_\infty$ also makes sense. We will also use the $q$-binomial coefficients
\begin{align*}
	\binom{n}{k}_{q}:=\frac{(q;q)_{n}}{(q;q)_{k}(q;q)_{n-k}}.
\end{align*}

Define the following \emph{conjugated vertex weights}:
\begin{align}
	w^{\conj}_{u}(i_1,j_1;i_2,j_2):=
	\frac{(s^{2};q)_{i_2}}{(q;q)_{i_2}}
	\frac{(q;q)_{i_1}}{(s^{2};q)_{i_1}}\, w_{u}(i_1,j_1;i_2,j_2).
	\label{weights_conj}
\end{align}
Also define
\begin{align}\label{vertex_weights_stoch}
	\Lmatr_{u}(i_1,j_1;i_2,j_2):= 
	(-s)^{j_2}w^{\conj}_{u}(i_1,j_1;i_2,j_2).
\end{align}
The above quantities are given in Fig.~\ref{fig:vertex_weights_conj_stoch}.
Note that for any $i_1\in\Z_{\ge0}$, $j_1\in\{0,1\}$ we have
\begin{align}\label{L_sum_to_one}
	\sum_{i_2,j_2\in\Z_{\ge0}\colon i_2+j_2=i_1+j_1}
	\Lmatr_{u}(i_1,j_1;i_2,j_2)=1.
\end{align}
Therefore, if the $\Lmatr_{u}$'s are nonnegative, 
they can be interpreted
as defining a \emph{probability distribution} 
on all possible output configurations
$\{(i_2,j_2)\colon i_2+j_2=i_1+j_1\}$
given the input configuration $(i_1,j_1)$, cf.~Fig.~\ref{fig:vertex}.
We will discuss values of parameters leading to nonnegative
$\Lmatr_{u}$'s in \S \ref{sub:stochastic_weights} below.

A motivation for introducing the conjugated weights $w_u^{\conj}$ can be found in 
\S \ref{sub:semi_infinite_operator_D} below.

\begin{figure}[htbp]
	\begin{tabular}{c|c|c|c|c}
	&\vertexoo{1}
	&\vertexol{1}
	&\vertexll{1}
	&\vertexlo{1}
	\\
	\hline\rule{0pt}{20pt}
	$w_u^{\conj}$&
	$\dfrac{1-sq^{g}u}{1-su}$&
	$\dfrac{(1-q^{g})u}{1-su}$&
	$\dfrac{u-sq^{g}}{1-su}$&
	$\dfrac{1-s^{2}q^{g}}{1-su}$
	\phantom{\Bigg|}\\\hline\rule{0pt}{20pt}
	$\Lmatr_u$&
	$\dfrac{1-sq^{g}u}{1-su}$&
	$\dfrac{-su+sq^{g}u}{1-su}$&
	$\dfrac{-su+s^{2}q^{g}}{1-su}$&
	$\dfrac{1-s^{2}q^{g}}{1-su}$
	\phantom{\Bigg|}
	\end{tabular}
	\caption{Vertex weights
	\eqref{weights_conj} and \eqref{vertex_weights_stoch}.
	Note that they automatically vanish at the forbidden
	configuration $(0,0;-1,1)$.}
	\label{fig:vertex_weights_conj_stoch}
\end{figure}



\section{The Yang--Baxter equation} 
\label{sec:yang_baxter_relation}

\subsection{The Yang--Baxter equation in coordinate language} 
\label{sub:yang_baxter_relation_in_coordinate_form}

The Yang--Baxter equation deals with 
vertex weights at two vertices connected by a vertical edge,
with spectral parameters $\ybspec_1,\ybspec_2$.
Define the two-vertex weights by
\begin{align}\label{two-vertex}
	w_{\ybspec_1,\ybspec_2}^{(m,n)}(k_1,k_2;k_1',k_2'):=\sum_{l\ge 0} w_{\ybspec_1}(m,k_1;l,k_1')w_{\ybspec_2}(l,k_2;n,k_2'),
	\qquad
	k_1,k_2,k_1',k_2'\in\{0,1\}.
\end{align}
The expression \eqref{two-vertex}
is the weight of the two-vertex configuration
as in Fig.~\ref{fig:YB}, left, 
with numbers of incoming and outgoing arrows
$m,n,k_{1,2},k'_{1,2}$ fixed.
The number of arrows $l\ge0$ along the inside edge is arbitrary,
but due to the arrow preservation,
no more than one value of $l$ contributes to the sum.

Also define
\begin{align}\label{two-vertex_tilde}
	\widetilde w_{\ybspec_1,\ybspec_2}^{(m,n)}(k_1,k_2;k_1',k_2'):=
	w_{\ybspec_1,\ybspec_2}^{(m,n)}(k_2,k_1;k_2',k_1');
\end{align}
this is the weight of the configuration as in 
Fig.~\ref{fig:YB}, right.
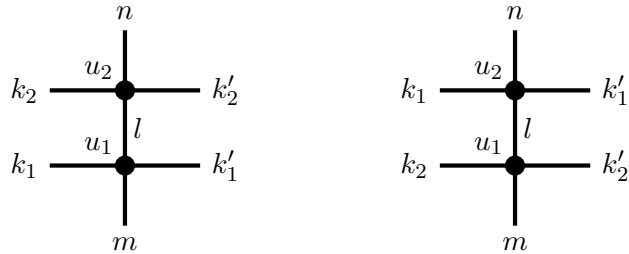
\begin{figure}[htbp]
	\begin{tabular}{cc}
	\begin{tikzpicture}
		[scale=1,ultra thick]
		\draw (0,0)--++(1,0) node [right] {$k_1'$};
		\draw (0,0)--++(-1,0) node [left] {$k_1$};
		\draw (0,1)--++(1,0) node [right] {$k_2'$};
		\draw (0,1)--++(-1,0) node [left] {$k_2$};
		\draw (0,0)--++(0,-.8) node[below] {$m$};
		\draw (0,1)--++(0,.8) node[above] {$n$};
		\draw (0,0)--++(0,1) node[midway, xshift=5pt] {$l$};
		\draw[fill] (0,0) circle (3pt) node [above left] {$\ybspec_1$};
		\draw[fill] (0,1) circle (3pt) node [above left] {$\ybspec_2$};
	\end{tikzpicture}
	&\hspace{40pt}
	\begin{tikzpicture}
		[scale=1,ultra thick]
		\draw (0,0)--++(1,0) node [right] {$k_2'$};
		\draw (0,0)--++(-1,0) node [left] {$k_2$};
		\draw (0,1)--++(1,0) node [right] {$k_1'$};
		\draw (0,1)--++(-1,0) node [left] {$k_1$};
		\draw (0,0)--++(0,-.8) node[below] {$m$};
		\draw (0,1)--++(0,.8) node[above] {$n$};
		\draw (0,0)--++(0,1) node[midway, xshift=5pt] {$l$};
		\draw[fill] (0,0) circle (3pt) node [above left] {$\ybspec_1$};
		\draw[fill] (0,1) circle (3pt) node [above left] {$\ybspec_2$};
	\end{tikzpicture}
	\end{tabular}
	\caption{Two-vertex configurations
	corresponding to \eqref{two-vertex} and \eqref{two-vertex_tilde}, 
	respectively.}
	\label{fig:YB}
\end{figure}

Let us organize the weights \eqref{two-vertex} into $4\times 4$ matrices
\begin{align*}
w_{\ybspec_1,\ybspec_2}^{(m,n)}=\begin{bmatrix} 
w_{\ybspec_1,\ybspec_2}^{(m,n)}(0,0;0,0)&w_{\ybspec_1,\ybspec_2}^{(m,n)}(0,0;0,1)&w_{\ybspec_1,\ybspec_2}^{(m,n)}(0,0;1,0)&w_{\ybspec_1,\ybspec_2}^{(m,n)}(0,0;1,1)\\
w_{\ybspec_1,\ybspec_2}^{(m,n)}(0,1;0,0)&w_{\ybspec_1,\ybspec_2}^{(m,n)}(0,1;0,1)&w_{\ybspec_1,\ybspec_2}^{(m,n)}(0,1;1,0)&w_{\ybspec_1,\ybspec_2}^{(m,n)}(0,1;1,1)\\
w_{\ybspec_1,\ybspec_2}^{(m,n)}(1,0;0,0)&w_{\ybspec_1,\ybspec_2}^{(m,n)}(1,0;0,1)&w_{\ybspec_1,\ybspec_2}^{(m,n)}(1,0;1,0)&w_{\ybspec_1,\ybspec_2}^{(m,n)}(1,0;1,1)\\
w_{\ybspec_1,\ybspec_2}^{(m,n)}(1,1;0,0)&w_{\ybspec_1,\ybspec_2}^{(m,n)}(1,1;0,1)&w_{\ybspec_1,\ybspec_2}^{(m,n)}(1,1;1,0)&w_{\ybspec_1,\ybspec_2}^{(m,n)}(1,1;1,1)
\end{bmatrix},
\end{align*}
and similarly for $\widetilde w_{\ybspec_1,\ybspec_2}^{(m,n)}$.
\begin{proposition}[The Yang--Baxter equation]\label{prop:YB}
	We have
	\begin{align}\label{YB_main_relation}
		\widetilde w_{\ybspec_2,\ybspec_1}^{(m,n)}=
		Xw_{\ybspec_1,\ybspec_2}^{(m,n)}
		X^{-1},
	\end{align}
	where the matrix $X$ depending on 
	$\ybspec_1$ and $\ybspec_2$
	is given by
	\begin{align}
		X=\begin{bmatrix}  
			\ybspec_1-q\ybspec_2&0&0&0\\ \\
			0&
			q(\ybspec_1-\ybspec_2)
			& 
			(1-q)\ybspec_1
			&0\\ \\
			0&
			(1-q)\ybspec_2
			&
			\ybspec_1-\ybspec_2
			&0\\ \\
			0&0&0&\ybspec_1-q\ybspec_2
		\end{bmatrix}.
		\label{matrix_X}
	\end{align}
\end{proposition}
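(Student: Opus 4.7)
The plan is to recast \eqref{YB_main_relation} as the intertwining identity
\begin{equation*}
\widetilde w_{\ybspec_2,\ybspec_1}^{(m,n)}\, X \;=\; X\, w_{\ybspec_1,\ybspec_2}^{(m,n)},
\end{equation*}
thereby bypassing the need to invert $X$ (which is generically fine since $\det X = (\ybspec_1-q\ybspec_2)^2(q\ybspec_1-\ybspec_2)(\ybspec_1-\ybspec_2)$ is nonzero for generic $\ybspec_1,\ybspec_2$). Both sides are then $4\times 4$ matrices of rational functions in $\ybspec_1,\ybspec_2$, and I would verify the equality entry by entry.

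The first reduction is that the two-vertex weights $w_{\ybspec_1,\ybspec_2}^{(m,n)}(k_1,k_2;k_1',k_2')$ vanish unless the arrow-preservation relation $(k_1+k_2)-(k_1'+k_2')=n-m$ holds, and in that case the intermediate summation index $l$ in \eqref{two-vertex} is forced to a single value (namely $l=m+k_1-k_1'$). So for each choice of $(m,n)$ the only contributing entries are those in the sector $\Delta:=n-m\in\{-2,-1,0,1,2\}$, and each such entry is just a single product of two weights \eqref{weights}. A parallel reduction holds for $\widetilde w$ via \eqref{two-vertex_tilde}. The structure of $X$ in \eqref{matrix_X} is block diagonal with respect to the total horizontal flow $k_1+k_2$, so the intertwining identity breaks into four independent checks, one in each sector.

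The extreme sectors $\Delta=\pm2$ are immediate: only the entries $(1,1;0,0)$ and $(0,0;1,1)$ are nonzero, the intermediate $l$ equals $m\pm1$, and the claimed equalities reduce, after cancelling the scalar diagonal blocks $\ybspec_1-q\ybspec_2$ of $X$, to the identity $w_{\ybspec_1}\,w_{\ybspec_2}=w_{\ybspec_2}\,w_{\ybspec_1}$ for products of two scalar weights, which is trivial. The sectors $\Delta=\pm1$ produce $2\times 2$ matrix identities in which $X$ acts by its middle $2\times 2$ block; each equality is a linear combination of two products from \eqref{weights} against explicit prefactors $\ybspec_1-q\ybspec_2$, $q(\ybspec_1-\ybspec_2)$, $(1-q)\ybspec_1$, $(1-q)\ybspec_2$, and can be cleared of denominators $(1-s\ybspec_1)(1-s\ybspec_2)$ and checked as a polynomial identity in $\ybspec_1,\ybspec_2$ (with $q^m,q^n,s$ as parameters).

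The main obstacle is the $\Delta=0$ sector, where the middle $2\times 2$ block of $X$ interacts with four entries of $w^{(m,m)}$ in each row, producing on each side rational functions whose numerators are degree-two polynomials in $\ybspec_1$ and $\ybspec_2$ with nontrivial $q^m$-dependence. I would organize the check by introducing the shorthand $\xi_i=1-sq^m\ybspec_i$ and $\eta_i=\ybspec_i-sq^m$ for $i=1,2$, so that all four relevant matrix elements in this sector become manifest products of $\xi_i,\eta_i$, $(1-s\ybspec_i)$, $(1-s^2q^{m-1})\ybspec_i$, and $(1-q^m)$. The identity then reduces to a pair of polynomial equalities whose structure is a direct reflection of the commutation relations for the Yang--Baxter algebra coefficients, and can be verified by expansion. (This is the same calculation that, phrased in R-matrix language, encodes the fact that \eqref{weights} are matrix elements of the $U_q(\widehat{\mathfrak{sl}_2})$ R-matrix intertwining a two-dimensional and a Verma module, as recalled in \S\ref{sub:motivation}.)
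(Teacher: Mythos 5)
Your proposal is correct and matches the paper's own (first) stated route---a direct verification---which the paper simply asserts ``can be checked directly'' without writing out; the intertwining reformulation and sector-by-sector organization you describe are a reasonable way to carry that out. One small slip: $\det X=(\ybspec_1-q\ybspec_2)^3(q\ybspec_1-\ybspec_2)$, since the middle $2\times2$ block has determinant $q(\ybspec_1-\ybspec_2)^2-(1-q)^2\ybspec_1\ybspec_2=(\ybspec_1-q\ybspec_2)(q\ybspec_1-\ybspec_2)$; this does not affect your genericity argument for the invertibility of $X$.
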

Note that $X$ is independent of $m$ and $n$, and it is this fact that 
makes the weights $w_{u}$ \eqref{weights} very special.
Note also that $X$ matters only up to an overall factor
(which can depend on $\ybspec_1$ and $\ybspec_2$).
\begin{proof}
	This equation can be checked directly.
	Alternatively, as shown in 
	\cite[Prop.\;2.5]{Borodin2014vertex},
	it can be derived from the 
	Yang--Baxter equation for the R-matrices.
\end{proof}
The conjugated and the stochastic weights 
(\eqref{weights_conj} and \eqref{vertex_weights_stoch}, respectively),
also satisfy certain versions of the Yang--Baxter equation,
see, e.g. \cite[Appendix C]{CorwinPetrov2015}.

\begin{remark}
	The matrix $X$ \eqref{matrix_X}
	itself can be viewed as a 
	version of the R-matrix corresponding to 
	both representations being two-dimensional 
	(details may be found in
	the proof of 
	Proposition 2.5 in 
	\cite{Borodin2014vertex}).
\end{remark}


\subsection{The Yang--Baxter equation in operator language} 
\label{sub:yang_baxter_relation_in_operator_language}

Before drawing corollaries from the Yang--Baxter equation, let us restate it in a different language which is sometimes more
convenient. 

Consider a vector space $V=\Span\{\bv_i\colon i=0,1,2,\ldots\}$,
and linear operators 
$\AY(u)$,
$\BY(u)$,
$\CY(u)$,
$\DY(u)$ on this space which depend on a spectral parameter $u\in\C$ 
and act in this basis as follows (cf. \S \ref{sub:vertex_weights}):
\begin{align}\label{ABCD_elementary_operators}
	\begin{array}{ll}
		\AY(u)\,\bv_g
		:=w_u\Big(\raisebox{-15pt}{\mbox{\vertexoo{.5}}}\Big)\,\bv_g
		=\dfrac{1-sq^{g}u}{1-su}\,\bv_g,&
		\DY(u)\,\bv_g:=
		w_u\Big(\raisebox{-15pt}{\mbox{\vertexll{.5}}}\Big)\,\bv_g
		=\dfrac{u-sq^{g}}{1-su}\,\bv_g,\\
		\BY(u)\,\bv_g:=
		w_u\Big(\raisebox{-15pt}{\mbox{\vertexlo{.5}}}\Big)\,\bv_{g+1}
		=\dfrac{1-q^{g+1}}{1-su}\,\bv_{g+1},&
		\CY(u)\,\bv_g:=
		w_u\Big(\raisebox{-15pt}{\mbox{\vertexol{.5}}}\Big)\,\bv_{g-1}
		=\dfrac{(1-s^{2}q^{g-1})u}{1-su}\,\bv_{g-1},
	\end{array}
\end{align}
where $g\in\Z_{\ge0}$, and, by agreement, $w_{u}(0,0;-1,1)=0$.
Note that in every vertex $(i_1,j_1;i_2,j_2)$
above, 
$i_1$ corresponds to the index of the vector that 
the operator is applied to, and $i_2$ corresponds to the index of the image vector.
The four possibilities for $j_1,j_2\in\{0,1\}$  
correspond to the four operators.

These four operators are conveniently united into a $2\times 2$
matrix with operator entries
\begin{align*}
	\TY(u):=\begin{bmatrix}
		\AY(u)&\BY(u)\\
		\CY(u)&\DY(u)\\
	\end{bmatrix},
\end{align*}
known as the \emph{monodromy matrix}.
It can be viewed as an operator 
$\TY(u)\colon \C^{2}\otimes V\to\C^{2}\otimes V$.
The space $\C^{2}$ is often called the \emph{auxiliary space},
and $V$ is referred to as the \emph{physical}, or \emph{quantum space}.

In terms of the monodromy matrices, the Yang--Baxter equation (Proposition \ref{prop:YB})
takes the form
\begin{align}\label{YB_operator_form}
	\big(
	\TY(\ybspec_1)\otimes\TY(\ybspec_2)
	\big)=Y
	\big(
	\TY(\ybspec_2)\otimes\TY(\ybspec_1)
	\big)Y^{-1},
\end{align}
where
\begin{align*}
	Y:=(X^{-1})^{\textnormal{transpose}}
	=
	\frac{1}{(\ybspec_1-q\ybspec_2)(\ybspec_2-q \ybspec_1)}
	\begin{bmatrix}
		 \ybspec_2-q \ybspec_1 & 0 & 0 & 0 \\
		 0 & \ybspec_2-\ybspec_1 & (1-q)\ybspec_2 & 0 \\
		 0 & (1-q)\ybspec_1 & q (\ybspec_2-\ybspec_1) & 0 \\
		 0 & 0 & 0 & \ybspec_2-q \ybspec_1
	\end{bmatrix}
	,
\end{align*}
with $X$ given by \eqref{matrix_X}.

The tensor product in both sides of \eqref{YB_operator_form} is taken with respect to the
two different auxiliary spaces corresponding to $(k_1,k_2)$ in Fig.~\ref{fig:YB}. 
Namely, we have
\begin{align}
	\TY(\ybspec_1)\otimes\TY(\ybspec_2)=\begin{bmatrix}
		\begin{matrix}
			\AY(\ybspec_1)\AY(\ybspec_2)&\AY(\ybspec_1)\BY(\ybspec_2)\\
			\AY(\ybspec_1)\CY(\ybspec_2)&\AY(\ybspec_1)\DY(\ybspec_2)
		\end{matrix}
		&
		\begin{matrix}
			\BY(\ybspec_1)\AY(\ybspec_2)&\BY(\ybspec_1)\BY(\ybspec_2)\\
			\BY(\ybspec_1)\CY(\ybspec_2)&\BY(\ybspec_1)\DY(\ybspec_2)
		\end{matrix}
		\\\\
		\begin{matrix}
			\CY(\ybspec_1)\AY(\ybspec_2)&\CY(\ybspec_1)\BY(\ybspec_2)\\
			\CY(\ybspec_1)\CY(\ybspec_2)&\CY(\ybspec_1)\DY(\ybspec_2)
		\end{matrix}
		&
		\begin{matrix}
			\DY(\ybspec_1)\AY(\ybspec_2)&\DY(\ybspec_1)\BY(\ybspec_2)\\
			\DY(\ybspec_1)\CY(\ybspec_2)&\DY(\ybspec_1)\DY(\ybspec_2)
		\end{matrix}
	\end{bmatrix},
	\label{T1T2}
\end{align}
and similarly,
\begin{align}
	\label{T2T1}
	\TY(\ybspec_2)\otimes\TY(\ybspec_1)=\begin{bmatrix}
		\begin{matrix}
			\AY(\ybspec_2)\AY(\ybspec_1)&\BY(\ybspec_2)\AY(\ybspec_1)\\
			\CY(\ybspec_2)\AY(\ybspec_1)&\DY(\ybspec_2)\AY(\ybspec_1)
		\end{matrix}
		&
		\begin{matrix}
			\AY(\ybspec_2)\BY(\ybspec_1)&\BY(\ybspec_2)\BY(\ybspec_1)\\
			\CY(\ybspec_2)\BY(\ybspec_1)&\DY(\ybspec_2)\BY(\ybspec_1)
		\end{matrix}
		\\\\
		\begin{matrix}
			\AY(\ybspec_2)\CY(\ybspec_1)&\BY(\ybspec_2)\CY(\ybspec_1)\\
			\CY(\ybspec_2)\CY(\ybspec_1)&\DY(\ybspec_2)\CY(\ybspec_1)
		\end{matrix}
		&
		\begin{matrix}
			\AY(\ybspec_2)\DY(\ybspec_1)&\BY(\ybspec_2)\DY(\ybspec_1)\\
			\CY(\ybspec_2)\DY(\ybspec_1)&\DY(\ybspec_2)\DY(\ybspec_1)
		\end{matrix}
	\end{bmatrix}.
\end{align}
See also Fig.~\ref{fig:YB_operator} for an example.
\begin{figure}[htbp]
	\begin{tabular}{cc}
	\begin{tikzpicture}
		[scale=1,ultra thick]
		\draw (0,0)--++(1,0) node [right] {$0$};
		\draw (0,0)--++(-1,0) node [left] {$1$};
		\draw (0,1)--++(1,0) node [right] {$0$};
		\draw (0,1)--++(-1,0) node [left] {$0$};
		\draw (0,0)--++(0,-.8) node[below] {$g$};
		\draw (0,1)--++(0,.8) node[above] {$g+1$};
		\draw (0,0)--++(0,1) node[midway, xshift=18pt] {$g+1$};
		\draw[fill] (0,0) circle (3pt) node [above left] {$\ybspec_2$};
		\draw[fill] (0,1) circle (3pt) node [above left] {$\ybspec_1$};
	\end{tikzpicture}
	&\hspace{40pt}
	\begin{tikzpicture}
		[scale=1,ultra thick]
		\draw (0,0)--++(1,0) node [right] {$0$};
		\draw (0,0)--++(-1,0) node [left] {$1$};
		\draw (0,1)--++(1,0) node [right] {$0$};
		\draw (0,1)--++(-1,0) node [left] {$0$};
		\draw (0,0)--++(0,-.8) node[below] {$g$};
		\draw (0,1)--++(0,.8) node[above] {$g+1$};
		\draw (0,0)--++(0,1) node[midway, xshift=18pt] {$g+1$};
		\draw[fill] (0,0) circle (3pt) node [above left] {$\ybspec_1$};
		\draw[fill] (0,1) circle (3pt) node [above left] {$\ybspec_2$};
	\end{tikzpicture}
	\end{tabular}
	\caption{The operator $\AY(\ybspec_1)\BY(\ybspec_2)$ applied to the basis vector $\bv_g$ corresponds to 
	the configuration on the left, and 
	$\AY(\ybspec_1)\BY(\ybspec_2)\,\bv_{g}=\widetilde w^{(g,g+1)}_{\ybspec_2,\ybspec_1}(0,1;0,0)\,\bv_{g+1}$.
	Similarly, we have
	$\AY(\ybspec_2)\BY(\ybspec_1)\,\bv_{g}=w^{(g,g+1)}_{\ybspec_1,\ybspec_2}(1,0;0,0)\,\bv_{g+1}$,
	which corresponds to the configuration on the right.}
	\label{fig:YB_operator}
\end{figure}

The Yang--Baxter equation \eqref{YB_operator_form} in the matrix form allows to extract individual commutation relations 
between the operators $\AY$, $\BY$, $\CY$, and $\DY$. 
Let us write down relations which will be useful in what follows.
Comparing matrix elements $(1,1)$ on both sides of \eqref{YB_operator_form} implies
\begin{align}\label{A_commute}
	\AY(\ybspec_1)\AY(\ybspec_2)=\AY(\ybspec_2)\AY(\ybspec_1).
\end{align}
Similarly, looking at matrix elements $(1,4)$ and $(4,4)$ gives rise to 
\begin{align}\label{B_commute}
	\BY(\ybspec_1)\BY(\ybspec_2)& =\BY(\ybspec_2)\BY(\ybspec_1),
	\\
	\label{D_commute}
	\DY(\ybspec_1)\DY(\ybspec_2)& =\DY(\ybspec_2)\DY(\ybspec_1),
\end{align}
respectively. Looking at matrix elements $(2,4)$ leads to
\begin{align}
	\label{YB_relationBD}
	\BY(\ybspec_1)\DY(\ybspec_2)=
	\frac{\ybspec_1-\ybspec_2}{q\ybspec_1-\ybspec_2}\DY(\ybspec_2)\BY(\ybspec_1)+\frac{(1-q)\ybspec_2}{\ybspec_2-q\ybspec_1}\BY(\ybspec_2)\DY(\ybspec_1)
	.
\end{align}
Finally, considering matrix elements 
$(2,1)$ and
$(1,3)$ implies, respectively,
\begin{align}
	\label{YB_relationAC}
	\AY(\ybspec_1)\CY(\ybspec_2)&=
	\frac{u_1-u_2}{qu_1-u_2}
	\CY(\ybspec_2)\AY(\ybspec_1)
	+
	\frac{(1-q)u_2}{u_2-qu_1}
	\AY(\ybspec_2)\CY(\ybspec_1)
	,
	\\
	\label{YB_relationAB}
	\BY(\ybspec_1)\AY(\ybspec_2)&=
	\frac{\ybspec_1-\ybspec_2}{\ybspec_1-q\ybspec_2}\AY(\ybspec_2)\BY(\ybspec_1)+\frac{(1-q)\ybspec_2}{\ybspec_1-q\ybspec_2}\BY(\ybspec_2)\AY(\ybspec_1)
	.
\end{align}


\subsection{Attaching vertical columns} 
\label{sub:attaching_vertical_columns}

A very important 
property of the Yang--Baxter equation is that it survives 
when one attaches several vertical columns on the side, 
with the requirement that the
conjugating matrix
is the same across the vertical columns.
Let us consider the situation of two vertical columns as in~Fig.~\ref{fig:YB_attach}.
Attaching these columns involves summing over all possible 
intermediate numbers of arrows
$k_1'$ and $k_2'$,
i.e., this corresponds to taking the product of two $4\times 4$ matrices
$w_{\ybspec_1,\ybspec_2}^{(m_1,n_1)}$ and
$w_{\ybspec_1,\ybspec_2}^{(m_2,n_2)}$.
Clearly, for this product the Yang--Baxter equation 
\eqref{YB_main_relation} is not going to change. 
One can similarly attach an arbitrary finite number of vertical columns,
and the Yang--Baxter equation will continue to hold.
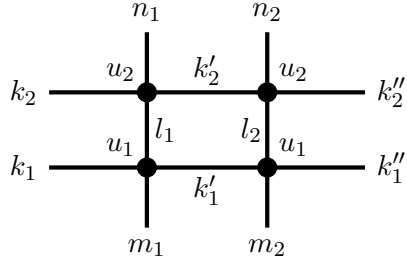
\begin{figure}[htbp]
	\begin{tikzpicture}
		[scale=1,ultra thick]
		\draw (0,0)--++(2.9,0) node [right] {$k_1''$};
		\draw (0,0)--++(-1.3,0) node [left] {$k_1$};
		\draw (0,1)--++(2.9,0) node [right] {$k_2''$};
		\draw (0,1)--++(-1.3,0) node [left] {$k_2$};
		\draw (0,0)--++(0,-.8) node[below] {$m_1$};
		\draw (0,1)--++(0,.8) node[above] {$n_1$};
		\draw (1.6,0)--++(0,-.8) node[below] {$m_2$};
		\draw (1.6,1)--++(0,.8) node[above] {$n_2$};
		\draw (0,0)--++(0,1) node[midway, xshift=7pt] {$l_1$};
		\draw (1.6,0)--++(0,1) node[midway, xshift=-6pt] {$l_2$};
		\node at (.8,-.3) {$k_1'$};
		\node at (.8,1.3) {$k_2'$};
		\draw[fill] (0,0) circle (3pt) node [above left] {$\ybspec_1$};
		\draw[fill] (0,1) circle (3pt) node [above left] {$\ybspec_2$};
		\draw[fill] (1.6,1) circle (3pt) node [above right] {$\ybspec_2$};
		\draw[fill] (1.6,0) circle (3pt) node [above right] {$\ybspec_1$};
	\end{tikzpicture}
	\caption{Attaching two vertical columns.}
	\label{fig:YB_attach}
\end{figure}

In the operator language 
attaching two vertical columns 
is equivalent to 
taking a tensor product $V=V_1\otimes V_2$ of two different physical spaces $V_1$ and $V_2$ with the same conjugating matrix $X$. 
The monodromy matrix in the space $V$ has the form
\begin{align}
	\TY=\begin{bmatrix}
		\AY&\BY\\
		\CY&\DY
	\end{bmatrix}
	=
	\begin{bmatrix}
		\AY_2&\BY_2\\
		\CY_2&\DY_2
	\end{bmatrix}\begin{bmatrix}
		\AY_1&\BY_1\\
		\CY_1&\DY_1
	\end{bmatrix}
	=\begin{bmatrix}
		\AY_2\AY_1+\BY_2\CY_1&\AY_2\BY_1+\BY_2\DY_1\\
		\CY_2\AY_1+\DY_2\CY_1&\CY_2\BY_1+\DY_2\DY_1
	\end{bmatrix}.
	\label{T_V1V2}
\end{align}
Here the lower index $1$ or $2$ in the operators above corresponds to the 
vertical (=~physical) space in which they act, i.e., $\AY_2=\AY_2(\ybspec)$ acts in the second
vertical column, and $\AY_1=\AY_1(\ybspec)$ acts in the same way in the first vertical column,
and similarly for $\BY_{1,2}$, $\CY_{1,2}$, and $\DY_{1,2}$ (all operators in 
\eqref{T_V1V2} depend on the same spectral parameter $\ybspec$, and we have omitted it in the notation).
Note that any two operators with different lower indices commute.

The monodromy matrix $\TY=\TY(\ybspec)$ in \eqref{T_V1V2} corresponds to one horizontal row of vertices.
That is, the four matrix elements of $\TY(\ybspec)$ correspond to the following four configurations:
\begin{align*}
	\begin{bmatrix}
		\begin{tikzpicture}
			[scale=1,very thick]
			\draw[densely dotted] (0,0)--++(2.9,0) node [right] {$0$};
			\draw[densely dotted] (0,0)--++(-1.3,0) node [left] {$0$};
			\draw (0,.5)--++(0,-1);
			\draw (1.6,.5)--++(0,-1);
			\node at (.8,-.3) {$k'$};
			\draw[fill] (0,0) circle (3pt) node [above left] {$\ybspec$};
			\draw[fill] (1.6,0) circle (3pt) node [above right] {$\ybspec$};
		\end{tikzpicture}
		&
		\begin{tikzpicture}
			[scale=1,very thick]
			\draw[densely dotted] (0,0)--++(2.9,0) node [right] {$0$};
			\draw[densely dotted] (0,0)--++(-1.3,0) node [left] {$1$};
			\draw (0,.5)--++(0,-1);
			\draw (1.6,.5)--++(0,-1);
			\node at (.8,-.3) {$k'$};
			\draw[fill] (0,0) circle (3pt) node [above left] {$\ybspec$};
			\draw[fill] (1.6,0) circle (3pt) node [above right] {$\ybspec$};
			\draw (-1.3,0)--++(1.3,0);
			\draw[->] (-1.3,0)--++(.3,0);
		\end{tikzpicture}
		\\\\
		\begin{tikzpicture}
			[scale=1,very thick]
			\draw[densely dotted] (0,0)--++(2.9,0) node [right] {$1$};
			\draw[densely dotted] (0,0)--++(-1.3,0) node [left] {$0$};
			\draw (0,.5)--++(0,-1);
			\draw (1.6,.5)--++(0,-1);
			\node at (.8,-.3) {$k'$};
			\draw[fill] (0,0) circle (3pt) node [above left] {$\ybspec$};
			\draw[fill] (1.6,0) circle (3pt) node [above right] {$\ybspec$};
			\draw[->] (1.6,0)--++(1.3,0);
		\end{tikzpicture}
		&
		\begin{tikzpicture}
			[scale=1,very thick]
			\draw[densely dotted] (0,0)--++(2.9,0) node [right] {$1$};
			\draw[densely dotted] (0,0)--++(-1.3,0) node [left] {$1$};
			\draw (0,.5)--++(0,-1);
			\draw (1.6,.5)--++(0,-1);
			\node at (.8,-.3) {$k'$};
			\draw[fill] (0,0) circle (3pt) node [above left] {$\ybspec$};
			\draw[fill] (1.6,0) circle (3pt) node [above right] {$\ybspec$};
			\draw (-1.3,0)--++(1.3,0);
			\draw[->] (-1.3,0)--++(.3,0);
			\draw[->] (1.6,0)--++(1.3,0);
		\end{tikzpicture}
	\end{bmatrix},
\end{align*}
and the two summands in each matrix element in \eqref{T_V1V2} correspond to 
$k'$ being $0$ or $1$.

Furthermore, tensor products of two monodromy matrices like \eqref{T1T2} and \eqref{T2T1} correspond to 
configurations as in Fig.~\ref{fig:YB_attach}. As follows from the above discussion, these tensor
products satisfy the same Yang--Baxter equation \eqref{YB_operator_form}.



\section{Symmetric rational functions} 
\label{sec:symmetric_rational_functions}

We will now discuss
how the setup of \S \ref{sec:yang_baxter_relation}
can be applied to the physical space 
corresponding to the semi-infinite horizontal strip. 
This will lead to
certain symmetric rational functions 
which are one of the main objects of these notes.

\subsection{Signatures} 
\label{sub:signatures}

Let us first introduce some necessary notation.
By a \emph{signature}
of length $N$ we mean a sequence 
$\la=(\la_1\ge \la_2\ge \ldots\ge\la_N)$, $\la_i\in\Z$.
The set of all signatures of length $N$
will be denoted by $\sign{N}$, 
and $\signp{N}$ will stand for the 
set of signatures with $\la_N\ge0$.
By agreement, by $\sign{0}=\signp{0}$ we will denote the 
set consisting of the single empty signature $\varnothing$
of length $0$. Also let 
$\signpe{}:=\bigsqcup_{N\ge0}\signp{N}$
denote the set of all possible nonnegative signatures (including the empty one).
We will also use the multiplicative notation
$\mu=0^{m_0}1^{m_1}2^{m_2}\ldots\in\signpe$
for signatures, which means that $m_j:=|\{i\colon\mu_i=j\}|$
is the number of parts in $\mu$ that are equal to $j$ ($m_j$ is called the \emph{multiplicity} of $j$).


\subsection{Semi-infinite operators $\AY$ and $\BY$ and definition of symmetric rational functions} 
\label{sub:semi_infinite_operators_ay_and_by_definition_of_FG}

Let us consider the physical space $V=V_0\otimes V_1\otimes V_2\otimes \ldots$,
i.e., a tensor product 
of countably many ``elementary'' physical spaces (each of the latter has basis $\{\bv_j\}_{j\ge0}$ marked by $\Z_{\ge0}$). 
We will think that $V$ corresponds to the semi-infinite (to the right) row
of vertices attached to one another on the side. 
We will make sense of the infinite tensor product $V$
by requiring that we only consider \emph{finitary} vectors $V^{\mathrm{fin}}\subset V$, i.e., 
those in which almost all tensor factors are equal to $\bv_0$.
Therefore, a natural basis
in the space $V^{\mathrm{fin}}$ is indexed by nonnegative signatures:
\begin{align*}
	\bv_{\mu}=\bv_{m_0}\otimes \bv_{m_1}\otimes \bv_{m_2}\otimes \ldots,
	\qquad \mu=0^{m_0}1^{m_1}2^{m_2}\ldots\in\signpe
\end{align*}
($m_0+m_1+\ldots$ is the length of the signature $\mu$ which is finite).
We will work in the space
${\bar V}^{\mathrm{fin}}$
of all possible linear combinations of $\bv_{\mu}$ with complex coefficients.

Defining the operators $\AY$ and $\BY$ acting in ${\bar V}^{\mathrm{fin}}$ causes no problems.
Indeed, we have for any $N\in\Z_{\ge0}$ and $\la\in\signp N$:
\begin{align}\label{AY_one_var_semi_infinite}
	\AY(u)\,\bv_{\la}=\sum_{\mu\in\signp N}
	\textnormal{weight}_{u}
	\Bigg(
	\raisebox{-29pt}{\scalebox{.9}{\begin{tikzpicture}
		[scale=1.2,very thick]
		\def\d{.07}
		\draw[densely dotted] (-.5,0)--++(8.2,0);
		\foreach \ii in {0,1,2,3,4,5,6}
		{
			\draw[densely dotted] (1.2*\ii,-.5)--++(0,1);
		}
		\node[below] at (0,-.5) {0};
		\node[below] at (1.2,-.5) {$\la_N$};
		\node[below] at (3.6,-.5) {$\la_3$};
		\node[below] at (6,-.5) {$\la_2=\la_1$};
		\node[above] at (7.2,.5) {$\mu_1$};
		\node[above] at (6,.5) {$\mu_3=\mu_2$};
		\node[above] at (2.4,.5) {$\mu_N$};
		\draw[line width=2pt,->] (6+\d,-.5) --++ (0,.2);
		\draw[line width=2pt,->] (6+\d,-.5) --++ (0,.5-\d)--++(\d,\d)--++(1.2-2*\d,0)--++(0,.5);
		\draw[line width=2pt,->] (6-\d,-.5) --++ (0,.2);
		\draw[line width=2pt,->] (6-\d,-.5) --++ (0,.5-\d)--++(2*\d,2*\d)--++(0,.5-\d);
		\draw[line width=2pt,->] (3.6,-.5) --++ (0,.2);
		\draw[line width=2pt,->] (3.6,-.5) --++ (0,.5)--++(2.4-2*\d,0)--++(\d,\d)--++(0,.5-\d);
		\draw[line width=2pt,->] (1.2,-.5) --++ (0,.2);
		\draw[line width=2pt,->] (1.2,-.5) --++ (0,.5)--++(1.2,0)--++(0,.5);
	\end{tikzpicture}}}
	\Bigg)\,\bv_{\mu},
\end{align}
and 
\begin{align}\label{BY_one_var_semi_infinite}
	\BY(u)\,\bv_{\la}=\sum_{\mu\in\signp {N+1}}
	\textnormal{weight}_{u}
	\Bigg(
	\raisebox{-29pt}{\scalebox{.9}{\begin{tikzpicture}
		[scale=1.2,very thick]
		\def\d{.07}
		\draw[densely dotted] (-.5,0)--++(8.2,0);
		\foreach \ii in {0,1,2,3,4,5,6}
		{
			\draw[densely dotted] (1.2*\ii,-.5)--++(0,1);
		}
		\node[below] at (0,-.5) {0};
		\node[below] at (1.2,-.5) {$\la_N$};
		\node[below] at (3.6,-.5) {$\la_3$};
		\node[below] at (6,-.5) {$\la_2=\la_1$};
		\node[above] at (7.2,.5) {$\mu_1$};
		\node[above] at (6,.5) {$\mu_3=\mu_2$};
		\node[above] at (2.4,.5) {$\mu_N$};
		\node[above] at (1.2,.5) {$\mu_{N+1}$};
		\draw[line width=2pt,->] (6+\d,-.5) --++ (0,.2);
		\draw[line width=2pt,->] (6+\d,-.5) --++ (0,.5-\d)--++(\d,\d)--++(1.2-2*\d,0)--++(0,.5);
		\draw[line width=2pt,->] (6-\d,-.5) --++ (0,.2);
		\draw[line width=2pt,->] (6-\d,-.5) --++ (0,.5-\d)--++(2*\d,2*\d)--++(0,.5-\d);
		\draw[line width=2pt,->] (3.6,-.5) --++ (0,.2);
		\draw[line width=2pt,->] (3.6,-.5) --++ (0,.5)--++(2.4-2*\d,0)--++(\d,\d)--++(0,.5-\d);
		\draw[line width=2pt,->] (1.2,-.5) --++ (0,.2);
		\draw[line width=2pt,->] (1.2,-.5) --++ (0,.5-\d)--++(\d,\d)--++(1.2-\d,0)--++(0,.5);
		\draw[line width=2pt,->] (-.5,0) --++ (.2,0);
		\draw[line width=2pt,->] (-.5,0) --++ (.5,0)--++(1.2-\d,0)--++(\d,\d)--++(0,.5-\d);
	\end{tikzpicture}}}
	\Bigg)\,\bv_{\mu}.
\end{align}
That is, in \eqref{AY_one_var_semi_infinite} and \eqref{BY_one_var_semi_infinite}
we sum over 
all possible signatures $\mu$, and for each fixed $\mu$
the coefficient is equal to the weight of the unique path collection connecting the arrow configuration
$\la$ to the configuration $\mu$, as shown pictorially
(the coefficient is $0$ if no admissible path collection 
exists).\footnote{Recall that the weight of a path collection is defined as
the product of weights of all (nonempty) vertices in the corresponding
region of $\Z^{2}$, and that the
weight of the empty vertex 
\emptyvertex{.5} is 1.}
The difference is that in \eqref{AY_one_var_semi_infinite} the path collection contains $N$
paths connecting $\la_j$ to $\mu_j$, $j=1,\ldots,N$,
and in \eqref{BY_one_var_semi_infinite} there is one additional path 
starting horizontally at the left boundary, 
and ending at $\mu_{N+1}$.


Let us denote the coefficients in the sums in 
\eqref{AY_one_var_semi_infinite} and \eqref{BY_one_var_semi_infinite}
by $\G_{\mu/\la}(u)$ and $\F_{\mu/\la}(u)$, respectively
(here $u$ is the spectral parameter we are using).
\begin{remark}\label{rmk:A_B_semi_infinite_definition}
	Each coefficient $\G_{\mu/\la}(u)$ and $\F_{\mu/\la}(u)$
	in the semi-infinite setting
	is the same as if we took it in a finite tensor product,
	with the number of factors $\ge \mu_1+1$.
	It follows that the semi-infinite operators \eqref{AY_one_var_semi_infinite} and 
	\eqref{BY_one_var_semi_infinite} satisfy the 
	same 
	commutation relations \eqref{A_commute} and \eqref{B_commute}.
	Indeed, to check the commutation relations, apply them to 
	$\bv_\la$ and read off the coefficient by each $\bv_\mu$.
	One readily sees that 
	each such coefficient by $\bv_\mu$
	involves only finite summation.
\end{remark}

Similarly, we define the coefficients 
$\G_{\mu/\la}(u_1,\ldots,u_n)$ and $\F_{\mu/\la}(u_1,\ldots,u_n)$ arising 
from products of our operators in the following way:
\begin{align}
	\label{AY_semi_infinite}
	\AY(u_1)\ldots\AY(u_n)\,\bv_\la&=\sum_{\mu\in\signp N}
	\G_{\mu/\la}(u_1,\ldots,u_n)\,\bv_{\mu},\\
	\BY(u_1)\ldots\BY(u_n)\,\bv_\la&=\sum_{\mu\in\signp {N+n}}
	\F_{\mu/\la}(u_1,\ldots,u_n)\,\bv_{\mu},
	\label{BY_semi_infinite}
\end{align}
where $N\in\Z_{\ge0}$ and $\la\in\signp N$ are arbitrary. 

\medskip

Equivalently, the quantities
$\G_{\mu/\la}(u_1,\ldots,u_n)$
and 
$\F_{\mu/\la}(u_1,\ldots,u_n)$ 
can be defined as certain partition functions in the higher spin six vertex model:

\begin{figure}[htbp]
	\begin{tabular}{cc}
		\begin{tikzpicture}
			[scale=.6,thick]
			\def\d{.1}
			\foreach \xxx in {0,4,5,1,2,3,6,7,8,9}
			{
				\draw[dotted] (\xxx,5.5)--++(0,-5);
			}
			\foreach \xxx in {1,2,3,4,5}
			{
				\draw[dotted] (-.5,\xxx)--++(10,0);
				\node[left] at (-.5,\xxx) {$u_\xxx$};
			}
			\draw[->, line width=1.7pt] (3,0)--++(0,.5) node [below, yshift=-6pt] {$\la_3$};
			\draw[->, line width=1.7pt] (1,0)--++(0,.5) node [below, yshift=-6pt] {$\la_4$};
			\draw[->, line width=1.7pt] (6+\d,0)--++(0,.5) node [below, yshift=-6pt] {$\la_2=\la_1$};
			\draw[->, line width=1.7pt] (6-\d,0)--++(0,.5);
			\draw[->, line width=1.7pt] (1,0) --++(0,3)--++(3-2*\d,0)--++(\d,\d)--++(0,1-2*\d)--++(\d,2*\d)--++(0,1-\d)--++(0,1) node [above] {$\mu_4$};
			\draw[->, line width=1.7pt] (3,0) --++(0,2)--++(1,0) --++(0,1-\d)--++(\d,2*\d)--++(0,1-2*\d)--++(\d,\d)
			--++(2-3*\d,0)--++(\d,\d)--++(0,2-\d) node [above] {$\mu_3$};
			\draw[->, line width=1.7pt]	(6-\d,0)--++(0,2-\d)--++(\d,2*\d)--++(0,2-2*\d)--++(\d,\d)
			--++(1-2*\d,0)--++(\d,\d)--++(0,1-\d)--++(1,0)--++(0,1)
			node [above] {$\mu_2$};
			\draw[->, line width=1.7pt]	(6+\d,0)--++(0,2-\d)--++(\d,\d)--++(1-2*\d,0)--++(0,2-\d)--++(\d,\d)--++(2-\d,0)--++(0,2)
			node [above] {$\mu_1$};
			\node(0,0) {$0$};
		\end{tikzpicture}
		&\hspace{30pt}
		\begin{tikzpicture}
			[scale=.6,thick]
			\def\d{.1}
			\foreach \xxx in {0,4,5,1,2,3,6,7}
			{
				\draw[dotted] (\xxx,5.5)--++(0,-5);
			}
			\foreach \xxx in {1,2,3,4,5}
			{
				\draw[dotted] (0,\xxx)--++(7.5,0);
				\node[left] at (-1,\xxx) {$u_\xxx$};
				\draw[->, line width=1.7pt] (-1,\xxx)--++(.5,0);
			}
			\draw[->, line width=1.7pt] (2,0)--++(0,.5) node [below, yshift=-6pt] {$\la_2$};
			\draw[->, line width=1.7pt] (5,0)--++(0,.5) node [below, yshift=-6pt] {$\la_1$};
			\node(0,0) {$0$};
			\draw[->, line width=1.7pt] (-1,5)--++(1-3*\d,0)--++(\d,\d)--++(0,1-\d);
			\draw[->, line width=1.7pt] (-1,4)--++(1-2*\d,0)--++(\d,\d)--++(0,1-2*\d)--++(\d,2*\d)--++(0,1-\d);
			\draw[->, line width=1.7pt] (-1,3)--++(1-\d,0)--++(\d,\d)--++(0,1-2*\d)--++(\d,2*\d)--++(0,1-2*\d)--++(\d,2*\d)--++(0,1-\d)
			node[above, xshift=-10pt] at (0,6) {$\mu_7=\mu_6=\mu_5$};
			\draw[->, line width=1.7pt] (-1,2)--++(1,0)--++(0,1-\d)--++(\d,\d)--++(1-\d,0)
			--++(0,1)--++(2-\d,0)--++(\d,\d)--++(0,2-\d)
			node[above] {$\mu_4$};
			\draw[->, line width=1.7pt] 
			(-1,1)--++(3-2*\d,0)--++(\d,\d)--++(0,1-2*\d)--++(\d,2*\d)--++(0,1-\d)--++(1,0)--++(0,1-\d)--++(\d,\d)--++(1-\d,0)
			--++(0,2) node [above] {$\mu_3$};
			\draw[->, line width=1.7pt] 
			(2,0)--++(0,1-\d)--++(\d,2*\d)--++(0,1-2*\d)--++(\d,\d)--++(3-2*\d,0)--++(0,1)--++(1,0)--++(0,1)
			--++(1-2*\d,0)--++(\d,\d)--++(0,2-\d) node[above, xshift=2pt] {$\mu_2=\mu_1$};
			\draw[->, line width=1.7pt] 
			(5,0)--++(0,1)--++(1,0)--++(0,1)--++(1,0)--++(0,2-\d)--++(\d,2*\d)
			--++(0,2-\d);
		\end{tikzpicture}
	\end{tabular}
	\caption{Path collections
	used in the definitions of  
	$\G_{\mu/\la}$ (left)
	and $\F_{\mu/\la}$ (right).
	The weight of a path collection is the product of weights of 
	all nonempty vertices (cf. Fig.~\ref{fig:paths_example}).}
	\label{fig:paths_FG}
\end{figure}
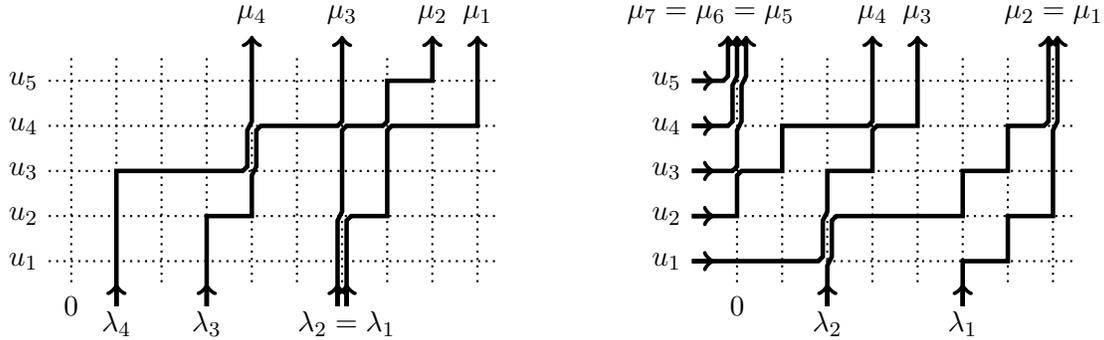
\begin{definition}\label{def:G}
	Let $N,n\in\Z_{\ge0}$, $\la,\mu\in\signp{N}$.
	Assign to each 
	vertex $(x,y)\in\Z\times\{1,2,\ldots,n\}$ the spectral parameter
	$u_y$.
	Define
	$\G_{\mu/\la}(u_1,\ldots,u_{n})$
	to be the sum of the weights of all possible
	collections of $N$ up-right paths such that they
	\begin{itemize}
		\item start with $N$ vertical edges
		$(\la_i,0)\to(\la_i,1)$, $i=1,\ldots,N$,
		\item end with $N$ vertical edges
		$(\mu_i,n)\to(\mu_i,n+1)$, $i=1,\ldots,N$.
	\end{itemize}
	See Fig.~\ref{fig:paths_FG}, left.
	We will also use the abbreviation $\G_{\mu}:=\G_{\mu/(0,0,\ldots,0)}$,
	which corresponds to the decomposition of $\AY(u_1)\ldots\AY(u_n)(\bv_{N}\otimes \bv_{0}\otimes \bv_{0}\otimes\ldots)$.
\end{definition}
\begin{definition}\label{def:F}
	Let $N,n\in\Z_{\ge0}$, $\la\in\signp{N}$,
	$\mu\in\signp{N+n}$. As before, assign to each 
	vertex $(x,y)\in\Z\times\{1,2,\ldots,n\}$ the spectral parameter
	$u_y$.
	Define
	$\F_{\mu/\la}(u_1,\ldots,u_{n})$
	to be the sum of the weights of all possible
	collections of $N+n$ up-right paths such that they
	\begin{itemize}
		\item start with $N$ vertical edges $(\la_i,0)\to(\la_i,1)$, $i=1,\ldots,N$,
		and with $n$ horizontal edges $(-1,y)\to(0,y)$, $y=1,\ldots,n$,
		\item end with $N+n$ vertical edges
		$(\mu_{i},n)\to(\mu_{i},n+1)$, $i=1,\ldots,N+n$.
	\end{itemize}
	See Fig.~\ref{fig:paths_FG}, right.
	We will also use the abbreviation $\F_{\mu}:=\F_{\mu/\varnothing}$,
	which corresponds to the decomposition of 
	$\BY(u_1)\ldots\BY(u_n)(\bv_{0}\otimes \bv_{0}\otimes\ldots)$.
\end{definition}

In both definitions above,
if a collection of paths has no interior vertices, we define its weight to be~$1$. 
Also, the weight of an empty collection of paths is $0$.

Clearly, both quantities $\G_{\mu/\la}(u_1,\ldots,u_n)$ and $\F_{\mu/\la}(u_1,\ldots,u_n)$
depend on the spectral parameters $u_1,\ldots,u_n$ in a \emph{rational} way.
The above definitions first appeared in
\cite[\S3]{Borodin2014vertex}.

\begin{proposition}\label{prop:symmetric}
	The rational functions 
	$\F_{\mu/\la}(u_1,\ldots,u_n)$ and
	$\G_{\mu/\la}(u_1,\ldots,u_n)$ 
	defined above
	are symmetric with respect to permutations of the 
	$u_j$'s.
\end{proposition}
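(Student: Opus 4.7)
The plan is to deduce the symmetry directly from the operator definitions \eqref{AY_semi_infinite}--\eqref{BY_semi_infinite} together with the commutation relations \eqref{A_commute} and \eqref{B_commute}. Since the symmetric group on $n$ letters is generated by adjacent transpositions, it suffices to show invariance under swapping $u_i \leftrightarrow u_{i+1}$ for each $i=1,\dots,n-1$.

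First I would fix $\la$ and write
\begin{align*}
	\AY(u_1)\cdots \AY(u_i)\AY(u_{i+1})\cdots\AY(u_n)\,\bv_\la
	&= \sum_{\mu}\G_{\mu/\la}(u_1,\ldots,u_n)\,\bv_\mu,\\
	\AY(u_1)\cdots \AY(u_{i+1})\AY(u_i)\cdots\AY(u_n)\,\bv_\la
	&= \sum_{\mu}\G_{\mu/\la}(u_1,\ldots,u_{i+1},u_i,\ldots,u_n)\,\bv_\mu.
\end{align*}
By \eqref{A_commute} applied to the adjacent pair, the two left-hand sides are equal as vectors in $\bar V^{\mathrm{fin}}$, so matching the coefficients of $\bv_\mu$ yields $\G_{\mu/\la}(u_1,\ldots,u_n)=\G_{\mu/\la}(u_1,\ldots,u_{i+1},u_i,\ldots,u_n)$. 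The identical argument with $\BY$ in place of $\AY$ and \eqref{B_commute} in place of \eqref{A_commute} handles $\F_{\mu/\la}$.

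The one point that requires attention is that \eqref{A_commute} and \eqref{B_commute} were originally stated in the \emph{finite} physical space from \S\ref{sub:attaching_vertical_columns}, while here the operators act on the semi-infinite space $\bar V^{\mathrm{fin}}$. This is precisely the content of Remark \ref{rmk:A_B_semi_infinite_definition}: the coefficient of $\bv_\mu$ in $\AY(u_1)\cdots\AY(u_n)\bv_\la$ only involves contributions from columns indexed by $\{0,1,\ldots,\max(\la_1,\mu_1)\}$, so it coincides with the analogous matrix element in the finite tensor product $V_0\otimes\cdots\otimes V_K$ for any $K\ge \max(\la_1,\mu_1)$, where the commutation relations have been established. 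Hence the commutation relations transfer to the semi-infinite setting, and the argument above goes through.

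There is no real obstacle — the entire statement is a formal consequence of the operator commutativity, which is itself a corollary of the Yang--Baxter equation (Proposition \ref{prop:YB}). If one preferred a more combinatorial viewpoint, one could alternatively interpret the equality as a bijection between path collections weighted by $(u_1,\ldots,u_n)$ and by $(u_{i+1},u_i,\ldots)$, constructed by inserting a ``swapping'' $X$-vertex between rows $i$ and $i+1$, propagating it to the right along the two rows via the local Yang--Baxter relation, and observing that it disappears at infinity because only finitely many columns carry arrows. This geometric picture is exactly the infinite-volume version of the row-swapping argument underlying Proposition \ref{prop:YB}, and it conceptually explains why symmetry holds.
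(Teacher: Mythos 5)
Your proof is correct and matches the paper's own argument, which simply invokes the commutation relations \eqref{A_commute}, \eqref{B_commute} together with Remark~\ref{rmk:A_B_semi_infinite_definition} to transfer them to the semi-infinite setting. You have filled in the same steps the paper leaves implicit (adjacent transpositions generate $\Sym_n$, matching coefficients of $\bv_\mu$, the finite-column reduction), and the bijective Yang--Baxter picture you sketch at the end is a valid complementary viewpoint but not needed.
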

\begin{proof}
	This immediately follows from the commutation relations \eqref{A_commute} and \eqref{B_commute}
	(cf. Remark \ref{rmk:A_B_semi_infinite_definition}).
\end{proof}

The functions
$\F_{\mu/\la}$
and $\G_{\mu/\la}$
satisfy the following branching rules:
\begin{proposition}
\label{prop:branching}
	\noindent{\bf1.\/}
	For any 
	$N,n_1,n_2\in\Z_{\ge0}$,
	$\la\in\signp{N}$, and $\mu\in \signp{N+n_1+n_2}$,
	one has
	\begin{equation}\label{branching-F}
		\F_{\mu/\la}(u_1,\dots,u_{n_1+n_2})=
		\sum_{\kappa\in\signp{N+n_1}}  
		\F_{\mu/\kappa}(u_{n_1+1},\dots,u_{n_1+n_2})\F_{\kappa/\la}(u_1,\ldots,u_{n_1}). 
	\end{equation}
\smallskip

	\noindent{\bf2.\/}
	For any $N,n_1,n_2\in\Z_{\ge0}$, and $\la,\mu\in\signp{N}$, one has
	\begin{equation}\label{branching-G}
		\G_{\mu/\la}(u_1,\ldots,u_{n_1+n_2})
		=
		\sum_{\kappa\in\signp N} 
		\G_{\mu/\kappa}(u_{n_1+1},\ldots,u_{n_1+n_2})
		\G_{\kappa/\la}(u_1,\ldots,u_{n_1})
		.
	\end{equation}
\end{proposition}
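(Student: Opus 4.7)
The plan is to use the operator realization of $\F$ and $\G$ as matrix coefficients of products of $\BY$ and $\AY$ operators, as in \eqref{AY_semi_infinite}--\eqref{BY_semi_infinite}, and derive the branching rules by associativity of operator composition together with insertion of the identity resolved along the basis $\{\bv_\kappa\}$.

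For the $\G$ branching, I would first apply the lower block of operators and expand via \eqref{AY_semi_infinite}:
\begin{equation*}
\AY(u_1)\cdots\AY(u_{n_1})\,\bv_\la=\sum_{\kappa\in\signp{N}}\G_{\kappa/\la}(u_1,\ldots,u_{n_1})\,\bv_\kappa.
\end{equation*}
Then apply the upper block $\AY(u_{n_1+1})\cdots\AY(u_{n_1+n_2})$ to each $\bv_\kappa$ term, again using \eqref{AY_semi_infinite} (with the same $N$, since $\AY$ preserves signature length):
\begin{equation*}
\AY(u_{n_1+1})\cdots\AY(u_{n_1+n_2})\,\bv_\kappa=\sum_{\mu\in\signp{N}}\G_{\mu/\kappa}(u_{n_1+1},\ldots,u_{n_1+n_2})\,\bv_\mu.
\end{equation*}
Comparing the coefficient of $\bv_\mu$ in the two ways of expanding the full product $\AY(u_1)\cdots\AY(u_{n_1+n_2})\bv_\la$ yields \eqref{branching-G}.

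For the $\F$ branching I would run the same argument with $\BY$ in place of $\AY$. The only bookkeeping is that each $\BY$ increases the signature length by one, so the intermediate state lives in $\signp{N+n_1}$, matching the summation index $\kappa$ in \eqref{branching-F}. As above, applying $\BY(u_1)\cdots\BY(u_{n_1})$ to $\bv_\la$ expands as a sum indexed by $\kappa\in\signp{N+n_1}$ with coefficients $\F_{\kappa/\la}(u_1,\ldots,u_{n_1})$, and then $\BY(u_{n_1+1})\cdots\BY(u_{n_1+n_2})$ expands each $\bv_\kappa$ as a sum over $\mu\in\signp{N+n_1+n_2}$ with coefficients $\F_{\mu/\kappa}(u_{n_1+1},\ldots,u_{n_1+n_2})$; reading off the coefficient of $\bv_\mu$ gives the identity.

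There is no real obstacle here, but the one point worth checking is that the passage from finite-tensor to semi-infinite physical space is legitimate, so that each coefficient involves only a finite sum. This is precisely Remark \ref{rmk:A_B_semi_infinite_definition}: for any fixed $\mu$, only $\kappa$ with $\kappa_1\le\mu_1$ can contribute, since arrows travel up and to the right and the path configurations are confined to the region bounded by the support of $\mu$. Equivalently, one can view the identity as a purely combinatorial partition function factorization: a path collection in the strip $\Z\times\{1,\ldots,n_1+n_2\}$ is uniquely determined by its restrictions to the two sub-strips $\Z\times\{1,\ldots,n_1\}$ and $\Z\times\{n_1+1,\ldots,n_1+n_2\}$, glued along the horizontal line $y=n_1+\tfrac12$, where the pattern of crossings defines a signature $\kappa$ of the required length; the weight multiplicativity across the two sub-strips then gives the branching identity directly from Definitions \ref{def:G} and \ref{def:F}.
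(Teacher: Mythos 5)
Your proposal is correct and matches the paper's argument: the paper also derives the branching rules directly from the operator definitions \eqref{AY_semi_infinite}--\eqref{BY_semi_infinite}, equivalently viewed as splitting the summation over path collections by conditioning on the cross-section $\kappa$ at height $n_1$. Your remark about finiteness of the intermediate sum (only $\kappa$ with $\kappa_1\le\mu_1$ contribute) is a useful sanity check that the paper leaves implicit.
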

\begin{proof}
	Follows from the definitions \eqref{AY_semi_infinite} and \eqref{BY_semi_infinite} in a straightforward way.
	In other words, 
	identities \eqref{branching-F} and \eqref{branching-G} 
	simply mean 
	the splitting of summation over path collections
	in $\F_{\mu/\la}$
	and $\G_{\mu/\la}$, such that
	the signature
	$\kappa$ keeps track of the cross-section 
	of the path collection at height $n_1$.
\end{proof}


\subsection{Semi-infinite operator $\DY$} 
\label{sub:semi_infinite_operator_D}

It is slightly more difficult to define the action of the other two operators,
$\CY$ and $\DY$, in the semi-infinite context. 
We will not need the operator $\CY$,
so let us focus on $\DY=\DY(u)$. 
The action of $\DY$ (in a finite tensor product) corresponds to the following 
configuration (cf. \eqref{AY_one_var_semi_infinite} and \eqref{BY_one_var_semi_infinite}): 
\begin{align*}
	\raisebox{-29pt}{\scalebox{.9}{\begin{tikzpicture}
		[scale=1.2,very thick]
		\def\d{.07}
		\draw[densely dotted] (-.5,0)--++(8.2,0);
		\foreach \ii in {0,1,2,3,4,5,6}
		{
			\draw[densely dotted] (1.2*\ii,-.5)--++(0,1);
		}
		\node[below] at (0,-.5) {0};
		\node[below] at (1.2,-.5) {$\la_N$};
		\node[below] at (3.6,-.5) {$\la_3$};
		\node[below] at (6,-.5) {$\la_2=\la_1$};
		\draw[line width=2pt,->] (6+\d,-.5) --++ (0,.2);
		\draw[line width=2pt,->] (6+\d,-.5) --++ (0,.5-\d)--++(\d,\d)--++(1.8-2*\d,0);
		\draw[line width=2pt,->] (6-\d,-.5) --++ (0,.2);
		\draw[line width=2pt,->] (6-\d,-.5) --++ (0,.5-\d)--++(\d,2*\d)--++(0,.5-\d) node [above] {$\mu_1$};
		\draw[line width=2pt,->] (3.6,-.5) --++ (0,.2);
		\draw[line width=2pt,->] (3.6,-.5) --++ (0,.5)--++(1.2,0)--++(0,.5) node [above] {$\mu_2$};
		\draw[line width=2pt,->] (1.2,-.5) --++ (0,.2);
		\draw[line width=2pt,->] (1.2,-.5) --++ (0,.5-\d)--++(\d,\d)--++(1.2-\d,0)--++(0,.5)
		node [above] {$\mu_{N-1}$};
		\draw[line width=2pt,->] (-.5,0) --++ (.2,0);
		\draw[line width=2pt,->] (-.5,0) --++ (.5,0)--++(1.2-\d,0)--++(\d,\d)--++(0,.5-\d) node [above] {$\mu_N$};
	\end{tikzpicture}}}
\end{align*}
For the semi-infinite horizontal strip, the weight of this configuration
would involve an infinite product of the form
$w_{u}(0,1;0,1)^{\infty}=\left(\frac{u-s}{1-su}\right)^{\infty}$.
This means that one cannot define the operator $\DY(u)$ in the semi-infinite setting
directly. 

However, the definition of $\DY$ can be easily corrected, by considering strips of finite length $L+1$
and the operators $\DY$ in $V_0\otimes \ldots\otimes V_L$. For a fixed $L$
denote such an operator by $\DY_L=\DY_L(u)$.
Dividing $\DY_L$ by
$w_{u}(0,1;0,1)^{L+1}$, and sending $L\to+\infty$, we would arrive at a meaningful object.
Indeed, under this transformations the weights of individual vertices will turn into
\begin{align}
	\begin{array}{rclclcl}
		\dfrac{1}
		{w_{u}(0,1;0,1)}w_{u}\Big(\raisebox{-18pt}{\mbox{\vertexoo{.6}}}\Big)
		&=&\dfrac{1-sq^{g}u}{u-s}
		&=&w_{u^{-1}}\Big(\raisebox{-18pt}{\mbox{\vertexll{.6}}}\Big)&=&
		w_{u^{-1}}^{\conj}\Big(\raisebox{-18pt}{\mbox{\vertexll{.6}}}\Big),
		\\
		\rule{0pt}{22pt}
		\dfrac{1}
		{w_{u}(0,1;0,1)}w_{u}\Big(\raisebox{-18pt}{\mbox{\vertexoll{.6}}}\Big)&=&\dfrac{(1-s^{2}q^{g})u}{u-s}
		&=&
		w_{u^{-1}}\Big(\raisebox{-18pt}{\mbox{\vertexlo{.6}}}\Big)\dfrac{1-s^{2}q^{g}}{1-q^{g+1}}&=&
		w_{u^{-1}}^{\conj}\Big(\raisebox{-18pt}{\mbox{\vertexlo{.6}}}\Big),\\
		\rule{0pt}{22pt}
		\dfrac{1}
		{w_{u}(0,1;0,1)}w_{u}\Big(\raisebox{-18pt}{\mbox{\vertexll{.6}}}\Big)&=&\dfrac{u-sq^{g}}{u-s}
		&=&w_{u^{-1}}\Big(\raisebox{-18pt}{\mbox{\vertexoo{.6}}}\Big)&=&w_{u^{-1}}^{\conj}\Big(\raisebox{-18pt}{\mbox{\vertexoo{.6}}}\Big),\\
		\rule{0pt}{22pt}
		\dfrac{1}
		{w_{u}(0,1;0,1)}w_{u}\Big(\raisebox{-18pt}{\mbox{\vertexlo{.6}}}\Big)&=&\dfrac{1-q^{g+1}}{u-s}
		&=&w_{u^{-1}}\Big(\raisebox{-18pt}{\mbox{\vertexoll{.6}}}\Big)\dfrac{1-q^{g+1}}{1-s^{2}q^{g}}
		&=&w_{u^{-1}}^{\conj}\Big(\raisebox{-18pt}{\mbox{\vertexoll{.6}}}\Big),
	\end{array}
	\label{D_Bar_computation}
\end{align}
where we have used the conjugated weights \eqref{weights_conj}.
Note that the numbers of vertical incoming and outgoing arrows at a vertex were \emph{swapped} under the above transformations.
Therefore, for any $L\ge \la_1+1$ we have
\begin{align*}
	\frac{[\textnormal{coefficient of $\bv_\mu$ in
	$\DY_{L}(u)\,\bv_\la$}]}{w_{u}(0,1;0,1)^{L+1}}=
	[\textnormal{coefficient of $\bv_\la$ in
	$\AY(u^{-1})\,\bv_\mu$}]\cdot \frac{\conj(\la)}{\conj(\mu)},
\end{align*}
where for any signature $\nu\in\signpe$ we have denoted
\begin{align}\label{conj_definition}
	\conj(\nu):=\prod_{k}\frac{(s^{2};q)_{n_k}}{(q;q)_{n_k}},
	\qquad \nu=0^{n_0}1^{n_1}2^{n_2}\cdots
\end{align}
(this product has finitely many factors not equal to $1$).
The operator $\AY(u^{-1})$ above can be regarded as acting either in a finite tensor product, 
or in the semi-infinite space ${\bar V}^{\mathrm{fin}}$, since 
matrix elements corresponding to $(\bv_\mu,\bv_\la)$
of 
these two versions of $\AY(u^{-1})$ coincide 
for fixed $\mu,\la$ and large enough $L$ (cf. Remark \ref{rmk:A_B_semi_infinite_definition}).

We see that it is natural to define the normalized operator 
\begin{align}\label{D_Bar}
	\overline\DY(u):=
	\lim_{L\to+\infty}
	\frac{
	\DY_{L}(u)}{w_{u}(0,1;0,1)^{L+1}},
\end{align}
where the limit is taken in the sense of matrix elements corresponding to the basis vectors
$\{\bv_\la\}_{\la\in\signpe}$. The matrix elements of
$\overline\DY(u)$ are (cf. \eqref{AY_one_var_semi_infinite})
\begin{align*}
	\overline\DY(u)\,\bv_{\la}=\sum_{\mu\in\signp N}
	\frac{\conj(\la)}{\conj(\mu)}\G_{\la/\mu}(u^{-1})\,\bv_\mu.
\end{align*}
Observe that the above sum over $\mu$ is \emph{finite}, 
in contrast with the operators \eqref{AY_one_var_semi_infinite}
and \eqref{BY_one_var_semi_infinite}.
From \eqref{D_Bar} and \eqref{D_commute} it follows that 
the operators 
$\overline\DY(u)$ commute for different $u$.


In what follows we will use the notation
\begin{align*}
	\F_{\la/\mu}^{\conj}:=
	\frac{\conj(\la)}{\conj(\mu)}\F_{\la/\mu}\,,
	\qquad
	\G_{\la/\mu}^{\conj}:=
	\frac{\conj(\la)}{\conj(\mu)}\G_{\la/\mu}\,.
\end{align*}


\subsection{Cauchy-type identities from the Yang--Baxter commutation relations} 
\label{sub:limits_of_yang_baxter_commutation_relations_cauchy_type_identities}

This subsection closely follows \cite[\S4]{Borodin2014vertex}.

Let us consider the semi-infinite limit as $L\to+\infty$ (similar to what was done
in \S \ref{sub:semi_infinite_operator_D} above)
of the Yang--Baxter commutation relation \eqref{YB_relationBD}.
Looking at \eqref{YB_relationBD},
we immediately face the question of what
we need to normalize the two sides by: 
$w_{u_{1}}(0,1;0,1)^{L+1}$
or 
$w_{u_{2}}(0,1;0,1)^{L+1}$? 
Since out of the three terms in 
\eqref{YB_relationBD}
two require the normalization involving $u_2$, let us use that one. 
To be able to take the limit as $L\to+\infty$, 
we will also require that
\begin{align}\label{admissible_bad_condition_HOM}
	\left|\frac{w_{u_{1}}(0,1;0,1)}{w_{u_{2}}(0,1;0,1)}
	\right|
	=
	\left|\frac{u_{1}-s}{1-su_{1}}
	\cdot
	\frac{1-su_{2}}{u_{2}-s}\right|<1.
\end{align}
Under \eqref{admissible_bad_condition_HOM}, we can take the normalized 
(by $w_{u_{2}}(0,1;0,1)^{L+1}$)
limit of the 
relation \eqref{YB_relationBD}, and, using \eqref{D_Bar}, conclude that 
\begin{align}\label{B_DBar_YB_relation}
	\BY(u_1)\overline\DY(u_{2})=
	\frac{u_1-u_2}{qu_1-u_2}
	\overline\DY(u_{2})\BY(u_1).
\end{align}
Indeed, before the limit the normalized 
second term of \eqref{YB_relationBD}
contains
\begin{align*}
	\frac{\DY_{L}(u_{1})}{w_{u_{2}}(0,1;0,1)^{L+1}}
	=
	\frac{\DY_{L}(u_{1})}
	{w_{u_{1}}(0,1;0,1)^{L+1}}
	\left(\frac{w_{u_{1}}(0,1;0,1)}{w_{u_{2}}(0,1;0,1)}\right)^{L+1},
\end{align*}
which converges to zero by 
\eqref{admissible_bad_condition_HOM}.

Using the notation 
$\F_{\mu/\la}$ and $\G_{\mu/\la}$ 
introduced in \S \ref{sub:semi_infinite_operators_ay_and_by_definition_of_FG},
relation \eqref{B_DBar_YB_relation} becomes
\begin{align}\label{skew_Cauchy_bad}
	\sum_{\mu\in\signpe}
	\F_{\la/\mu}(u_1)
	\G^{\conj}_{\nu/\mu}(u_2^{-1})
	= \frac{u_1-u_2}{qu_1-u_2}
	\sum_{\kappa\in\signpe}
	\G^{\conj}_{\kappa/\la}(u_2^{-1})
	\F_{\kappa/\nu}(u_1)
	.
\end{align}
Therefore, we have established the following fact:
\begin{proposition}\label{prop:skew_Cauchy}
	Let $u,v\in\C$ satisfy
	\begin{align}\label{admissible_good_HOM}
		\left|\frac{u-s}{1-su}
		\cdot
		\frac{v-s}{1-sv}\right|<1.
	\end{align}
	Then for any $\la,\nu\in\signpe$ we have
	\begin{align}\label{skew_Cauchy_good}
		\sum_{\kappa\in\signpe}
		\G^{\conj}_{\kappa/\la}(v)
		\F_{\kappa/\nu}(u)
		= \frac{1-quv}{1-uv}
		\sum_{\mu\in\signpe}
		\F_{\la/\mu}(u)
		\G^{\conj}_{\nu/\mu}(v)
		.
	\end{align}
\end{proposition}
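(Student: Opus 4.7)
The plan is to obtain \eqref{skew_Cauchy_good} as a direct consequence of the Yang--Baxter commutation relation \eqref{YB_relationBD}, by passing to a semi-infinite physical space and then reparametrizing. Concretely, I would fix a cutoff $L$ and work in the finite tensor product $V_0\otimes\cdots\otimes V_L$, in which \eqref{YB_relationBD} holds as stated with the finite-volume operator $\DY_{L}$ from \S\ref{sub:semi_infinite_operator_D} in place of $\DY$. Dividing the resulting equation by $w_{u_{2}}(0,1;0,1)^{L+1}$ and sending $L\to\infty$, the left-hand side tends to $\BY(u_1)\overline\DY(u_2)$ and the first summand on the right tends to $\frac{u_1-u_2}{qu_1-u_2}\,\overline\DY(u_2)\BY(u_1)$ by the definition \eqref{D_Bar}, since the operator $\BY$ contributes no factor depending on $L$.

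The essential step is to show that the third summand disappears in the limit. I would rewrite it as
\begin{align*}
\frac{(1-q)u_{2}}{u_{2}-qu_{1}}\,\BY(u_{2})\cdot\frac{\DY_{L}(u_{1})}{w_{u_{1}}(0,1;0,1)^{L+1}}\cdot\left(\frac{w_{u_{1}}(0,1;0,1)}{w_{u_{2}}(0,1;0,1)}\right)^{L+1}.
\end{align*}
When applied to a fixed basis vector $\bv_\nu$ and tested against a fixed $\bv_\la$, the middle factor stabilizes to a matrix element of $\overline\DY(u_{1})$ for all sufficiently large $L$, thanks to the finite-support property emphasized in \S\ref{sub:semi_infinite_operator_D} (which says that $\overline\DY(u_1)\,\bv_\mu$ is a finite linear combination of basis vectors). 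The last factor decays geometrically to $0$ under hypothesis \eqref{admissible_bad_condition_HOM}. This gives the operator identity \eqref{B_DBar_YB_relation}.

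Having \eqref{B_DBar_YB_relation}, I would apply both sides to $\bv_\nu$, expand them via
$\BY(u)\,\bv_\kappa=\sum_\la\F_{\la/\kappa}(u)\,\bv_\la$ and $\overline\DY(u_2)\,\bv_\kappa=\sum_\mu\G^{\conj}_{\kappa/\mu}(u_2^{-1})\,\bv_\mu$, and read off the coefficient of $\bv_\la$ to obtain \eqref{skew_Cauchy_bad}. It remains to perform the substitution $u_{1}=u$, $u_{2}=v^{-1}$: using $(1-sv^{-1})/(v^{-1}-s)=(v-s)/(1-sv)$ one checks that \eqref{admissible_bad_condition_HOM} is equivalent to \eqref{admissible_good_HOM}, while the prefactor transforms as
\begin{align*}
\frac{u-v^{-1}}{qu-v^{-1}}=\frac{uv-1}{quv-1}=\frac{1-uv}{1-quv},
\end{align*}
and rearranging produces precisely the factor $\frac{1-quv}{1-uv}$ on the right-hand side of \eqref{skew_Cauchy_good}.

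The main obstacle in this plan is precisely the vanishing of the third term in the semi-infinite limit: one must ensure that the matrix elements of the normalized $\DY_{L}(u_1)/w_{u_{1}}(0,1;0,1)^{L+1}$ do not grow, so that the geometric factor coming from \eqref{admissible_bad_condition_HOM} actually drives the entire term to zero. The finite-support property of $\overline\DY$ explained in \S\ref{sub:semi_infinite_operator_D} delivers exactly what is needed, reducing the whole argument to a clean operator identity followed by a straightforward change of spectral variable.
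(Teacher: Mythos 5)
Your proposal follows exactly the paper's argument (\S\ref{sub:limits_of_yang_baxter_commutation_relations_cauchy_type_identities}): normalize the finite-volume Yang--Baxter relation \eqref{YB_relationBD} by $w_{u_2}(0,1;0,1)^{L+1}$, use \eqref{admissible_bad_condition_HOM} to kill the $\BY(u_2)\DY_L(u_1)$ term in the $L\to\infty$ limit, obtain \eqref{B_DBar_YB_relation}, read off matrix elements to get \eqref{skew_Cauchy_bad}, and substitute $(u_1,u_2)=(u,v^{-1})$. Your added explanation of why the third term vanishes --- matrix elements of the middle factor stabilize while the geometric factor decays --- makes explicit what the paper leaves implicit, but there is no substantive difference.
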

\begin{proof}
	Indeed, this is just \eqref{skew_Cauchy_bad}
	under the replacement of
	$(u_1,u_2)$
	by $(u,v^{-1})$.
\end{proof}
Identity \eqref{skew_Cauchy_good} is nontrivial 
only if $\nu\in\signp N$ and $\la\in\signp{N+1}$.
In this case the sum in the right-hand side of \eqref{skew_Cauchy_good}
is over $\mu\in\signp N$ and is finite,
while in the left-hand side it
is over $\kappa\in\signp{N+1}$ and is infinite 
(but converges due to \eqref{admissible_good_HOM}).

We will call \eqref{skew_Cauchy_good}
the \emph{skew Cauchy identity} for the symmetric functions 
$\F_{\mu/\la}$ and $\G_{\mu/\la}$ 
because of its similarity with the skew Cauchy
identities for the Schur, Hall--Littlewood, or 
Macdonald symmetric functions \cite[Ch.\;I.5, Ex.\;26, and Ch.\;VI.7, Ex.\;6]{Macdonald1995}. 
In fact, if $s=0$, our identity
\eqref{skew_Cauchy_good} becomes the skew Cauchy identity for the Hall--Littlewood
symmetric functions. Further letting $q\to0$,
we recover the Schur case.
\begin{definition}\label{def:admissible}
	Let us say that
	two complex numbers $u,v\in\C$ are \emph{admissible}, 
	denoted $\adm uv$,
	if \eqref{admissible_good_HOM} holds. Note that this 
	relation is symmetric in $u$ and $v$.
\end{definition}

The skew Cauchy identity can obviously be iterated with the following result:
\begin{corollary}\label{cor:skew_Cauchy_many}
	Let $u_1,\ldots,u_M$ and $v_1,\ldots,v_N$ be complex
	numbers such that $\adm {u_i}{v_j}$ for all
	$i=1,\ldots,M$ and $j=1,\ldots,N$.
	Then for any $\la,\nu\in\signpe$
	one has
	\begin{multline}\label{skew_Cauchy_many}
		\sum_{\kappa\in\signpe}
		\G^{\conj}_{\kappa/\la}(v_1,\ldots,v_N)
		\F_{\kappa/\nu}(u_1,\ldots,u_M)
		\\=\prod_{i=1}^{M}\prod_{j=1}^{N}
		\frac{1-qu_iv_j}{1-u_iv_j}
		\sum_{\mu\in\signpe}
		\F_{\la/\mu}(u_1,\ldots,u_M)
		\G^{\conj}_{\nu/\mu}(v_1,\ldots,v_N).
	\end{multline}
\end{corollary}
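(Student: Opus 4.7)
The plan is to iterate the single-variable identity of Proposition \ref{prop:skew_Cauchy} at the operator level, using the machinery developed in \S\ref{sub:semi_infinite_operators_ay_and_by_definition_of_FG}--\S\ref{sub:semi_infinite_operator_D}. The point is that $\BY(u_1)\cdots\BY(u_M)\,\bv_\nu=\sum_\kappa \F_{\kappa/\nu}(u_1,\ldots,u_M)\,\bv_\kappa$ and $\overline{\DY}(v^{-1})\,\bv_\kappa=\sum_\la \G^{\conj}_{\kappa/\la}(v)\,\bv_\la$, so the left-hand side of \eqref{skew_Cauchy_many} is the $\bv_\la$-coefficient of $\overline{\DY}(v_1^{-1})\cdots\overline{\DY}(v_N^{-1})\,\BY(u_1)\cdots\BY(u_M)\,\bv_\nu$, while the right-hand side (without the prefactor) is the $\bv_\la$-coefficient of the reversed operator product applied to $\bv_\nu$.

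Rewriting \eqref{B_DBar_YB_relation} as $\overline{\DY}(v^{-1})\BY(u)=\tfrac{1-quv}{1-uv}\BY(u)\overline{\DY}(v^{-1})$, one sees this commutation is \emph{scalar} (no cross-terms). Pushing each $\overline{\DY}(v_j^{-1})$ past each $\BY(u_i)$ therefore produces precisely $\prod_{i,j}\tfrac{1-qu_iv_j}{1-u_iv_j}$, and combining this with the mutual commutativity of the $\BY$'s among themselves (by \eqref{B_commute}) and of the $\overline{\DY}$'s among themselves (a direct consequence of \eqref{D_commute} via the limit \eqref{D_Bar}) gives the operator identity
\begin{equation*}
\overline{\DY}(v_1^{-1})\cdots\overline{\DY}(v_N^{-1})\,\BY(u_1)\cdots\BY(u_M)=\prod_{i=1}^M\prod_{j=1}^N\frac{1-qu_iv_j}{1-u_iv_j}\,\BY(u_1)\cdots\BY(u_M)\,\overline{\DY}(v_1^{-1})\cdots\overline{\DY}(v_N^{-1}).
\end{equation*}
Equating the $\bv_\la$-coefficients of both sides applied to $\bv_\nu$ is then precisely \eqref{skew_Cauchy_many}.

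The main obstacle is convergence. Each $\overline{\DY}(v_j^{-1})$ exists only as the renormalized limit \eqref{D_Bar}, and the sum on the left of \eqref{skew_Cauchy_many} is a priori infinite; the hypothesis that $\adm{u_i}{v_j}$ for \emph{every} pair $(i,j)$ is exactly what is needed to justify, on the one hand, the existence of each operator limit in \eqref{D_Bar}, and on the other hand, the absolute convergence of the infinite sums over intermediate signatures that appear whenever an $\overline{\DY}(v_j^{-1})$ is inserted between (or applied to the output of) products of $\BY$'s. I would implement this cleanly by inducting on $N$, pushing the $v_j$'s in one at a time and applying the single-variable Proposition \ref{prop:skew_Cauchy}: at each step the branching rules \eqref{branching-F} and \eqref{branching-G} reduce the required manipulation to interchanging summations in absolutely convergent double series, with the pairwise admissibility giving the necessary bounds.
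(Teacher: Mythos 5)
Your proposal is correct and matches the paper's intent: the paper's proof of this corollary is just the remark that Proposition \ref{prop:skew_Cauchy} ``can obviously be iterated,'' and your operator-language reformulation (pushing each $\overline\DY(v_j^{-1})$ past each $\BY(u_i)$ via \eqref{B_DBar_YB_relation}, using \eqref{B_commute} and \eqref{D_commute} for the same-type factors) together with the induction-on-$N$ implementation via Proposition \ref{prop:skew_Cauchy} and the branching rules is exactly that iteration, spelled out. Your attention to the convergence role of the pairwise admissibility hypotheses is appropriate but not a departure from the paper's argument.
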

Furthermore, the skew Cauchy identity \eqref{skew_Cauchy_many} can be simplified
by specializing some of the indices. 
Recall the abbreviations $\G_\mu$ and $\F_\mu$ from Definitions \ref{def:G} and \ref{def:F}.
The identity of Corollary \ref{cor:skew_Cauchy_many}
readily implies the following facts:
\begin{corollary}
\label{cor:Pieri}
	\noindent{\rm{}\bf{}1.\/} For any $N\in\Z_{\ge0}$, $\la\in\signp N$, 
	and any complex $u_1,\ldots,u_N$ and $v$ such that $\adm{u_i}{v}$ for all $i$, 
	we have
	\begin{align}\label{Pieri1}
		\sum_{\kappa\in\signp{N}}
		\G^{\conj}_{\kappa/\la}(v)
		\F_{\kappa}(u_1,\ldots,u_N)
		=\prod_{i=1}^{N}
		\frac{1-qu_iv}{1-u_iv}\,
		\F_{\la}(u_1,\ldots,u_N).
	\end{align}
\smallskip

	\noindent{\rm{}\bf{}2.\/} For any $N,n\in\Z_{\ge0}$ any $\nu\in\signp N$,
	and any complex $u$ and $v_1,\ldots,v_n$ such that $\adm u{v_j}$ for all $j$, 
	we have
	\begin{align}\label{Pieri2}
		\sum_{\kappa\in\signp{N+1}}
		\G^{\conj}_{\kappa}(v_1,\ldots,v_n)
		\F_{\kappa/\nu}(u)
		=
		\frac{1-q^{N+1}}{1-s u}
		\prod_{j=1}^{n}\frac{1-quv_j}{1-uv_j}\,
		\G^{\conj}_{\nu}(v_1,\ldots,v_n).
	\end{align}
\end{corollary}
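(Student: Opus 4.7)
\emph{Proof plan.} Both identities are direct specializations of the iterated skew Cauchy identity \eqref{skew_Cauchy_many} from Corollary~\ref{cor:skew_Cauchy_many}, chosen so that the sum on one side of \eqref{skew_Cauchy_many} collapses to a single explicit term. The plan is to specialize, compute the surviving term by inspecting the underlying vertex configuration, and check that the bookkeeping reproduces \eqref{Pieri1} and \eqref{Pieri2}.

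For Part 1, I would apply \eqref{skew_Cauchy_many} with $M=N$ variables $u_1,\dots,u_N$ on the $u$-side, a single variable $v_1=v$ on the $v$-side, and $\nu=\varnothing$. The left-hand side then coincides with the left-hand side of \eqref{Pieri1} since $\F_{\kappa/\varnothing}=\F_{\kappa}$. On the right-hand side, $\G^{\conj}_{\varnothing/\mu}$ forces $\mu=\varnothing$ through the length constraint $|\mu|=|\varnothing|=0$, and then $\G^{\conj}_{\varnothing/\varnothing}(v)=1$ together with $\F_{\la/\varnothing}=\F_{\la}$ give precisely \eqref{Pieri1}.

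For Part 2, I would apply \eqref{skew_Cauchy_many} with a single $u$ on the $u$-side, variables $v_1,\dots,v_n$ on the $v$-side, and $\la=0^{N+1}$. By the convention $\G_{\kappa/0^{|\kappa|}}=\G_{\kappa}$ from Definition~\ref{def:G}, and matching normalization factors $\conj(\cdot)$ from \S\ref{sub:semi_infinite_operator_D}, the left-hand side becomes the sum in \eqref{Pieri2}. On the right-hand side the crucial observation is that $\F_{0^{N+1}/\mu}(u)$ vanishes unless $\mu=0^N$: any path starting at $(\mu_i,0)$ with $\mu_i>0$ cannot reach column $0$ in row $y=1$, because up-right steps never decrease the horizontal coordinate. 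For $\mu=0^N$ there is a unique admissible configuration with a single nonempty vertex at $(0,1)$ of arrow type $(N,1;N+1,0)$, whose weight is $w_u(N,1;N+1,0)=(1-q^{N+1})/(1-su)$ from \eqref{weights}. Combined with $\G^{\conj}_{\nu/0^N}=\G^{\conj}_{\nu}$, this produces \eqref{Pieri2}.

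Neither step involves a genuine obstacle; the only subtle point is to confirm that the shorthand $\G^{\conj}_{\kappa}$ (without a skew index) agrees with $\G^{\conj}_{\kappa/0^{|\kappa|}}$, which is immediate once one expands the definition and notices that $\conj(0^{|\kappa|})$ cancels on both sides. The admissibility hypotheses $\adm{u_i}{v}$ (respectively $\adm{u}{v_j}$) stated in the corollary are exactly the finitely many conditions needed to invoke Corollary~\ref{cor:skew_Cauchy_many} after specialization, so convergence of the infinite sums on the left-hand sides of \eqref{Pieri1} and \eqref{Pieri2} is automatic.
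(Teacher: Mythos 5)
Your proposal is correct and follows the same route as the paper's proof: both identities are obtained by specializing the iterated skew Cauchy identity (Corollary \ref{cor:skew_Cauchy_many}), setting $\nu=\varnothing$ with a single $v$-variable for Part 1, and $\la=0^{N+1}$ with a single $u$-variable for Part 2, where the one surviving term $\F_{0^{N+1}/\mu}(u)=\frac{1-q^{N+1}}{1-su}\mathbf 1_{\mu=0^N}$ is read off from the definition of $\F$. Your explicit path-counting justification for this last evaluation is a welcome elaboration of what the paper dispatches in one line.
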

\begin{proof}
	Identity \eqref{Pieri1} follows from \eqref{skew_Cauchy_many} by taking $\nu=\varnothing$ 
	and a single $v$-variable.
	Then the sum 
	over $\mu$ in the right-hand side of \eqref{skew_Cauchy_many} reduces to just $\mu=\varnothing$.
		
	Identity \eqref{Pieri2} follows by taking $\la=0^{N+1}$ 
	and a single $u$-variable
	in \eqref{skew_Cauchy_many}, and observing that
	$\F_{0^{N+1}/\mu}(u)=\frac{1-q^{N+1}}{1-s u}
	\mathbf{1}_{\mu=0^N}$ by the very definition of $\F$.
\end{proof}
Identities \eqref{Pieri1} and \eqref{Pieri2}
are analogous to the \emph{Pieri rules}
for Schur, Hall--Littlewood, or Macdonald
symmetric functions \cite[Ch.\;I.5, formula\;(5.16), and Ch.\;VI.6]{Macdonald1995}.

\begin{remark}\label{rmk:Pieri_as_eigenrelations}
	Identity \eqref{Pieri1} shows that the functions 
	$\{\F_{\la}(u_1,\ldots,u_N)\}_{\la\in\signp N}$
	for each set of the $u$'s form an eigenvector
	of the transfer matrix $\{\G_{\nu/\la}^{\conj}(v)\}_{\la,\nu\in\signp N}$
	viewed as acting in the spatial variables corresponding to signatures
	(i.e., with rows indexed by $\la$ and columns indexed by $\nu$).
	Equivalently, 
	$\{\F_{\la}^{\conj}(u_1,\ldots,u_N)\}_{\la\in\signp N}$
	is an eigenvector of the transfer matrix 
	$\{\G_{\nu/\la}(v)\}_{\la,\nu\in\signp N}$ (i.e., the conjugation ``$\conj$'' can be moved).
	This statement is parallel (and simpler)
	to the fact that on a finite lattice,
	the vector
	$\BY(u_1)\ldots\BY(u_n)(\bv_{0}\otimes \ldots \otimes\bv_0)$
	is an eigenvector of the operator $\AY(v)+\DY(v)$
	given certain nonlinear \emph{Bethe equations} on $u_1,\ldots,u_N$.
	In our case the Bethe equations disappeared, and only one of the terms
	in $\AY(v)+\DY(v)$ has survived.

	One can also obtain analogous statements 
	when the number of $v$-variables in \eqref{Pieri1} 
	is greater than one --- this would correspond to applying a sequence of 
	transfer matrices with varying spectral parameters.
\end{remark}

Taking $\nu=\varnothing$ and $\la=0^{M}$ 
in \eqref{skew_Cauchy_many}
and noting that
\begin{align}\label{F_at_zero_signature}
	\F_{0^{M}}(u_1,\ldots,u_M)=
	\frac{(q;q)_{M}}{\prod_{i=1}^{M}(1-su_i)},
\end{align}
we arrive at the following analogue of the 
\emph{usual} 
(\emph{non-skew}) \emph{Cauchy identity} 
(see \cite[Ch.\;I.4, formula\;(4.3), and Ch.\;VI.4, formula\;(4.13)]{Macdonald1995} for the corresponding Schur and Macdonald 
Cauchy identities):
\begin{corollary}\label{cor:usual_Cauchy}
	For $M,N\ge0$ and complex numbers 
	$u_1,\ldots,u_M$
	and $v_1,\ldots,v_N$
	such that
	$\adm{u_i}{v_j}$ for all $i$ and $j$, one has
	\begin{align}
		\sum_{\mu\in\signp M}
		\F_{\mu}(u_1,\ldots,u_M)
		\G^{\conj}_{\mu}(v_1,\ldots,v_N)
		=
		\frac{(q;q)_{M}}{\prod_{i=1}^{M}(1-su_i)}
		\prod_{i=1}^{M}\prod_{j=1}^{N}
		\frac{1-qu_iv_j}{1-u_iv_j}
		.
		\label{usual_Cauchy}
	\end{align}
\end{corollary}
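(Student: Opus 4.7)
The plan is to deduce this non-skew Cauchy identity as a direct specialization of the iterated skew Cauchy identity of Corollary~\ref{cor:skew_Cauchy_many}, precisely as hinted by the paragraph immediately preceding the statement. I would set $\nu=\varnothing$ and $\la=0^{M}$ (the length-$M$ signature of zeros) in \eqref{skew_Cauchy_many}, and then collapse each of the two sides using the trivial values of $\F$ and $\G^{\conj}$ on the empty signature.

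On the left-hand side of \eqref{skew_Cauchy_many}, the summation variable $\kappa\in\signpe$ is constrained: the operator-theoretic definition \eqref{AY_semi_infinite} together with the abbreviation $\G_\kappa=\G_{\kappa/0^{M}}$ in Definition~\ref{def:G} forces $\kappa\in\signp{M}$ and turns $\G^{\conj}_{\kappa/0^{M}}$ into $\G^{\conj}_{\kappa}$. Thus the left-hand side becomes exactly $\sum_{\mu\in\signp M}\F_{\mu}(u_1,\ldots,u_M)\G^{\conj}_{\mu}(v_1,\ldots,v_N)$. On the right-hand side, the sum over $\mu\in\signpe$ collapses to the single term $\mu=\varnothing$, since the only path collection with top boundary $\nu=\varnothing$ and bottom boundary $\mu$ requires $\mu=\varnothing$ (whence $\G^{\conj}_{\varnothing/\varnothing}=1$), while $\F_{0^{M}/\varnothing}=\F_{0^{M}}$ by the abbreviation in Definition~\ref{def:F}.

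It remains to justify the evaluation \eqref{F_at_zero_signature}, namely $\F_{0^{M}}(u_1,\ldots,u_M)=(q;q)_{M}/\prod_{i=1}^{M}(1-su_i)$. I would argue directly from Definition~\ref{def:F}: with bottom boundary $\varnothing$ and top boundary $0^{M}$, the unique admissible path collection consists of the $M$ horizontal arrows entering at $(-1,y)$, $y=1,\ldots,M$, each turning upward at the column $x=0$ and then exiting vertically at $(0,M)$. The only vertices with nontrivial weight lie in column $x=0$: at height $y$ one reads the vertex configuration $(i_1,j_1;i_2,j_2)=(y-1,1;y,0)$, contributing $w_{u_y}(y-1,1;y,0)=(1-q^{y})/(1-su_y)$. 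Multiplying these weights over $y=1,\ldots,M$ yields $(q;q)_M/\prod_{i=1}^{M}(1-su_i)$, as required.

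Putting the three pieces together gives the claimed identity. There is no real obstacle here: the entire argument is a specialization and a one-column vertex-weight computation. The only subtlety to flag is the admissibility hypothesis $\adm{u_i}{v_j}$ for all $i,j$, which is inherited verbatim from Corollary~\ref{cor:skew_Cauchy_many} and is what guarantees absolute convergence of the (a priori infinite) sum on the left-hand side.
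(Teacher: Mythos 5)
Your proposal is correct and follows exactly the route the paper takes: it specializes $\nu=\varnothing$, $\la=0^{M}$ in Corollary~\ref{cor:skew_Cauchy_many}, observes that both skew sums collapse (the $\kappa$-sum to $\kappa\in\signp M$ with $\G^{\conj}_{\kappa/0^M}=\G^{\conj}_\kappa$, the $\mu$-sum to $\mu=\varnothing$), and evaluates $\F_{0^M}$ via \eqref{F_at_zero_signature}. Your explicit derivation of \eqref{F_at_zero_signature} from the path picture — the unique collection of $M$ hooked paths in column $x=0$, with the vertex at height $y$ contributing $w_{u_y}(y-1,1;y,0)=(1-q^y)/(1-su_y)$ — is a correct unpacking of what the paper states without proof.
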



\subsection{Symmetrization formulas} 
\label{sub:symmetrization_formulas}

So far, our definition of the symmetric functions 
$\F_{\mu/\la}$ and $\G_{\nu/\la}$ was not too explicit --- they were defined
as large sums over all possible path collections with certain boundary conditions (see Definitions \ref{def:G} and \ref{def:F}).
However, it turns out that the non-skew symmetric functions $\F_\mu$ and $\G_\nu$
can be evaluated more explicitly:
\begin{theorem}\label{thm:symmetrization}
	\noindent{\bf1.\/}
	For any $M\ge0$, any $\mu\in\signp M$,
	and any $u_1,\ldots,u_M\in\C$ we 
	have\footnote{In both formulas 
	\eqref{F_symm_formula}
	and \eqref{G_symm_formula} the 
	permutation $\sigma$ (belonging, respectively, to $\Sym_M$ or $\Sym_N$) acts by permuting the 
	indeterminates $u_i$ or $v_j$, respectively.
	The same convention is 
	used throughout the text.}
	\begin{align}\label{F_symm_formula}
		\F_\mu(u_1,\ldots,u_M)=
		\frac{(1-q)^{M}}{\prod_{i=1}^{M}(1-su_i)}
		\sum_{\sigma\in\Sym_M}
		\sigma\Bigg(
		\prod_{1\le \aind<\bind\le M}\frac{u_\aind-qu_\bind}{u_\aind-u_\bind}
		\prod_{i=1}^{M}
		\left(\frac{u_i-s}{1-su_i}\right)^{\mu_i}\Bigg).
	\end{align}
	
\smallskip
	\noindent{\bf2.\/}
	For any $n\ge0$, $\nu\in\signp n$,
	let $k$ be the number of 
	zero coordinates in $\nu$, 
	i.e., $\nu_{n-k+1}=\ldots=\nu_n=0$.
	Then for any $N\ge n-k$
	and any
	$v_1,\ldots,v_N\in\C$ we have
	\begin{multline}\label{G_symm_formula}
		\G_\nu(v_1,\ldots,v_N)=
		\frac{(s^{2};q)_{n}}{(q;q)_{N-n+k}(s^{2};q)_{k}}
		\frac{(1-q)^{N}}{\prod_{j=1}^{N}(1-sv_j)}
		\\\times\sum_{\sigma\in \Sym_N}
		\sigma\Bigg(
		\prod_{1\le \aind<\bind\le N}\frac{v_\aind-qv_\bind}{v_\aind-v_\bind}
		\prod_{j=1}^{n-k}
		\left(\frac{v_j-s}{1-sv_j}\right)^{\nu_j}
		\prod_{i=1}^{n-k}
		\frac{v_i}{v_i-s}
		\prod_{j=n-k+1}^{N}
		({1-sq^{k}v_j})
		\Bigg).
	\end{multline}
	If $N<n-k$, the function $\G_\nu(v_1,\ldots,v_N)$
	vanishes for trivial reasons.
\end{theorem}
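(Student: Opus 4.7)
The plan is to prove both parts by induction on the number of spectral parameters, using the branching identities of Proposition \ref{prop:branching} to peel off one variable at a time and then reassembling the smaller symmetrization from the inductive hypothesis into the full one via a standard factorization of the ``$q$-Vandermonde'' product.

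For Part 1, the base case $M=1$ follows by direct inspection of Definition \ref{def:F}: the unique admissible path collection consists of $\mu_1$ horizontal traversals (each contributing $\frac{u_1-s}{1-su_1}$) and a single upward turn at $(\mu_1,1)$ (contributing $\frac{1-q}{1-su_1}$), which reproduces \eqref{F_symm_formula} with $\Sym_1$ trivial. For the inductive step I apply the branching relation
\begin{equation*}
\F_\mu(u_1,\ldots,u_M)=\sum_{\kappa\in\signp{M-1}}\F_{\mu/\kappa}(u_M)\,\F_\kappa(u_1,\ldots,u_{M-1}),
\end{equation*}
and evaluate $\F_{\mu/\kappa}(u_M)$ explicitly from the single-row path picture: the new path enters at the left, threads through the existing paths while contributing vertex weights determined by the local multiplicities of $\mu$ and $\kappa$, and exits upward at some coordinate $\mu_i$ of $\mu$. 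I then substitute the inductive symmetrization expression for $\F_\kappa(u_1,\ldots,u_{M-1})$.

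The main obstacle is the combinatorial reassembly of the resulting double sum $\sum_{\kappa}\sum_{\sigma'\in\Sym_{M-1}}$ into the single $\Sym_M$-sum on the right-hand side of \eqref{F_symm_formula}. The natural way to do this is to fix the index $i\in\{1,\ldots,M\}$ such that the variable $u_M$ sits in position $i$ after the permutation, match this choice with the unique $\kappa$ obtained by deleting $\mu_i$ from $\mu$, and verify that the factor $\F_{\mu/\kappa}(u_M)$ combined with the residual $\Sym_{M-1}$-symmetrization correctly produces the ``$u_M$-in-position-$i$'' contribution to the full $\Sym_M$-symmetrization. This uses a standard factorization of $\prod_{\aind<\bind}\frac{u_\aind-qu_\bind}{u_\aind-u_\bind}$ isolating the pairs involving index $i$ from those that do not. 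Careful $q$-Pochhammer bookkeeping is needed when $\mu$ has repeated parts, since then several choices of $i$ yield the same $\kappa$ and their contributions must be added with the appropriate multiplicity factors.

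For Part 2, the strategy is identical: induction on $N$ via \eqref{branching-G}, with $\G_{\mu/\la}(v)$ computed directly from the single-row path picture. The main new features are the special initial condition $\la=(0^N)$ and the possibility of many coinciding zero parts in $\nu$. The prefactor $\frac{(s^2;q)_n}{(q;q)_{N-n+k}(s^2;q)_k}$ of \eqref{G_symm_formula} arises in the base case from the $q$-multinomial-type count of how paths split off the initial cluster at column $0$, while the trailing product $\prod_{j=n-k+1}^{N}(1-sq^k v_j)$ reflects the columns past the last exit position where paths still pass through with nontrivial weight. The inductive reassembly uses the same ``$q$-Vandermonde'' factorization as in Part 1, and I expect the most delicate aspect to be tracking the interplay between the zero parts of $\nu$, the initial cluster factor, and the conjugated vertex weights \eqref{weights_conj}, rather than any fundamentally new phenomenon.
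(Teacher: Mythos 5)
Your plan is a genuinely different route from the paper's. The paper proves Theorem \ref{thm:symmetrization} through the algebraic Bethe ansatz machinery: it writes the monodromy matrix on a two-factor tensor product $V_1\otimes V_2$, uses the Yang--Baxter commutation relations \eqref{YB_relationBD}, \eqref{YB_relationAB} (respectively \eqref{YB_relationAC}, \eqref{YB_relationAB} for $\G$) to normal-order products of $\BY_1,\DY_1$ against $\AY_2,\BY_2$, invokes a linear-independence argument from \cite{FelderVarchenko1996} to pin down the coefficients uniquely, computes them for the representative subset $\KS=\{1,\ldots,r\}$ by explicit normal-ordering, and then ``unzips'' the second tensor factor iteratively to reach the full symmetrization. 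Your route is a direct combinatorial induction on the number of variables via the branching rule \eqref{branching-F}, followed by evaluating the one-variable skew coefficient from the lattice picture and recombining the double sum. Such branching-rule proofs do exist for Hall--Littlewood polynomials (cf.\ \cite[Ch.~III]{Macdonald1995}), so the overall strategy is not implausible.

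However, the reassembly step as you describe it contains a genuine error. You propose to fix the index $i$ for the slot where $u_M$ lands and ``match this choice with the unique $\kappa$ obtained by deleting $\mu_i$ from $\mu$.'' The signatures $\kappa\in\signp{M-1}$ that contribute in the branching \eqref{branching-F} are not those obtained by deleting a part of $\mu$: they are \emph{all} signatures interlacing with $\mu$, i.e.\ $\mu_1\ge\kappa_1\ge\mu_2\ge\kappa_2\ge\cdots\ge\mu_M$, since $\F_{\mu/\kappa}(u_M)$ is the weight of the one-row path configuration and each $\kappa_j$ may occupy any integer in $[\mu_{j+1},\mu_j]$. For $\mu=(3,1)$, say, the branching runs over $\kappa\in\{(1),(2),(3)\}$ --- three terms --- whereas the $\Sym_2$-sum has only two, so the indices $i$ and the signatures $\kappa$ are not in bijection. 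What actually happens is that each $\F_{\mu/\kappa}(u_M)$ is a product of several vertex weights, and the sum over the many interlacing $\kappa$'s must be carried out (summing geometric progressions in $\frac{u_M-s}{1-su_M}$ over the interlacing gaps) before the result can be recognized as the $u_M$-in-slot-$i$ pieces of the $\Sym_M$-symmetrization. You flag this step as the main obstacle but then describe an incorrect shortcut for it rather than performing the summation; the repeated-parts multiplicity issue you mention is a symptom of the same misconception. As written, the argument is not a proof, and the specific correspondence you propose would fail already at $M=2$.
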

This theorem was established in \cite{Borodin2014vertex}.
Here we present a different proof which involves 
the operators $\AY,\BY,\CY,\DY$ from 
\S \ref{sub:yang_baxter_relation_in_operator_language},
and closely follows the algebraic Bethe ansatz 
framework
\cite{FelderVarchenko1996},
\cite{QISM_book}. 
Let us first discuss certain straightforward corollaries of Theorem \ref{thm:symmetrization}. 
For $\mu\in\signp M$ and $r\in\Z_{\ge0}$, 
let $\mu+r^{M}$ denote the shifted
signature $(\mu_1+r,\mu_2+r,\ldots,\mu_M+r)$.

\begin{corollary}\label{cor:shifts}
	\noindent{\bf1.\/} For any $\mu\in\signp M$ and any $r\in\Z_{\ge0}$ one has
	\begin{align}\label{F_shifts}
		\F_{\mu+r^{M}}(u_1,\ldots,u_M)=
		\prod_{i=1}^{M}
		\bigg(\frac{u_i-s}
		{1-su_i}\bigg)^{r}\cdot
		\F_{\mu}(u_1,\ldots,u_M).
	\end{align}
\smallskip

	\noindent{\bf2.\/}
	For any $\nu\in\signp N$ with $\nu_N\ge1$ one has
	\begin{align*}
		\G_\nu(v_1,\ldots,v_N)
		=(s^{2};q)_{N}\bigg(\prod_{i=1}^{N}
		\frac{v_i}{v_i-s}\bigg)
		\F_\nu(v_1,\ldots,v_N).
	\end{align*}
	That is, when $k=0$ and $N=n$ in \eqref{G_symm_formula}, the function
	$\G_\nu(v_1,\ldots,v_N)$ almost coincides 
	with $\F_{\nu}$.
\end{corollary}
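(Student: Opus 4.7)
\medskip

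\textbf{Proof proposal.} Both claims are direct consequences of the symmetrization formulas \eqref{F_symm_formula} and \eqref{G_symm_formula} of Theorem \ref{thm:symmetrization}: the idea in each case is simply to identify a factor inside the symmetrization sum that does not actually depend on the permutation, and to pull it out.

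For part 1, the plan is to apply \eqref{F_symm_formula} with $\mu$ replaced by the shifted signature $\mu+r^M$. The only place where $\mu$ enters the summand is in the factor $\prod_{i=1}^M \bigl(\tfrac{u_i-s}{1-su_i}\bigr)^{\mu_i}$, which under the shift becomes
\begin{align*}
\prod_{i=1}^M\left(\frac{u_i-s}{1-su_i}\right)^{\mu_i+r}=\prod_{i=1}^M\left(\frac{u_i-s}{1-su_i}\right)^{r}\cdot\prod_{i=1}^M\left(\frac{u_i-s}{1-su_i}\right)^{\mu_i}.
\end{align*}
Since the first product is a symmetric function of $u_1,\dots,u_M$, it is invariant under every $\sigma\in\Sym_M$ and can be taken outside the symmetrization. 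What remains is exactly the right-hand side of \eqref{F_symm_formula} for $\mu$, giving \eqref{F_shifts}.

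For part 2, the plan is to specialize \eqref{G_symm_formula}. The hypothesis $\nu_N\ge 1$ means that $\nu$ has no zero parts, i.e.\ $k=0$, and we also take $N=n$, so the $q$-Pochhammer prefactor becomes $(s^2;q)_N/[(q;q)_0(s^2;q)_0]=(s^2;q)_N$ and the ``tail'' product $\prod_{j=n-k+1}^{N}(1-sq^{k}v_j)$ is empty. What \eqref{G_symm_formula} then says is
\begin{align*}
\G_\nu(v_1,\dots,v_N)=(s^{2};q)_{N}\,\frac{(1-q)^{N}}{\prod_{j=1}^{N}(1-sv_j)}\sum_{\sigma\in\Sym_N}\sigma\Biggl(\prod_{1\le\aind<\bind\le N}\frac{v_\aind-qv_\bind}{v_\aind-v_\bind}\prod_{j=1}^{N}\left(\frac{v_j-s}{1-sv_j}\right)^{\nu_j}\prod_{i=1}^{N}\frac{v_i}{v_i-s}\Biggr).
\end{align*}
The factor $\prod_{i=1}^{N}\tfrac{v_i}{v_i-s}$ is symmetric in the $v_i$'s, so it pulls out of the symmetrization. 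Comparing with \eqref{F_symm_formula} applied to $\nu$ (with $M=N$ and variables $v_1,\dots,v_N$) identifies the remaining sum with $\F_\nu(v_1,\dots,v_N)$, yielding the claimed formula.

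No genuine obstacle arises here: both identities are bookkeeping consequences of the explicit symmetrization formulas, and the only point worth checking is that the specialization $k=0$, $N=n$ in \eqref{G_symm_formula} collapses the prefactors and the tail product in the way indicated above.
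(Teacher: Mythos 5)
Your proof is correct and matches the paper's stated approach, which is likewise a direct verification from the symmetrization formulas \eqref{F_symm_formula} and \eqref{G_symm_formula}. The paper also mentions, without detail, an alternative route via the partition-function definitions of $\F$ and $\G$ (a shift $\mu\mapsto\mu+r^M$ just translates the path collection to the right, so it multiplies the weight by $w_{u_i}(0,1;0,1)^r=\bigl(\tfrac{u_i-s}{1-su_i}\bigr)^r$ for each row), but you do not need that and your algebraic argument is complete.
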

\begin{proof}
	A straightforward 
	verification using \eqref{F_symm_formula} and \eqref{G_symm_formula}.
	Alternatively, this immediately follows
	from the definitions of the functions $\F$ and $\G$
	as partition functions of path collections
	(Definitions \ref{def:F} and \ref{def:G}).
\end{proof}
The next corollary utilizes the explicit formulas \eqref{F_symm_formula}
and \eqref{G_symm_formula} in an essential way:
\begin{corollary}\label{cor:prin_spec}
	\noindent{\bf1.\/}
	For any $M\ge0$, $\mu\in\signp M$, and $u\in\C$ we have
	\begin{align}\label{F_prin_spec_simple}
		\F_\mu(u,qu,\ldots,q^{M-1}u)=
		\frac{(q;q)_{M}}{(su;q)_{M}}
		\bigg(
		\prod_{j=0}^{{\mu_i}-1}\frac{q^{i-1}u-s}{1-sq^{i-1}u}
		\bigg)^{\mu_i}.
	\end{align}
\smallskip

	\noindent{\bf2.\/}
	For any $n\ge0$ and $\nu\in\signp n$
	with $k$ zero coordinates,
	any $N\ge n-k$, and any $v\in\C$ we have
	\begin{multline}\label{G_prin_spec_simple}
		\G_\nu(v,qv,\ldots,q^{N-1}v)=
		\frac{(q;q)_{N}}{(q;q)_{N-n+k}}
		\frac{(sv;q)_{N+k}}{(sv;q)_{n}(sv;q)_{N}}
		\\\times\frac{(s^{2};q)_{n}}{(s^{2};q)_{k}}
		\frac{1}{(s/v;q^{-1})_{n-k}}
		\prod_{j=1}^{N}
		\bigg(
		\frac{q^{j-1}v-s}{1-s q^{j-1}v}
		\bigg)^{\nu_j}.
	\end{multline}
\end{corollary}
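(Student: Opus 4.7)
The plan is to apply the symmetrization formulas from Theorem \ref{thm:symmetrization} directly, and exploit the standard fact that under the principal specialization the sum over $\Sym_M$ (resp. $\Sym_N$) collapses to a single term corresponding to the identity permutation. Indeed, after applying a permutation $\sigma$, the numerator factor $\prod_{\alpha<\beta}(u_{\sigma(\alpha)}-qu_{\sigma(\beta)})$ evaluated at $u_i = q^{i-1}u$ acquires a zero whenever there is a pair $\alpha<\beta$ with $\sigma(\alpha)=\sigma(\beta)+1$, since then $u_{\sigma(\alpha)}-qu_{\sigma(\beta)} = q^{\sigma(\beta)}u-q^{\sigma(\beta)}u=0$. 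Any non-identity $\sigma$ must fail to place some consecutive pair of values $k,k+1$ in increasing positional order, so taking $\alpha=\sigma^{-1}(k{+}1)$ and $\beta=\sigma^{-1}(k)$ exhibits such a vanishing factor. Hence only $\sigma=\mathrm{id}$ contributes.

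For part 1, this reduces \eqref{F_symm_formula} at $u_i = q^{i-1}u$ to
\[
\F_\mu(u,qu,\ldots,q^{M-1}u)=\frac{(1-q)^{M}}{(su;q)_M}
\prod_{1\le\alpha<\beta\le M}\frac{q^{\alpha-1}u-q^{\beta}u}{q^{\alpha-1}u-q^{\beta-1}u}
\prod_{i=1}^{M}\left(\frac{q^{i-1}u-s}{1-sq^{i-1}u}\right)^{\mu_i}.
\]
The middle product equals $\prod_{\alpha<\beta}\frac{1-q^{\beta-\alpha+1}}{1-q^{\beta-\alpha}}$, which after grouping by $k=\beta-\alpha$ telescopes to $(q;q)_M/(1-q)^M$. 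This cancels the $(1-q)^M$ out front and yields the right-hand side of \eqref{F_prin_spec_simple}.

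For part 2, the same vanishing argument reduces \eqref{G_symm_formula} at $v_j=q^{j-1}v$ to the single identity term. The Vandermonde-type factor telescopes as before to $(q;q)_N/(1-q)^N$, and combining with the prefactors produces
\[
\frac{(s^2;q)_n\,(q;q)_N}{(q;q)_{N-n+k}(s^2;q)_k\,(sv;q)_N}
\prod_{j=1}^{n-k}\left(\frac{q^{j-1}v-s}{1-sq^{j-1}v}\right)^{\nu_j}
\prod_{i=1}^{n-k}\frac{q^{i-1}v}{q^{i-1}v-s}
\prod_{j=n-k+1}^{N}(1-sq^{k+j-1}v).
\]
It remains to rewrite the two tail products in the form appearing in \eqref{G_prin_spec_simple}. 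The factor $\prod_{i=1}^{n-k}\frac{q^{i-1}v}{q^{i-1}v-s}=\prod_{i=0}^{n-k-1}(1-sq^{-i}/v)^{-1}=1/(s/v;q^{-1})_{n-k}$, while reindexing $i=k+j-1$ gives $\prod_{j=n-k+1}^{N}(1-sq^{k+j-1}v)=\prod_{i=n}^{N+k-1}(1-sq^{i}v)=(sv;q)_{N+k}/(sv;q)_n$. Substituting these identities produces the claimed formula, and one may freely extend the range of $j$ in the $\nu$-product from $n-k$ to $N$ since $\nu_j=0$ for $j>n-k$.

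The only nontrivial step is the vanishing claim for non-identity permutations; everything after that is elementary $q$-Pochhammer bookkeeping. I expect the main obstacle to be keeping the index bookkeeping in part 2 straight, particularly the careful handling of the shift by $k$ and the conversion between $(\,\cdot\,;q)$ and $(\,\cdot\,;q^{-1})$ Pochhammer symbols, but no genuinely new ideas are required beyond Theorem \ref{thm:symmetrization}.
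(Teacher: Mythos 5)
Your proof is correct and follows the same route as the paper: the principal specialization collapses the symmetrization sum to the $\sigma=\mathrm{id}$ term, and the rest is $q$-Pochhammer bookkeeping. Your descent argument makes the vanishing of non-identity terms explicit (the paper only asserts it), and your derived formula for part~1 is what \eqref{F_prin_spec_simple} intends --- the printed display there has a typo in its product indices.
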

Substituting a geometric sequence with ratio $q$
into a function 
$\F$ or $\G$ will be referred to as the 
\emph{principal specialization} of these symmetric functions.
\begin{proof}
	The substitutions of geometric sequences 
	into $\F$ or $\G$ make all terms except the one with 
	$\sigma=\mathrm{id}$ vanish due to the presence
	of the cross term 
	$\sigma\Big(\prod_{1\le\aind<\bind\le M}
	\frac{u_\aind-qu_\bind}{u_\aind-u_\bind}\Big)$.
	For $\sigma=\mathrm{id}$ this cross term is equal to 
	$(q;q)_M/(1-q)^M$.
	The rest is obtained in a straightforward way 
	by evaluating the remaining parts of the formulas.
\end{proof}

The proof of Theorem \ref{thm:symmetrization} occupies the rest of this subsection.
\begin{proof}[Proof of \eqref{F_symm_formula}]
\textbf{Step 1.}
To obtain an explicit formula for 
$\F_\mu(u_1,\ldots,u_M)$,
we need to understand how 
the operator
$\BY(u_1)\ldots\BY(u_M)$
acts on the vector
$(\bv_{0}\otimes \bv_{0}\otimes\ldots)$. 
Let us first consider
what happens in the physical space 
containing just two tensor factors,
which puts us into the setting described in \S \ref{sub:attaching_vertical_columns}.
We have from \eqref{T_V1V2}:
\begin{align}\label{B_two_factor_tensor_F_proof}
	\BY(u)=
	\BY_1(u)\AY_2(u)+\DY_1(u)\BY_2(u),
\end{align}
where the lower indices in the operators 
in the right-hand side stand for 
the spaces in which they act. 
The operators in the right-hand side 
act as in \eqref{ABCD_elementary_operators}.
Recall that any two operators with different lower indices commute.

When we multiply together a number of operators $\BY(u)$
(with different spectral $u$-parameters)
and open the parentheses, we collect several 
factors $\BY_1$ and $\DY_1$, 
and several other factors
$\AY_2$ and $\BY_2$. Using the Yang--Baxter commutation
relations \eqref{YB_relationBD} and \eqref{YB_relationAB}, 
we can swap these operators
at the expense of picking certain prefactors, 
and also this swapping of operators could lead to an exchange of their spectral parameters.
Therefore, we can write 
$\BY(u_1)\ldots\BY(u_M)(\bv_{0}\otimes \bv_{0})$
as a linear combination of vectors of the form
\begin{align}\label{vectors_of_the_form_F_proof}
	\BY_1(u_{k_1})\ldots
	\BY_1(u_{k_{M-r}})
	\DY_1(u_{\ell_1})\ldots
	\DY_1(u_{\ell_r})\,\bv_0\otimes
	\BY_2(u_{i_1})\ldots
	\BY_2(u_{i_r})
	\AY_2(u_{j_1})\ldots
	\AY_2(u_{j_{M-r}})\,\bv_0,
\end{align}
with
\begin{align*}
	\begin{array}{lll}
		\IS=\{i_1<\ldots<i_r\},&\quad \JS=\{j_1<\ldots<j_{M-r}\},&\quad
		\IS\sqcup\JS=\{1,\ldots,M\},\\\rule{0pt}{14pt}
		\KS=\{k_1<\ldots<k_{M-r}\},&\quad \LS=\{\ell_1<\ldots<\ell_{r}\},&\quad
		\KS\sqcup\LS=\{1,\ldots,M\}.
	\end{array}
\end{align*}

\smallskip\noindent\textbf{Step 2.}
The coefficients of the vectors \eqref{vectors_of_the_form_F_proof} 
are computed using only the commutation relations 
\eqref{YB_relationBD} and \eqref{YB_relationAB},
and we argue that these coefficients 
\emph{do not depend} on how exactly we apply the commutation relations
to reach the result.
This property is based on the fact that for generic spectral parameters, there exists a representation
of $\begin{bmatrix}
	\AY(u)&\BY(u)\\\CY(u)&\DY(u)
\end{bmatrix}$
subject to the same commutation relations, and a \emph{highest weight vector} $\mathsf{v}_0$
in that representation,\footnote{Meaning that $\mathsf{v}_0$ is annihilated by $\CY(u)$ and is an eigenvector
for $\AY(u)$ and $\DY(u)$.} such that 
vectors $\Big(\prod_{j\in\JS}\BY(u_j)\Big)\mathsf{v}_0$, with $\JS$ ranging over all subsets of $\{1,2,\ldots,M\}$,
are linearly independent.
This is shown in \cite[Lemma\;14]{FelderVarchenko1996}, and we will not repeat the argument here.

Knowing this fact, if we have two ways of applying the commutation relations 
which yield different coefficients of the
vectors \eqref{vectors_of_the_form_F_proof},
then we can apply these commutation relations in the above highest weight representation,
and this would lead to a contradiction with the linear independence property.

\smallskip\noindent\textbf{Step 3.}
Our next goal is to show that the 
coefficient of each vector of the form \eqref{vectors_of_the_form_F_proof}
vanishes unless $\IS\cap\KS=\varnothing$. We argue by induction on $M$.
For $M=1$, the application of the operator \eqref{B_two_factor_tensor_F_proof}
(with $u=u_1$)
to $\bv_0\otimes\bv_0$
obviously has this property.
When we apply the next operator 
$\BY(u_2)$, we see that 
the sets $\IS$ and $\KS$ could grow by the element $2$,
and that they can also lose 
the element $1$
in the process of commuting the 
$\DY$'s and the $\AY$'s to the right.
However, the sets $\IS$ and $\KS$
cannot gain the element $1$. This means that $1\notin \IS\cap\KS$.
However, we could have applied  
$\BY(u_1)\BY(u_2)=\BY(u_2)\BY(u_1)$
in the opposite order, which implies (by the uniqueness of the coefficients)
that $2\notin\IS\cap\KS$. Therefore, 
$\IS\cap\KS=\varnothing$ for $M=2$. Clearly, we can continue this argument
with more factors in the same way, and conclude that 
$\IS\cap\KS=\varnothing$ for any $M$. 

\smallskip\noindent\textbf{Step 4.}
Since $\IS\sqcup\JS=\KS\sqcup\LS=\{1,\ldots,M\}$, we see that $\IS=\LS$ and $\KS=\JS$.
This implies that the desired action of a product of the $\BY$ operators takes the form
\begin{align}
	\BY(u_1)\ldots
	\BY(u_M)(\bv_0\otimes\bv_0)=
	\sum_{\KS\subseteq\{1,\ldots,M\}}
	C_{\KS}
	\bigg(
	\prod_{k\in\KS}\BY_1(u_k)
	\prod_{\ell\notin\KS}\DY_1(u_\ell)
	\bigg)\,\bv_0\otimes
	\bigg(
	\prod_{\ell\notin\KS}\BY_2(u_\ell)
	\prod_{k\in\KS}\AY_2(u_k)
	\bigg)\,\bv_0,
	\label{desired_action_F_proof}
\end{align}
with some uniquely defined coefficients 
$C_\KS(u_1,\ldots,u_M)$, where
$\KS\subseteq\{1,\ldots,M\}$.

Now, since we obviously can permute the spectral parameters
$u_j$ without changing the desired action \eqref{desired_action_F_proof},
by uniqueness of the coefficients
we must have
\begin{align*}
	C_{\KS}(u_{\sigma(1)},\ldots,u_{\sigma(M)})
	=C_{\sigma(\KS)}(u_1,\ldots,u_M)
	\qquad \textnormal{for all $\sigma\in\Sym_M$}.
\end{align*}
Thus, it suffices to compute these coefficients
for $\KS=\{1,2,\ldots,r\}$ for each $r=1,2,\ldots,M$. This can be done by simply opening the 
parentheses in
\begin{align}
	\big(
		\BY_1(u_1)\AY_2(u_1)+\DY_1(u_1)\BY_2(u_1)
	\big)
	\cdots
	\big(
		\BY_1(u_M)\AY_2(u_M)+\DY_1(u_M)\BY_2(u_M)
	\big),
	\label{opening_parentheses_F_proof}
\end{align}
because the only way to end up with 
the vector
\begin{align*}
	\BY_1(u_1)\cdots\BY_1(u_r)
	\DY_1(u_{r+1})\cdots\DY_1(u_{M})\,\bv_0
	\otimes
	\BY_2( u_{r+1})\cdots\BY_2( u_{M})
	\AY_2( u_1)\cdots\AY_2( u_r)\,\bv_0
\end{align*}
is to use the first summand in \eqref{opening_parentheses_F_proof}
for $j=1,\ldots,r$, the second summand 
for $j=r+1,\ldots,M$, and commute all the $\AY_2$'s through the $\BY_2$'s
without swapping the spectral parameters.
From \eqref{YB_relationAB} we readily have
\begin{align}
	\AY(w_1)\BY(w_2)=
	\frac{w_2-qw_1}{w_2-w_1}\,
	\BY(w_2)\AY(w_1)
	-
	\frac{(1-q)w_1}{w_2-w_1}\,
	\BY(w_1)\AY(w_2)
	,\label{AB=BA_BA_YB_relation}
\end{align}
and we are only interested in the first summand above.
Our commutations thus give the coefficient
\begin{align*}
	C_{\{1,2,\ldots,r\}}(u_1,\ldots,u_M)=
	\prod_{\aind=1}^{r}\prod_{\bind=r+1}^{M}\frac{u_{\bind}-qu_{\aind}}{u_{\bind}-u_{\aind}},
\end{align*}
and so we have
\begin{multline}
	\BY(u_1)\ldots
	\BY(u_M)(\bv_0\otimes\bv_0)
	\\=\sum_{\KS\subseteq\{1,\ldots,M\}}
	\prod_{\substack{\aind\in\KS\\\bind\notin \KS}}
	\frac{u_{\bind}-qu_{\aind}}{u_{\bind}-u_{\aind}}
	\bigg(
	\prod_{k\in\KS}\BY_1(u_k)
	\prod_{\ell\notin\KS}\DY_1(u_\ell)
	\bigg)\,\bv_0\otimes
	\bigg(
	\prod_{\ell\notin\KS}\BY_2(u_\ell)
	\prod_{k\in\KS}\AY_2(u_k)
	\bigg)\,\bv_0.\label{pre_final_two_factors_F_proof}
\end{multline}
Recall that $\bv_0$ is an eigenvector for $\DY_1$ and $\AY_2$, and
introduce the notation $\ay_{1,2}$ and $\dy_{1,2}$ by
\begin{align}
	\DY_{j}(u)\,\bv_0=\dy_{j}(u)\,\bv_0,
	\qquad
	\AY_{j}(u)\,\bv_0=\ay_{j}(u)\,\bv_0,
	\qquad
	j=1,2.
	\label{ay_dy_notation}
\end{align}
Thus, $\ay_{1,2}$ and $\dy_{1,2}$
are eigenvalues (scalars).\footnote{
When $\bv_0$ is the highest weight vector in the 
representation $V$ of \S \ref{sub:yang_baxter_relation_in_operator_language}, these eigenvalues
can be read off \eqref{ABCD_elementary_operators}.
However, in Step 5 below we will use
notation \eqref{ay_dy_notation}
for highest weight vectors of representations obtained
by tensoring several such $V$'s.}
Hence our final result \eqref{pre_final_two_factors_F_proof} 
for two tensor factors can be rewritten in the following form:
\begin{align}
	\BY(u_1)\ldots
	\BY(u_M)(\bv_0\otimes\bv_0)
	=\sum_{\KS\subseteq\{1,\ldots,M\}}
	\dy_1(\KSB)\ay_2(\KS)
	\prod_{\substack{\aind\in\KS\\\bind\notin \KS}}
	\frac{u_{\bind}-qu_{\aind}}{u_{\bind}-u_{\aind}}\,
	(\BY_1(\KS)\,\bv_0)\otimes
	(\BY_2(\KSB)\,\bv_0),
	\label{final_two_factors_F_proof}
\end{align}
where we have abbreviated
\begin{align}\label{abbreviation_F_proof}
	\KSB:=\{1,\ldots,M\}\setminus\KS,\qquad
	f(\KS):=\prod_{k\in\KS}f(u_k).
\end{align}

\smallskip\noindent\textbf{Step 5.}
In this form the 
formula \eqref{final_two_factors_F_proof} 
for two tensor factors
can be immediately extended to arbitrarily many tensor factors.
Indeed, let us think of the second vector $\bv_0$
as $\tilde\bv_0\otimes\tilde\bv_0$. Then we can use 
\eqref{final_two_factors_F_proof}
to evaluate $\BY_2(\KSB)\,\bv_0=\BY_2(\KSB)(\tilde\bv_0\otimes\tilde\bv_0)$,
split the second $\tilde \bv_0$ again, and so on. 

Therefore, we obtain the 
final formula for the action of
$\BY(u_1)\ldots\BY(u_M)$
on the vector
$(\bv_{0}\otimes \bv_{0}\otimes\ldots)$:
\begin{multline}
	\BY(u_1)\ldots\BY(u_M)
	(\bv_{0}\otimes \bv_{0}\otimes\ldots)
	\\=\sum_{\substack{\KS_0,\KS_1,\ldots\subseteq\{1,\ldots,M\}\\
	\KS_0\sqcup\KS_1\sqcup \ldots
	=\{1,2,\ldots,M\}}}
	\prod_{0\le i<j}\dy_i(\KS_j)\ay_j(\KS_i)
	\prod_{\substack{\aind\in\KS_i\\\bind\in \KS_j}}
	\frac{u_{\bind}-qu_{\aind}}{u_{\bind}-u_{\aind}}
	\Big(
	\BY_0(\KS_0)\,\bv_0\otimes
	\BY_1(\KS_1)\,\bv_0\otimes \ldots
	\Big).
	\label{final_many_factors_F_proof}
\end{multline}

To finish the derivation of \eqref{F_symm_formula}, we need to recall the
action \eqref{ABCD_elementary_operators}
of the operators $\AY,\BY$, and $\DY$
in the ``elementary'' physical space $\Span\{\bv_i\colon i=0,1,2,\ldots\}$.
We have 
\begin{align}\nonumber
	\ay_j(\KS)&=1
	,\qquad \qquad
	\dy_j(\KS)=
	\prod_{k\in\KS}
	\frac{u_k-s}{1-su_k},
	\\
	\BY_j(\KS)\,\bv_0&=
	\frac{(q;q)_{|\KS|}}{\prod_{k\in\KS}(1-s u_k)}\,\bv_{|\KS|}
	=
	\frac{(1-q)^{|\KS|}}{\prod_{k\in\KS}(1-s u_k)}
	\bigg(\sum_{\sigma\in\Sym(\KS)}
	\sigma\Big(\prod_{\substack{\aind<\bind\\\aind,\bind\in\KS}}
	\frac{u_\aind-qu_\bind}{u_\aind-u_\bind}\Big)
	\bigg)\,
	\bv_{|\KS|}
	,\label{BjK_cross_F_proof}
\end{align}
where for $\BY_j(\KS)$ we have used the
symmetrization formula
\cite[Ch.\;III.1, formula\;(1.4)]{Macdonald1995}\footnote{That is, for any $r\in\Z_{\ge1}$,
we have
$\displaystyle \sum_{\omega\in\Sym_r}\prod_{1\le \aind<\bind\le r}
\frac{u_{\omega(\aind)}-q u_{\omega(\bind)}}
{u_{\omega(\aind)}-u_{\omega(\bind)}}=\frac{(q;q)_{r}}{(1-q)^{r}}$.\label{symm_footnote}}
to insert an additional sum over permutations
of $\KS$ (here $\Sym(\KS)$ denotes the group of permutations of $\KS$,
and $\sigma$ acts by permuting the corresponding variables).

To read off the coefficient of
$\bv_\mu=\bv_{m_0}\otimes \bv_{m_1}\otimes \bv_{m_2}\otimes \ldots$,
$\mu\in\signp M$, in \eqref{final_many_factors_F_proof}, we must have
$|\KS_i|=m_i$ for all $i\ge0$.
Let us fix one such partition 
$\KS_0\sqcup\KS_1\sqcup \ldots=\{1,\ldots,M\}$.
For each $\aind\in\{1,\ldots,M\}$, let $\ks(\aind)\in\Z_{\ge0}$ 
denote the number $j$ such that $\aind\in\KS_j$.
Then we can write
\begin{align*}
	\prod_{0\le i<j}\dy_i(\KS_j)
	=
	\prod_{r=1}^{M}\left(
	\frac{u_r-s}{1-su_r}\right)^{\ks(r)}.
\end{align*}
Note that this product does
not change if we permute 
the $u_i$'s within the sets $\KS_j$ as in \eqref{BjK_cross_F_proof}.
Furthermore,
we can also write
\begin{align}
	\prod_{0\le i<j}
	\prod_{\substack{\aind\in\KS_i\\\bind\in \KS_j}}
	\frac{u_{\bind}-qu_{\aind}}{u_{\bind}-u_{\aind}}
	=
	\prod_{\substack{1\le \aind,\bind\le M\\\ks(\aind)<\ks(\bind)}}
	\frac{u_{\bind}-qu_{\aind}}{u_{\bind}-u_{\aind}}.
	\label{dj_cross_F_proof}
\end{align}
We then combine this with the coefficients 
coming from $\BY_j(\KS_{j})\,\bv_0$
which involve summations over
permutations within the sets $\KS_j$, and 
compare the result with the desired formula \eqref{F_symm_formula}.

Clearly, fixing a partition into the $\KS_j$'s
corresponds to considering
only permutations $\sigma\in\Sym_M$ in \eqref{F_symm_formula}
which place each $i\in\{1,\ldots,M\}$ into $\KS_{\ks(i)}$.
This is the mechanism which gives rise 
to the summations
over
permutations within the sets $\KS_j$.
One can readily check that the summands agree, and thus \eqref{F_symm_formula} 
is established.

\smallskip\noindent\textbf{Example.}
To illustrate the last step of the proof
in which we match our computation
to \eqref{F_symm_formula}, let us consider a concrete example
$\mu=(2,2,1)$. 
By \eqref{F_symm_formula}, we have
\begin{multline*}
	\F_\mu(u_1,u_2,u_3)
	=
	\frac{(1-q)^{3}}{(1-su_1)(1-su_2)(1-su_3)}
	\\\times\sum_{\sigma\in\Sym_3}
	\sigma\Bigg(
	\frac{(u_1-qu_2)(u_1-qu_3)(u_2-qu_3)}
	{(u_1-u_2)(u_1-u_3)(u_2-u_3)}
	\left(\frac{u_1-s}{1-su_1}\right)^{2}
	\left(\frac{u_2-s}{1-su_2}\right)^{2}
	\left(\frac{u_3-s}{1-su_3}\right)
	\Bigg).
\end{multline*}
Fix the partition $\{1,2,3\}=\KS_1\sqcup\KS_2$,
where $\KS_1=\{2\}$ and $\KS_2=\{1,3\}$,
which corresponds to permutations
$\sigma=(132)$ and $\sigma=(312)$.
These permutations yield the following cross-terms, respectively:
\begin{align*}
	\frac{(u_1-qu_3)(u_1-qu_2)(u_3-qu_2)}
	{(u_1-u_3)(u_1-u_2)(u_3-u_2)},
	\qquad
	\frac{(u_3-qu_1)(u_3-qu_2)(u_1-qu_2)}
	{(u_3-u_1)(u_3-u_2)(u_1-u_2)}.
\end{align*}
The factor
$\frac{(u_1-qu_2)(u_3-qu_2)}
{(u_1-u_2)(u_3-u_2)}$
can be matched to \eqref{dj_cross_F_proof}, 
and the remaining factors $\frac{u_1-qu_3}{u_1-u_3}$
and $\frac{u_3-qu_1}{u_3-u_1}$
correspond to the summation within $\KS_2$ coming 
from \eqref{BjK_cross_F_proof}.
\end{proof}

\begin{remark}
	Formula \eqref{F_symm_formula} that we just established
	links the algebraic and the coordinate Bethe ansatz.
	Its proof given above closely follows 
	the proof of Theorem 5 in Section 8 of
	\cite{FelderVarchenko1996}.
	The key relation \eqref{final_many_factors_F_proof} without proof can be found in 
	\cite[Appendix VII.2]{QISM_book}.
\end{remark}

\begin{proof}[Proof of \eqref{G_symm_formula}]
\textbf{Step 1.}
We will use the same approach as in the proof of \eqref{F_symm_formula}
to get an explicit formula for 
$\G_\nu(v_1,\ldots,v_N)$.
That is, we need to compute
$\AY(v_1)\ldots\AY(v_N)(\bv_{n}\otimes \bv_{0}\otimes \bv_{0}\otimes\ldots)$.
We start with just two tensor factors, and consider the application of 
this operator to $\bv_n\otimes\bv_0$. 
After that we will use \eqref{F_symm_formula} to turn the second $\bv_0$
into $\bv_0\otimes\bv_0\otimes \ldots$.

For two tensor factors we have from \eqref{T_V1V2}:
\begin{align}\label{AY_two_factor_G_proof}
	\AY(v)=\AY_1(v)\AY_2(v)+\CY_1(v)\BY_2(v).
\end{align}
Taking the product $\AY(v_1)\ldots\AY(v_N)$ and opening the parentheses,
we can use the commutation relations \eqref{YB_relationAC} and \eqref{YB_relationAB} to 
express the result as a linear combination of vectors of the form
\begin{align}\label{vectors_of_the_form_G_proof}
	\AY_1(v_{k_1})\ldots
	\AY_1(v_{k_{N-r}})
	\CY_1(v_{\ell_1})\ldots
	\CY_1(v_{\ell_r})\,\bv_n\otimes
	\BY_2(v_{i_1})\ldots
	\BY_2(v_{i_r})
	\AY_2(v_{j_1})\ldots
	\AY_2(v_{j_{N-r}})\,\bv_0,
\end{align}
with 
\begin{align*}
	\begin{array}{lll}
		\IS=\{i_1<\ldots<i_r\},&\quad \JS=\{j_1<\ldots<j_{N-r}\},&\quad
		\IS\sqcup\JS=\{1,\ldots,N\},\\\rule{0pt}{14pt}
		\KS=\{k_1<\ldots<k_{N-r}\},&\quad \LS=\{\ell_1<\ldots<\ell_{r}\},&\quad
		\KS\sqcup\LS=\{1,\ldots,N\}.
	\end{array}
\end{align*}

\smallskip\noindent\textbf{Step 2.}
Again, the key point is that the 
coefficients by vectors of the form \eqref{vectors_of_the_form_G_proof}
are uniquely determined by the commutation relations, and do not depend on the order of commuting.
The uniqueness argument here is very similar
to the one in Step 2 of the proof of \eqref{F_symm_formula}, and we will 
not repeat it.

\smallskip\noindent\textbf{Step 3.}
We now observe that we must have $\IS=\LS$ and $\JS=\KS$.
Indeed, let us show that $\IS\cap\KS=\varnothing$, which would imply the claim. 
We argue by induction. The case of $N=1$ is obvious. 
When we then apply the next operator
$\AY(v_2)$ to \eqref{AY_two_factor_G_proof}
(with $v=v_1$)
and use the commutation relations to write all vectors in the required form
\eqref{vectors_of_the_form_G_proof},
neither $\IS$ nor $\KS$ can gain index $1$, 
exactly in the same way as 
in Step 3 of the proof of \eqref{F_symm_formula}. The fact that
$1\notin\IS\cap\KS$ does not change after we apply all other operators
$\AY(v_j)$, $j=3,\ldots,N$. Since the order of factors
in 
$\AY(v_1)\ldots\AY(v_N)$ does not matter, we conclude that $\IS\cap\KS=\varnothing$
for any $N$.

\smallskip\noindent\textbf{Step 4.}
We thus conclude that 
\begin{align}\label{step4_action_of_A_G_proof}
	\AY(v_1)\ldots\AY(v_N)
	(\bv_n\otimes\bv_0)
	=
	\sum_{\KS\subseteq\{1,2,\ldots,N\}}
	C_\KS
	\big(\AY_1(\KS)\CY_1(\KSB)\big)\,\bv_n\otimes
	\big(\BY_2(\KSB)\AY_2(\KS)\big)\,\bv_0,
\end{align}
where we are using the abbreviation \eqref{abbreviation_F_proof}.
Here the coefficients $C_\KS$
are uniquely determined,
and satisfy
\begin{align*}
	C_{\KS}(v_{\sigma(1)},\ldots,v_{\sigma(N)})
	=C_{\sigma(\KS)}(v_1,\ldots,v_N)
	\qquad \textnormal{for all $\sigma\in\Sym_N$}.
\end{align*}
Thus, we need to compute only the coefficients $C_\KS$
for $\KS=\{r+1,\ldots,N\}$, where $r=1,2,\ldots,N$. 
Since $\nu\in\signp n$ has exactly $k$ zero coordinates
(see \eqref{G_symm_formula}), we must have $|\KSB|=r=n-k$. 
Indeed, this is because $\CY_1(\KSB)$ is responsible for 
moving some of $n$ arrows to the right from the location $0$.

The coefficients $C_{\{r+1,\ldots,N\}}$ can thus 
be computed by simply opening the 
parentheses in
\begin{align*}
	\big(\AY_1(v_N)\AY_2(v_N)+
	\CY_1(v_N)\BY_2(v_N)\big)\ldots
	\big(\AY_1(v_1)\AY_2(v_1)+
	\CY_1(v_1)\BY_2(v_1)\big)
	,
\end{align*}
and noting that there is a unique way of reaching $\KS=\{r+1,\ldots,N\}$: 
pick the first summands in the first $N-r=N-n+k$ factors, 
the second summands in the last $r= n-k$ factors,
and after that move $\AY_2(\KS)$ to the right of $\BY_2(\KSB)$
without swapping the spectral parameters in the process of commuting.
Using \eqref{AB=BA_BA_YB_relation} (where we are 
interested only in the first term in the right-hand side),
we thus get the product
\begin{align*}
	C_{\{n-k+1,\ldots,N\}}(v_1,\ldots,v_N)=
	\prod_{\aind=1}^{n-k}\prod_{\bind=n-k+1}^{N}
	\frac{v_\aind-q v_\bind}{v_\aind-v_\bind}.
\end{align*}
Next, we note that $\AY_2(\KS)\,\bv_0=\bv_0$ and that (from 
\eqref{ABCD_elementary_operators})
\begin{align*}
	\AY_1(\KS)\CY_1(\KSB)\,\bv_n=
	\frac{(1-s^{2}q^{n-1})\ldots(1-s^{2}q^{k})v_1 \ldots v_{n-k}}{(1-sv_1)\ldots(1-sv_{n-k})}
	\prod_{j=n-k+1}^{N}\frac{1-sq^{k}v_j}{1-sv_j}\cdot\bv_k.
\end{align*}

\smallskip\noindent\textbf{Step 5.}
What remains unaccounted for in \eqref{step4_action_of_A_G_proof} 
is $\BY_2(\KS)\,\bv_0=\BY_2(v_1)\ldots\BY_2(v_{n-k})\,\bv_0$. 
But this was computed earlier in the proof of 
\eqref{F_symm_formula}, and we can also immediately 
take the second vector to be $\bv_0\otimes\bv_0\otimes \ldots$ instead of just $\bv_0$. 
Thus, by \eqref{BY_semi_infinite}, we have
\begin{align}\label{B_action_G_proof}
	\BY(v_1)\ldots\BY(v_{n-k})\big(\bv_0\otimes\bv_0\otimes \ldots\big)
	=\sum_{\kappa\in\signp {n-k}}
	\F_\kappa(v_1,\ldots,v_{n-k})\,\bv_\kappa,
\end{align}
and so the coefficient of $\bv_\nu$ (with 
$\nu\in\signp n$ having exactly $k$ zero coordinates)
in $\AY(v_1)\ldots\AY(v_N)(\bv_{n}\otimes \bv_{0}\otimes \bv_{0}\otimes\ldots)$
is equal to
\begin{align}\label{G_coeff_by_enu_G_proof}
	\frac{(s^{2};q)_{n}}{(s^{2};q)_{k}}
	\sum_{\substack{\KSB\subseteq\{1,\ldots,N\}\\|\KSB|=n-k}}
	\prod_{i\in\KSB}\frac{v_i}{1-sv_i}
	\prod_{j\in\KS}\frac{1-sq^{k}v_j}{1-sv_j}
	\prod_{\substack{\aind\in\KSB\\\bind\in\KS}}
	\frac{v_\aind-q v_\bind}{v_\aind-v_\bind}
	\F_{(\nu_1-1,\ldots,\nu_{n-k}-1)}(\{v_i\}_{i\in\KSB}).
\end{align}
Indeed, the signature $\kappa$ in \eqref{B_action_G_proof} 
corresponds to nonzero parts in $\nu$, and coordinates in $\kappa$ are counted starting from location 1
(hence the shifts $\nu_i-1$).

To match \eqref{G_coeff_by_enu_G_proof} to \eqref{G_symm_formula},
we use formula \eqref{F_symm_formula} to write 
$\F_{(\nu_1-1,\ldots,\nu_{n-k}-1)}$ as a sum over permutations of $\KSB$, and 
insert an additional symmetrization over $\KS$ 
(see footnote$^{\ref{symm_footnote}}$):
\begin{align*}
	1=\frac{(1-q)^{N-n+k}}{(q;q)_{N-n+k}}
	\sum_{\substack{\sigma\colon\KS\to\KS\\\textnormal{$\sigma$ is a bijection}}}
	\sigma\Bigg(\prod_{\substack{\aind,\bind\in\KS\\\aind<\bind}}
	\frac{v_{\aind}-qv_{\bind}}{v_{\aind}-v_{\bind}}
	\Bigg).
\end{align*}
After that one readily checks that \eqref{G_coeff_by_enu_G_proof}
coincides with the desired expression.
This concludes the proof of 
Theorem \ref{thm:symmetrization}.
\end{proof}



\section{Stochastic weights and fusion} 
\label{sec:stochastic_weights_and_fusion}

One key object of the present notes 
is the set of probability measures afforded by the Cauchy identities of 
\S \ref{sub:limits_of_yang_baxter_commutation_relations_cauchy_type_identities}.
We describe and study 
them in \S \ref{sec:markov_kernels_and_stochastic_dynamics} below.
Here we discuss the \emph{fusion procedure} 
on which some 
of the constructions of \S \ref{sec:markov_kernels_and_stochastic_dynamics}
are based.

\subsection{Stochastic weights $\Lmatr_{u}$} 
\label{sub:stochastic_weights}

If we assume that
\begin{align}\label{stochastic_weights_condition_qsxi}
\textnormal{$0<q<1$ and $-1<s<0$,}
\end{align}
and, moreover, that
\begin{align}\label{stochastic_weights_condition_u}
\textnormal{all spectral parameters $u_i$ are nonnegative,}
\end{align}
then all the vertex weights
$w_{u}$, 
$w_{u}^{\conj}$, 
and 
$\Lmatr_{u}$
(see Fig.~\ref{fig:vertex_weights} and \ref{fig:vertex_weights_conj_stoch})
are nonnegative.
Under these assumptions, \eqref{L_sum_to_one} implies that 
the stochastic weights
$\Lmatr_{u}(i_1,j_1;i_2,j_2)$, where $i_1,i_2\in\Z_{\ge0}$
and $j_1,j_2\in\{0,1\}$,
define a \emph{probability distribution}
on all possible output arrow configurations
$\big\{(i_2,j_2)\in\Z_{\ge0}\times\{0,1\}\colon i_2+j_2=i_1+j_1\big\}$
given the input arrow configuration $(i_1,j_1)$.
We will use conditions 
\eqref{stochastic_weights_condition_qsxi}--\eqref{stochastic_weights_condition_u}
to define Markov dynamics in \S \ref{sec:markov_kernels_and_stochastic_dynamics} below.

The conditions \eqref{stochastic_weights_condition_qsxi}--\eqref{stochastic_weights_condition_u} are sufficient but not necessary 
for the nonnegativity of the $\Lmatr_{u}$'s; 
for other conditions see \cite[Prop.\;2.3]{CorwinPetrov2015}
and also \S \ref{sub:asep_degeneration} and
\S \ref{sub:general_j_dynamics_and_q_hahn_degeneration} below.

\begin{remark}\label{rmk:q_or_s_zero}
	We will
	always 
	assume that the parameters $q$ and $s$
	are nonzero. In fact, without this assumption
	the weights
	$\Lmatr_{u}$ may still 
	define probability distributions. If $q$ or $s$
	vanish, then some of our statements remain valid and simplify, but
	we will not focus on the necessary modifications.
\end{remark}

\begin{remark}\label{rmk:sign_of_SP}
	Since the stochastic weights $\Lmatr_{u}$
	depend on $s$ and $u$ only through 
	$s u$ and $s^{2}$, they are invariant under 
	the simultaneous change of sign of both $s$ and $u$.
	We have chosen $s$ to be negative, and $u$ will be 
	nonnegative.
\end{remark}


\subsection{Fusion of stochastic weights} 
\label{sub:fusion_of_stochastic_weights}

For each $J\in\Z_{\ge1}$,
we will now define certain
more general stochastic vertex weights 
$\LJ{J}_{u}(i_1,j_1;i_2,j_2)$, 
where $(i_1,j_1), (i_2,j_2)\in\Z_{\ge0}\times\{0,1,\ldots,J\}$.
That is, we want to relax the restriction that the horizontal arrow
multiplicities are bounded by 1, and consider multiplicities bounded by any fixed $J\ge1$.
When $J=1$, the vertex weights $\LJ{J}_{u}$ will coincide with $\Lmatr_{u}$.
Of course, we want the new weights
$\LJ{J}_{u}$ to share some of the nice properties of the $\Lmatr_{u}$'s;
most importantly, the $\LJ{J}_{u}$'s should satisfy a version of the Yang--Baxter equation.
The construction of the weights 
$\LJ{J}_{u}$ follows the so-called \emph{fusion procedure}, which was invented 
in a representation-theoretic context \cite{KulishReshSkl1981yang} (see also \cite{KR1987Fusion})
to produce higher-dimensional solutions of the Yang--Baxter equation
from lower-dimensional ones. Following \cite{CorwinPetrov2015}, here
we describe the fusion procedure in purely combinatorial/probabilistic terms.

We will need the following definition.
\begin{definition}\label{def:q_exch}
	A probability distribution $P$ on $\{0,1\}^{J}$
	is called \emph{$q$-exchangeable} if 
	the probability weights $P(\vec h)$, 
	$\vec h=(h^{\scriptscriptstyle(1)},\ldots,h^{\scriptscriptstyle(J)})\in\{0,1\}^{J}$,
	depend on $\vec h$ in the following way:
	\begin{align}\label{P_bar_exchangeable}
		P(\vec h)=\tilde P(j)\cdot \frac{q^{\sum_{r=1}^{J}(r-1)h^{\scriptscriptstyle(r)}}}{Z_{j}(J)},
		\qquad \qquad
		j:=\sum_{r=1}^{J}h^{\scriptscriptstyle(r)},
	\end{align}
	where $\tilde P$ is a probability distribution on $\{0,1,\ldots,J\}$.
	In words, 
	for 
	a fixed sum of coordinates $j$,
	the weights of the conditional distribution  
	of $\vec h$
	are proportional to the product of the factors
	$q^{r-1}$ for each 
	coordinate ``1''
	at location $r\in\{1,\ldots,J\}$. 
	The normalization constant $Z_j(J)$
	is given by the following expression involving the $q$-binomial 
	coefficient:\footnote{Indeed, 
	$Z_j(J)$ is the sum of
	$q^{\sum_{r=1}^{J}(r-1)h^{(r)}}$ over all $\vec h\in\{0,1\}^{J}$ with 
	$\sum_{r=1}^{J}h^{(r)}=j$.
	Considering two cases $h^{(J)}=1$ or $h^{(J)}=0$, we see that it satisfies the recursion
	$Z_j(J)=q^{J-1}Z_{j-1}(J-1)+Z_{j}(J-1)$,
	with 
	$Z_0(J)=1$. This recursion is solved by \eqref{Zjj_formula}.}
	\begin{align}\label{Zjj_formula}
		Z_j(J)=q^{\frac{j(j-1)}2}\binom{J}{j}_{q}=
		q^{\frac{j(j-1)}2}\frac{(q;q)_{J}}{(q;q)_{j}(q;q)_{J-j}}.
	\end{align}

	The name ``$q$-exchangeable'' refers to the fact that 
	any exchange in $\vec h$
	of the form $10\to 01$
	multiplies the weight of $\vec h$
	by $q$.
	See \cite{Gnedin2009}, \cite{GnedinOlsh2009q} for a detailed treatment of 
	$q$-exchangeable distributions.
\end{definition}

Returning to vertex weights,
a key probabilistic feature observed in \cite{CorwinPetrov2015}
which triggers the fusion procedure is the following. Attach vertically
$J$
vertices with spectral parameters $u,qu,\ldots,q^{J-1}u$
(see Fig.~\ref{fig:J_vertex}), and assign 
to them the corresponding weights $\Lmatr_{q^{i}u}$ given by \eqref{vertex_weights_stoch}.
Fixing the numbers $i_1$ and $i_2$ of arrows at the bottom and at the top, 
we see that this vertex configuration
maps probability distributions $P_1$
on incoming arrows
$h_1^{\scriptscriptstyle(1)},\ldots,h_1^{\scriptscriptstyle(J)}$
to probability distributions
$P_2$
on outgoing arrows
$h_2^{\scriptscriptstyle(1)},\ldots,h_2^{\scriptscriptstyle(J)}$.
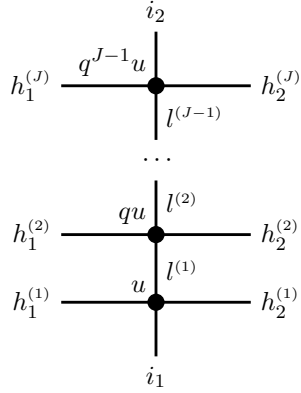
\begin{figure}[htbp]
	\scalebox{.9}{\begin{tikzpicture}
		[scale=1,very thick]
		\draw (0,0)--++(1.4,0) node [right] {$h_2^{\scriptscriptstyle(1)}$};
		\draw (0,0)--++(-1.4,0) node [left] {$h_1^{\scriptscriptstyle(1)}$};
		\draw (0,1)--++(1.4,0) node [right] {$h_2^{\scriptscriptstyle(2)}$};
		\draw (0,1)--++(-1.4,0) node [left] {$h_1^{\scriptscriptstyle(2)}$};
		\draw (0,3.2)--++(1.4,0) node [right] {$h_2^{\scriptscriptstyle(J)}$};
		\draw (0,3.2)--++(-1.4,0) node [left] {$h_1^{\scriptscriptstyle(J)}$};
		\draw (0,0)--++(0,-.8) node[below] {$i_1$};
		\draw (0,1)--++(0,.8) node [right, yshift=-9pt] {$l^{\scriptscriptstyle(2)}$};
		\draw (0,2.4)--++(0,.8) node [right, yshift=-13pt] {$l^{\scriptscriptstyle(J-1)}$};;
		\draw (0,3.2)--++(0,.8) node[above] {$i_2$};
		\draw (0,0)--++(0,1) node [right, yshift=-15pt] {$l^{\scriptscriptstyle(1)}$};
		\draw[fill] (0,0) circle (3pt) node [above left] {$\ybspec$};
		\draw[fill] (0,1) circle (3pt) node [above left] {$q\ybspec$};
		\draw[fill] (0,3.2) circle (3pt) node [above left] {$q^{J-1}\ybspec$};
		\node at (0.05,2.1) {$\cdots$};
	\end{tikzpicture}}
	\caption{Attaching $J$ vertices 
	with spectral parameters $u,qu,\ldots,q^{J-1}u$
	vertically.}
	\label{fig:J_vertex}
\end{figure}
\begin{proposition}\label{prop:L_q_exch}
	The 
	mapping $P_1\mapsto P_2$ 
	described above 
	preserves the class of
	$q$-exchangeable distributions.
\end{proposition}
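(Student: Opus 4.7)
The plan is to reduce the proposition to a two-row computation by working locally at each pair of adjacent rows. I first recall that a distribution $P$ on $\{0,1\}^J$ is $q$-exchangeable if and only if, for every $r\in\{1,\ldots,J-1\}$, fixing all entries $h^{(s)}$ with $s\ne r,r+1$ produces a conditional distribution of $(h^{(r)},h^{(r+1)})$ satisfying $P(\ldots,0,1,\ldots)=q\cdot P(\ldots,1,0,\ldots)$. Hence it suffices to verify this local $q$-swap identity for $P_2$ at each $r$.

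Fixing $r$, I compare two output configurations $\vec h_2^A$ and $\vec h_2^B$ differing only by a $(0,1)\leftrightarrow(1,0)$ swap at positions $r,r+1$, with all other output entries and both $i_1,i_2$ held fixed. Since for each $\vec h_1$ the intermediate vertical arrows $l^{(r')}$ are determined by arrow conservation, one checks that the $l^{(r')}$ differ between $A$ and $B$ only at $r'=r$, and the row weights in the corresponding products of $\Lmatr_{q^{r'-1}u}$ differ only at rows $r$ and $r+1$. Moreover, the $q$-exchangeability of $P_1$, combined with the product form $q^{\sum_s(s-1)h_1^{(s)}}$ of its weight, implies that for each fixed configuration of the other entries $(h_1^{(s)})_{s\ne r,r+1}$, the conditional distribution of $(h_1^{(r)},h_1^{(r+1)})$ (with sum $j_1^{(r,r+1)}$ determined by the external data) is $q$-exchangeable on $\{0,1\}^2$. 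The problem thus reduces to the $J=2$ case: for the two-vertex sub-stack with spectral parameters $u_1=q^{r-1}u$ and $u_2=qu_1=q^r u$, starting from any $q$-exchangeable input on $\{0,1\}^2$ and any fixed boundary values $i_1,i_2$, the output distribution on $\{0,1\}^2$ is $q$-exchangeable.

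For this $J=2$ case, only the output sector with sum $j_2=1$ yields a nontrivial identity. Partitioning inputs by $j_1=1+(i_2-i_1)\in\{0,1,2\}$, one enumerates the at most two allowed inputs $\vec h_1$, sums over the uniquely determined intermediate vertical arrow $l$, and uses the explicit weights $\Lmatr_u$ from Figure \ref{fig:vertex_weights_conj_stoch} to verify $P_2(\vec h_2=(0,1))=q\cdot P_2(\vec h_2=(1,0))$ in each case. Each verification reduces to a short polynomial identity in $s,u_1,q,q^{i_1}$; the identity is forced precisely by the two spectral parameters differing by the fusion factor $q$.

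The main obstacle is the reduction step: one must carefully exploit the product form of the $q$-exchangeable weight of $P_1$ to show that, after fixing the external configuration and conditioning on the relevant boundary values $l^{(r-1)},l^{(r+1)}$, the induced distribution on $(h_1^{(r)},h_1^{(r+1)})$ is still $q$-exchangeable on $\{0,1\}^2$. A more conceptual alternative would invoke the Yang--Baxter equation \eqref{YB_main_relation} at the fusion value $u_2=qu_1$, where the conjugating matrix $X$ from \eqref{matrix_X} becomes singular in a manner that algebraically encodes the $q$-swap symmetry, but the direct computational route above avoids having to deal with the singularity of $X$ at this special point.
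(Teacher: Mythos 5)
Your proposal is correct and follows essentially the same route as the paper: reduce to the local $q$-swap property at adjacent rows, exploit the product form of the $q$-exchangeable weight and of the column of stochastic vertex weights to localize the identity to the two rows $r,r+1$, and then verify the $J=2$ case case-by-case over the three input sectors $j_1\in\{0,1,2\}$. You spell out the reduction step more explicitly than the paper does (which states only that "since this property involves only two neighboring vertices, it suffices to consider the case $J=2$"), in particular making clear why the intermediate occupation numbers $l^{(r')}$ are frozen outside the two-row window and why the induced conditional on $(h_1^{(r)},h_1^{(r+1)})$ is $q$-exchangeable on $\{0,1\}^2$; this is a genuine clarification of a step the paper leaves compressed. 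The closing remark about the Yang--Baxter equation at $\ybspec_2=q\ybspec_1$ is a reasonable conceptual aside, consistent with the paper's general framing, though not used in either proof.
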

\begin{proof}
	Let us fix 
	the numbers $i_1,i_2\in\Z_{\ge0}$
	of bottom and top arrows, 
	as well as 
	the total number 
	$j_1=\sum_{\ell=1}^{J}h_1^{\scriptscriptstyle(\ell)}\in\{0,1,\ldots,J\}$
	of incoming arrows from the left.
	Under these conditions, the 
	incoming $q$-exchangeable distribution $P_1$
	is unique (its partition function is $Z_{j_1}(J)$).
	It suffices to show that 
	for any $\vec h_2\in\{0,1\}^{J}$ 
	with $h_2^{\scriptscriptstyle(r)}=0$, $h_2^{\scriptscriptstyle(r+1)}=1$ for some $r$, we have
	\begin{align*}
		P_2(h_2^{\scriptscriptstyle(1)},\ldots,h_2^{\scriptscriptstyle(r)},h_2^{\scriptscriptstyle(r+1)},\ldots
		h_2^{\scriptscriptstyle(J)})=
		q\cdot P_2
		(h_2^{\scriptscriptstyle(1)},\ldots,h_2^{\scriptscriptstyle(r+1)},h_2^{\scriptscriptstyle(r)},\ldots
		h_2^{\scriptscriptstyle(J)}).
	\end{align*}

	Since this property involves only two neighboring vertices, it suffices to consider the case $J=2$.
	The desired statement now follows from the relations
	(here $g$ is arbitrary):
	\def\dd{.12}\begin{align*}
		\textnormal{weight}
		\Bigg(\raisebox{-27pt}{\scalebox{.6}{\begin{tikzpicture}
			[scale=1, very thick]
			\node (i1) at (0,-1) {$g$};
			\node (i1p) at (\dd,-1) {\phantom{$g$}};
			\node (i1m) at (-\dd,-1) {\phantom{$g$}};
			\node (i2) at (0,2) {$g-1$};
			\node (i2p) at (\dd,2) {\phantom{$g-1$}};
			\node (i2m) at (-\dd,2) {\phantom{$g-1$}};
			\node (h11) at (-1,0) {$0$};
			\node (h12) at (-1,1) {$0$};
			\node (h21) at (1,0) {$0$};
			\node (h22) at (1,1) {$1$};
			\draw[densely dotted] (h11) -- (h21);
			\draw[densely dotted] (h12) -- (h22);
			\draw[->] (i1m) -- (i2m);
			\draw[->] (i1) -- (i2);
			\draw[->] (i1p) -- (\dd,1) -- (h22);
		\end{tikzpicture}}}\Bigg)&=q\cdot
		\textnormal{weight}
		\Bigg(\raisebox{-27pt}{\scalebox{.6}{\begin{tikzpicture}
			[scale=1, very thick]
			\node (i1) at (0,-1) {$g$};
			\node (i1p) at (\dd,-1) {\phantom{$g$}};
			\node (i1m) at (-\dd,-1) {\phantom{$g$}};
			\node (i2) at (0,2) {$g-1$};
			\node (i2p) at (\dd,2) {\phantom{$g-1$}};
			\node (i2m) at (-\dd,2) {\phantom{$g-1$}};
			\node (h11) at (-1,0) {$0$};
			\node (h12) at (-1,1) {$0$};
			\node (h21) at (1,0) {$1$};
			\node (h22) at (1,1) {$0$};
			\draw[densely dotted] (h11) -- (h21);
			\draw[densely dotted] (h12) -- (h22);
			\draw[->] (i1m) -- (i2m);
			\draw[->] (i1) -- (i2);
			\draw[->] (i1p) -- (\dd,0) -- (h21);
		\end{tikzpicture}}}\Bigg),\\
		\textnormal{weight}
		\Bigg(\raisebox{-27pt}{\scalebox{.6}{\begin{tikzpicture}
			[scale=1, very thick]
			\node (i1) at (0,-1) {$g$};
			\node (i1p) at (\dd,-1) {\phantom{$g$}};
			\node (i1m) at (-\dd,-1) {\phantom{$g$}};
			\node (i2) at (0,2) {$g+1$};
			\node (i2m1) at (-\dd/2,2) {\phantom{$g+1$}};
			\node (i2m2) at (-3*\dd/2,2) {\phantom{$g+1$}};
			\node (i2p2) at (3*\dd/2,2) {\phantom{$g+1$}};
			\node (i2p1) at (\dd/2,2) {\phantom{$g+1$}};
			\node (h11) at (-1,0) {$1$};
			\node (h12) at (-1,1) {$1$};
			\node (h21) at (1,0) {$0$};
			\node (h22) at (1,1) {$1$};
			\draw[densely dotted] (h11) -- (h21);
			\draw[densely dotted] (h12) -- (h22);
			\draw[->] (i1) -- (0,1-\dd) --++ (3/2*\dd,2*\dd) -- (i2p2);
			\draw[->] (i1m) -- (-\dd,1-\dd) --++ (3/2*\dd,2*\dd)  -- (i2p1);
			\draw[->] (h11) -- (-2*\dd,0) --++ (0,1-\dd)--++ (3/2*\dd,2*\dd)  -- (i2m1);
			\draw[->] (h12) -- (-2.5*\dd,1)--++(\dd,\dd)--(i2m2);
			\draw[->] (i1p) -- (\dd,1-\dd) --++(\dd,\dd)--(h22);
		\end{tikzpicture}}}\Bigg)&=q\cdot
		\textnormal{weight}
		\Bigg(\raisebox{-27pt}{\scalebox{.6}{\begin{tikzpicture}
			[scale=1, very thick]
			\node (i1) at (0,-1) {$g$};
			\node (i1p) at (\dd,-1) {\phantom{$g$}};
			\node (i1m) at (-\dd,-1) {\phantom{$g$}};
			\node (i2) at (0,2) {$g+1$};
			\node (i2m1) at (-\dd/2,2) {\phantom{$g+1$}};
			\node (i2m2) at (-3*\dd/2,2) {\phantom{$g+1$}};
			\node (i2p2) at (3*\dd/2,2) {\phantom{$g+1$}};
			\node (i2p1) at (\dd/2,2) {\phantom{$g+1$}};
			\node (h11) at (-1,0) {$1$};
			\node (h12) at (-1,1) {$1$};
			\node (h21) at (1,0) {$1$};
			\node (h22) at (1,1) {$0$};
			\draw[densely dotted] (h11) -- (h21);
			\draw[densely dotted] (h12) -- (h22);
			\draw[->] (i1) -- (0,0-\dd) --++ (3/2*\dd,2*\dd) -- (i2p2);
			\draw[->] (i1m) -- (-\dd,0-\dd) --++ (3/2*\dd,2*\dd)  -- (i2p1);
			\draw[->] (h11) -- (-1.5*\dd,0) --++ (\dd,\dd)  -- (i2m1);
			\draw[->] (h12) -- (-1.5*\dd,1)--(i2m2);
			\draw[->] (i1p) -- (\dd,0-\dd) --++(\dd,\dd)--(h21);
		\end{tikzpicture}}}\Bigg),\\
		q\cdot \textnormal{weight}
		\Bigg(\raisebox{-27pt}{\scalebox{.6}{\begin{tikzpicture}
			[scale=1, very thick]
			\node (i1) at (0,-1) {$g$};
			\node (i1p) at (\dd,-1) {\phantom{$g$}};
			\node (i1m) at (-\dd,-1) {\phantom{$g$}};
			\node (i2) at (0,2) {$g$};
			\node (i2m) at (-\dd,2) {\phantom{$g$}};
			\node (i2p) at (\dd,2) {\phantom{$g$}};
			\node (h11) at (-1,0) {$0$};
			\node (h12) at (-1,1) {$1$};
			\node (h21) at (1,0) {$0$};
			\node (h22) at (1,1) {$1$};
			\draw[densely dotted] (h11) -- (h21);
			\draw[densely dotted] (h12) -- (h22);
			\draw[->] (i1p) -- (\dd,1-\dd) --++(\dd,\dd) --++ (h22);
			\draw[->] (i1) -- (0,1-\dd)--++(\dd,2*\dd) -- (i2p);
			\draw[->] (i1m) -- (-\dd,1-\dd)--++(\dd,2*\dd)-- (i2);
			\draw[->] (h12) -- (-2*\dd,1) --++(\dd,\dd) -- (i2m);
		\end{tikzpicture}}}\Bigg){}+{}\textnormal{weight}
		\Bigg(\raisebox{-27pt}{\scalebox{.6}{\begin{tikzpicture}
			[scale=1, very thick]
			\node (i1) at (0,-1) {$g$};
			\node (i1p) at (\dd,-1) {\phantom{$g$}};
			\node (i1m) at (-\dd,-1) {\phantom{$g$}};
			\node (i2) at (0,2) {$g$};
			\node (i2m) at (-\dd,2) {\phantom{$g$}};
			\node (i2p) at (\dd,2) {\phantom{$g$}};
			\node (h11) at (-1,0) {$1$};
			\node (h12) at (-1,1) {$0$};
			\node (h21) at (1,0) {$0$};
			\node (h22) at (1,1) {$1$};
			\draw[densely dotted] (h11) -- (h21);
			\draw[densely dotted] (h12) -- (h22);
			\draw[->] (i1p) -- (\dd,-\dd) --++(\dd/2,2*\dd)--(3/2*\dd,1-\dd)--++(\dd,\dd)--(h22);
			\draw[->] (i1) -- (0,-\dd) --++(\dd/2,2*\dd)--(1/2*\dd,1-\dd)--++(\dd/2,2*\dd)--(i2p);
			\draw[->] (i1m) -- (-\dd,-\dd) --++(\dd/2,2*\dd)--(-1/2*\dd,1-\dd)--++(\dd/2,2*\dd)--(i2);
			\draw[->] (h11) -- (-2.5*\dd,0) --++(\dd,\dd) -- (-3/2*\dd,1-\dd)--++(\dd/2,2*\dd)--(i2m);
		\end{tikzpicture}}}\Bigg)&=
		q\cdot \left(q\cdot 
		\textnormal{weight}
		\Bigg(\raisebox{-27pt}{\scalebox{.6}{\begin{tikzpicture}
			[scale=1, very thick]
			\node (i1) at (0,-1) {$g$};
			\node (i1p) at (\dd,-1) {\phantom{$g$}};
			\node (i1m) at (-\dd,-1) {\phantom{$g$}};
			\node (i2) at (0,2) {$g$};
			\node (i2m) at (-\dd,2) {\phantom{$g$}};
			\node (i2p) at (\dd,2) {\phantom{$g$}};
			\node (h11) at (-1,0) {$0$};
			\node (h12) at (-1,1) {$1$};
			\node (h21) at (1,0) {$1$};
			\node (h22) at (1,1) {$0$};
			\draw[densely dotted] (h11) -- (h21);
			\draw[densely dotted] (h12) -- (h22);
			\draw[->] (i1p) -- (\dd,0-\dd) --++(\dd,\dd) --++ (h21);
			\draw[->] (i1) -- (0,-\dd)--++(\dd/2,2*\dd)--(\dd/2,1-\dd)--++(\dd/2,2*\dd)--(i2p);
			\draw[->] (i1m) -- (-\dd,-\dd)--++(\dd/2,2*\dd)--(-\dd/2,1-\dd)--++(\dd/2,2*\dd)--(i2);
			\draw[->] (h12) -- (-2*\dd,1) --++(\dd,\dd) -- (i2m);
		\end{tikzpicture}}}\Bigg){}+{}\textnormal{weight}
		\Bigg(\raisebox{-27pt}{\scalebox{.6}{\begin{tikzpicture}
			[scale=1, very thick]
			\node (i1) at (0,-1) {$g$};
			\node (i1p) at (\dd,-1) {\phantom{$g$}};
			\node (i1m) at (-\dd,-1) {\phantom{$g$}};
			\node (i2) at (0,2) {$g$};
			\node (i2m) at (-\dd,2) {\phantom{$g$}};
			\node (i2p) at (\dd,2) {\phantom{$g$}};
			\node (h11) at (-1,0) {$1$};
			\node (h12) at (-1,1) {$0$};
			\node (h21) at (1,0) {$1$};
			\node (h22) at (1,1) {$0$};
			\draw[densely dotted] (h11) -- (h21);
			\draw[densely dotted] (h12) -- (h22);
			\draw[->] (i1p) -- (\dd,-\dd) --++(\dd,\dd) --++ (h21);
			\draw[->] (i1) -- (0,-\dd)--++(\dd,2*\dd) -- (i2p);
			\draw[->] (i1m) -- (-\dd,-\dd)--++(\dd,2*\dd)-- (i2);
			\draw[->] (h11) -- (-2*\dd,0) --++(\dd,\dd) -- (i2m);
		\end{tikzpicture}}}\Bigg)
		\right).
	\end{align*}
	In each of the relations 
	the right-hand side differs by 
	moving the outgoing arrow
	down,
	and ``weight'' means the product of the weights
	$\Lmatr_{u}$ at the bottom vertex and $\Lmatr_{qu}$ at the top vertex.
	The above 
	relations are readily verified 
	from the definition of 
	$\Lmatr_{u}$
	\eqref{vertex_weights_stoch}
	(see also Fig.~\ref{fig:vertex_weights_conj_stoch}).
\end{proof}

This proposition implies that 
for any fixed $i_1,i_2\in\Z_{\ge0}$,
the Markov operator mapping 
$P_1$ to $P_2$
(where $P_1$, $P_2$ are probability distributions on $\{0,1\}^{J}$), 
can be projected to another Markov operator
which maps $\tilde P_1$ to $\tilde P_2$ (cf. \eqref{P_bar_exchangeable}),
i.e., acts on probability distributions on the smaller space $\{0,1,\ldots,J\}$.
We will denote the matrix elements of this ``collapsed'' 
Markov operator by 
$\LJ{J}_{u}(i_1,j_1;i_2,j_2)$, 
where $(i_1,j_1), (i_2,j_2)\in\Z_{\ge0}\times\{0,1,\ldots,J\}$.

The definition of $\LJ{J}_{u}$ implies that
these matrix elements
satisfy a certain
recursion relation in $J$. 
This relation is obtained by 
considering two cases,
whether there is a left-to-right arrow at the very bottom, or not
(i.e., $h_1^{\scriptscriptstyle(1)}=0$ or $h_1^{\scriptscriptstyle(1)}=1$).
Therefore, we obtain the following recursion:
\begin{align}
	\LJ{J}_{u}(i_1,j_1;i_2,j_2)
	=\sum_{a,b\in\{0,1\}}\sum_{l\ge0}
	P(h_{1}^{\scriptscriptstyle(1)}=a)\,
	\Lmatr_{u}(i_1,a;l,b)
	\LJ{J-1}_{qu}(l,j_1-a;i_2,j_2-b).
	\label{recursion_relation}
\end{align}
Here the probability $P(h_{1}^{\scriptscriptstyle(1)}=a)$ 
corresponds to our division into two cases. 
It can be readily computed using Definition \ref{def:q_exch}:
\begin{align*}
	P(h_{1}^{\scriptscriptstyle(1)}=0)=\frac{q^{j_1}Z_{j_1}(J-1)}{Z_{j_1}(J)}=
	\frac{q^{j_1}-q^{J}}{1-q^{J}},
	\qquad
	P(h_{1}^{\scriptscriptstyle(1)}=1)=
	\frac{q^{j_1-1}Z_{j_1-1}(J-1)}{Z_{j_1}(J)}=
	\frac{1-q^{j_1}}{1-q^{J}}.
\end{align*}

The recursion relation \eqref{recursion_relation} has a solution
expressible in terms of 
terminating $q$-hypergeometric functions 
(here we follow
the notation of \cite{Mangazeev2014}, \cite{Borodin2014vertex}):
\begin{multline*}
	{}_{r+1}\bar{\varphi}_r\left(\begin{matrix} q^{-n};a_1,\dots,a_r\\b_1,\dots,b_r\end{matrix}
	\Bigl| q,z\right):=\sum_{k=0}^n z^k\,\frac{(q^{-n};q)_k}{(q;q)_k}	\prod_{i=1}^r (a_i;q)_k(b_iq^k;q)_{n-k}\\
	=\prod_{i=1}^r (b_i;q)_n\cdot {}_{r+1}{\varphi}_r\left(\begin{matrix} q^{-n},a_1,\dots,a_r\\b_1,\dots,b_r\end{matrix}
	\Bigl| q,z\right),
\end{multline*}
where here $n\in\Z_{\ge0}$.
The solution $\LJ{J}_{u}$
looks as follows:
\begin{multline}
	\LJ{J}_{u}(i_1,j_1;i_2,j_2)
	=
	\mathbf{1}_{i_1+j_1=i_2+j_2}
	\frac{(-1)^{i_1}q^{
	\frac12 i_1(i_1+2j_1-1)}
	u^{i_1}s^{j_1+j_2-i_2}(u s^{-1};q)_{j_2-i_1}}
	{(q;q)_{i_2} (s u;q)_{i_2+j_2}
	(q^{J+1-j_1};q)_{j_1-j_2}}
	\\\times{}_{4}\bar{\varphi}_3\left(\begin{matrix} q^{-i_2};q^{-i_1},s u q^{J},q s/u\\
	s^{2},q^{1+j_2-i_1},q^{J+1-i_2-j_2}\end{matrix}
	\Bigl|\, q,q\right).\label{LJ_formula}
\end{multline}
Formula \eqref{LJ_formula} for fused vertex weights 
is essentially due to \cite{Mangazeev2014}.
In the present form \eqref{LJ_formula}
it was obtained 
in \cite[Thm.\;3.15]{CorwinPetrov2015}
by matching the recursion
\eqref{recursion_relation}
to the recursion relation for the classical
$q$-Racah orthogonal polynomials.\footnote{The 
parameters in \eqref{LJ_formula} match those in
\cite[Thm.\;3.15]{CorwinPetrov2015}
as $\beta=\al q^{J}$, $\al=-s u$, and $\nu=s^{2}$.}
About the latter see \cite[Ch.\;3.2]{Koekoek1996}.


\subsection{Principal specializations of skew functions} 
\label{sub:principal_specializations_of_skew_functions}

The fused stochastic vertex weights discussed
in \S \ref{sub:fusion_of_stochastic_weights}
can be used to describe principal specializations of the 
skew functions $\F_{\mu/\la}$ and $\G_{\mu/\la}$,
in analogy to the non-skew principal specializations
of Corollary \ref{cor:prin_spec}.

Mimicking \eqref{weights_conj}--\eqref{vertex_weights_stoch}, 
we will use the general $J$
stochastic weights $\LJ{J}_{u}$ \eqref{LJ_formula} to define the 
weights which are general $J$ versions of the $w_u$'s \eqref{weights}:
\begin{align}
	\WJ{J}_{u}(i_1,j_1;i_2,j_2):=\frac1{(-s)^{j_2}}
	\frac{(q;q)_{i_2}}{(s ^{2};q)_{i_2}}
	\frac{(s ^{2};q)_{i_1}}{(q;q)_{i_1}}
	\LJ{J}_{u}(i_1,j_1;i_2,j_2),
	\label{WJ_defn}
\end{align}
where $i_1,i_2\in\Z_{\ge0}$ and $j_1,j_2\in\{0,1,\ldots,J\}$
are such that $i_1+j_1=i_2+j_2$ (otherwise the above weight is set to zero).
These weights are expressed
via the $q$-hypergeometric function
as follows:
\begin{multline}
	\WJ{J}_{u}(i_1,j_1;i_2,j_2)=
	\frac{(-1)^{i_1+j_2}q^{\frac12 i_1(i_1+2j_1-1)}s^{j_2-i_1}u^{i_1}
	(q;q)_{j_1}(us^{-1};q)_{j_1-i_2}}{(q;q)_{i_1}(q;q)_{j_2} (us;q)_{i_1+j_1}}
	\\\label{WJ_formula}
	\times
	{}_{4}\bar{\varphi}_3\left(\begin{matrix} q^{-i_1};q^{-i_2},q^J s u,qs u^{-1}\\s^2,q^{1+j_1-i_2},q^{1+J-i_1-j_1}\end{matrix}
	\Bigl|\, q,q\right).
\end{multline}
We see that the weights $\WJ{J}_{u}$
depend on the spectral parameter
$u$ in a rational manner.
One can check that for $J=1$, the weights $\WJ{J}_{u}$ turn into \eqref{weights}.

\begin{proposition}\label{prop:skew_prin_spec}
	\noindent{\bf1.\/}
	For any $J\in\Z_{\ge1}$, $N\in\Z_{\ge0}$, 
	$\la\in\signp N$, $\mu\in\signp{N+J}$, and $u\in\C$, the principal specialization
	of the skew function
	\begin{align*}
		\F_{\mu/\la}(u,qu,\ldots,q^{J-1}u)
	\end{align*}
	is equal to the weight of the unique collection of $N+J$ up-right paths in 
	the semi-infinite horizontal strip of height $1$, 
	with vertex weights
	equal to $\WJ{J}_{u}$ (so that at most $J$ horizontal arrows
	per edge are allowed). The paths in the collection start with 
	$N$ vertical edges $(\la_i,0)\to(\la_i,1)$ and with $J$ horizontal edges $(-1,1)\to(0,1)$,
	and end with $N+J$ vertical edges 
	$(\mu_j,1)\to(\mu_j,2)$, see Fig.~\ref{fig:J_paths}, top.

\smallskip

	\noindent{\bf2.\/}
	For any $J\in\Z_{\ge1}$, any $\la,\mu\in\sign N$, and any $v\in\C$,
	the principal specialization 
	of the skew function
	\begin{align*}
		\G_{\mu/\la}(v,qv,\ldots,q^{J-1}v)
	\end{align*}
	is equal to the weight of the unique collection of $N$ up-right paths in 
	the semi-infinite horizontal strip of height $1$, 
	with vertex weights
	$\WJ{J}_{v}$ (so that at most $J$ horizontal arrows
	per edge are allowed). 
	The paths in the collection start with 
	$N$ vertical edges $(\la_i,0)\to(\la_i,1)$ and end with $N$ vertical edges 
	$(\mu_i,1)\to(\mu_i,2)$, see Fig.~\ref{fig:J_paths}, bottom.
\end{proposition}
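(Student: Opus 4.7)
The plan is to reformulate both sides of the claimed identity in terms of the stochastic weights $\Lmatr$, where Proposition \ref{prop:L_q_exch} applies directly, and then to propagate $q$-exchangeability column by column.

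First, I would expand $\F_{\mu/\la}(u,qu,\dots,q^{J-1}u)$ via Definition \ref{def:F} as the partition function of a $J$-row horizontal strip with $w_{q^{r-1}u}$-weight at row $r$. Using the conjugation relations \eqref{weights_conj}--\eqref{vertex_weights_stoch} between $w_u$ and $\Lmatr_u$, the Pochhammer factors $(s^{2};q)_i/(q;q)_i$ telescope down each vertical column from $i=m_x$ at the bottom to $i=m'_x$ at the top, while the $(-s)^{j_2}$ factors accumulate to $(-s)^{\sum_x j^{(x)}}$, where $j^{(x)}:=J+\sum_{y\le x}(m_y-m'_y)$ is determined by arrow conservation from the left boundary $\vec h^{(-1)}=(1,\dots,1)$. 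This yields
\[
\F_{\mu/\la}(u,qu,\dots,q^{J-1}u)\;=\;(-s)^{-\sum_x j^{(x)}}\,\frac{\conj(\la)}{\conj(\mu)}\,Z^{(J)}(\la,\mu),
\]
with $Z^{(J)}(\la,\mu)$ the analogous $\Lmatr$-strip partition function. The identity \eqref{WJ_defn} produces exactly the same prefactor relation between $\prod_x\WJ{J}_u(m_x,j^{(x-1)};m'_x,j^{(x)})$ and $\prod_x\LJ{J}_u(m_x,j^{(x-1)};m'_x,j^{(x)})$, so the proposition reduces to the $\Lmatr$-level identity
\[
Z^{(J)}(\la,\mu)\;=\;\prod_x\LJ{J}_u(m_x,j^{(x-1)};m'_x,j^{(x)}).
\]

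Next, I would prove this identity by induction on columns, invoking Proposition \ref{prop:L_q_exch}. The left-boundary horizontal configuration $\vec h^{(-1)}=(1,\dots,1)$ is the unique element of $\{0,1\}^J$ with total $J$ and is therefore trivially $q$-exchangeable. Assume inductively that after summing over $\vec h^{(0)},\dots,\vec h^{(x-1)}$ the accumulated weight equals $\prod_{y<x}\LJ{J}_u(m_y,j^{(y-1)};m'_y,j^{(y)})$ times a $q$-exchangeable distribution on $\vec h^{(x-1)}$ with total $j^{(x-1)}$. Proposition \ref{prop:L_q_exch} applied with vertical input $m_x$ and vertical output $m'_x$ guarantees that passing through column $x$ preserves the $q$-exchangeability of the horizontal distribution, and by the very definition of $\LJ{J}_u$ as the collapsed Markov operator acting on totals, summing over $\vec h^{(x)}$ with $\sum_r h^{(x),r}=j^{(x)}$ produces exactly one factor of $\LJ{J}_u(m_x,j^{(x-1)};m'_x,j^{(x)})$, advancing the induction. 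Arrow conservation forces $\{j^{(x)}\}$ to be deterministic and to vanish for $x>\mu_1$, so the induction terminates on the right and yields the desired product. Part 2 for $\G_{\mu/\la}$ follows by the same argument with left-boundary configuration $\vec h^{(-1)}=(0,\dots,0)$, the unique, hence trivially $q$-exchangeable, element of $\{0,1\}^J$ with total $0$.

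The main obstacle in executing this plan is bookkeeping rather than anything conceptually new: one must verify carefully how the conjugation and sign factors of \eqref{WJ_defn} telescope across the full strip when translating between the $w$- and $\Lmatr$-pictures, and must articulate Proposition \ref{prop:L_q_exch}, stated there for a single column, in a form that propagates through an arbitrary sequence of columns with varying vertical inputs $m_x$ and outputs $m'_x$. Once these checks are in place the proof is a straightforward iteration of the single-column fusion.
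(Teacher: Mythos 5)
Your proposal is correct and follows essentially the same route as the paper: translate to the stochastic weights $\Lmatr$ via \eqref{weights_conj}--\eqref{vertex_weights_stoch}, observe that the left-boundary configuration (fully packed for $\F$, empty for $\G$) is trivially $q$-exchangeable, and propagate $q$-exchangeability column by column via Proposition \ref{prop:L_q_exch} to collapse the $J$ rows to one with weights $\LJ{J}$ and hence $\WJ{J}$. The paper streamlines your explicit column-by-column induction and conjugation bookkeeping by invoking the branching rule (Proposition \ref{prop:branching}) and the collapse already established in \S\ref{sub:fusion_of_stochastic_weights}, but the underlying argument is the same.
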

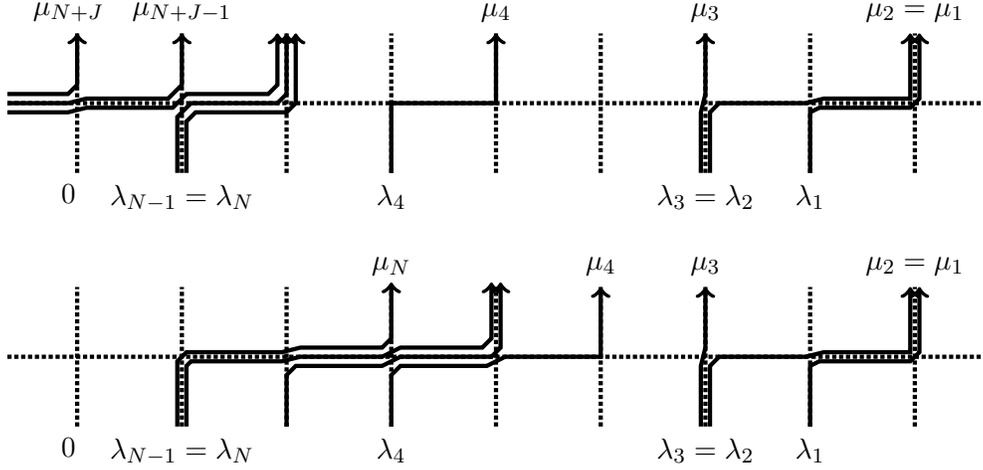
\begin{figure}[htbp]
	\scalebox{1}{\begin{tikzpicture}
	[scale=1.16,ultra thick]
	\def\d{.105}
	\def\h{1.2} 
	\draw[densely dotted] (-.8,0)--++(8.8+2*\h,0);
	\foreach \ii in {0,1,2,3,4,5,6,7,8}
	{
		\draw[densely dotted] (\h*\ii,-.8)--++(0,1.6);
	}
	\node[below] at (-.1,-.8) {0};
	\node[below] at (\h,-.8) {$\la_{N-1}=\la_N$};
	\node[below] at (6*\h,-.8) {$\la_{3}=\la_{2}$};
	\node[below] at (7*\h,-.8) {$\la_{1}$};
	\node[below] at (3*\h,-.8) {$\la_{4}$};
	\node[above] at (8*\h,.8) {$\mu_{2}=\mu_1$};
	\node[above] at (6*\h,.8) {$\mu_3$};
	\node[above] at (4*\h,.8) {$\mu_4$};
	\node[above] at (0*\h-.1,.8) {$\mu_{N+J}$};
	\node[above] at (1*\h,.8) {$\mu_{N+J-1}$};
	\draw[->] (7*\h,-.8)--++(0,.8-\d)--++(\d,\d/2)--++(\h-\d*3/2,0)--++(\d,\d)--++(0,.8-\d/2);
	\draw[->] (6*\h+\d/2,-.8)--++(0,.8-\d)--++(\d,\d)--++(\h-2*\d,0)--++(2*\d,\d/2)--++(\h-2*\d,0)--++(0,.8-\d/2);
	\draw[->] (6*\h-\d/2,-.8)--++(0,.8-\d)--++(\d/2,2*\d)--++(0,.8-\d);
	\draw[->] (3*\h,-.8)--++(0,.8)--++(\h,0)--++(0,.8);
	\draw[->] (-.8,\d)--++(.8-\d,0)--++(\d,\d)--++(0,.8-2*\d);
	\draw[->] (-.8,0)--++(.8-\d,0)--++(2*\d,\d/2)--++(\h-\d*5/2,0)--++(3/2*\d,3/2*\d)--++(0,.8-2*\d);
	\draw[->] (-.8,-\d)--++(.8-\d,0)--++(2*\d,\d/2)--++(\h-2*\d,0)--++(3/2*\d,3/2*\d)--++(\h-\d/2-\d-\d,0)--++(\d,\d)
	--++(0,.8-2*\d);
	\draw[->] (\h-\d/2,-.8)--++(0,.8-3/2*\d)--++(3/2*\d,3/2*\d)--++(\h-2*\d,0)--++(\d,\d)--++(0,.8-\d);
	\draw[->] (\h+\d/2,-.8)--++(0,.8-\d*2)--++(\d,\d)--++(\h-3/2*\d,0)--++(\d,\d)--++(0,.8);
\end{tikzpicture}}
\\\bigskip
\scalebox{1}{\begin{tikzpicture}
	[scale=1.16,ultra thick]
	\def\d{.105}
	\def\h{1.2} 
	\draw[densely dotted] (-.8,0)--++(8.8+2*\h,0);
	\foreach \ii in {0,1,2,3,4,5,6,7,8}
	{
		\draw[densely dotted] (\h*\ii,-.8)--++(0,1.6);
	}
	\node[below] at (-.1,-.8) {0};
	\node[below] at (\h,-.8) {$\la_{N-1}=\la_N$};
	\node[below] at (6*\h,-.8) {$\la_{3}=\la_{2}$};
	\node[below] at (7*\h,-.8) {$\la_{1}$};
	\node[above] at (8*\h,.8) {$\mu_{2}=\mu_1$};
	\node[above] at (6*\h,.8) {$\mu_3$};
	\node[above] at (5*\h,.8) {$\mu_4$};
	\node[below] at (3*\h,-.8) {$\la_4$};
	\node[above] at (3*\h,.8) {$\mu_N$};
	\draw[->] (7*\h,-.8)--++(0,.8-\d)--++(\d,\d/2)--++(\h-\d*3/2,0)--++(\d,\d)--++(0,.8-\d/2);
	\draw[->] (6*\h+\d/2,-.8)--++(0,.8-\d)--++(\d,\d)--++(\h-2*\d,0)--++(2*\d,\d/2)--++(\h-2*\d,0)--++(0,.8-\d/2);
	\draw[->] (6*\h-\d/2,-.8)--++(0,.8-\d)--++(\d/2,2*\d)--++(0,.8-\d);
	\draw[->] (\h-\d/2,-.8)--++(0,.8-\d/2)--++(\d,\d)--++(\h-\d,0)--++(2*\d,\d/2)--++(\h-\d*3/2-\d,0)--++(\d,\d)
	--++(0,.8-3/2*\d);
	\draw[->] (\h+\d/2,-.8)--++(0,.8-\d)--++(\d/2,\d/2)--++(\h-\d*3/2,0)--++(2*\d,\d/2)--++(\h-\d*3/2-\d,0)
	--++(2*\d,\d)--++(\h-\d-\d*3/2,0)--++(\d,\d)--++(0,.8-\d*3/2);
	\draw[->] (2*\h,-.8)--++(0,.8-\d*2)--++(\d,\d)--++(\h-\d-\d,0)--++(2*\d,\d)--++(\h-\d-\d/2,0)--++(\d,\d)
	--++(0,.8-\d/2);
	\draw[->] (3*\h,-.8)--++(0,.8-2*\d)--++(\d,\d)--++(\h-\d*2,0)--++(2*\d,\d)--++(\h-\d,0)--++(0,.8);
\end{tikzpicture}}
	\caption{
	A unique path collection with horizontal multiplicities bounded by $J=3$
	corresponding to the function 
	$\F_{\mu/\la}(u,qu,\ldots,q^{J-1}u)$ (top)
	or
	$\G_{\mu/\la}(v,qv,\ldots,q^{J-1}v)$ (bottom).}
	\label{fig:J_paths}
\end{figure}
\begin{proof}
	Let us focus on the second claim.
	Relation \eqref{weights_conj}--\eqref{vertex_weights_stoch} 
	between the $J=1$ weights 
	$w_{u}$ and
	$\Lmatr_{u}$
	readily implies that for any
	$z\in\C$,
	any $L\in\Z_{\ge0}$ and
	$\nu,\kappa\in\signp L$,
	the quantity
	\begin{align*}
		(-s)^{|\kappa|-|\nu|}
		\frac{\conj(\kappa)}{\conj(\nu)}\G_{\kappa/\nu}(z)
		=(-s)^{|\kappa|-|\nu|}
		\G^{\conj}_{\kappa/\nu}(z)
	\end{align*}
	is equal to the weight of the unique collection
	of $L$ paths 
	in the semi-infinite horizontal strip of height~$1$
	connecting $\nu$ to $\kappa$,
	but with horizontal arrow multiplicities bounded by~$1$.
	The vertex weights in this path collection
	are the
	stochastic weights $\Lmatr_{z}$.

	Therefore, by Proposition \ref{prop:branching}, the quantity
	\begin{align}\label{ssconjconj_G_fusion_proof}
		(-s)^{|\mu|-|\la|}
		\G_{\mu/\la}^{\conj}(v,qv,\ldots,q^{J-1}v)
	\end{align}
	is equal to the sum of weights of 
	collections of $N$ paths in $\{1,2,\ldots,J\}\times\Z_{\ge0}$
	connecting $\la$ to $\mu$ (as in Definition \ref{def:G}), 
	with vertex weights
	$\Lmatr_{q^{j-1}v}$
	at the $j$-th horizontal line
	(hence the horizontal arrow multiplicities are bounded by~$1$).
	In \eqref{ssconjconj_G_fusion_proof},
	the configuration of input horizontal arrows
	at location $0$ is empty, and hence its distribution is $q$-exchangeable.
	Thus, we may use
	the fusion of stochastic weights from \S \ref{sub:fusion_of_stochastic_weights}
	to collapse the $J$ horizontals 
	into one, with horizontal arrow multiplicities bounded by $J$, and 
	with fused vertex weights 
	$\LJ{J}_{v}$.
	In this way all path collections in
	$\{1,2,\ldots,J\}\times\Z_{\ge0}$
	connecting $\la$ to $\mu$
	map to the unique collection 
	in $\{1\}\times\Z_{\ge0}$
	with horizontal edge multiplicities
	bounded by $J$.
	Using \eqref{WJ_defn}, we conclude that the second claim holds.

	The first claim about
	$\F_{\mu/\la}(u,qu,\ldots,q^{J-1}u)$
	is analogous, because 
	the corresponding configuration of input arrows
	in $\{1,2,\ldots,J\}\times\Z_{\ge0}$
	is the fully packed one, whose distribution is also $q$-exchangeable.
	This completes the proof.
\end{proof}

\begin{remark}\label{rmk:anal_cont}
	Since the general $J$ vertex weights 
	$\WJ{J}_{u}(i_1,j_1;i_2,j_2)$
	\eqref{WJ_formula}
	depend on $q^{J}$ in a rational manner, they make sense for
	$q^{J}$ being an arbitrary (generic) complex parameter.
	Thus, we can consider the 
	principal specializations
	$\G_{\mu/\la}(v,qv,\ldots,q^{J-1}v)$
	for a generic $q^{J}\in\C$. In other words, 
	this quantity admits an analytic continuation 
	in $q^{J}$.
	The second part of Proposition \ref{prop:skew_prin_spec}
	thus states that when $J\in\Z_{\ge1}$,
	the function $\G_{\mu/\la}(v,qv,\ldots,q^{J-1}v)$
	can be expressed as a substitution of
	the values $(v,qv,\ldots,q^{J-1}v)$ into the symmetric function
	$\G_{\mu/\la}(v_{1},v_{2},\ldots,v_{J})$.

	In contrast with the functions $\G_{\mu/\la}$
	in which the number of indeterminates
	does not depend on $\la$ and $\mu$,
	the
	number of arguments in the
	functions $\F_{\mu/\la}$ 
	is completely determined by the signatures $\la$ and $\mu$. Therefore, 
	the parameter $J$ in 
	$\F_{\mu/\la}(u,qu,\ldots,q^{J-1}u)$
	has to 
	remain a positive integer.
\end{remark}

\begin{remark}
	The weights \eqref{WJ_defn} are related to 
	the weights 
	$\WTJ{J}_{v}$ of \cite[(6.8)]{Borodin2014vertex}
	via
	\begin{align*}
		\WJ{J}_{v}(i_1,j_1;i_2,j_2)=
		q^{\frac14 (j_2^{2}-j_1^{2})}
		(-s)^{j_2-j_1}\frac{(q;q)_{j_1}}{(q;q)_{j_2}}
		\WTJ{J}_{v}(i_1,j_1;i_2,j_2).
	\end{align*}
	The structure of the path collection
	for $\G_{\mu/\la}$
	implies that 
	for the purposes of computing
	$\G_{\mu/\la}$,
	any prefactors
	in the vertex weights
	of the form $f(j_1)/f(j_2)$
	are irrelevant. 
	Therefore, the principal specializations
	$\G_{\mu/\la}(v,qv,\ldots,q^{J-1}v)$
	coincide with those in \cite[\S6]{Borodin2014vertex}.
	Note that, however, factors
	of the form $f(j_1)/f(j_2)$ in the vertex weights
	do make a difference for the functions
	$\F_{\mu/\la}(u,qu,\ldots,q^{J-1}u)$.
\end{remark}



\section{Markov kernels and stochastic dynamics} 
\label{sec:markov_kernels_and_stochastic_dynamics}

In this section we describe 
probability distributions on signatures arising from the 
Cauchy identity (Corollary \ref{cor:usual_Cauchy}),
as well as
discrete time stochastic systems
(i.e., discrete time Markov chains)
which act nicely on these measures.

\subsection{Probability measures associated with the Cauchy identity} 
\label{sub:probability_measures_associated_with_the_cauchy_identity}

The idea that summation identities for symmetric functions
lead to interesting
probability measures dates back at least to 
\cite{fulman1997probabilistic}, \cite{okounkov2001infinite},
and it was further developed in
\cite{okounkov2003correlation},
\cite{vuletic2007shifted},
\cite{Borodin2010Schur},
\cite{BorodinCorwin2011Macdonald},
\cite{BCGS2013}.
Similar ideas in our context lead to
the definition of the following probability measures
which are analogous to the Schur or Macdonald measures:

\begin{definition}\label{def:MM}
	Let $M,N\in\Z_{\ge0}$,\footnote{Here and below if $M=0$,
	then $\signp M$ consists of the single empty signature $\varnothing$, 
	and thus all probability measures and Markov operators on this space are trivial.}
	and the parameters 
	$q$, $s$, and 
	$\UU=(u_1,\ldots,u_M)$,
	$\VV=(v_1,\ldots,v_N)$
	satisfy 
	\eqref{stochastic_weights_condition_qsxi}--\eqref{stochastic_weights_condition_u}.\footnote{These 
	conditions are assumed throughout \S \ref{sec:markov_kernels_and_stochastic_dynamics}
	except \S \ref{sub:asep_degeneration}
	and \S \ref{sub:general_j_dynamics_and_q_hahn_degeneration}.}
	Moreover, assume that 
	$\adm{u_i}{v_j}$ 
	for all $i,j$ (for the admissibility 
	it is enough to require that all $u_i$ and 
	$v_j$ are sufficiently small, cf. Definition \ref{def:admissible}).
	Define the 
	probability measure on 
	$\signp M$ via
	\begin{align}\label{MM_measure}
		\MM_{\UU;\VV}(\nu):=\frac{1}{Z(\UU;\VV)}\,
		\F_\nu(u_1,\ldots,u_M)
		\G_\nu^{\conj}(v_1,\ldots,v_N),
		\qquad \nu\in\signp M,
	\end{align}
	where the normalization constant is given by
	\begin{align}\label{Z_formula}
		Z(\UU;\VV)
		:=(q;q)_{M}\prod_{i=1}^{M}
		\bigg(\frac1{1-su_i}
		\prod_{j=1}^{N}
		\frac{1-qu_iv_j}{1-u_iv_j}\bigg).
	\end{align}
\end{definition}
The fact that the unnormalized weights 
$Z(\UU;\VV)\,\MM_{\UU;\VV}(\nu)$
are nonnegative follows from \eqref{stochastic_weights_condition_qsxi}--\eqref{stochastic_weights_condition_u}.
Indeed, these conditions 
imply that the vertex weights 
$w_{u}$
and 
$w^{\conj}_{u}$
are nonnegative, and hence so are the functions 
$\F_\nu$ and $\G_\nu^{\conj}$.
The form \eqref{Z_formula} of the normalization constant
follows from the Cauchy identity 
\eqref{usual_Cauchy}.\footnote{The sum of the unnormalized weights 
converges due to the admissibility conditions, and hence the normalization
constant
$Z(\UU;\VV)$ is nonnegative.
This constant is positive whenever the measure 
$\MM_{\UU;\VV}$ is nontrivial.}
Note that the length of the tuple $\UU$ determines the length of the 
signatures on which the measure $\MM_{\UU;\VV}$ lives. In contrast, the 
length of the tuple $\VV$ may be arbitrary.

In two degenerate cases, 
$\MM_{\varnothing;\VV}$ is the delta measure
at the empty configuration (for any $\VV$),
and $\MM_{(u_1,\ldots,u_M);\varnothing}$ is the delta measure
at the configuration $0^{M}$ (that is, all $M$ particles are at zero).

\begin{figure}[htbp]
	\scalebox{1}{\begin{tikzpicture}
		[scale=.7,thick]
		\def\d{.11}
		\foreach \xxx in {0,...,16}
		{
			\draw[dotted] (\xxx,.5)--++(0,9);
		}
		\foreach \xxx in {1,...,9}
		{
			\draw[dotted] (-.85,\xxx)--++(17.35,0);
			\draw[->, line width=1.7pt] (-.85,\xxx)--++(.5,0);
		}
		\node[left] at (-.85,1) {$u_1$};
		\node[left] at (-.85,2) {$u_{2}$};
		\node[left] at (-.85,3.15) {$\vdots$};
		\node[left] at (-.85,4) {$u_M$};
		\node[left] at (-.85,5) {$v_N$};
		\node[left] at (-.85,6.15) {$\vdots$};
		\node[left] at (-.85,7) {$v_3$};
		\node[left] at (-.85,8) {$v_2$};
		\node[left] at (-.85,9) {$v_1$};
		\node at (0,0) {$0$};
		\node at (1,0) {$1$};
		\node at (2,0) {$2$};
		\node at (3,0) {$3$};
		\node at (4,0) {$\ldots$};
		\draw[->, line width=1.7pt] (-.85,5)--++(.85-\d,0)--++(\d,\d)
		--++(0,1-2*\d)--++(\d,\d)--++(4-3*\d,0)--++(0,1-\d)
		--++(\d,2*\d)--++(0,1-\d)--++(\d,\d)--++(0,1-\d)--++(12.85,0);
		\draw[->, line width=1.7pt] (-.85,6)--++(.85-\d,0)
		--++(\d,\d)--++(0,1-2*\d)--++(\d,2*\d)--++(0,1-2*\d)
		--++(\d,2*\d)--++(0,1-2*\d)--++(\d,2*\d)--++(0,1-\d);
		\draw[->, line width=1.7pt] (-.85,7)--++(.85-\d,0)--++(0,1-\d)--++(\d,2*\d)--++(0,1-2*\d)--++(\d,2*\d)--++(0,1-\d);
		\draw[->, line width=1.7pt] (-.85,8)--++(.85-2*\d,0)--++(0,1-\d)
		--++(\d,2*\d)--++(0,1-\d);
		\draw[->, line width=1.7pt] (-.85,9)--++(.85-3*\d,0)--++(0,1);
		\draw[->, line width=1.7pt] (-.85,4)--++(.85,0)--++(0,1-\d)--++(\d,\d)
		--++(1-\d,0)--++(3-\d,0)--++(0,1-\d)--++(\d,2*\d)--++(0,1-2*\d)
		--++(\d,2*\d)--++(0,1-\d)--++(3-\d,0)--++(9.85,0);
		\draw[->, line width=1.7pt] (-.85,3)--++(.85,0)--++(2-\d,0)
		--++(\d,\d)--++(0,1-\d)--++(2,0)
		--++(0,1-\d)--++(\d,2*\d)--++(0,1-2*\d)--++(\d,2*\d)--++(0,1-\d)
		--++(4-2*\d,0)--++(8.85,0);
		\draw[->, line width=1.7pt] (-.85,2)--++(.85,0)--++(2,0)
		--++(0,1-\d)--++(\d,\d)--++(5-2*\d,0)--++(\d,\d)--++(0,1-\d)
		--++(1-\d,0)--++(\d,\d)--++(0,1-\d)--++(2,0)--++(0,1)
		--++(6.85,0);
		\draw[->, line width=1.7pt] (-.85,1)--++(.85,0)
		--++(4,0)--++(0,1)--++(3,0)--++(0,1-\d)--++(\d,\d)
		--++(1-\d,0)--++(0,1-\d)--++(\d,\d)--++(5-\d,0)
		--++(0,1)--++(3.85,0);
		\node[right, xshift=2pt] at (0,4.5) {$\nu_M$};
		\node[right, xshift=2pt] at (8,4.5) {$\nu_{2}$};
		\node[right, xshift=2pt] at (13,4.5) {$\nu_{1}$};
		\draw[->, line width=3.1pt, opacity=.55, red]
		(3*\d,9.85)--++(0,-1+.15)--++(4-3*\d,0)--++(0,-1+\d)
		--++(\d,-2*\d)--++(0,-1+2*\d)--++(\d,-2*\d)--++(0,-1+\d)
		--++(6-\d*2,0)--++(0,-1)--++(3,0)--++(0,-.75);
		\draw[->, line width=3.1pt, opacity=.55, draw=red]
		(1*\d,9.85)--++(0,-1+\d+.15)--++(\d,-2*\d)--++(0,-1+\d)--++(4-3*\d,0)
		--++(0,-1+\d)--++(\d,-2*\d)--++(0,-1+2*\d)--++(\d,-2*\d)--++(0,-1+\d)
		--++(4-\d,0)--++(0,-.75);
		\draw[->, line width=3.1pt, opacity=.55, red]
		(-1*\d,9.85)--++(0,-1+\d+.15)--++(\d,-2*\d)--++(0,-1+2*\d)
		--++(\d,-2*\d)--++(0,-1+\d)--++(4-3*\d,0)--++(0,-1+\d)
		--++(\d,-2*\d)--++(0,-1+2*\d)--++(\d,-2*\d)--++(0,-.75+\d);
		\draw[->, line width=3.1pt, opacity=.55, red]
		(-3*\d,9.85)--++(0,-1+\d+.15)--++(\d,-2*\d)--++(0,-1+2*\d)
		--++(\d,-2*\d)--++(0,-1+2*\d)
		--++(\d,-2*\d)--++(0,-1+\d)--++(0,-1.75);
	\end{tikzpicture}}
	\caption{
	Probability weights
	$\MM_{\UU;\VV}(\nu)$
	as partition functions.
	The bottom part of height $M$ corresponds to 
	$\BY(u_{1})\ldots\BY(u_{M})$,
	and the top half of height $N$ --- to
	$\DY(v_1^{-1})
	\ldots
	\DY(v_N^{-1})$.
	The initial configuration at the bottom
	is empty, and the 
	final configuration of the solid (black) paths at the top
	is $\bv_{(0^{M})}=\bv_M\otimes \bv_0\otimes \bv_0 \ldots$.
	The locations where the
	solid paths cross the horizontal division line
	correspond to the signature $\nu=(\nu_1,\ldots,\nu_M)$.
	The opaque (red) paths complement the 
	solid (black) paths in the top part, this corresponds 
	to the renormalization \eqref{D_Bar_computation}
	employed in the passage from the operators 
	$\DY(v_j^{-1})$ to $\overline\DY(v_j^{-1})$
	(the latter
	involves coefficients
	$\G^{\conj}_{\la/\mu}(v_j)$).
	After the renormalization, we let the width 
	of the grid go to infinity.}
	\label{fig:MM_pictorially}
\end{figure}

The measures $\MM_{\UU;\VV}$ can be 
represented pictorially, see Fig.~\ref{fig:MM_pictorially}.
Let us look at the bottom part of the path collection as in 
Fig.~\ref{fig:MM_pictorially}, and 
let us denote the positions of the vertical edges
at the $k$-th horizontal by
$\nu^{(k)}_{i}$, $1\le i\le k\le M$
see Fig.~\ref{fig:GT_scheme_paths}, bottom.
In the top part of the path collection, let us denote the 
coordinates of the vertical edges by 
$\tnu^{(\ell)}_{j}$,
$1\le \ell\le N$, $1\le j\le M$
(see Fig.~\ref{fig:GT_scheme_paths}, top).
We have
$\nu^{(M)}_{i}=\tnu^{(N)}_{i}=\nu_i$, $i=1,\ldots,M$.
By construction, these coordinates 
satisfy \emph{interlacing constraints}:
\begin{align}\label{interlacing_constraints}
	\nu^{(k)}_{i}\le \nu^{(k-1)}_{i-1}\le \nu^{(k)}_{i-1},
	\qquad
	\tnu^{(\ell)}_{j}\le \tnu^{(\ell-1)}_{j-1}\le \tnu^{(\ell)}_{j-1}
\end{align}
for all meaningful values of $k,i$ and $\ell,j$.
Arrays of the form 
$\{\nu^{(k)}_{i}\}_{1\le i\le k\le M}$
satisfying the above interlacing properties
are also sometimes called
\emph{Gelfand--Tsetlin schemes/patterns}.
By the very definition of the skew 
$\F$ and $\G$ functions, the distribution
of the sequence of signatures
$(\nu^{(1)},\ldots,\nu^{(M)}=\tnu^{(N)},\ldots,\tnu^{(1)})$
has the form
\begin{multline}\label{MM_Process}
	\MM_{\UU;\VV}(\nu^{(1)},\ldots,\nu^{(M)}=\tnu^{(N)},\ldots,\tnu^{(1)})
	=
	\frac{1}{Z(\UU;\VV)}\,
	\F_{\nu^{(1)}}(u_1)
	\F_{\nu^{(2)}/\nu^{(1)}}(u_2)
	\ldots
	\F_{\nu^{(M)}/\nu^{(M-1)}}(u_M)
	\\\times
	\G_{\tnu^{(1)}}^{\conj}(v_1)
	\G_{\tnu^{(2)}/\tnu^{(1)}}^{\conj}(v_2)\ldots
	\G_{\tnu^{(N)}/\tnu^{(N-1)}}^{\conj}(v_N).
\end{multline}
The probability distribution \eqref{MM_Process} on 
interlacing arrays is an analogue of 
Schur or Macdonald processes of 
\cite{okounkov2003correlation}, 
\cite{BorodinCorwin2011Macdonald}.
It readily follows from the Pieri rules
(Corollary \ref{cor:Pieri})
that under \eqref{MM_Process}, the marginal distribution of 
$\nu^{(k)}$ for any $k=1,\ldots,M$ is 
$\MM_{(u_1,\ldots,u_k); (v_1,\ldots,v_N)}$, and
similarly the marginal distribution of 
$\tnu^{(\ell)}$, $\ell=1,\ldots,N$,
is
$\MM_{(u_1,\ldots,u_M); (v_1,\ldots,v_\ell)}$.

\begin{figure}[htbp]
	\scalebox{.9}{\begin{tikzpicture}
		[scale=.8,thick]
		\def\d{.11}
		\foreach \xxx in {0,...,14}
		{
			\draw[dotted, opacity=.7] (\xxx,4.5)--++(0,5);
		}
		\foreach \xxx in {5,...,9}
		{
			\draw[dotted, opacity=.7] (-.85,\xxx)--++(15.35,0);
		}
		\draw[->, line width=1.7pt] (3*\d,9.85)--++(0,-1+.5);
		\draw[->, line width=1.7pt] (1*\d,9.85)--++(0,-1+.5);
		\draw[->, line width=1.7pt] (-1*\d,9.85)--++(0,-1+.5);
		\draw[->, line width=1.7pt] (-3*\d,9.85)--++(0,-1+.5);
		\draw[->, line width=1.7pt]
		(3*\d,9.85)--++(0,-1+.15)--++(4-3*\d,0)--++(0,-1+\d)
		--++(\d,-2*\d)--++(0,-1+2*\d)--++(\d,-2*\d)--++(0,-1+\d)
		--++(6-\d*2,0)--++(0,-1)--++(3,0)--++(0,-.75) 
		node[left] {$\tnu^{(6)}_{1}$};
		\draw[->, line width=1.7pt]
		(1*\d,9.85)--++(0,-1+\d+.15)--++(\d,-2*\d)--++(0,-1+\d)--++(4-3*\d,0)
		--++(0,-1+\d)--++(\d,-2*\d)--++(0,-1+2*\d)--++(\d,-2*\d)--++(0,-1+\d)
		--++(4-\d,0)--++(0,-.75) node[left] {$\tnu^{(6)}_{2}$};
		\draw[->, line width=1.7pt]
		(-1*\d,9.85)--++(0,-1+\d+.15)--++(\d,-2*\d)--++(0,-1+2*\d)
		--++(\d,-2*\d)--++(0,-1+\d)--++(4-3*\d,0)--++(0,-1+\d)
		--++(\d,-2*\d)--++(0,-1+2*\d)--++(\d,-2*\d)--++(0,-.75+\d)
		node[left] {$\tnu^{(6)}_{3}$};
		\draw[->, line width=1.7pt]
		(-3*\d,9.85)--++(0,-1+\d+.15)--++(\d,-2*\d)--++(0,-1+2*\d)
		--++(\d,-2*\d)--++(0,-1+2*\d)
		--++(\d,-2*\d)--++(0,-1+\d)--++(0,-1.75)
		node[left] {$\tnu^{(6)}_{4}$};
		\node[left, anchor=east] at (0, 5.55) {$\tnu^{(5)}_{4}$};
		\node[left, anchor=east] at (0, 6.55) {$\tnu^{(4)}_{4}$};
		\node[left, anchor=east] at (-\d, 7.55) {$\tnu^{(3)}_{4}=\tnu^{(3)}_{3}$};
		\node[left, anchor=east] at (-2*\d, 8.55) {$\tnu^{(2)}_{4}=\tnu^{(2)}_{3}=\tnu^{(2)}_{2}$};
		\node[left, anchor=east] at (-3*\d, 9.55) {$\tnu^{(1)}_{4}=\tnu^{(1)}_{3}=\tnu^{(1)}_{2}=\tnu^{(1)}_{1}$};
		\node[left, anchor=east] at (4-\d, 5.55) {$\tnu^{(5)}_{3}=\tnu^{(5)}_{2}$};
		\node[left, anchor=east] at (10, 5.55) {$\tnu^{(5)}_{1}$};
		\node[right, anchor=west] at (4+2*\d, 6.55) 
		{$\tnu^{(4)}_{3}=\tnu^{(4)}_{2}=\tnu^{(4)}_{1}$};
		\node[left, anchor=east] at (4-\d, 7.55) {$\tnu^{(3)}_{2}=\tnu^{(3)}_{1}$};
		\node[left, anchor=east] at (4, 8.55) {$\tnu^{(2)}_{1}$};
	\end{tikzpicture}}

	\scalebox{.9}{\begin{tikzpicture}
		[scale=.8,thick]
		\def\d{.1}
		\foreach \xxx in {0,...,14}
		{
			\draw[dotted, opacity=.7] (\xxx,.5)--++(0,4);
		}
		\foreach \xxx in {1,...,4}
		{
			\draw[dotted, opacity=.7] (-.85,\xxx)--++(15.35,0);
			\draw[->, line width=1.7pt] (-.85,\xxx)--++(.5,0);
		}
		\draw[->, line width=1.7pt] (-.85,4)--++(.85,0)--++(0,1)
		node[left] {${\nu^{(4)}_{4}}$};
		\draw[->, line width=1.7pt] (-.85,3)--++(.85,0)--++(2-\d,0)
		--++(\d,\d)--++(0,1-\d)--++(2,0)
		--++(0,1) node[left] {${\nu^{(4)}_{3}}$};
		\draw[->, line width=1.7pt] (-.85,2)--++(.85,0)--++(2,0)
		--++(0,1-\d)--++(\d,\d)--++(5-2*\d,0)--++(\d,\d)--++(0,1-\d)
		--++(1-\d,0)--++(\d,\d)--++(0,1-\d)
		node[left] {${\nu^{(4)}_{2}}$};
		\draw[->, line width=1.7pt] (-.85,1)--++(.85,0)
		--++(4,0)--++(0,1)--++(3,0)--++(0,1-\d)--++(\d,\d)
		--++(1-\d,0)--++(0,1-\d)--++(\d,\d)--++(5-\d,0)
		--++(0,1) node[left] {${\nu^{(4)}_{1}}$};
		\node[left] at (2,3.55) {$\nu^{(3)}_{3}$};
		\node[left] at (7,3.55) {$\nu^{(3)}_{2}$};
		\node[right] at (8,3.55) {$\nu^{(3)}_{1}$};
		\node[left] at (7,2.55) {$\nu^{(2)}_{1}$};
		\node[left] at (2,2.55) {$\nu^{(2)}_{2}$};
		\node[left] at (4,1.55) {$\nu^{(1)}_{1}$};
		\node[left, anchor=east] at (-3*\d, 3.55) {\phantom{$\tnu^{(1)}_{4}=\tnu^{(1)}_{3}=\tnu^{(1)}_{2}=\tnu^{(1)}_{1}$}};
		\node at (0,0) {$0$};
		\node at (1,0) {$1$};
		\node at (2,0) {$2$};
		\node at (3,0) {$3$};
		\node at (4,0) {$\ldots$};
	\end{tikzpicture}}
	\caption{A pair of interlacing arrays from path collections.
	Horizontal parts
	of the paths are for illustration.}
	\label{fig:GT_scheme_paths}
\end{figure}
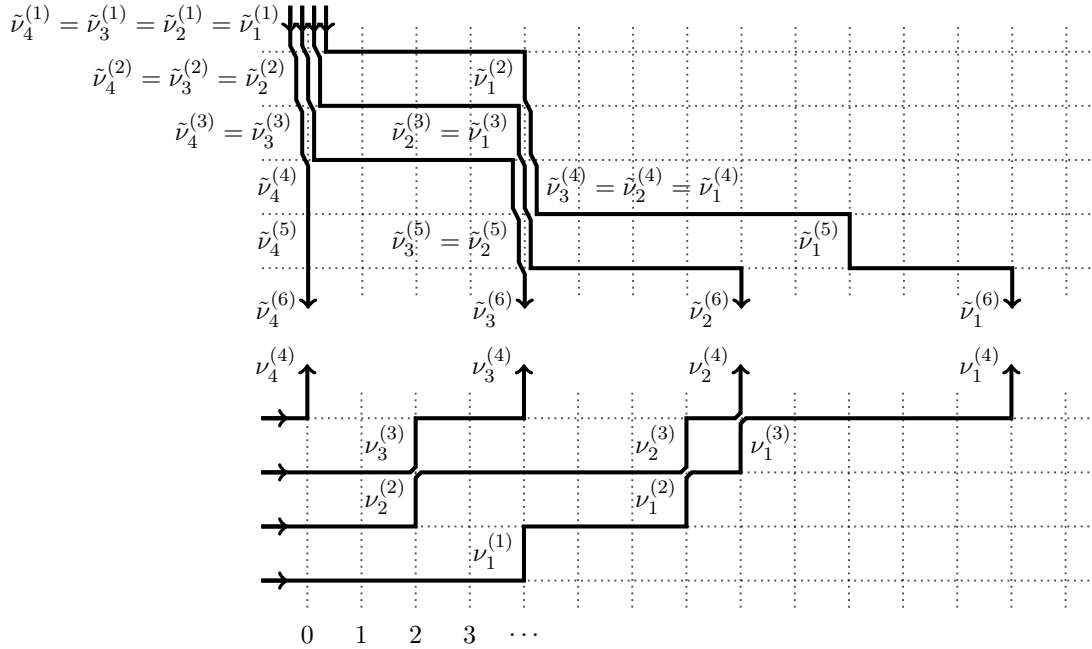


\subsection{Four Markov kernels} 
\label{sub:four_markov_kernels}

Let us now define four 
Markov kernels
which map the measure
$\MM_{\UU;\VV}$
to a measure of the same form, but with 
modified parameters $\UU$ or $\VV$.

The first two Markov kernels, $\Lam$ and $\Lae$, correspond 
to taking conditional distributions given $\nu^{(M)}=\tnu^{(N)}$
of 
$\nu^{(k)}$ or
$\tnu^{(\ell)}$, respectively,
in the ensemble \eqref{MM_Process}.
Namely, let us define for any $m$:
\begin{align}\label{Lam}
	\Lam_{u\md\UU}(\nu\to\mu):=
	\frac{\F_\mu(u_1,\ldots,u_m)}{\F_\nu(u_1,\ldots,u_m,u)}\,
	\F_{\nu/\mu}(u),
\end{align}
where $\UU=(u_1,\ldots,u_m)$, and 
$\nu\in\signp{m+1}$, $\mu\in\signp{m}$.
Also, let us define for any $n$:
\begin{align}\label{Lae}
	\Lae_{v\md\VV}(\la\to\mu):=
	\frac{\G^{\conj}_{\mu}(v_1,\ldots,v_n)}
	{\G^{\conj}_{\la}(v_1,\ldots,v_n,v)}\,
	\G^{\conj}_{\la/\mu}(v),
\end{align}
where $\VV=(v_1,\ldots,v_n)$, and $\la,\mu\in\signp m$ for some $m$.
The facts that the quantities \eqref{Lam} and \eqref{Lae}
sum to 1 (over all $\mu\in\signp m$; note that these sums are finite) 
follow from 
the branching rules (Proposition \ref{prop:branching}). Hence, 
$\Lam_{u\md\UU}\colon\signp{m+1}\dashrightarrow\signp m$
and 
$\Lae_{v\md\VV}\colon\signp{m}\dashrightarrow\signp m$
define Markov kernels.\footnote{We use the notation ``$\dashrightarrow$''
to indicate that $\Lam_{u\md\UU}$ and 
$\Lae_{v\md\VV}$ are Markov kernels, i.e., they are
functions in the first variable 
(belonging to the space on the left of ``$\dashrightarrow$'')
and probability distributions in the second variable
(belonging to the space on the right of ``$\dashrightarrow$'').}
Note that in \eqref{Lam} and \eqref{Lae}
one can replace all functions by $\F^{\conj}$
or $\G$, respectively, and get the same kernels.

The kernels $\Lam$ and $\Lae$
act on the measures \eqref{MM_measure}
as 
\begin{align}\label{M_and_Lambdas}
	\MM_{\UU\cup u;\VV}\Lam_{u\md\UU}=
	\MM_{\UU;\VV}
	,\qquad \qquad
	\MM_{\UU;\VV\cup v}\Lae_{v\md\VV}=
	\MM_{\UU;\VV},
\end{align}
this follows from the Pieri rules (Corollary \ref{cor:Pieri}).
The matrix products above are understood in a natural way, for example,
$\big(\MM_{\UU\cup u;\VV}\Lam_{u\md\UU}\big)(\mu)=
\sum_{\nu}\MM_{\UU\cup u;\VV}(\nu)\Lam_{u\md\UU}(\nu\to\mu)$.

\begin{remark}[Gibbs measures]
	Conditioned on 
	any $\nu^{(k)}$ (where $k=1,\ldots,M$),
	the distribution 
	of the lower levels
	$\nu^{(1)},\ldots,\nu^{(k-1)}$
	under \eqref{MM_Process}
	is independent of $\VV$ and is given by 
	\begin{align}\label{Gibbs_property}
		\Lam_{u_k\md(u_1,\ldots,u_{k-1})}(\nu^{(k)}\to\nu^{(k-1)})\ldots
		\Lam_{u_3\md(u_1,u_2)}(\nu^{(3)}\to\nu^{(2)})
		\Lam_{u_2\md(u_1)}(\nu^{(2)}\to\nu^{(1)}),
	\end{align}
	and a similar expression can be 
	written for conditioning on $\tnu^{(\ell)}$, 
	yielding a distribution
	which is independent
	of $\UU$ 
	and involves the kernels $\Lae$.

	It is natural to call a measure on 
	a sequence of interlacing signatures
	$(\nu^{(1)},\ldots,\nu^{(M)})$
	whose conditional distributions are given by \eqref{Gibbs_property}
	a \emph{Gibbs measure} (with respect to the $\UU$ parameters).
	In fact, when $q=s=0$ and $u_i\equiv 1$, 
	this Gibbs property turns into the following:
	conditioned on any 
	$\nu^{(k)}$,
	the distribution 
	of the lower levels
	$\nu^{(1)},\ldots,\nu^{(k-1)}$
	is \emph{uniform}
	among all sequences of signatures satisfying the interlacing constraints
	\eqref{interlacing_constraints}.

	This Gibbs property (as well as commutation relations discussed 
	below in this subsection) can be used to construct ``multivariate''
	Markov kernels on
	arrays of interlacing signatures which act nicely on 
	distributions of the form \eqref{MM_Process}, 
	but we will not address this construction here
	(about similar constructions
	see references given in 
	Remark \ref{rmk:Doob_h_transform} below). For details of 
	such constructions in the case of Macdonald
	processes see \cite{BorodinPetrov2013NN},
	\cite{MatveevPetrov2014}.
\end{remark}

The other two Markov kernels, $\Qp$ and $\Qe$, increase the number of 
parameters in the measures $\MM_{\UU;\VV}$, as opposed to \eqref{M_and_Lambdas}, where the number of parameters is decreased. 
These kernels are defined as follows.
For any $n,m\in\Z_{\ge0}$,
define
\begin{align}\label{Qp}
	\Qp_{u;\VV}(\la\to\nu):=\frac{1-su}{1-q^{m+1}}
	\bigg(\prod_{j=1}^{n}\frac{1-uv_j}{1-quv_j}\bigg)
	\frac{\G_{\nu}^{\conj}(v_1,\ldots,v_n)}
	{\G_{\la}^{\conj}(v_1,\ldots,v_n)}
	\,\F_{\nu/\la}(u),
\end{align}
where 
$\VV=(v_1,\ldots,v_n)$ such that
$\adm u{v_j}$ for all $j$, with 
$\la\in\signp m$ and $\nu\in\signp {m+1}$.
Also, for any $m\in\Z_{\ge0}$, define
\begin{align}\label{Qe}
	\Qe_{\UU;v}(\mu\to\nu):=
	\bigg(\prod_{i=1}^{m}\frac{1-u_iv}{1-qu_iv}\bigg)
	\frac{\F_{\nu}(u_1,\ldots,u_m)}{\F_{\mu}(u_1,\ldots,u_m)}
	\,\G^{\conj}_{\nu/\mu}(v),
\end{align}
where $\UU=(u_1,\ldots,u_M)$
such that $\adm{u_i}v$ for all $i$,
with $\mu,\nu\in\signp m$.
By the Pieri rules of Corollary \ref{cor:Pieri},
$\Qp_{u;\VV}\colon\signp m\dashrightarrow \signp {m+1}$
and 
$\Qe_{\UU;v}\colon\signp m\dashrightarrow \signp {m}$
define Markov kernels (i.e., they sum to one in the
second argument).

\begin{remark}
	In $\Qe_{\UU;v}$ \eqref{Qe}, moving the conjugation from the function $\G$ to 
	both functions
	$\F$ does not change the kernel. 
	However, doing so in $\Qp_{u;\VV}$
	\eqref{Qp} requires modifying the prefactor:
	\begin{align*}
		\Qp_{u;\VV}(\la\to\nu)=\frac{1-su}{1-s^{2}q^{m}}
		\bigg(\prod_{j=1}^{n}\frac{1-uv_j}{1-quv_j}\bigg)
		\frac{\G_{\nu}(v_1,\ldots,v_n)}
		{\G_{\la}(v_1,\ldots,v_n)}
		\,\F^{\conj}_{\nu/\la}(u).
	\end{align*}
\end{remark}
From the branching rules (Proposition \ref{prop:branching})
it readily follows that the kernels
$\Qp$ and $\Qe$ act on the measures \eqref{MM_measure} as
\begin{align}\label{MM_and_Q}
	\MM_{\UU;\VV}\Qp_{u;\VV}=\MM_{\UU\cup u;\VV},\qquad
	\qquad
	\MM_{\UU;\VV}\Qe_{\UU;v}=\MM_{\UU;\VV\cup v}.
\end{align}

The Markov kernels defined above enter the following 
commutation relations:
\begin{proposition}\label{prop:Q_La_commutation}
\noindent{\bf1.\/}
For any $\UU=(u_1,\ldots,u_m)$ and $u,v\in\C$ such that $\adm uv$,
we have 
$\Qe_{\UU\cup u;v}\Lam_{u\md\UU}=
\Lam_{u\md\UU}\Qe_{\UU;v}$ 
(as Markov kernels $\signp {m+1}\dashrightarrow\signp m$), or, in more detail,
\begin{align*}
	\sum_{\la\in\signp{m+1}}
	\Qe_{\UU\cup u;v}(\nu\to\la)\Lam_{u\md\UU}(\la\to\mu)
	=
	\sum_{\kappa\in\signp m}
	\Lam_{u\md\UU}(\nu\to\kappa)\Qe_{\UU;v}(\kappa\to\mu),
\end{align*}
where $\nu\in\signp{m+1}$ 
and 
$\mu\in\signp m$.

\noindent{\bf2.\/}
For any $\VV=(v_1,\ldots,v_n)$
and $u,v\in\C$ such that $\adm uv$, we have
$\Qp_{u;\VV\cup v}\Lae_{v\md\VV}
=
\Lae_{v\md\VV}\Qp_{u;\VV}$
(as Markov kernels $\signp {m}\dashrightarrow\signp {m+1}$),
which is unabbreviated in the same way as the first relation.
\end{proposition}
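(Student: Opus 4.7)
The plan is to derive both commutation relations by direct substitution of the definitions of $\Qe,\Lam$ (respectively $\Qp,\Lae$) and then reduce the resulting identities to the skew Cauchy identity of Proposition~\ref{prop:skew_Cauchy}. The structure is identical in both parts, so I describe part~1 in detail and indicate how part~2 is parallel.

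For part 1, I would expand the left-hand side
$\sum_{\la\in\signp{m+1}}\Qe_{\UU\cup u;v}(\nu\to\la)\Lam_{u\md\UU}(\la\to\mu)$
by plugging in \eqref{Qe} and \eqref{Lam}. The telescoping of the non-skew $\F$-factors collapses the ratio $\F_\la(\UU,u)$ against itself, leaving the prefactor $\prod_{i=1}^m\frac{1-u_iv}{1-qu_iv}\cdot\frac{1-uv}{1-quv}\cdot\frac{\F_\mu(\UU)}{\F_\nu(\UU,u)}$ multiplied by the sum $\sum_\la \G^\conj_{\la/\nu}(v)\F_{\la/\mu}(u)$. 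An analogous expansion of the right-hand side using \eqref{Lam} and \eqref{Qe} produces the same prefactor $\prod_{i=1}^m\frac{1-u_iv}{1-qu_iv}\cdot\frac{\F_\mu(\UU)}{\F_\nu(\UU,u)}$ (without the extra $(1-uv)/(1-quv)$) multiplied by the sum $\sum_\kappa \F_{\nu/\kappa}(u)\G^\conj_{\mu/\kappa}(v)$. Equality of the two sides is thus equivalent to
\begin{equation*}
\sum_\la \G^\conj_{\la/\nu}(v)\F_{\la/\mu}(u)
=\frac{1-quv}{1-uv}\sum_\kappa \F_{\nu/\kappa}(u)\G^\conj_{\mu/\kappa}(v),
\end{equation*}
which is precisely the skew Cauchy identity \eqref{skew_Cauchy_good} after renaming $(\nu,\mu)\mapsto(\la,\nu)$ in the role of fixed (free) indices.

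For part 2, the same strategy applies: expand both sides of $\Qp_{u;\VV\cup v}\Lae_{v\md\VV}=\Lae_{v\md\VV}\Qp_{u;\VV}$ using \eqref{Qp} and \eqref{Lae}. The $(1-su)/(1-q^{m+1})$ factors and the $\prod_j(1-uv_j)/(1-quv_j)$ factors coincide on both sides (they depend on the same parameters in the two compositions since the rank $m$ is the same across the compositions), while ratios of non-skew $\G^\conj$-functions telescope the intermediate $\G^\conj_\la(\VV,v)$ and $\G^\conj_\mu(\VV)$ factors. What remains is the identity
\begin{equation*}
\frac{1-uv}{1-quv}\sum_\la \F_{\la/\kappa}(u)\G^\conj_{\la/\nu}(v)
=\sum_\mu \G^\conj_{\kappa/\mu}(v)\F_{\nu/\mu}(u),
\end{equation*}
which is again \eqref{skew_Cauchy_good} with a suitable relabeling of free indices.

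The only real work is the bookkeeping of prefactors: verifying that all the non-skew factors $\F_\nu(\UU,u),\F_\mu(\UU),\G^\conj_\la(\VV,v),\G^\conj_\kappa(\VV)$ and the scalar coefficients $\frac{1-su}{1-q^{m+1}}$, $\prod_j\frac{1-uv_j}{1-quv_j}$ appear symmetrically on both sides so that they cancel down to a clean skew-Cauchy statement. This is routine but must be done carefully because the prefactor $1/(1-q^{m+1})$ in $\Qp$ depends on the signature length at the input, and one must check that both compositions use the same value of $m$ before applying $\Qp$. The admissibility hypothesis $\adm uv$ guarantees the convergence of the infinite sums on the $\G^\conj_{\la/\nu}(v)\F_{\la/\mu}(u)$ side, matching the hypothesis of Proposition~\ref{prop:skew_Cauchy}.
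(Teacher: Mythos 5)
Your proof is correct and matches the paper's approach: the paper dismisses this as ``a straightforward corollary of the skew Cauchy identity,'' and your proposal supplies exactly the intended calculation, substituting the definitions of $\Lam,\Qe$ (resp.\ $\Lae,\Qp$), telescoping the non-skew prefactors, and reducing the remaining identity to \eqref{skew_Cauchy_good} after a relabeling of the free signatures. Your bookkeeping of the $(1-uv)/(1-quv)$ factor, the $(1-su)/(1-q^{m+1})$ factor, and the requirement that both compositions start from rank $m$ is the substance of the ``straightforward'' claim and checks out.
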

\begin{proof}
	A straightforward corollary of the skew Cauchy identity
	(Proposition \ref{prop:skew_Cauchy}).
\end{proof}

\begin{remark}
	In the context of Schur functions, the 
	Markov kernels 
	$\Qp$ and $\Lam$
	are often referred to as \emph{transition} and \emph{cotransition probabilities}.
	In \cite[\S9]{Borodin2010Schur} and \cite[\S2.3.3]{BorodinCorwin2011Macdonald}
	similar kernels are denoted by $p^{\uparrow}$ and $p^{\downarrow}$, respectively. 
	The kernels 
	$\Qp$ and $\Lam$ involve the skew functions $\F$ in the $\UU$ parameters, 
	and similarly
	$\Qe$ and $\Lae$ correspond to the $\G$'s in the $\VV$ parameters.
	The latter operators differ form the former ones because 
	(unlike in the Schur or Macdonald setting)
	the functions $\F$ and $\G$ are not proportional to each other.
\end{remark}

We will treat the Markov kernels $\Qe_{\UU;v}$ and $\Qp_{u;\VV}$
as one-step transition operators of certain discrete time
Markov chains. 

\begin{remark}\label{rmk:EF_relation_Qe}
	One can readily write down eigenfunctions
	of $\Qe_{\UU;v}$ viewed as an operator
	on functions on $\signp m$.
	Here we mean 
	algebraic (or formal) eigenfunctions, 
	i.e., we do not address the question 
	of how they decay at infinity.
	We have for any $\mu\in\signp m$:
	\begin{multline}
		\big(\Qe_{\UU;v}\EF{\UU}\bullet(z_1,\ldots,z_m)\big)(\mu)
		=\sum_{\nu\in\signp m}\Qe_{\UU;v}(\mu\to\nu)
		\EF{\UU}\nu(z_1,\ldots,z_m)\\=
		\bigg(\prod_{i=1}^{m}\frac{1-qz_i v}{1-z_i v}
		\frac{1-u_iv}{1-qu_iv}
		\bigg)\EF{\UU}\mu(z_1,\ldots,z_m),
		\label{EF_relation_Qe}
	\end{multline}
	where 
	the eigenfunction 
	$\EF\UU\la(\ZZZ)$
	depends on the spectral variables $\ZZZ=(z_1,\ldots,z_m)$
	satisfying 
	the admissibility conditions $\adm{z_i}v$ for all $i$,
	and is defined as follows:
	\begin{align}
		\label{EF_Qe}
		\EF\UU\la(z_1,\ldots,z_m):=
		\frac{1}{{\F_\la(u_1,\ldots,u_m)}}\,{\F_\la(z_1,\ldots,z_m)}.
	\end{align}
	Relation \eqref{EF_relation_Qe} readily follows from the Pieri rules (Corollary \ref{cor:Pieri}).
	This eigenrelation can be employed to 
	write down a 
	spectral decomposition of the operator $\Qe$,
	see Remark \ref{rmk:spec_decomp} below.
\end{remark}


\subsection{Specializations} 
\label{sub:specializations_pi_and_rho_}

Let us now discuss special choices of parameters $\UU$ and $\VV$ which 
greatly simplify the Markov kernels 
$\Qe_{\UU;v}$ and $\Qp_{u;\VV}$, respectively. First, observe that
for any $m\in\Z_{\ge0}$ and any $\mu\in\signp m$ we have
\begin{align*}
	\F_\mu(\PI m)=
	\F_\mu(\underbrace{0,0,\ldots,0}_{\textnormal{$m$ times}})
	=(-s)^{|\mu|}(q;q)_m,
\end{align*}
where the last equality is due to 
\eqref{F_prin_spec_simple} because we can take $u=0$ 
in that formula (note that by \eqref{F_symm_formula}, the function $\F_\mu(u_1,\ldots,u_m)$ is continuous
at $\UU=\PI m$).

A similar limit for the functions $\G_\mu$ is given in the next proposition:
\begin{proposition}\label{prop:RHO_spec}
	For any $n\in\Z_{\ge0}$ and $\nu\in\signp n$, we have
	\begin{align}
		\label{RHO_spec}
		\G_\nu(\RHO):=
		\lim_{\epsilon\to0}\Big(\G_\nu(\epsilon,q\epsilon,\ldots,q^{J-1}\epsilon)\Big\vert_{q^{J}=
		1/(s\epsilon)}\Big)=
		\begin{cases}
			(-s)^{|\nu|}(s^{2};q)_{n}s^{-2n},&\textnormal{if $\nu_n>0$};
			\\
			0,&\textnormal{if $\nu_n=0$}.
		\end{cases}
	\end{align}
\end{proposition}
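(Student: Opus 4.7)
The plan is to apply the principal specialization formula \eqref{G_prin_spec_simple} from Corollary \ref{cor:prin_spec} with $v=\epsilon$ and $N=J$, substitute $q^J = 1/(s\epsilon)$, and carefully take the limit $\epsilon\to 0$ factor by factor. So I would write
\begin{align*}
\G_\nu(\epsilon,q\epsilon,\ldots,q^{J-1}\epsilon)=
\frac{(q;q)_{J}}{(q;q)_{J-n+k}}\frac{(s\epsilon;q)_{J+k}}{(s\epsilon;q)_{n}(s\epsilon;q)_{J}}\frac{(s^{2};q)_{n}}{(s^{2};q)_{k}}\frac{1}{(s/\epsilon;q^{-1})_{n-k}}\prod_{j=1}^{J}\bigg(\frac{q^{j-1}\epsilon-s}{1-sq^{j-1}\epsilon}\bigg)^{\nu_j},
\end{align*}
where $k$ is the number of zero coordinates of $\nu$, and analyze each piece under the constraint $s\epsilon\cdot q^J=1$.

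The easy observations are: since $\nu_j=0$ for $j>n$, the rightmost product reduces to a product over $j=1,\ldots,n-k$ and, as $\epsilon\to 0$, converges to $(-s)^{|\nu|}$ (note $(-s)^{\nu_j}=(-s)^0=1$ for $j>n-k$); and $(s\epsilon;q)_n\to 1$. The case $\nu_n=0$ (i.e.\ $k\ge1$) is then immediate: the ratio $(s\epsilon;q)_{J+k}/(s\epsilon;q)_J=\prod_{i=0}^{k-1}(1-s\epsilon q^{J+i})$ contains the factor $1-s\epsilon q^J=1-1=0$, and I would check that none of the other factors diverge badly enough to compensate, giving $\G_\nu(\RHO)=0$.

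The main computational step, and the only non-routine one, is the case $\nu_n>0$ (so $k=0$), where both $(q;q)_J/(q;q)_{J-n}$ and $(s/\epsilon;q^{-1})_n$ blow up and one has to pair them correctly. Using $q^J=1/(s\epsilon)$ one gets
\begin{align*}
\frac{(q;q)_{J}}{(q;q)_{J-n}}=\prod_{i=0}^{n-1}(1-q^{J-i})\sim\prod_{i=0}^{n-1}\Bigl(-\frac{q^{-i}}{s\epsilon}\Bigr)=\frac{(-1)^{n}q^{-\binom{n}{2}}}{(s\epsilon)^{n}},
\end{align*}
while $(s/\epsilon;q^{-1})_n\sim(-s/\epsilon)^{n}q^{-\binom{n}{2}}$, so the ratio of these two is $s^{-2n}$ in the limit. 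Combined with the factors $(s^2;q)_n$ and $(-s)^{|\nu|}$ identified above, this yields exactly $(-s)^{|\nu|}(s^{2};q)_{n}s^{-2n}$.

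The main obstacle is purely bookkeeping: tracking signs and powers of $q$ and $s$ in the two divergent $q$-Pochhammers so that the cancellation is clean. There is also a minor subtlety that the condition $q^J=1/(s\epsilon)$ is algebraic and does not literally correspond to an integer $J$ under the sign conventions of \eqref{stochastic_weights_condition_qsxi}, but this is not an issue because the right-hand side of \eqref{G_prin_spec_simple} is a rational function of $q^J$ (via $q^{J-i}=q^{-i}/(s\epsilon)$ after substitution) and \eqref{RHO_spec} is simply the specialization of that rational function at $q^J=1/(s\epsilon)$ followed by $\epsilon\to 0$; no analytic continuation argument beyond this is needed.
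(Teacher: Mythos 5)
Your proposal is correct and follows essentially the same route as the paper's proof: substitute into the principal specialization formula \eqref{G_prin_spec_simple}, observe rationality in $q^J$ (the paper makes this more explicit by regrouping the divergent-looking ratios $(q;q)_J/(q;q)_{J-n+k}$ and $(s\epsilon;q)_{J+k}/(s\epsilon;q)_J$ into the polynomials $(q^{J+1+k-n};q)_{n-k}$ and $(s\epsilon q^J;q)_k$ before setting $q^J=1/(s\epsilon)$), note that the factor $(1;q)_k$ kills the $k\ge 1$ case, and then balance the two diverging $q$-Pochhammers to get $s^{-2n}$ for $k=0$. Your "check that nothing else diverges" is indeed just bookkeeping: once the two pairings above are made, the remaining factors are manifestly finite as $\epsilon\to 0$, so no surprise compensation can occur.
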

\begin{proof}
	Let $k$ be the number of zero coordinates in $\nu$.
	From \eqref{G_prin_spec_simple} we have for $J\ge n-k$:
	\begin{multline*}
		\G_\nu(\epsilon,q\epsilon,\ldots,q^{J-1}\epsilon)=
		\frac{(q;q)_{J}}{(q;q)_{J-n+k}}
		\frac{(s\epsilon ;q)_{J+k}}{(s\epsilon ;q)_{n}}
		\frac{(s^{2};q)_{n}}{(s^{2};q)_{k}}
		\frac{1}{(s/\epsilon;q^{-1})_{n-k}}
		\frac{1}{(sq^{n-k}\epsilon;q)_{J-n+k}}
		\\\times
		\frac{1}{(s \epsilon;q)_{n-k}}
		\prod_{j=1}^{n-k}
		\bigg(
		\frac{q^{j-1}\epsilon -s}{1-s q^{j-1}\epsilon}
		\bigg)^{\nu_j}.
	\end{multline*}
	The $\epsilon\to0$ limit of the product over $j$ above (which is independent of $q^J$)
	gives $(-s)^{|\nu|}$. We can rewrite the remaining factors as follows:
	\begin{align*}
		&\frac{(q;q)_{J}(s^{2};q)_{n}(s\epsilon;q)_{J+k}}{(q;q)_{J-n+k}
		(s^{2};q)_{k}(s\epsilon;q)_{n}(s/\epsilon;q^{-1})_{n-k}(s\epsilon q^{n-k};q)_{J-n+k}}
		\frac{1}{(s \epsilon;q)_{n-k}}
		\\&\hspace{80pt}=
		\frac{(s^{2}q^{k};q)_{n-k}}{(s/\epsilon;q^{-1})_{n-k}(s\epsilon;q)_{n}}
		(q^{J+1+k-n};q)_{n-k}(s\epsilon q^{J};q)_{k}.
	\end{align*}
	One readily sees that 
	the above quantity 
	depends on $q^{J}$ in a rational manner.\footnote{This can also
	be thought of as a consequence of the fusion procedure
	(\S \ref{sub:principal_specializations_of_skew_functions}),
	but the statement of the proposition does not require fusion.}
	This allows to analytically continue in $q^{J}$, and set
	$q^{J}=1/(s\epsilon)$. Observe that 
	the result involves 
	$(1;q)_k$, which vanishes unless $k=0$. 
	For $k=0$ we obtain:
	\begin{align*}
		\frac{(s^{2};q)_{n}}{(s/\epsilon;q^{-1})_{n}(s\epsilon;q)_{n}}
		((s\epsilon)^{-1}q^{1-n};q)_{n}(s\epsilon;q)_{n}=
		\frac{(s^{2};q)_{n}}{(s/\epsilon;q^{-1})_{n}}
		((s\epsilon)^{-1}q^{1-n};q)_{n},
	\end{align*}
	and in the $\epsilon\to0$ limit this turns into 
	$(s^{2};q)_{n}s^{-2n}$, which completes the proof.
\end{proof}
\begin{remark}
	An alternative proof of Proposition \ref{prop:RHO_spec} 
	(and in fact a computation of a more general specialization) using the 
	integral formula of Corollary \ref{cor:G_integral_formula} below
	is discussed in \S\ref{sub:computation_of_gnurhow}.
\end{remark}

Let us substitute the above specializations $\PI m$ and
$\RHO$ into the Markov kernels.
The kernel $\Qe_{\PI m;v}$ looks as follows:
\begin{align}\label{Qe_PI}
	\Qe_{\PI m;v}(\mu\to\nu)=
	(-s)^{|\nu|-|\mu|}
	\G^{\conj}_{\nu/\mu}(v),
	\qquad\mu,\nu\in\signp m.
\end{align}
Similarly, the kernel $\Qp_{u;\RHO}$ has the form
\begin{align*}
	\Qp_{u;\RHO}(\la\to\nu)=
	\frac{1-su}{s(s-u)}
	(-s)^{|\nu|-|\la|}
	\F^{\conj}_{\nu/\la}(u),
\end{align*}
where
$\la\in\signp m$, $\nu\in\signp{m+1}$ are such that $\la_m,\nu_{m+1}>0$.
Because of this latter condition, we can subtract $1$ from all parts of $\la$ and $\nu$, and rewrite 
$\Qp_{u;\RHO}$ as follows:
\begin{align}
	\label{Qp_RHO}
	\Qp_{u;\RHO}(\la\to\nu)=
	(-s)^{|\nu|-|\la|}
	\frac{1}{\Lmatr_{u}(0,1;0,1)}
	\,\F^{\conj}_{\nu/\la}(u)=
	(-s)^{|\tilde\nu|-|\tilde\la|}
	\F^{\conj}_{\tilde\nu/\tilde\la}(u),
\end{align}
where
$\tilde\nu=\nu-1^{m+1}$ and $\tilde\la=\la-1^{m}$.


\subsection{Interacting particle systems} 
\label{sub:interacting_particle_systems}

Fix $M\in\Z_{\ge0}$. Let us interpret $\signp M$ 
as the space of $M$-particle configurations on $\Z_{\ge0}$,
in which putting an arbitrary number of particles per site is allowed (particles are assumed to be identical).
That is, each
$\la=0^{\ell_0}1^{\ell_1}2^{\ell_2}\ldots\in\signp M$ corresponds to 
having $\ell_0$ particles at site $0$, $\ell_1$ particles at site $1$, and so on.

We can interpret the Markov kernels
$\Qe_{\UU;v}$ and $\Qp_{u;\VV}$ (for any $\UU$ or $\VV$)
as one-step transition operators of two discrete time Markov chains.
Denote these Markov chains by 
$\Xe_{\UU;\{v_t\}}$ 
and $\Xp_{\{u_t\};\VV}$, respectively.
Here $\{v_t\}_{t\in\Z_{\ge0}}$
and $\{u_t\}_{t\in\Z_{\ge0}}$
are time-dependent parameters which are 
added during one step of 
$\Xe$ or $\Xp$, respectively (we tacitly assume that all parameters $u_i$ and $v_j$
satisfy the necessary admissibility conditions as in \S \ref{sub:four_markov_kernels}).

For generic $\UU$ and $\VV$ parameters, the Markov chains
$\Xe_{\UU;\{v_t\}}$ 
and $\Xp_{\{u_t\};\VV}$, respectively, are \emph{nonlocal}, 
i.e., transitions at a given location depend on the whole particle configuration.
However, taking $\UU=\PI m$ or $\VV=\RHO$ in the corresponding chain
makes them \emph{local}
(in fact, we will get certain \emph{sequential update} rules). 
\begin{remark}\label{rmk:Doob_h_transform}
	The origin of nonlocality in the above Markov chains 
	is the conjugation of the skew functions that is necessary
	for the transition probabilities to add up to 1
	(cf. \eqref{Qp} and \eqref{Qe}). This conjugation may be viewed as 
	an instance of the 
	classical Doob's $h$-transform
	(we refer to, e.g., \cite{konig2002non}, \cite{Konig2005} for details).
	
	Another way of introducing locality to 
	Markov chains $\Xe_{\UU;\{v_t\}}$ 
	and $\Xp_{\{u_t\};\VV}$ that works for generic $\UU$ and~$\VV$, respectively, 
	could be to consider 
	``multivariate'' chains on whole interlacing arrays (similarly to, e.g., 
	\cite{OConnell2003Trans}, \cite{OConnell2003}, \cite{BorFerr2008DF}, \cite{BorodinPetrov2013NN},
	\cite{MatveevPetrov2014},
	with \cite{BorodinBufetov2015} 
	providing an application
	to the six vertex model on the torus),
	but we will not discuss this here.
\end{remark}

Let us discuss update rules 
of the dynamics 
$\Xe_{\PI M;\{v_t\}}$ 
and $\Xp_{\{u_t\};\RHO}$ in detail. They follow from 
\eqref{Qe_PI} and \eqref{Qp_RHO}
combined with 
the interpretation
of functions $\F$ and $\G$
as partition functions of path collections with stochastic vertex weights \eqref{vertex_weights_stoch}.

\subsubsection{Dynamics $\Xe_{\PI M;\{v_t\}}$}
\label{ssub:dyn_xe}

Fix $M\in\Z_{\ge0}$.
During each time step $t\to t+1$ of
the chain $\Xe_{\PI M;\{v_t\}}$,
the current configuration
$\mu=0^{m_0}1^{m_1}2^{m_2}\ldots\in\signp M$ 
is randomly changed to
$\nu=0^{n_0}1^{n_1}2^{n_2}\ldots\in\signp M$ 
according to the following sequential (left to right) update.
First, 
choose $n_0\in\{0,1,\ldots,m_0\}$ from the probability distribution
$$\Lmatr_{v_{t+1}}(m_0,0;n_0,m_0-n_0),$$ and set $h_1:=m_0-n_0\in\{0,1\}$,
Then, having $h_1$
and $m_1$, choose $n_1\in\{0,1,\ldots,m_1+h_1\}$ 
from the probability distribution
$$\Lmatr_{v_{t+1}}(m_1,h_1;n_1,m_1+h_1-n_1),$$
and set $h_2:=m_1+h_1-n_1\in\{0,1\}$.
Continue in the same manner for $x=2,3,\ldots$ 
by 
choosing
$n_{x}\in\{0,1,\ldots,m_x+h_x\}$ 
from the distribution
$$\Lmatr_{v_{t+1}}(m_x,h_x;n_{x},m_x+h_x-n_{x}),$$
and setting $h_{x+1}:=m_x+h_x-n_{x}\in\{0,1\}$.
Since at each step the probability that 
$h_{x+1}=1$ is strictly less than $1$,
eventually for some $x>\mu_1$ we will have $h_{x+1}=0$, which means that the update will  
terminate (all the above choices are independent).
See Fig.~\ref{fig:particle_systems}, left, for an example.

\subsubsection{Dynamics $\Xp_{\{u_t\};\RHO}$}
\label{ssub:dyn_xp}

During each time step $t\to t+1$ of
the chain $\Xp_{\{u_t\};\RHO}$,
the current configuration
$\mu=1^{m_1}2^{m_2}\ldots\in\signp M$ 
is randomly changed to
$\nu=1^{n_1}2^{n_2}\ldots\in\signp {M+1}$
according to the following sequential (left to right) update
(note that here $M$ is increased with time,
and also that there cannot be any particles at location $0$).

First, choose $n_1\in\{0,1,\ldots,m_1+1\}$ from the probability distribution
$$\Lmatr_{u_{t+1}}(m_1,1;n_1,m_1+1-n_1),$$ and set $h_2:=m_1+1-n_1\in\{0,1\}$. 
The fact that $j_1=1$ in this stochastic vertex weight 
accounts for the incoming arrow from $0$.
For $x=2,3,\ldots$ continue in the same way, for each $x$ choosing 
$n_{x}\in\{0,1,\ldots,m_x+h_x\}$ from the probability distribution
$$\Lmatr_{u_{t+1}}(m_x,h_x;n_x,m_x+h_x-n_x),$$
and setting $h_{x+1}=m_x+h_x-n_x\in\{0,1\}$.
The update will eventually terminate when $h_{x+1}=0$ for some $x>\mu_1$.
See Fig.~\ref{fig:particle_systems}, right, for an example.

\begin{figure}[htbp]
	\begin{tabular}{cc}
		\scalebox{.9}{\begin{tikzpicture}
		[scale=1, very thick]
		\def\hh{-1.8}
		\def\d{.06}
		\draw[->] (-.5,0)--++(8.2,0);
		\draw[thick, densely dotted, opacity=.7] (-.5,\hh)--++(8,0);
		\foreach \iii in {0,1,2,3,4,5,6,7}
		{
			\draw (\iii,.1)--++(0,-.2) node [below] {$\iii$};
			\draw[thick, densely dotted, opacity=.7] (\iii,\hh-.5)--++(0,1);
		}
		\foreach \ppt in {(0,.3),(0,.6),(1,.3),(3,.3),(3,.6),(3,.9),(6,.3),(6,.6)}
		{
			\draw[thick] \ppt circle(3pt);
		}
		\draw[->] (0,.8) to [in=100, out=45] (1,.6);
		\draw[->] (1,.5) to [in=100, out=45] (2,.3);
		\draw[->] (3,1.1) to [out=35, in=180] (4.5,1.5) to [in=120, out=0] (6,.9);
		\draw[->] (-\d,\hh-.5)--++(0,.5-\d)--++(\d,2*\d)--++(0,.5-\d);
		\draw[->] (\d,\hh-.5)--++(0,.5)--++(1-\d-\d,0)--++(\d,\d)--++(0,.5-\d);
		\draw[->] (1,\hh-.5)--++(0,.5-\d)--++(\d,\d)--++(1-\d,0)--++(0,.5);
		\draw[->] (3-2*\d,\hh-.5)--++(0,.5-\d)--++(\d,2*\d)--++(0,.5-\d);
		\draw[->] (3,\hh-.5)--++(0,.5-\d)--++(\d,2*\d)--++(0,.5-\d);
		\draw[->] (3+2*\d,\hh-.5)--++(0,.5)--++(1-2*\d,0)--++(2-2*\d,0)--++(0,.5);
		\draw[->] (6-\d,\hh-.5)--++(0,.5-\d)--++(\d,2*\d)--++(0,.5-\d);
		\draw[->] (6+\d,\hh-.5)--++(0,.5-\d)--++(\d,2*\d)--++(0,.5-\d);
		\node at (1.9,-3.5)
		{\parbox{100pt}
		{
			$\Lmatr_v(2,0;1,1)\Lmatr_v(1,1;1,1)
						$

			$\qquad\times\Lmatr_v(0,1;1,0)\Lmatr_v(3,0;2,1)\Lmatr_v(0,1;0,1)
						$
			
			$\qquad \qquad\times\Lmatr_v(0,1;0,1)\Lmatr_v(2,1;3,0)	$
		}
		};
	\end{tikzpicture}}
	&\hspace{20pt}
	\scalebox{.9}{\begin{tikzpicture}
		[scale=1, very thick]
		\def\hh{-1.8}
		\def\d{.06}
		\draw[->] (-.5,0)--++(8.2,0);
		\draw[thick, densely dotted, opacity=.7] (-.5,\hh)--++(8,0);
		\foreach \iii in {0,1,2,3,4,5,6,7}
		{
			\draw (\iii,.1)--++(0,-.2) node [below] {$\iii$};
			\draw[thick, densely dotted, opacity=.7] (\iii,\hh-.5)--++(0,1);
		}
		\foreach \ppt in {(1,.3),(1,.6),(3,.3),(3,.6),(3,.9),(6,.3),(6,.6)}
		{
			\draw[thick] \ppt circle(3pt);
		}
		\draw[->] (1,.8) to [in=100, out=45] (2,.3);
		\draw[thick, densely dotted] (-1,.3) circle(3pt);
		\draw[->] (-1,.5) to [in=120, out=45] (1,.9);
		\draw[->] (3,1.1) to [out=35, in=180] (4.5,1.5) to [in=120, out=0] (6,.9);
		\draw[->] (-1,\hh)--++(2-3*\d,0)--++(2*\d,\d)--++(0,.5-\d);
		\draw[->] (1-\d,\hh-.5)--++(0,.5-\d)--++(2*\d,2*\d)--++(0,.5-\d);
		\draw[->] (1+\d,\hh-.5)--++(0,.5-\d)--++(2*\d,\d)--++(1-\d*3,0)--++(0,.5);
		\draw[->] (3-2*\d,\hh-.5)--++(0,.5-\d)--++(\d,2*\d)--++(0,.5-\d);
		\draw[->] (3,\hh-.5)--++(0,.5-\d)--++(\d,2*\d)--++(0,.5-\d);
		\draw[->] (3+2*\d,\hh-.5)--++(0,.5)--++(1-2*\d,0)--++(2-2*\d,0)--++(0,.5);
		\draw[->] (6-\d,\hh-.5)--++(0,.5-\d)--++(\d,2*\d)--++(0,.5-\d);
		\draw[->] (6+\d,\hh-.5)--++(0,.5-\d)--++(\d,2*\d)--++(0,.5-\d);
		\node at (1.9,-3.5)
		{\parbox{100pt}
		{
			$\Lmatr_u(2,1;2,1)\Lmatr_u(0,1;1,0)
						$

			$\qquad\times\Lmatr_u(3,0;2,1)\Lmatr_u(0,1;0,1)
						$
			
			$\qquad \qquad\times\Lmatr_u(0,1;0,1)\Lmatr_u(2,1;3,0)	$
		}
		};
	\end{tikzpicture}}
	\end{tabular}
	\caption{
	Left: a possible move under the chain $\Xe_{\PI M;\{v_t\}}$ with $M=8$
	(depicted in terms of particle and path configurations). 
	Right: a possible move under the chain 
	$\Xp_{\{u_t\};\RHO}$
	with $M=7$ (so that the resulting configuration has $8$ particles).
	The probabilities of these moves are also given, where $v=v_{t+1}$ and $u=u_{t+1}$.
	}
	\label{fig:particle_systems}
\end{figure}
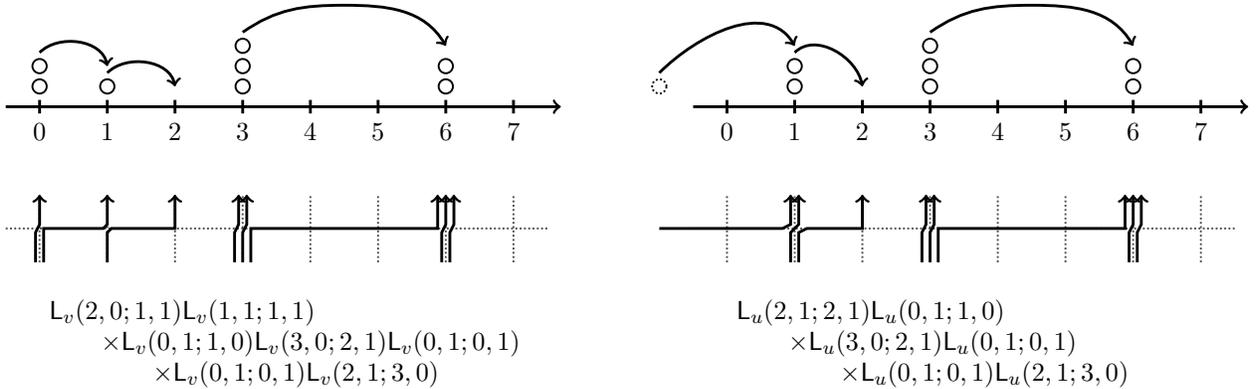

\subsubsection{Properties of dynamics} 
\label{ssub:properties_of_dynamics}

We will now list a number of immediate properties of the Markov chains 
$\Xe_{\PI M;\{v_t\}}$ and $\Xp_{\{u_t\};\RHO}$ described above.

\smallskip\noindent$\bullet$
Under both dynamics, particles move only to the right.
Moreover, at most one particle can leave 
any given stack of particles
and it can move only as far as the next nonempty stack of particles.

\smallskip\noindent$\bullet$
The property that 
at most one particle can leave 
any given stack of particles
is a $J=1$ feature. 
One can readily define \emph{fused dynamics}
involving stochastic vertex weights 
$\LJ{J}_{z}$
for any $J\in\Z_{\ge1}$ (see \S \ref{sub:fusion_of_stochastic_weights}). In these general $J$ dynamics,
at most $J$ particles 
can leave 
any given stack.
One step of a general $J$ dynamics 
(say, an analogue of $\Xp$)
can be thought of as simply 
combining $J$ steps of the $J=1$ dynamics with parameters
$u_t,qu_t,\ldots,q^{J-1}u_t$.
Results of \S \ref{sub:fusion_of_stochastic_weights}
show that one can then forget about the intermediate configurations 
during these $J$ steps, and still obtain a Markov chain. 
We will utilize these general $J$ Markov chains 
in \S \ref{sub:general_j_dynamics_and_q_hahn_degeneration} below.

\smallskip\noindent$\bullet$
If the dynamics 
$\Xe_{\UU;\{v_t\}}$
is started from the initial configuration 
$0^{M}$ 
(that is, all $M$ particles are at zero),
then at any time $t$ the distribution of the 
particle configuration is 
given by $\MM_{\UU;(v_1,\ldots,v_t)}$.
Similarly, if 
$\Xp_{\{u_t\};\VV}$ starts from the empty initial configuration, 
then at any time $t$ the distribution of the 
particle configuration is 
given by $\MM_{(u_1,\ldots,u_t);\VV}$. This follows
from \eqref{MM_and_Q}.

\smallskip\noindent$\bullet$
Let us return to local dynamics.
As follows from \eqref{EF_relation_Qe}--\eqref{EF_Qe},
the eigenfunctions of the 
transition operator 
$\Qe_{\PI M;v}$ corresponding to the dynamics $\Xe$ (on $M$-particle configurations) are 
\begin{align*}
	\EF{}\la(z_1,\ldots,z_M)=
	\frac{1}{(q;q)_{M}(-s)^{|\la|}}\,{\F_\la(z_1,\ldots,z_M)},
	\qquad
	\Qe_{\PI M;v}\EF{}\la(\ZZZ) = 
	\bigg(
	\prod_{j=1}^{M}\frac{1-qz_jv}{1-z_jv}
	\bigg)\EF{}\la(\ZZZ)
\end{align*}
(here and below for $\UU=\PI M$ we write 
$\EF{}\la$ instead of
$\EF{\UU}\la$).

\smallskip\noindent$\bullet$
The dynamics 
$\Xe_{\PI M;\{v_t\}}$
on $M$-particle configurations appeared in 
\cite{CorwinPetrov2015}
(under the name \emph{$J=1$ higher spin zero range 
process}), and certain duality relations
for it were established in that paper.\footnote{Similar duality results 
also appeared earlier in
\cite{BorodinCorwinSasamoto2012}, \cite{BorodinCorwin2013discrete},
\cite{Corwin2014qmunu}
for $q$-TASEP and $q$-Hahn degenerations of the general higher spin six vertex model.}
Some of the results in \cite{CorwinPetrov2015}
also deal with infinite-particle 
process like $\Xe_{\PI M;\{v_t\}}$,
which starts from the
initial configuration $0^{\infty}1^{0}2^{0}\ldots$
(interpreting the zero range process as an exclusion process, this 
would correspond to the most well-studied \emph{step initial data}).
In this case, during each time step $t\to t+1$,
one particle can escape the location $0$ 
with probability $\Lmatr_{v_{t+1}}
(\infty,0;\infty,1)=(-sv_{t+1})/(1-sv_{t+1})$ (note that under
\eqref{stochastic_weights_condition_qsxi}--\eqref{stochastic_weights_condition_u}
this number is between $0$ and $1$). 
In \S \ref{sub:general_j_dynamics_and_q_hahn_degeneration} below we will discuss how 
this initial condition can be obtained by a straightforward
limit transition from the dynamics 
$\Xp_{\{u_t\};\RHO}$.
Thus, considering the latter dynamics 
without this limit transition 
adds a new boundary condition, 
under which during each time step,
a new particle is \emph{always} added at the leftmost location.



\subsection{Degeneration to the six vertex model and the ASEP} 
\label{sub:asep_degeneration}

In this subsection we do not assume that 
our parameters satisfy \eqref{stochastic_weights_condition_qsxi}--\eqref{stochastic_weights_condition_u}.
However, all algebraic statements discussed above in this section
(e.g., Proposition \ref{prop:Q_La_commutation}) 
continue to hold without this assumption --- they just become statements
about linear operators. Moreover, 
one can say that these are statements about \emph{formal} Markov
operators, i.e., in which the matrix elements sum up to one
along each row,
but are not necessarily nonnegative.

Observe that taking $s^{2}=q^{-I}$ for $I\in\Z_{\ge1}$
makes the weight
\begin{align*}
	\Lmatr_{u}(I,1;I+1,0)=\frac{1-s^{2}q^{I}}{1-s u}
\end{align*}
vanish, regardless of $u$.
If, moreover, all other weights 
$\Lmatr_{u}(i_1,j_1;i_2,j_2)$ with
$i_{1,2}\in\{0,1,\ldots,I\}$ and 
$j_{1,2}\in\{0,1\}$
are nonnegative, 
then we can restrict our 
attention to path ensembles in which 
the multiplicities of all
vertical edges are bounded by $I$,
and still talk about interacting particle systems
as in \S \ref{sub:interacting_particle_systems} above.

Let us consider the simplest case and take $I=1$, so 
$s=q^{-\frac12}$.
For this choice of $s$,
there are six possible arrow configurations
at a vertex, and their weights are given in 
Fig.~\ref{fig:six_vertex}. 
These weights are nonnegative if
either $0<q<1$ and $u\ge q^{-\frac12}$,
or
$q>1$ and $0\le u\le q^{-\frac12}$ (these are the 
new nonnegativity conditions
replacing \eqref{stochastic_weights_condition_qsxi}--\eqref{stochastic_weights_condition_u} for $s=q^{-\frac12}$).
Observe the following symmetry of these degenerate vertex weights:
\begin{align}\label{stoch_6V_symmetry}
	\Lmatr_{u}(i_1,j_1;i_2,j_2)
	=
	\Lmatr_{u^{-1}}(1-i_1,1-j_1;1-i_2,1-j_2)\Big\vert_{q\to q^{-1}},
	\qquad i_1,j_1,i_2,j_2\in\{0,1\}.
\end{align}

\begin{figure}[htbp]
	\def\d{0.1}\def\scl{.8}
	\begin{tabular}{c|c|c|c|c|c|c}
	&\scalebox{\scl}{\begin{tikzpicture}
		[scale=1, very thick]
		\node (i1) at (0,-1) {$0$};
		\node (j1) at (-1,0) {$0$};
		\node (i2) at (0,1) {$0$};
		\node (j2) at (1,0) {$0$};
		\draw[densely dotted] (j1) -- (j2);
		\draw[densely dotted] (i1) -- (i2);
	\end{tikzpicture}}
	&\scalebox{\scl}{\begin{tikzpicture}
		[scale=1, very thick]
		\node (i1) at (0,-1) {$1$};
		\node (j1) at (-1,0) {$0$};
		\node (i2) at (0,1) {$1$};
		\node (j2) at (1,0) {$0$};
		\draw[densely dotted] (j1) -- (j2);
		\draw[densely dotted] (i1) -- (i2);
		\draw[->, line width=1.7pt] (i1)--(i2);
	\end{tikzpicture}}
	&\scalebox{\scl}{\begin{tikzpicture}
		[scale=1, very thick]
		\node (i1) at (0,-1) {$1$};
		\node (j1) at (-1,0) {$0$};
		\node (i2) at (0,1) {$0$};
		\node (j2) at (1,0) {$1$};
		\draw[densely dotted] (j1) -- (j2);
		\draw[densely dotted] (i1) -- (i2);
		\draw[->, line width=1.7pt] (i1)--(0,0)--(j2);
	\end{tikzpicture}}
	&\scalebox{\scl}{\begin{tikzpicture}
		[scale=1, very thick]
		\node (i1) at (0,-1) {$0$};
		\node (j1) at (-1,0) {$1$};
		\node (i2) at (0,1) {$0$};
		\node (j2) at (1,0) {$1$};
		\draw[densely dotted] (j1) -- (j2);
		\draw[densely dotted] (i1) -- (i2);
		\draw[->, line width=1.7pt] (j1)--(j2);
	\end{tikzpicture}}
	&\scalebox{\scl}{\begin{tikzpicture}
		[scale=1, very thick]
		\node (i1) at (0,-1) {$0$};
		\node (j1) at (-1,0) {$1$};
		\node (i2) at (0,1) {$1$};
		\node (j2) at (1,0) {$0$};
		\draw[densely dotted] (j1) -- (j2);
		\draw[densely dotted] (i1) -- (i2);
		\draw[->, line width=1.7pt] (j1)--(0,0)--(i2);
	\end{tikzpicture}}
	&\scalebox{\scl}{\begin{tikzpicture}
		[scale=1, very thick]
		\node (i1) at (0,-1) {$1$};
		\node (j1) at (-1,0) {$1$};
		\node (i2) at (0,1) {$1$};
		\node (j2) at (1,0) {$1$};
		\draw[densely dotted] (j1) -- (j2);
		\draw[densely dotted] (i1) -- (i2);
		\draw[->, line width=1.7pt] (i1)--(0,-\d)--++(\d,\d)--(j2);
		\draw[->, line width=1.7pt] (j1)--(-\d,0)--++(\d,\d)--(i2);
	\end{tikzpicture}}
	\\
	\hline\rule{0pt}{20pt}
	$\Lmatr_{u}\big\vert_{s=q^{-\frac12}}$
	&1
	&$b_1$
	&$1-b_1$
	&$b_2$
	&$1-b_2$
	&1
	\phantom{\Bigg|}
	\end{tabular}
	\caption{All six stochastic vertex weights
	corresponding to $s=q^{-\frac12}$. The weights $b_{1,2}$ are
	expressed through $u$ and $q$ as
	$b_1=\dfrac{1-uq^{\frac12}}{1-uq^{-\frac12}}$
	and 
	$b_2=\dfrac{-uq^{-\frac12}+q^{-1}}{1-uq^{-\frac12}}$.}
	\label{fig:six_vertex}
\end{figure}

These vertex weights define the \emph{stochastic
six vertex model}
introduced in \cite{GwaSpohn1992} and studied recently in
\cite{BCG6V}. 
A simulation of a 
random configuration
of the stochastic six vertex model
with boundary conditions considered in 
\cite{BCG6V} 
is given in 
Fig.~\ref{fig:stoch6v}.
Note that the latter paper deals with the stochastic six vertex model
in which the vertical arrows are entering from below, and no arrows enter from the left.
Moreover, to get a nontrivial limit shape as in Fig.~\ref{fig:stoch6v}
one should take $q>1$. However, with the help of the symmetry
\eqref{stoch_6V_symmetry} (leading to the swapping
of arrows with empty edges), these boundary conditions
are equivalent to considering the process
$\Xp_{\{u_t\};\RHO}$ with $0<q<1$, which is our usual assumption throughout the 
text. Simulations of the latter dynamics
can be obtained from the pictures in Fig.~\ref{fig:stoch6v}
by reflecting them with respect to the diagonal of the first quadrant.
\begin{figure}[htbp]
	\includegraphics[width=.6\textwidth]{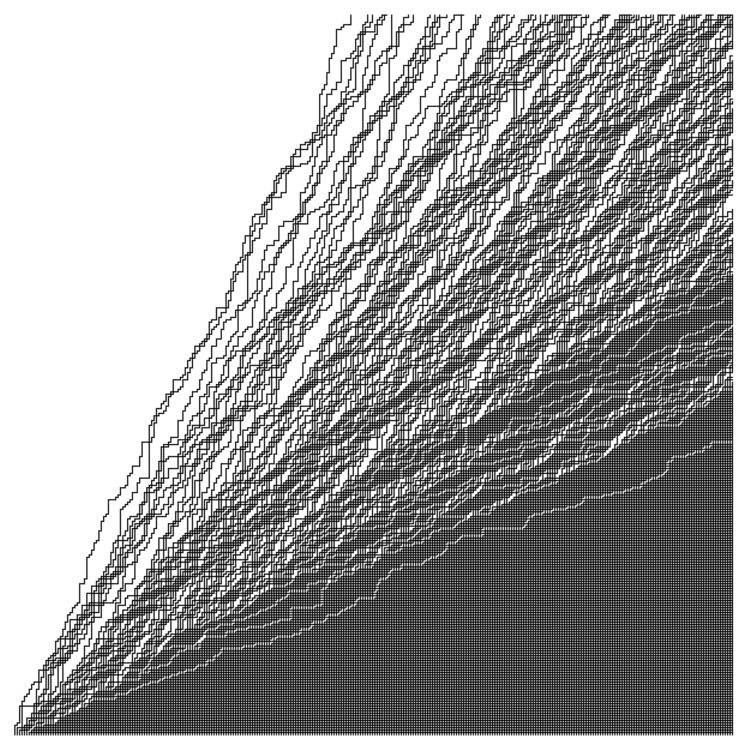}
	\caption{A simulation of 
	the 
	stochastic six vertex model 
	of size $300$ 
	with 
	boundary conditions as in \cite{BCG6V}
	and parameters
	$\Lmatr(0,1;0,1)=0.3$,
	$\Lmatr(1,0;1,0)=0.7$.}
	\label{fig:stoch6v}
\end{figure}

Let us briefly discuss two continuous time 
limits of the stochastic six vertex model. 
Here we restrict our attention to 
systems of the type $\Xe_{\PI M;\{v_t\}}$, i.e., with a fixed finite number of particles 
(about other boundary and initial conditions see also \S \ref{sub:moments_of_six_vertex_model_and_asep} below).
The first of the limits is the well-known 
ASEP (Asymmetric Simple Exclusion Process) introduced in 
\cite{Spitzer1970}
(see Fig.~\ref{fig:ASEP}), which is obtained as follows.
Observe that for $u=q^{-\frac12}+(1-q)q^{-\frac12}\epsilon$, 
we have as $\epsilon\searrow0$:
\begin{align*}
	\Lmatr_{u}(0,1;0,1)=
	\epsilon
	+O(\epsilon^{2}),\qquad \qquad
	\Lmatr_{u}(1,0;1,0)=
	q\epsilon
	+O(\epsilon^{2}).
\end{align*}
Therefore, for 
$\epsilon$ small, 
the particles in the stochastic six vertex model
will mostly travel
to the right by $1$ at every step.
If we subtract this deterministic shift
and look at times of order $\epsilon^{-1}$, 
then the rescaled discrete time process will converge to the 
continuous time
ASEP with $r=1$ and $\ell=q$, see Fig.~\ref{fig:six_to_ASEP}
(note that multiplying both $r$ and $\ell$
by a constant is the same as a deterministic
rescaling of the continuous time in the ASEP, and thus is a harmless
operation).
\begin{figure}[htbp]
	\begin{tikzpicture}
		[scale=1,very thick]
			\def\pt{.17}
			\def\ee{.1}
			\def\h{.45}
			\draw[->] (-.5,0) -- (8.5,0);
			\foreach \ii in {(0,0),(\h,0),(3*\h,0),(4*\h,0),(5*\h,0),(6*\h,0) ,(8*\h,0),(10*\h,0),(9*\h,0),(12*\h,0),(13*\h,0),(14*\h,0),(15*\h,0),(16*\h,0),(17*\h,0),(18*\h,0)}
			{
				\draw \ii circle(\pt);
			}
			\foreach \ii in {(2*\h,0),(7*\h,0),(11*\h,0),(15*\h,0),(8*\h,0),(16*\h,0)}
			{
				\draw[fill] \ii circle(\pt);
			}
		    \draw[->, very thick] (2*\h,.3) to [in=180,out=90] (2.5*\h,.65) to [in=90, out=0] (3*\h,.3) node [xshift=-7,yshift=20] {$r$};
		    \draw[->, very thick] (2*\h,.3) to [in=0,out=90] (1.5*\h,.65) to [in=90, out=180] (1*\h,.3) node [xshift=7,yshift=20] {$\ell$};
		    \draw[->, very thick] (8*\h,.3) to [in=0,out=90] (7.5*\h,.65) to [in=90, out=180] (7*\h,.3);
		    \draw[ultra thick] (7.5*\h,.65)--++(.1,.2)--++(-.2,-.4)--++(.1,.2)--++(-.1,.2)
		    --++(.2,-.4);
		    \draw[->, very thick] (8*\h,.3) to [in=180,out=90] (8.5*\h,.65) to [in=90, out=0] (9*\h,.3) node [xshift=-7,yshift=20] {$r$};
	\end{tikzpicture}
  	\caption{The ASEP is a continuous time Markov
  	chain on particle configurations on $\Z$
  	(in which there is at most one particle per site).
  	Each particle 
  	has two 
  	exponential clocks of rates
  	$r$ and $\ell$, respectively (all exponential clocks
  	in the process are assumed independent).
	When the ``$r$''
	clock of a particle rings, it immediately tries to
	jump to the right by one,
	and similarly for the ``$\ell$'' clock and left jumps.
	If the destination of a jump is already occupied, then the jump is blocked. (This describes the
	ASEP with finitely many particles, but one can also 
	construct the infinite-particle ASEP
	following, e.g., the graphical method of \cite{Harris1978}.)}
  	\label{fig:ASEP}
\end{figure}
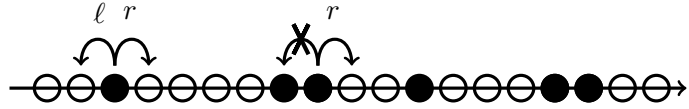
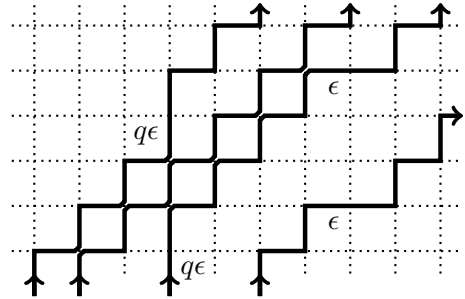
\begin{figure}[htbp]
	\begin{tikzpicture}
			[scale=.6,thick]
			\def\d{.1}
			\foreach \xxx in {0,...,9}
			{
				\draw[dotted] (\xxx,0.5)--++(0,6);
			}
			\foreach \xxx in {1,...,6}
			{
				\draw[dotted] (-.5,\xxx)--++(10,0);
			}
			\draw[->, line width=1.7pt] (0,0)--++(0,.5);
			\draw[->, line width=1.7pt] (1,0)--++(0,.5);
			\draw[->, line width=1.7pt] (3,0)--++(0,.5);
			\draw[->, line width=1.7pt] (5,0)--++(0,.5);
			\draw[->, line width=1.7pt] (0,0)--++(0,1)--++(1-\d,0)
			--++(\d,\d)--++(0,1-\d)--++(1-\d,0)--++(\d,\d)
			--++(0,1-\d)--++(1-\d,0)--++(\d,\d)
			--++(0,1-\d)--++(0,1)--++(1,0)--++(0,1)--++(1,0)
			--++(0,.5);
			\draw[->, line width=1.7pt] (1,0)--++(0,1-\d)
			--++(\d,\d)--++(1-\d,0)--++(0,1-\d)--++(\d,\d)--++(1-2*\d,0)
			--++(\d,\d)--++(0,1-2*\d)--++(\d,\d)
			--++(1-2*\d,0)--++(\d,\d)--++(0,1-\d)
			--++(1-\d,0)--++(\d,\d)--++(0,1-\d)
			--++(1-\d,0)--++(\d,\d)
			--++(0,1-\d)--++(1,0)--++(0,.5);
			\draw[->, line width=1.7pt] (3,0)--++(0,2-\d)
			--++(\d,\d)--++(1-\d,0)--++(0,1-\d)--++(\d,\d)
			--++(1-\d,0)--++(0,1-\d)--++(\d,\d)
			--++(1-\d,0)--++(0,1-\d)--++(\d,\d)
			--++(1-\d,0)--++(1,0)--++(0,1)--++(1,0)--++(0,.5);
			\draw[->, line width=1.7pt] (5,0)--++(0,1)
			--++(1,0)--++(0,1)
			--++(1,0)--++(1,0)--++(0,1)--++(1,0)--++(0,1)--++(.5,0);

			\node[below right] at (3,1) {$q \epsilon$};
			\node[below left] at (3,4) {$q \epsilon$};
			\node[below left] at (7,2) {$\epsilon$};
			\node[below left] at (7,5) {$\epsilon$};
		\end{tikzpicture}
  	\caption{Limit of the six vertex model to the ASEP.}
  	\label{fig:six_to_ASEP}
\end{figure}

Another continuous time limit 
is obtained by setting:
\begin{align*}
	q=\frac{1- \epsilon}{\al},\qquad
	u=\frac{\epsilon\al^{\frac12}}{1-\al},
\end{align*}
where $0<\al<1$, so that 
as $\epsilon\to 0$ we have
\begin{align*}
	\Lmatr_{u}(0,1;0,1)=
	\al
	+O(\epsilon^{2}),\qquad \qquad
	\Lmatr_{u}(1,0;1,0)=
	1-\epsilon
	+O(\epsilon^{2}).
\end{align*}
At times of order $\epsilon^{-1}$, 
the system behaves as follows. 
Each particle at a location $j$
has an exponential clock with rate $1$.
When the clock rings, the particle wakes up
and performs a jump to the right having the geometric
distribution with parameter $\al$. 
However, if in the process of the jump this particle
runs into another particle (i.e., its first neighbor on the right),
then the moving particle stops at this neighbor's location, 
and the neighbor 
wakes up (and subsequently performs a geometrically distributed
jump). See Fig.~\ref{fig:second_limit}.

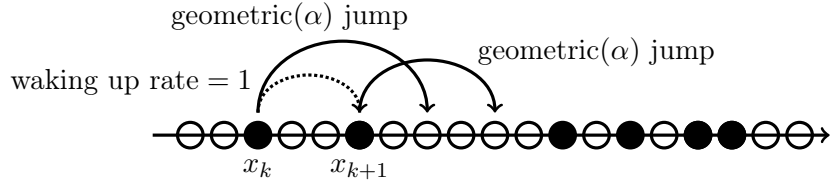
\begin{figure}[htbp]
	\begin{tikzpicture}
		[scale=1,very thick]
			\def\pt{.17}
			\def\ee{.1}
			\def\h{.45}
			\draw[->] (-.5,0) -- (8.5,0);
			\foreach \ii in {(0,0),(\h,0),(3*\h,0),(4*\h,0),
			(5*\h,0),(6*\h,0) ,(8*\h,0),(10*\h,0),(9*\h,0),(7*\h,0),
			(12*\h,0),(14*\h,0),(15*\h,0),(16*\h,0),(17*\h,0),
			(18*\h,0)}
			{
				\draw \ii circle(\pt);
			}
			\foreach \ii in {(2*\h,0),(13*\h,0),(11*\h,0),(15*\h,0),(5*\h,0),(16*\h,0)}
			{
				\draw[fill] \ii circle(\pt);
			}
			\node[below, yshift=-5pt] at (2*\h,0) {$x_{k}$};
			\node[below, yshift=-5pt] at (5*\h,0) {$x_{k+1}$};
			\node[above, yshift=20pt, anchor=east, 
			xshift=2pt] at (2*\h,0) 
			{$\textnormal{waking up rate}=1$};
			\node[above, yshift=35pt, xshift=-20pt] at (4.5*\h,0) 
			{$\textnormal{geometric}(\al)$ jump};
			\node[above, yshift=22pt] at (12*\h,0) 
			{$\textnormal{geometric}(\al)$ jump};
			\draw[->, very thick] (2*\h,.3) 
			to [in=180,out=90] (4.5*\h,1.25) 
			to [in=90, out=0] (7*\h,.3);
			\draw[->, very thick, densely dotted] (2*\h,.3) 
			to [in=180,out=90] (3.5*\h,.8) 
			to [in=90, out=0] (5*\h,.3);
			\draw[->, very thick] (5*\h,.3) 
			to [in=180,out=90] (7*\h,1) 
			to [in=90, out=0] (9*\h,.3);
	\end{tikzpicture}
  	\caption{A possible jump in the second
  	limit of the
  	stochastic six vertex model.
  	The particle at $x_k$ wakes up at rate
  	$1$ and decides to jump by 5 with probability
  	$(1-\al)\al^{4}$ (waking up means that the particle 
  	will jump by at least one). 
  	However, $x_{k+1}$ is closer than the intended jump of $x_{k}$,
  	and so $x_k$ stops at the location of
  	$x_{k+1}$, and the latter particle wakes up.
  	Then $x_{k+1}$ decides to jump by 4 with probability
  	$(1-\al)\al^{3}$.}
  	\label{fig:second_limit}
\end{figure}


\subsection{Degeneration to $q$-Hahn and $q$-Boson systems} 
\label{sub:general_j_dynamics_and_q_hahn_degeneration}

In this subsection we will consider another 
family of 
degenerations of the higher spin six vertex model
which puts no restrictions on the vertical multiplicities.
For these degenerations we will need to employ the general $J$
stochastic vertex weights 
$\LJ{J}_{u}(i_1,j_1;i_2,j_2)$
described in \S \ref{sub:fusion_of_stochastic_weights}.

\begin{proposition}
	\label{prop:LJ_simplifying}
	When $u=s$, formula \eqref{LJ_formula}
	for the weights 
	$\LJ{J}_{u}$
	simplifies to the following product form:
	\begin{align}\label{LJ_simplifying}
		\LJ{J}_{s}(i_1,j_1;i_2,j_2)=
		\mathbf{1}_{i_1+j_1=i_2+j_2}\cdot
		\mathbf{1}_{j_2\le i_1}\cdot
		(s^{2}q^{J})^{j_2}\frac{(q^{-J};q)_{j_2}(s^{2}q^{J};q)_{i_1-j_2}}{(s^2;q)_{i_1}} 
		\frac{(q;q)_{i_1}}{(q;q)_{j_2}(q;q)_{i_1-j_2}}.
	\end{align}
\end{proposition}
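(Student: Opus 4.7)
My approach is a direct computation: substitute $u=s$ into the explicit formula \eqref{LJ_formula} and track the resulting simplifications in both the prefactor and the $_4\bar{\varphi}_3$ series.

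First I handle the prefactor. At $u=s$, the factor $(us^{-1};q)_{j_2-i_1}$ becomes $(1;q)_{j_2-i_1}$. Since $(1;q)_m=0$ for every integer $m\ge 1$, this forces the entire weight to vanish whenever $j_2\ge i_1+1$, which produces the indicator $\mathbf{1}_{j_2\le i_1}$. In the remaining range $j_2\le i_1$, I write $m:=i_1-j_2\ge 0$ and use the negative-index identity $(1;q)_{-m}=(-1)^{m}q^{m(m+1)/2}/(q;q)_m$ to convert $(1;q)_{j_2-i_1}$ into a ratio of ordinary $q$-Pochhammer symbols and an explicit power of $q$. Together with $u^{i_1}s^{j_1+j_2-i_2}|_{u=s}=s^{2j_2}$ (using arrow-preservation $i_1+j_1=i_2+j_2$), the whole prefactor becomes a product of such elementary factors.

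The simplification inside the $_4\bar{\varphi}_3$ is the heart of the proof. At $u=s$ the fourth upper parameter $qs/u$ becomes $q$, so the factor $(q;q)_k$ appears in the numerator of each term and cancels the standard $(q;q)_k$ from the $q$-hypergeometric denominator. The surviving sum is
\[
\sum_{k=0}^{i_2}\frac{(q^{-i_2};q)_k(q^{-i_1};q)_k(s^2q^J;q)_k}{(s^2;q)_k(q^{1+j_2-i_1};q)_k(q^{J+1-i_2-j_2};q)_k}\,q^k.
\]
One checks directly, using $i_1+j_1=i_2+j_2$, that the original $_4\varphi_3$ is balanced ($s^2\cdot q^{1+j_2-i_1}\cdot q^{J+1-i_2-j_2}=q\cdot q^{-i_2}\cdot q^{-i_1}\cdot s^2q^J\cdot q$). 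A balanced terminating $_4\varphi_3(\cdots;q,q)$ in which one numerator parameter equals $q$ admits a closed-form evaluation: it can be obtained by applying Sears' $_4\varphi_3$ transformation (which sends the series to a terminating $_3\varphi_2$ to which $q$-Chu--Vandermonde then applies), or equivalently by combining a contiguous relation with the $q$-Saalschütz identity. The outcome is a product of $q$-Pochhammer symbols.

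Finally I assemble the simplified prefactor and the summed series, and regroup using routine identities such as $(q^{-J};q)_{j_2}=(-1)^{j_2}q^{-Jj_2+\binom{j_2}{2}}(q;q)_J/(q;q)_{J-j_2}$ and the factorization $(s^2;q)_{i_1}=(s^2;q)_{j_2}(s^2q^{j_2};q)_{i_1-j_2}$. Matching powers of $q$ and $s$ then produces precisely the right-hand side of \eqref{LJ_simplifying}. The main obstacle is the closed-form evaluation of the reduced balanced hypergeometric series; once that identity is in hand, everything else is careful but mechanical bookkeeping, and the vanishing factor $\mathbf{1}_{j_2\le i_1}$ emerges transparently from the prefactor analysis.
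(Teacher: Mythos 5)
Your overall strategy---substituting $u=s$ into \eqref{LJ_formula}, deriving the indicator $\mathbf{1}_{j_2\le i_1}$ from the prefactor, and collapsing the $q$-hypergeometric factor---is precisely the ``second approach'' the paper takes (the details are deferred to \cite[Prop.~6.7]{Borodin2014vertex}). Your prefactor analysis is all correct: the vanishing of $(1;q)_{j_2-i_1}$ for $j_2>i_1$, the negative-index Pochhammer identity, the reduction $u^{i_1}s^{j_1+j_2-i_2}\big|_{u=s}=s^{2j_2}$, and the balance check for the ${}_4\varphi_3$.

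The gap is in the hypergeometric step, which you rightly flag as the crux. It is not the case that a balanced terminating ${}_4\varphi_3(\cdots;q,q)$ with an upper parameter equal to $q$ generically has a product evaluation: canceling the explicit $(q;q)_k$ against the $(q;q)_k$ in the denominator of the ${}_4\varphi_3$ definition leaves a sum with three upper and three lower Pochhammer factors and \emph{no} residual $(q;q)_k$, which is not a ${}_3\varphi_2$, so the Sears'-to-${}_3\varphi_2$-to-Chu--Vandermonde route does not apply as sketched. Moreover, once $j_2<i_1$ the lower parameter $q^{1+j_2-i_1}$ is a non-positive power of $q$, so $(q^{1+j_2-i_1};q)_k$ vanishes for $k\ge i_1-j_2$ and the plain ${}_4\varphi_3$ normalization is ill-defined; one must stay with the $\bar\varphi$ form throughout. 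In fact that vanishing \emph{is} the mechanism: in the $\bar\varphi$ sum the factor $(q^{1+j_2-i_1+k};q)_{i_2-k}$ annihilates all terms with $k< i_1-j_2$, so one reindexes $k=(i_1-j_2)+l$ and separates the $l$-dependence via $(x;q)_{m+l}=(x;q)_m\,(xq^m;q)_l$ and $(xq^l;q)_{r-l}=(x;q)_r/(x;q)_l$. What remains is a terminating Saalsch\"utzian ${}_3\bar\varphi_2$ with upper parameters $q^{-j_1},q^{-j_2},s^2q^{J+i_1-j_2}$ and lower parameters $s^2q^{i_1-j_2},q^{J+1-j_1-j_2}$, which $q$-Saalsch\"utz (applied in the $\bar\varphi$ form to avoid a $0/0$ when $j_1+j_2>J$) evaluates to the required product; the assembly is then mechanical as you describe. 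As a cheaper alternative, the paper also notes one may simply verify that the right-hand side of \eqref{LJ_simplifying} satisfies the recursion \eqref{recursion_relation} at $u=s$, bypassing the $q$-hypergeometric analysis entirely.
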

\begin{proof}
	To show this, one can directly check that 
	\eqref{LJ_simplifying} satisfies the 
	corresponding recursion relation for $u=s$ \eqref{recursion_relation}.
	Alternatively, one can transform the 
	${}_{4}\bar{\varphi}_3$ $q$-hypergeometric
	function to the desired form.
	We refer to \cite[Prop.\;6.7]{Borodin2014vertex}
	for the complete proof following the second approach.
\end{proof}

We see that this degeneration turns the 
higher spin interacting particle systems
described in \S \ref{sub:interacting_particle_systems}
with sequential update into simpler systems with parallel update.

\subsubsection{Distribution $\phi_{q,\smu,\snu}$} 
\label{ssub:distribution_phi}

Before discussing interacting particle systems
arising from the vertex weights \eqref{LJ_simplifying},
let us focus on the \emph{$q$-deformed Beta-binomial distribution} 
appearing in the right-hand side of that formula:
\begin{align}\label{qHahn_distr}
	\phi_{q,\smu,\snu}(j\md m):=
	\smu^j\frac{(\snu/\smu;q)_{j}(\smu;q)_{m-j}}{(\snu;q)_{m}} \frac{(q;q)_{m}}{(q;q)_{j}(q;q)_{m-j}},
	\qquad
	j\in\{0,1,\ldots,m\}.
\end{align}
Here $m\in\Z_{\ge0}\cup\{+\infty\}$,
and the 
case $m=+\infty$ corresponds to a straightforward limit of \eqref{qHahn_distr}, 
see \eqref{qHahn_distr_infinity} below.
If the parameters 
belong to one of the following families:
\begin{align}\label{qHahn_weights_condition}
\parbox{.85\textwidth}{
	\begin{enumerate}
	\item $0<q<1$, $0\le \smu\le 1$, and $\snu\le \smu$;
	\smallskip\item $0<q<1$, $\smu=q^{J}\snu$ for some $J\in\Z_{\ge0}$, and $\snu\le 0$;
	\smallskip\item $m$ is finite, $q>1$, $\smu=q^{-J}\snu$ 
	for some $J\in\Z_{\ge0}$, and $\snu\le 0$;
	\smallskip\item $m$ is finite, $q>0$, $\smu=q^{\tilde\smu}$, 
	and $\snu=q^{\tilde\snu}$ 
	with $\tilde\smu,\tilde\snu\in\Z$, such that 
	\begin{itemize}
		\smallskip\item either $\tilde\smu,\tilde\snu\ge0$, and $\tilde\snu\ge\tilde\smu$,
		\smallskip\item or $\tilde\smu,\tilde\snu\le0$, $\tilde\snu\le -m$, and $\tilde\snu\le\tilde\smu$,
	\end{itemize}
\end{enumerate}}
\end{align}
then the 
weights \eqref{qHahn_distr} are 
nonnegative.\footnote{These are sufficient conditions for nonnegativity, 
and in fact some of these families intersect nontrivially.
We do not 
attempt to list all the necessary conditions (as, for example, for $q<0$ there also exist 
values of $\smu$ and $\snu$ leading to nonnegative weights).}
The above conditions \eqref{qHahn_weights_condition} replace the 
nonnegativity conditions \eqref{stochastic_weights_condition_qsxi}--\eqref{stochastic_weights_condition_u}
for this subsection.

We will now discuss several interpretations of the distribution \eqref{qHahn_distr}
which, in particular, will justify its name. 
The significance of the probability distribution $\phi_{q,\smu,\snu}$
for interacting particle systems
was first realized by Povolotsky \cite{Povolotsky2013},
who showed that it corresponds to the 
most general ``chipping model'' (i.e., a particle system as in
Fig.~\ref{fig:particle_systems} with possibly multiple particles 
leaving a given stack at a time)
having
parallel update, product-form steady state,
and such that the system is solvable by the coordinate Bethe ansatz. 
He also provided an algebraic interpretation of this distribution:
\begin{proposition}[{\cite[Thm.\;1]{Povolotsky2013}}]
	Let $A$ and $B$ be two letters satisfying the following quadratic 
	commutation relation:
	\begin{align*}
		BA=\al A^2+\be AB+\gamma B^2,\qquad
		\al+\be+\gamma=1.
	\end{align*}
	Then
	\begin{align}\label{q_binomial_relation}
		\big(pA+(1-p)B\big)^{m}=\sum_{j=0}^{m}
		\phi_{q,\smu,\snu}(j\md m) A^{j}B^{m-j},
	\end{align}
	where
	\begin{align*}
		\al=\frac{\snu(1-q)}{1-q\snu}
		,\qquad
		\be=\frac{q-\snu}{1-q\snu}
		,\qquad
		\gamma=
		\frac{1-q}{1-q\snu}
		,
		\qquad
		\smu=p+\snu(1-p).
	\end{align*}
\end{proposition}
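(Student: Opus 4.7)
The strategy is to proceed by induction on $m$ in the associative algebra generated by $A,B$ modulo the relation $BA=\al A^2+\be AB+\gamma B^2$, comparing coefficients in normal-ordered form (all $A$'s preceding all $B$'s). The base cases $m=0,1$ are immediate: for $m=1$, the identification $\smu=p+\snu(1-p)$ gives $\phi_{q,\smu,\snu}(1\md 1)=(\smu-\snu)/(1-\snu)=p$ and $\phi_{q,\smu,\snu}(0\md 1)=(1-\smu)/(1-\snu)=1-p$, which reproduces $T=pA+(1-p)B$.

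For the inductive step, assume the expansion holds for $m$, so that $T^m=\sum_{j=0}^{m}\phi_{q,\smu,\snu}(j\md m)A^jB^{m-j}$ in normal-ordered form. I would then compute
\begin{equation*}
T^{m+1}=T\cdot T^m=p\sum_{j=0}^m\phi_{q,\smu,\snu}(j\md m)\,A^{j+1}B^{m-j}+(1-p)\sum_{j=0}^m\phi_{q,\smu,\snu}(j\md m)\,BA^jB^{m-j}.
\end{equation*}
The first sum is already normal-ordered. In the second sum, each $BA^j$ must be brought to normal form by iterating the commutation relation. Writing
\begin{equation*}
BA^j=\sum_{k=0}^{j+1}\Xi(j,k)\,A^kB^{j+1-k},
\end{equation*}
the coefficients $\Xi(j,k)$ are determined by the two-term recursion $BA^{j+1}=\al A^{j+2}+\be A\cdot BA^j+\gamma B\cdot BA^j$, where the last term is reduced by applying $B$ to the already-known normal-ordered form of $BA^j$ and invoking the relation once more. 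A short calculation (or a direct guess verified by induction) produces a closed expression for $\Xi(j,k)$ as a product of factors of the form $(1-q^i)/(1-q^i\snu)$, $(q^i-\snu)/(1-q^i\snu)$, and $\snu(1-q^i)/(1-q^i\snu)$ for $1\le i\le j$; these factors are precisely what appear in the $q$-Pochhammer symbols defining $\phi_{q,\smu,\snu}$.

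Combining both contributions, the coefficient of $A^lB^{m+1-l}$ in $T^{m+1}$ becomes
\begin{equation*}
p\,\phi_{q,\smu,\snu}(l-1\md m)+(1-p)\sum_{j\ge l-1}\phi_{q,\smu,\snu}(j\md m)\,\Xi(j,l),
\end{equation*}
and the claim reduces to identifying this with $\phi_{q,\smu,\snu}(l\md m+1)$. After substituting $\al=\snu(1-q)/(1-q\snu)$, $\be=(q-\snu)/(1-q\snu)$, $\gamma=(1-q)/(1-q\snu)$ and $p=(\smu-\snu)/(1-\snu)$, the verification reduces to a terminating $q$-hypergeometric identity of $q$-Chu--Vandermonde (equivalently, $q$-Pfaff--Saalsch\"utz) type.

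The main obstacle is the explicit closed-form evaluation of $\Xi(j,k)$ and the subsequent matching of the resulting sums to the $q$-Hahn weights $\phi_{q,\smu,\snu}(l\md m+1)$; although the recursion for $\Xi(j,k)$ is transparent, unwinding it in a form amenable to the $q$-identity is the delicate step. A perhaps cleaner alternative is to build a faithful representation of the algebra on a polynomial space (with $A,B$ acting as multiplication and a suitable $q$-shift/contraction operator) on which $T^m$ applied to a distinguished vector directly generates $\sum_j\phi_{q,\smu,\snu}(j\md m)z^j$; this reduces the identity to a scalar computation and avoids the combinatorial normal-ordering altogether.
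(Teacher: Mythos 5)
The paper offers no proof of this proposition: it cites \cite[Thm.\;1]{Povolotsky2013} directly and explicitly notes that ``the proof of the above statement is nontrivial, and we will not reproduce it here.'' So there is no in-paper argument to compare against, and your sketch must stand on its own.

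Your inductive plan is sensible and the base cases are correct ($\phi_{q,\smu,\snu}(1\md 1)=(\smu-\snu)/(1-\snu)=p$ and $\phi_{q,\smu,\snu}(0\md 1)=(1-\smu)/(1-\snu)=1-p$), but you leave exactly the hard part undone --- the part the paper itself calls nontrivial --- and you understate its difficulty. Normal ordering $BA^j$ is not controlled by a simple ``two-term recursion'' obtained by ``invoking the relation once more.'' Writing $BA^j=\sum_{k}\Xi_j(k)\,A^kB^{j+1-k}$, the $\gamma B^2$ term on the right of the quadratic relation forces a nested, in fact \emph{quadratic}, recursion
\begin{align*}
\Xi_{j+1}(l)=\al\,\mathbf{1}_{l=j+2}+\be\,\Xi_j(l-1)+\gamma\sum_{k=0}^{j+1}\Xi_j(k)\,\Xi_k(l),
\end{align*}
because reducing $B\cdot(BA^j)$ to normal form requires expanding every $BA^k$ with $k\le j+1$, not a single additional application of the relation. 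Guessing a product formula for $\Xi_j(k)$ and verifying it against a quadratic recursion is itself a nontrivial terminating $q$-hypergeometric identity, and the subsequent matching of the coefficient of $A^lB^{m+1-l}$ in $T^{m+1}$ to $\phi_{q,\smu,\snu}(l\md m{+}1)$ requires a $q$-Vandermonde/Saalsch\"utz-type summation that you neither state nor carry out. Your ``cleaner alternative'' via a polynomial representation is likewise only a gesture: no candidate operator for $B$ is offered, and building a faithful one-variable $q$-shift realization compatible with the $B^2$ term is not obviously easier than the normal-ordering computation you are trying to sidestep. As written, the proposal is a road map pointing at the right answer, not a proof; the missing $q$-hypergeometric computation is precisely the content of \cite[Thm.\;1]{Povolotsky2013}.
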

In particular, taking $A=B=1$ 
in \eqref{q_binomial_relation} 
implies that the weights \eqref{qHahn_distr} sum to 
$1$ over $j=0,1,\ldots,m$.
The proof of the above statement is nontrivial, and we will not reproduce it here.

Another interpretation of the $q$-deformed Beta-binomial distribution 
can be given via a $q$-version of the 
P\'olya's urn process due to Gnedin and Olshanski \cite{Gnedin2009}. Consider the Markov chain
on the Pascal triangle 
\begin{align*}
	\bigsqcup_{m=0}^{\infty}\{(k,\ell)\in\Z^{2}_{\ge0}\colon k+\ell=m\}
\end{align*}
with the following transition probabilities (here $m=k+\ell$ is the time in this chain)
\begin{align*}
	\begin{tikzpicture}
		[scale=1, thick]
		\node (a1) at (0,0) {$(k,\ell)$};
		\node (a2) at (6,.8) {$(k,\ell+1)$};
		\node (a3) at (6,-.8) {$(k+1,\ell)$};
		\draw[->] (a1)--(a2.west) node[midway, yshift=12pt] 
		{$\frac{1-q^{b+\ell}}{1-q^{a+b+m}}$};
		\draw[->] (a1)--(a3.west) node[midway, yshift=-15pt] 
		{$q^{\ell+b}\frac{1-q^{a+k}}{1-q^{a+b+m}}$};
	\end{tikzpicture}
\end{align*}
Then the distribution of this Markov chain (started from the initial vertex
$(0,0)$) at time $m$ is 
\begin{align*}
	\Prob\big((k,\ell)\big)=\phi_{q,q^{b},q^{a+b}}(k\md m),\qquad
	k=0,1,\ldots,m.
\end{align*}
More general Markov chains (on the space of interlacing arrays)
based on the distributions
$\phi_{q,q^{b},q^{a+b}}$ with negative $a$ and $b$
which have a combinatorial significance (they are $q$-deformations 
of the classical Robinson--Schensted--Knuth insertion algorithm) were constructed
recently in \cite{MatveevPetrov2014}.

Another feature of the distribution 
$\phi_{q,\smu,\snu}$ is that it is the weight function for the 
so-called \emph{$q$-Hahn orthogonal polynomials}. See \cite[\S3.6]{Koekoek1996}
about the polynomials, and \cite[\S5.2]{BCPS2014}
for the exact matching between $\phi_{q,\smu,\snu}$ and the notation
related to the $q$-Hahn polynomials.


\subsubsection{$q$-Hahn particle system} 
\label{ssub:_q_hahn_particle_system}

We will now discuss what the dynamics
$\Xe_{\PI M;\{v_t\}}$ and $\Xp_{\{u_t\};\RHO}$
look like under the degeneration
described in Proposition \ref{prop:LJ_simplifying}.
We will first consider
the dynamics 
$\Xe_{\PI M;\{v_t\}}$ which lives on 
particle configurations with a fixed number of particles
(say, $M\in\Z_{\ge0}$), and then will 
deal with $\Xp_{\{u_t\};\RHO}$. 
The resulting dynamics will be commonly referred to as the 
\emph{$q$-Hahn particle system} 
with different initial or boundary 
conditions.

\begin{figure}[htbp]
	\scalebox{.85}{\begin{tikzpicture}
		[scale=1, very thick]
		\def\hh{-1.8}
		\draw[->] (-.5,0)--++(8.5,0);
		\node at (-1,0) {};
		\foreach \iii in {0,1,2,3,4,5,6,7}
		{
			\draw (\iii,.1)--++(0,-.2) node [below] {$\iii$};
		}
		\foreach \ppt in {(0,.3),(3,.3),(6,.3),(6,.6)}
		{
			\draw[thick] \ppt circle(3pt);
		}
		\foreach \ppt in {(0,.6),(3,.6),(3,.9),(4,.3)}
		{
			\draw[thick, fill] \ppt circle(3pt);
		}
		\draw[->] (0,.8) to [in=100, out=45] (1,.3);
		\draw[->] (4,.5) to [in=100, out=45] (5,.3);
		\draw[->] (3.15,.6+.1) to [in=130, out=40] (4,.6);
		\draw[->] (3.15,.9+.1) to [in=130, out=40] (4,.9);
		\node[anchor=west] at (8,.8) {$\phi(1\md 2)\phi(2\md 3)\phi(1\md 1)\phi(0\md 2)$};
		\node[anchor=west] at (8,.8) {\phantom{$\phi(2\md{+\infty})\phi(2\md 3)\phi(1\md 1)\phi(2\md 4)$}};
	\end{tikzpicture}}
	\vspace{7pt}

	\scalebox{.85}{\begin{tikzpicture}
		[scale=1, very thick]
		\def\hh{-1.8}
		\draw[->] (-.5,0)--++(8.5,0);
		\draw[thick, densely dotted] (-1,.3) circle(3pt);
		\draw[thick, densely dotted] (-1,.6) circle(3pt);
		\draw[thick, densely dotted] (-1,.9) circle(3pt);
		\node at (-1,0) {};
		\foreach \iii in {0,1,2,3,4,5,6,7}
		{
			\draw (\iii,.1)--++(0,-.2) node [below] {$\iii$};
		}
		\foreach \ppt in {(3,.3),(6,.3),(6,.6)}
		{
			\draw[thick] \ppt circle(3pt);
		}
		\foreach \ppt in {(3,.6),(3,.9),(4,.3),(6,.9),(6,1.2),(6,1.5)}
		{
			\draw[thick, fill] \ppt circle(3pt);
		}
		\draw[->] (-.85,.9+.1) to [in=130, out=40] (1,.9);
		\draw[->] (-.85,.6+.1) to [in=130, out=40] (1,.6);
		\draw[->] (-.85,.3+.1) to [in=130, out=40] (1,.3);
		\draw[->] (6.15,.9+.1) to [in=120, out=40] (7,.3);
		\draw[->] (6.15,1.2+.1) to [in=120, out=40] (7,.6);
		\draw[->] (6.15,1.5+.1) to [in=120, out=40] (7,.9);
		\draw[->] (4,.5) to [in=100, out=45] (5,.3);
		\draw[->] (3.2,.6+.1) to [in=130, out=40] (4,.6);
		\draw[->] (3.2,.9+.1) to [in=130, out=40] (4,.9);
		\node[anchor=west] at (8,.8) {$\phi(2\md 3)\phi(1\md 1)\phi(3\md 5)$};
		\node[anchor=west] at (8,.8) {\phantom{$\phi(2\md{+\infty})\phi(2\md 3)\phi(1\md 1)\phi(2\md 4)$}};
	\end{tikzpicture}}
	\vspace{7pt}

	\scalebox{.85}{\begin{tikzpicture}
		[scale=1, very thick]
		\def\hh{-1.8}
		\node at (-1,0) {};
		\draw[->] (-.5,0)--++(8.5,0);
		\foreach \iii in {0,1,2,3,4,5,6,7}
		{
			\draw (\iii,.1)--++(0,-.2) node [below] {$\iii$};
		}
		\foreach \ppt in {(3,.3),(6,.3),(6,.6),
		(1,.3),(1,.6),(1,.9),(1,1.2),(1,1.5),(1,1.8)}
		{
			\draw[thick] \ppt circle(3pt);
		}
		\foreach \ppt in {(3,.6),(3,.9),(4,.3),(6,.9),(6,1.2)}
		{
			\draw[thick, fill] \ppt circle(3pt);
		}
		\node at (1,2.4) {$\vdots$};
		\draw[->] (6.15,.9+.1) to [in=120, out=40] (7,.3);
		\draw[->] (6.15,1.2+.1) to [in=120, out=40] (7,.6);
		\draw[->] (4,.5) to [in=100, out=45] (5,.3);
		\draw[->] (3.2,.6+.1) to [in=130, out=40] (4,.6);
		\draw[->] (3.2,.9+.1) to [in=130, out=40] (4,.9);
		\draw[->] (1.2,2.4) to [in=100, out=-00] (2-0.05,.3);
		\draw[->] (1.2,2.7) to [in=100, out=-00] (2,.6);
		\node[anchor=west] at (8,.8) {$\phi(2\md{+\infty})\phi(2\md 3)\phi(1\md 1)\phi(2\md 4)$};
	\end{tikzpicture}}
  	\caption{Possible transitions of the $q$-Hahn particle system,
  	with probabilities given on the right (here $\phi\equiv \phi_{q,q^{J}s^{2},s^{2}}$). 
  	Top: dynamics $\Xeh$
	living on configurations with a fixed number of particles.
  	Middle: dynamics $\Xph$,
  	in which at each time step $J$ new particles 
  	are added at location $1$ ($J=3$ on the picture).
  	Bottom: dynamics 
  	$\Xih$ with the initial condition
  	$1^{\infty}2^{0}3^{0}\ldots$.}
  	\label{fig:qhahn}
\end{figure}

In order to perform the desired degeneration of $\Xe$, 
fix $J\in\Z_{\ge1}$ and 
take the time-dependent parameters $\{v_t\}$ to be
\begin{align*}
	(v_1,v_2,\ldots)=(s,qs,\ldots,q^{J-1}s,s,qs,\ldots,q^{J-1}s,\ldots).
\end{align*}
We will consider the \emph{fused dynamics}
in which one time step corresponds
to $J$ steps of the original dynamics.
The fused dynamics is Markovian
due to the results of \S \ref{sub:fusion_of_stochastic_weights}.
As follows from \S \ref{ssub:dyn_xe},
each time step 
$\mu=0^{m_0}1^{m_1}2^{m_2}\ldots
\to
\nu=0^{n_0}1^{n_1}2^{n_2}\ldots$ 
($\mu,\nu\in\signp M$) 
of the fused dynamics
looks as follows.
For each location $x\in\Z_{\ge0}$, sample $j_x\in\{0,1,\ldots,m_x\}$
independently of other locations according to the probability 
distribution
$\phi_{q,q^{J}s^{2},s^{2}}(j_x\md m_x)$
(clearly, $j_x=m_x=0$ for all large enough $x$). 
Then, in parallel, move $j_x$ particles from location $x$ to location $x+1$
for each $x\in\Z_{\ge0}$, that is, set $n_x=m_x-j_x+j_{x+1}$.
Denote this dynamics by $\Xeh$ (see Fig.~\ref{fig:qhahn}, top).

Note that the weights $\phi_{q,q^{J}s^{2},s^{2}}$ are nonnegative
for $J\in\Z_{\ge1}$ if $s^{2}\le 0$
(case 2 in 
\eqref{qHahn_weights_condition}), 
and we are assuming this in our construction.

\begin{remark}\label{rmk:analityc_continuation_in_Xe}
	For $J\in\Z_{\ge1}$, at most $J$ particles can leave 
	any given location during one time step. However, since the weights of
	the distribution $\phi_{q,q^{J}s^{2},s^{2}}$ depend
	on $q^{J}$ in a rational way, we may analytically continue 
	$\Xeh$
	from the case $q^J\in q^{\Z_{\ge1}}$ and $s^{2}\le 0$,
	and let 
	the parameters
	$\smu=q^{J}\snu$ and $\snu=s^{2}$ 
	belong to one of the other families in \eqref{qHahn_weights_condition}.
	If $\smu/\snu\notin q^{\Z_{\ge1}}$, 
	then an arbitrary number of particles
	can leave 
	any given location during one time step.
\end{remark}

Let us now discuss the degeneration of the dynamics 
$\Xp_{\{u_t\};\RHO}$. 
Fix $J\in\Z_{\ge1}$ and
take 
the time-dependent parameters $\{u_t\}$ to be
\begin{align}
	(u_1,u_2,\ldots)=(s,qs,\ldots,q^{J-1}s,s,qs,\ldots,q^{J-1}s,\ldots).
	\label{u_specialization_to_generalize}
\end{align} 
The corresponding fused dynamics
is very similar to $\Xeh$,
and the only difference is
in the behavior at locations $0$ and $1$.
Namely, 
location $0$ cannot be occupied,
and at each time step,
exactly $J$ new particles are added at location $1$.
Denote this degeneration of 
$\Xp_{\{u_t\};\RHO}$ by $\Xph$
(see Fig.~\ref{fig:qhahn}, middle).

Because $J\in\Z_{\ge1}$ particles are added 
to the configuration at each time step,
dynamics $\Xph$ cannot be analytically continued in $J$
similarly to Remark \ref{rmk:analityc_continuation_in_Xe}.
However, we can simplify this dynamics, by generalizing
\eqref{u_specialization_to_generalize} to
\begin{align*}
	(u_1,u_2,\ldots)=
	(s,qs,\ldots,q^{K-1}s,s,qs,\ldots,q^{J-1}s,s,qs,\ldots,q^{J-1}s,\ldots),
\end{align*}
where $K\in\Z_{\ge1}$ is a new parameter. 
If we start the corresponding fused dynamics
from the empty initial configuration 
$1^{0}2^{0}\ldots$, then after the first step of this dynamics
the configuration will be simply $1^{K}2^{0}3^{0}\ldots$. 
Moreover, during the evolution, the number of particles at location 
$1$ will always be $\ge K$.
Assume that $|q|<1$, and take $K\to+\infty$.
Under the limiting dynamics, at all subsequent times the 
number of particles leaving 
location $1$ has the distribution
\begin{align}\label{qHahn_distr_infinity}
	\lim_{i_1\to+\infty}\LJ{J}_{s}(i_1,j_1;i_2,j_2)=
	\mathbf{1}_{i_2=+\infty}\cdot 
	\phi_{q,q^{J}s^{2},s^{2}}(j_2\md {+\infty}),
	\quad
	\phi_{q,\smu,\snu}(j\md {+\infty})=
	\smu^j\frac{(\snu/\smu;q)_{j}}{(q;q)_{j}} 
	\frac{(\smu;q)_{\infty}}{(\snu;q)_{\infty}}.
\end{align}
The limit as $K\to+\infty$ clearly does not affect 
probabilities of particle jumps at all other locations. 
We will denote the limiting dynamics by 
$\Xih$ (see Fig.~\ref{qHahn_distr}, bottom).
This particle system was introduced in \cite{Povolotsky2013}.
The system $\Xih$ 
readily admits an analytic continuation from
$q^J\in q^{\Z_{\ge1}}$ and $s^{2}\le 0$
as in Remark \ref{rmk:analityc_continuation_in_Xe}.

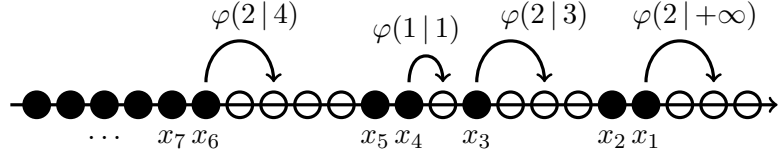
\begin{figure}[htbp]
	\begin{tikzpicture}
		[scale=1,very thick]
			\def\pt{.17}
			\def\ee{.1}
			\def\h{.45}
			\draw[->] (-1.7,0) -- (8.5,0);
			\foreach \ii in {18,17,16,13,12,11,9,6,5,4,3}
			{
				\draw (\ii*\h,0) circle(\pt);
			}
			\foreach \ii in {15,14,10,8,7,2,1,0,-1,-2,-3}
			{
				\draw[fill] (\ii*\h,0) circle(\pt);
			}
			\node[below] at (15*\h,-.2) {$x_1$};
			\node[below] at (14*\h,-.2) {$x_2$};
			\node[below] at (10*\h,-.2) {$x_3$};
			\node[below] at (8*\h,-.2) {$x_4$};
			\node[below] at (7*\h,-.2) {$x_5$};
			\node[below] at (2*\h,-.2) {$x_6$};
			\node[below] at (1*\h,-.2) {$x_7$};
			\node[below] at (-1*\h,-.27) {$\ldots$};
		    \draw[->, very thick] (2*\h,.3) to [in=180,out=90] (3*\h,.85) to [in=90, out=0] (4*\h,.3) node [xshift=-7,yshift=25] {$\phi(2\md4)$};
		    \draw[->, very thick] (8*\h,.3) to [in=180,out=90] (8.5*\h,.65) to [in=90, out=0] (9*\h,.3) node [xshift=-10,yshift=20] {$\phi(1\md1)$};
		    \draw[->, very thick] (10*\h,.3) to [in=180,out=90] (11*\h,.85) to [in=90, out=0] (12*\h,.3) node [xshift=0,yshift=25] {$\phi(2\md3)$};
		    \draw[->, very thick] (15*\h,.3) to [in=180,out=90] (16*\h,.85) to [in=90, out=0] (17*\h,.3) node [xshift=-7,yshift=25] {$\phi(2\md{+\infty})$};
	\end{tikzpicture}
  	\caption{The transition of the $q$-Hahn system $\Xih$ from Fig.~\ref{fig:qhahn}, bottom, 
  	interpreted in terms of the $q$-Hahn TASEP
  	(here $\phi\equiv \phi_{q,q^{J}s^{2},s^{2}}$).}
  	\label{fig:qhahn_tasep}
\end{figure}

Thanks to infinitely many particles at location $1$,
the system $\Xih$ admits another nice particle interpretation. 
Namely, consider right-finite particle configurations 
$\{x_1>x_2>x_3>\ldots\}$
in $\Z$,
in which there can be at most one particle at a given location.
For a configuration $\la=1^{\infty}2^{\ell_2}3^{\ell_3}\ldots$
of $\Xih$, let 
$\ell_j=x_{j-1}-x_j-1$ (with $x_0=+\infty$)
be the number of empty spaces between consecutive particles.
Let the process start from the \emph{step initial configuration}
$x_i(0)=-i$ for all $i$ 
(corresponding to $\la=1^{\infty}2^{0}3^{0}\ldots$). 
Then during each time step, 
each particle $x_i$ jumps to the right according to the distribution
$\phi_{q,q^{J}s^{2},s^{2}}(\cdot\md\gap_i)$,
where $\gap_i=\ell_i$ is the distance to the nearest right neighbor of $x_i$
(the first particle employs the distribution with $\gap_1=+\infty$).
This system is called the \emph{$q$-Hahn TASEP}, 
see Fig.~\ref{fig:qhahn_tasep}.
It was introduced
in \cite{Povolotsky2013}, and 
duality relations for this process were obtained in
\cite{Corwin2014qmunu}.

\begin{remark}
	In all the above $q$-Hahn systems, one can clearly let the 
	parameter $J$ to depend on time. 
\end{remark}


\subsubsection{$q$-TASEP and $q$-Boson} 
\label{ssub:qBoson}

Let us now perform a further degeneration of the $q$-Hahn
TASEP corresponding to the parameters
$q$, $\smu$, and $\snu$,
by setting $\smu=q\snu$
(that is, we take $J=1$, and thus must consider $\snu\le 0$).
Then \eqref{qHahn_distr} implies that 
$\phi_{q,\smu,\snu}(j\md m)$ 
vanishes unless $j\le 1$, and 
\begin{align*}
	&\phi_{q,\smu,\snu}(0\md m)=\frac{1-q^{m}\snu}{1-\snu},
	\qquad \qquad&
	\phi_{q,\smu,\snu}(1\md m)=\frac{-\snu(1-q^{m})}{1-\snu},
	\\
	&\phi_{q,\smu,\snu}(0\md {+\infty})=\frac{1}{1-\snu},
	\qquad \qquad&
	\phi_{q,\smu,\snu}(1\md {+\infty})=\frac{-\snu}{1-\snu}.
\end{align*}
Taking $\snu=- \epsilon$
and speeding up
the time by $\epsilon^{-1}$, we
arrive at the \emph{$q$-TASEP} --- a continuous time particle
system on 
configurations 
$\{x_1>x_2>x_3>\ldots\}$
on
$\Z$ (with no more than one particle per location)
in which each particle $x_i$ jumps to the right by one at rate 
$1-q^{\gap_{i}}$, where, as before, 
$\gap_i=x_{i-1}-x_i-1$
is the distance to the right neighbor of $x_i$.

The $q$-TASEP was introduced in \cite{BorodinCorwin2011Macdonald}
(see also \cite{BorodinCorwinSasamoto2012}),
and an ``arrow'' interpretation of the
$q$-TASEP (on configurations
$\la=1^{\infty}2^{\gap_2}3^{\gap_3}\ldots$)
had been considered much earlier
\cite{BogoliubovBulloughTimonen94},
\cite{BogoliubovIzerginKitanine98},
\cite{SasamotoWadati1998} under the name of the 
(\emph{stochastic}) \emph{$q$-Boson system}.

It is worth noting that the $q$-Hahn system has a variety of other degenerations,
see \cite{Povolotsky2013} and \cite{CorwinBarraquand2015Beta} for examples.




\section{Orthogonality relations} 
\label{sec:orthogonality_relations}

In this section we recall
two types of (bi)orthogonality 
relations for the symmetric rational functions
$\F_\la$
(from \S \ref{sec:symmetric_rational_functions})
which first appeared in \cite{BCPS2014}.
These relations imply certain Plancherel
isomorphism theorems.
We also apply biorthogonality to get an integral representation 
for the functions $\G_\mu$.
The results of this section provide us with tools which will
eventually allow to explicitly evaluate averages of certain observables 
of the interacting particle systems described in 
\S \ref{sec:markov_kernels_and_stochastic_dynamics} above.

\subsection{Spatial biorthogonality} 
\label{sub:spatial_biorthogonality}

First, we will need the following general statement:

\begin{lemma}\label{lemma:general_lemma}
	Let $\{f_m(u)\}$,
	$\{g_\ell(u)\}$ be two families of rational functions in $u\in\C$
	such that there exist two disjoint finite sets $P_1,P_2\subset\C\cup\{\infty\}$ 
	and positively oriented pairwise nonintersecting
	closed contours $c_1,\ldots,c_k$
	with the following properties:
	\begin{itemize}
		\item All singularities of all the functions $f_m(u),g_\ell(u)$ lie inside $P_1\cup P_2$.
		\item The product $f_m(u)g_\ell(u)$ does not have singularities in $P_1$ if $m<\ell$, and 
		the same product does not have singularities in $P_2$ if $m>\ell$.
		\item For any $i>j$ the contour $c_i$ can be shrunk to $P_1$ without intersecting the 
		contour\footnote{Here and below $R \boldsymbol\gamma$ denotes the image of the 
		contour $\boldsymbol\gamma$ under the multiplication by the constant $R$.} $q^{-1} c_j$
		(equivalently, for any $j<i$ the contour $c_j$ can be shrunk to $P_2$ without
		intersecting $q\cdot c_i$). 
		Shrinking takes place on the Riemann
		sphere $\C\cup\{\infty\}$.
	\end{itemize}
	Fix $k\in\Z_{\ge1}$ and two signatures $\mu,\la\in\signp k$.
	If $\mu\ne\la$, then for any permutation $\sigma\in \Sym_k$ we have
	\begin{align}\label{general_lemma_integral}
		\oint\limits_{c_1}d u_1
		\ldots
		\oint\limits_{c_k}d u_k
		\prod_{1\le \aind<\bind\le k}
		\bigg(
		\frac{u_\aind-u_\bind}{u_\aind-q u_\bind}\cdot
		\frac{u_{\sigma(\aind)}-q u_{\sigma(\bind)}}
		{u_{\sigma(\aind)}-u_{\sigma(\bind)}}
		\bigg)
		\prod_{j =1}^{k}f_{\mu_j}(u_j)g_{\la_{\sigma^{-1}(j)}}(u_j)=0.
	\end{align}
\end{lemma}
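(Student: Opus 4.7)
The plan is to proceed by induction on $k$, integrating out one variable at a time via residue calculus and exploiting the nesting hypothesis on the contours. The base case $k=1$ is immediate: when $\mu_1\neq\lambda_1$, WLOG $\mu_1<\lambda_1$, so by the second bullet of the hypotheses the integrand $f_{\mu_1}(u_1)g_{\lambda_1}(u_1)$ is regular on $P_1$, and contracting $c_1$ to $P_1$ yields zero by Cauchy's theorem.

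For the inductive step, I plan to integrate the variable $u_k$ first. A useful preliminary observation is that, although the combined cross-term $\prod_{\alpha<\beta}\tfrac{u_\alpha - u_\beta}{u_\alpha - qu_\beta}\cdot\tfrac{u_{\sigma(\alpha)} - qu_{\sigma(\beta)}}{u_{\sigma(\alpha)}-u_{\sigma(\beta)}}$ appears full of poles, most cancel between numerator and denominator: the surviving poles are only of the form $u_a=qu_b$ with $a<b$ and $\sigma^{-1}(a)>\sigma^{-1}(b)$, while all apparent poles on diagonals $u_a=u_b$ cancel identically. Consequently, viewed as a function of $u_k$, the integrand has poles only at (i) the singularities in $P_1\cup P_2$ of $f_{\mu_k}(u_k)g_{\lambda_{\sigma^{-1}(k)}}(u_k)$, and (ii) at most simple poles at $u_k=qu_\beta$ for $\beta>k$ with $\sigma^{-1}(k)>\sigma^{-1}(\beta)$, or at $u_k=q^{-1}u_\alpha$ for $\alpha<k$ with $\sigma^{-1}(\alpha)>\sigma^{-1}(k)$. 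By the third bullet of the hypotheses, the poles at $q^{-1}u_\alpha$ ($\alpha<k$) lie outside $c_k$, whereas those at $qu_\beta$ ($\beta>k$) lie inside $c_k$.

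If $\mu_k<\lambda_{\sigma^{-1}(k)}$, the factor $f_{\mu_k}g_{\lambda_{\sigma^{-1}(k)}}$ is regular on $P_1$, and contracting $c_k$ toward $P_1$ encounters none of the (i)-residues nor any (ii)-residue, so the integral vanishes. In the case $\mu_k>\lambda_{\sigma^{-1}(k)}$, one deforms $c_k$ outward toward $P_2$ instead, picking up the residues at $u_k=q^{-1}u_\alpha$ for suitable $\alpha<k$; each such residue produces a $(k-1)$-fold integral which, after the substitution $u_k\mapsto q^{-1}u_\alpha$, can be packaged into the form of the lemma with modified data $(\mu',\lambda',\sigma')$ still satisfying $\mu'\neq\lambda'$, and the inductive hypothesis applies. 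The equality case $\mu_k=\lambda_{\sigma^{-1}(k)}$ is handled by the same residue analysis, with the mismatch in $\mu$ versus $\lambda$ persisting through the reduction.

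The main obstacle I anticipate is the residue bookkeeping: one must verify that, after each residue substitution, the reduced $(k-1)$-fold integrand genuinely fits the hypothesis of the lemma, with an appropriately inherited permutation $\sigma'$ on the remaining indices and with the inherited contours still satisfying the nesting property. A more conceptual alternative, if the direct route becomes unwieldy, is to recognize the integral via the symmetrization formula of Theorem~\ref{thm:symmetrization} as a pairing between Bethe-ansatz-type wavefunctions indexed by $\mu$ and $\lambda$, with the desired vanishing then following from biorthogonality of Bethe eigenfunctions; the direct residue argument above has the advantage of requiring only elementary complex analysis and no representation-theoretic input.
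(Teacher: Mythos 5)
Your description of the surviving poles of the cross-term is correct, and the base case and the one easy case ($\mu_k<\lambda_{\sigma^{-1}(k)}$, contract $c_k$ to $P_1$) are sound. The genuine gap is precisely at the step you flag as the ``main obstacle'': when $\mu_k\ge\lambda_{\sigma^{-1}(k)}$ you need to integrate out $u_k$ and land in a lower-rank instance of the lemma, but the residues do not re-package. If you expand $c_k$ toward $P_2$ and pick up the residue at $u_k=q^{-1}u_\alpha$, the factor $f_{\mu_k}(q^{-1}u_\alpha)\,g_{\lambda_{\sigma^{-1}(k)}}(q^{-1}u_\alpha)$ now depends on $u_\alpha$ with singularities at $qP_1\cup qP_2$, not at $P_1\cup P_2$, so neither the singularity-location hypothesis nor the contour-nesting hypothesis holds for the reduced $(k-1)$-fold integral with respect to the \emph{same} sets $P_1,P_2$; and the reduced cross-factor is not of the prescribed form either. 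If instead you contract $c_k$ to $P_1$ and evaluate the $P_1$-residues, the result is a function of $u_1,\ldots,u_{k-1}$ whose analytic structure (through the cross-terms $u_\alpha-qu_k$, etc.) is not a product of the required shape, so induction again fails to apply. The equality case $\mu_k=\lambda_{\sigma^{-1}(k)}$ is waved off entirely. Finally, the ``conceptual alternative'' via biorthogonality of the $\F_\la$ would be circular: Theorem~\ref{thm:orthogonality_F} (spatial biorthogonality) is exactly what this lemma is subsequently used to prove.

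The paper's argument is not inductive and avoids residue bookkeeping altogether. Using the identity that reduces the double cross-product to $\sgn(\sigma)$ times a product over inversions, one observes that whenever $\sigma(i)$ is a running maximum of $\bigl(\sigma(1),\ldots,\sigma(i)\bigr)$, the contour $c_{\sigma(i)}$ can be contracted to $P_1$ picking up no $qu_\bind$ or $q^{-1}u_\aind$ residues at all (the offending denominator factors cancel), forcing $\mu_{\sigma(i)}\ge\lambda_i$; dually, at a running minimum of $\bigl(\sigma(i),\ldots,\sigma(k)\bigr)$ one expands to $P_2$ and gets $\mu_{\sigma(i)}\le\lambda_i$. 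These inequalities, combined with the weak monotonicity of $\mu$ and $\lambda$, are then shown (the arrow-diagram combinatorics) to force $\mu=\lambda$; no intermediate residue has to be controlled. If you want to salvage a variable-by-variable argument, you would need to track, uniformly, a \emph{system} of inequalities extracted from contractions/expansions that never cross a residue, which is exactly the running-max/running-min mechanism; picking one variable and pushing residues through is not the right move here.
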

\begin{proof}
	This is a straightforward generalization of Lemma 3.5 in \cite{BCPS2014}.
	Let us outline the steps of the proof.

	We will assume that the integral \eqref{general_lemma_integral}
	is nonzero, and will show that then it must be that $\la=\mu$.
	First, we observe that, by our assumed structure of the poles, 
	\begin{itemize}
		\item If it is possible to shrink the contour $c_i$ to $P_1$, then for the 
		integral to be nonzero we must have $\mu_i\ge\la_{\sigma^{-1}(i)}$.
		\item If it is possible to shrink the contour $c_i$ to $P_2$, then for the 
		integral to be nonzero we must have $\mu_i\le\la_{\sigma^{-1}(i)}$.
	\end{itemize}
	Next, using
	\begin{align}
		\prod_{1\le \aind<\bind\le k}
		\frac{u_\aind- u_\bind}{u_\aind-q u_\bind}\cdot
		\frac{u_{\sigma(\aind)}-q u_{\sigma(\bind)}}
		{u_{\sigma(\aind)}-u_{\sigma(\bind)}}
		=\sgn(\sigma)
		\prod_{\aind<\bind\colon \sigma(\aind)>\sigma(\bind)}\frac{u_{\sigma(\aind)}-q u_{\sigma(\bind)}}{u_\aind-q u_\bind}
		\label{general_lemma_proof1}
	\end{align}
	and the properties of the integration contours, we see that
	\begin{itemize}
		\item If for some $i\in\{1,\ldots,k\}$ one has $\sigma(i)>\max(\sigma(1),\ldots,\sigma(i-1))$, then 
		the numerator in the left-hand side of \eqref{general_lemma_proof1}
		contains all terms of the form 
		$(u_{\sigma(i)}-q u_{\sigma(i)+1}),(u_{\sigma(i)}-q u_{\sigma(i)+2}),\ldots,
		(u_{\sigma(i)}-q u_{k})$, and thus the expression in the right-hand side of
		\eqref{general_lemma_proof1} does not have poles at $u_{\sigma(i)}=q u_j$ for all $j>\sigma(i)$.
		This means that we can shrink the contour $c_{\sigma(i)}$ to $P_1$
		without picking any residues.
		This implies that for the integral \eqref{general_lemma_integral} to 
		be nonzero, we must have $\mu_{\sigma(i)}\ge \la_{i}$.
		\item Similarly, if for some 
		$i\in\{1,\ldots,k\}$ one has $\sigma(i)<\min(\sigma(i+1),\ldots,\sigma(k))$, then 
		the contour $c_{\sigma(i)}$ can be shrunk to $P_2$, and so the integral
		\eqref{general_lemma_integral} can be nonzero only if 
		$\mu_{\sigma(i)}\le \la_{i}$.
	\end{itemize}

	This completes 
	the argument analogous to
	Step I of the proof of 
	\cite[Lemma\;3.5]{BCPS2014}. Further steps of the proof have a purely 
	combinatorial nature and can be repeated without change. 
	We will not reproduce the full combinatorial argument here, 
	but will 
	illustrate it on a concrete example. 

	Take $\sigma=(3,2,4,5,1,8,6,7)$, and consider an arrow diagram
	as in Fig.~\ref{fig:arrow_diagram}. That is,
	think of the bottom row as $\la_1\ge\la_2\ge \ldots\ge\la_8$
	and of the top row as $\mu_1\ge\mu_2\ge \ldots\ge\mu_8$,
	and draw the corresponding horizontal arrows from larger to smaller 
	integers. Labels in the nodes correspond to 
	the permutation $\sigma$ itself.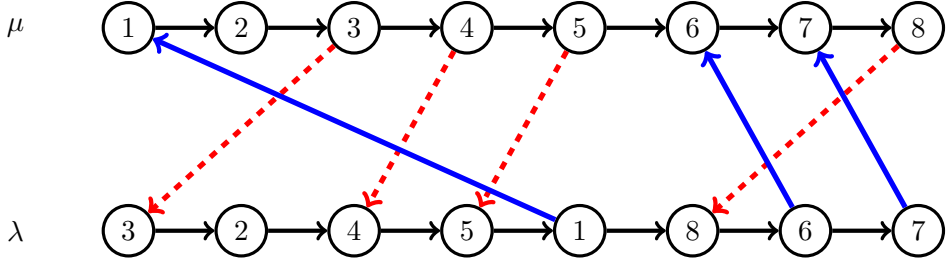
\begin{figure}[htbp]
		\begin{center}
		\parbox{.8\textwidth}{\begin{tikzpicture}
				[scale=1,nodes={draw},very thick]
				\def\h{2.7}
				\def\x{1.5}
				\node[draw=none] at (-1*\x,\h) {$\mu$};
				\node[circle] (x1) at (0,\h) {$1$};
				\node[circle] (x2) at (\x,\h) {$2$};
				\node[circle] (x3) at (2*\x,\h) {$3$};
				\node[circle] (x4) at (3*\x,\h) {$4$};
				\node[circle] (x5) at (4*\x,\h) {$5$};
				\node[circle] (x6) at (5*\x,\h) {$6$};
				\node[circle] (x7) at (6*\x,\h) {$7$};
				\node[circle] (x8) at (7*\x,\h) {$8$};
				\node[draw=none] at (-1*\x,0) {$\la$};
				\node[circle] (y1) at (0,0) {$3$};
				\node[circle] (y2) at (\x,0) {$2$};
				\node[circle] (y3) at (2*\x,0) {$4$};
				\node[circle] (y4) at (3*\x,0) {$5$};
				\node[circle] (y5) at (4*\x,0) {$1$};
				\node[circle] (y6) at (5*\x,0) {$8$};
				\node[circle] (y7) at (6*\x,0) {$6$};
				\node[circle] (y8) at (7*\x,0) {$7$};
				\draw[ultra thick, ->] (x1)--(x2);
				\draw[ultra thick, ->] (x2)--(x3);
				\draw[ultra thick, ->] (x3)--(x4);
				\draw[ultra thick, ->] (x4)--(x5);
				\draw[ultra thick, ->] (x5)--(x6);
				\draw[ultra thick, ->] (x6)--(x7);
				\draw[ultra thick, ->] (x7)--(x8);
				\draw[ultra thick, ->] (y1)--(y2);
				\draw[ultra thick, ->] (y2)--(y3);
				\draw[ultra thick, ->] (y3)--(y4);
				\draw[ultra thick, ->] (y4)--(y5);
				\draw[ultra thick, ->] (y5)--(y6);
				\draw[ultra thick, ->] (y6)--(y7);
				\draw[ultra thick, ->] (y7)--(y8);
				\draw[line width = 2, ->, dashed, red] (x3) -- (y1);
				\draw[line width = 2, ->, dashed, red] (x4) -- (y3);
				\draw[line width = 2, ->, dashed, red] (x5) -- (y4);
				\draw[line width = 2, ->, dashed, red] (x8) -- (y6);
				\draw[line width = 2, ->, blue] (y5) -- (x1);
				\draw[line width = 2, ->, blue] (y7) -- (x6);
				\draw[line width = 2, ->, blue] (y8) -- (x7);
			\end{tikzpicture}}
			\end{center}
		\caption{Arrow diagram for $\sigma=(3,2,4,5,1,8,6,7)$.}
  		\label{fig:arrow_diagram}
	\end{figure}
	As we read this permutation from left to right, we see
	running maxima
	$\sigma(1)$, $\sigma(3)>\max(\sigma(1),\sigma(2))$, 
	$\sigma(4)>\max(\sigma(1),\sigma(2),\sigma(3))$,
	and 
	$\sigma(6)>\max(\sigma(1),\sigma(2),\sigma(3),\sigma(4),\sigma(5))$,
	and correspondingly we add 
	(red dashed) arrows
	$\mu_{\sigma(i)}\to\la_i$.
	Similarly, reading the 
	permutation from right to left, we see running minima
	$\sigma(8)$, $\sigma(7)<\sigma(8)$, and 
	$\sigma(5)<\min(\sigma(6),\sigma(7),\sigma(8))$, 
	and we add (blue solid) arrows
	$\la_i\to\mu_{\sigma(i)}$.
	As one examines the arrow diagram, it becomes obvious
	that for the integral to be nonzero, we must have
	\begin{align*}
		\la_1=\ldots=\la_5=\mu_1=\ldots=\mu_5,
		\qquad
		\la_6=\la_7=\la_8=\mu_6=\mu_7=\mu_8.
	\end{align*}
	We also see that any permutation $\sigma$
	yielding a nonzero integral \eqref{general_lemma_integral}
	splits into two blocks 
	permuting $\{1,2,3,4,5\}$ and $\{5,6,7\}$, 
	which 
	are the clusters of equal 
	parts in $\la$ and $\mu$.
	A similar clustering occurs in the general situation, too.

	This implies that for the 
	integral \eqref{general_lemma_integral} to be nonzero,
	we must have
	$\la=\mu$, as desired.
\end{proof}

For the orthogonality statements below in this subsection we
assume that \eqref{stochastic_weights_condition_qsxi} holds.

\begin{definition}\label{def:orthogonality_contours}
	For any $k\ge1$,
	let $\contq {s}1,\ldots,\contq {s}k$
	be 
	positively oriented closed contours
	such that
	(see Fig.~\ref{fig:contours})
	\begin{itemize}
		\item 
		Each contour
		$\contq {s}\aind$ encircles all the points of the set
		$P_1:=\{s\}$, 
		while leaving outside all the points of
		$P_2:=\{s^{-1}\}$.
		\item For any $\bind>\aind$, the interior of $\contq {s}\aind$ contains 
		the contour $q\contq {s}\bind$. 
		\item 
		The contour $\contq {s}k$ is sufficiently small so that it does not intersect with $q\contq {s}k$.
	\end{itemize}
	Also, let $\contn{s;0}$ be a positively oriented closed 
	contour encircling
	$s,qs,\ldots,q^{k-1}s$,
	which also contains $q\contn{s;0}$
	and leaves $s^{-1}$ outside.
	Note that $0$ must be inside this contour.
\end{definition}
\begin{remark}
	The superscript ``$+$'' in the contours 
	of Definition \ref{def:orthogonality_contours}
	refers to the property that they 
	are $q$-nested, as opposed to 
	$q^{-1}$-nested contours $\contqi{\bullet}j$ 
	which we will consider later in \S \ref{sec:observables_of_interacting_particle_systems}.
	Throughout the text points encircled by a contour will be
	explicitly indicated in the square brackets.
\end{remark}

\begin{figure}[htbp]
	\begin{center}
	\begin{tikzpicture}
		[scale=3]
		\def\pt{0.02}
		\def\q{.6}
		\def\ss{.56}
		\draw[->, thick] (-2.8,0) -- (.8,0);
	  	\draw[->, thick] (0,-.9) -- (0,.9);
	  	\draw[fill] (-1.3,0) circle (\pt) node [below, xshift=3pt] {$s$};
	  	\draw[fill] (-1.3/\ss,0) circle (\pt) node [below, xshift=7pt] {$s^{-1}$};
	  	\draw[fill] (0,0) circle (\pt) node [below left] {$0$};
	  	\draw (-1.25,0) circle (.25) node [below,xshift=-7,yshift=24] {$\contqe3$};
	  	\draw[dotted] (-1.25*\q,0) circle (.25*\q);
	  	\draw[dotted] (-1.25*\q*\q,0) circle (.25*\q*\q);
	  	\draw (-.74,0) circle (.9) node [below,xshift=-64,yshift=80] {$\contn {s;0}$};
	  	\draw (-1.05,0) ellipse (.48 and .4) node [below,xshift=40,yshift=34] {$\contqe{2}$};
	  	\draw (-.92,0) ellipse (.65 and .5) node [below,xshift=25,yshift=57] {$\contqe{1}$};
	\end{tikzpicture}
	\end{center}
  	\caption{A possible choice of nested integration contours 
  	$\contqe{i}=\contq {s}i$, $i=1,2,3$
  	(Definition~\ref{def:orthogonality_contours}).
  	Contours $q\contq{s}3$ and $q^{2}\contq{s}3$ are shown dotted.
  	The large contour $\contn {s;0}$ 
  	is also shown.}
  	\label{fig:contours}
\end{figure}
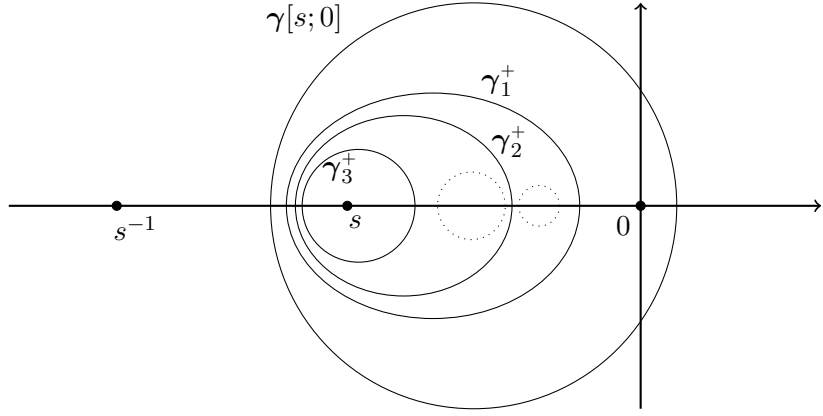

\begin{theorem}\label{thm:orthogonality_F}
	For any $k\in\Z_{\ge1}$
	and $\la,\mu\in\signp k$, we have
	\begin{align}
		\oint\limits_{\contq {s}1}\frac{d u_1}{2\pi\i}
		\ldots
		\oint\limits_{\contq {s}k}\frac{d u_k}{2\pi\i}
		\prod_{1\le \aind<\bind\le k}\frac{u_\aind-u_\bind}{u_\aind-qu_\bind}\,
		\F_{\la}^{\conj}(u_1,\ldots,u_k)
		\prod_{i=1}^{k}
		\frac{1}{u_i-s}\left(\frac{1-su_i}{u_i-s}\right)^{\mu_i}
		=\mathbf{1}_{\la=\mu},
		\label{orthogonality_formulation}
	\end{align}
	where $\F_\la$ is as in \S \ref{sub:symmetrization_formulas}.
\end{theorem}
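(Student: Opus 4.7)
The approach is to substitute the symmetrization formula \eqref{F_symm_formula} for $\F_\la$ into the left-hand side of \eqref{orthogonality_formulation} and apply Lemma \ref{lemma:general_lemma} termwise. Writing $\F_\la^{\conj} = \conj(\la)\F_\la$ and expanding via \eqref{F_symm_formula}, the integrand becomes $\conj(\la)(1-q)^k$ times a sum over $\sigma \in \Sym_k$ of
\begin{align*}
\prod_{1\le\aind<\bind\le k}\frac{u_\aind-u_\bind}{u_\aind-qu_\bind}\cdot \prod_{1\le\aind<\bind\le k}\frac{u_{\sigma(\aind)}-qu_{\sigma(\bind)}}{u_{\sigma(\aind)}-u_{\sigma(\bind)}}\cdot \prod_{j=1}^{k} f_{\mu_j}(u_j)\,g_{\la_{\sigma^{-1}(j)}}(u_j),
\end{align*}
where I set
\begin{align*}
f_m(u) := \frac{1}{(1-su)(u-s)}\left(\frac{1-su}{u-s}\right)^m, \qquad g_\ell(u) := \left(\frac{u-s}{1-su}\right)^\ell.
\end{align*}
A direct calculation gives $f_m(u)g_\ell(u) = (1-su)^{m-\ell-1}(u-s)^{-(m-\ell+1)}$, whose only pole is at $u=s$ when $m>\ell$ and at $u=s^{-1}$ when $m<\ell$, exactly the pole structure required by Lemma \ref{lemma:general_lemma} with $P_1 = \{s\}$ and $P_2 = \{s^{-1}\}$. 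The nested contours $\contq{s}i$ of Definition \ref{def:orthogonality_contours} encircle $P_1$, exclude $P_2$, and satisfy the $q$-nesting hypothesis of the lemma. Hence, for $\la \ne \mu$, every $\sigma$-summand vanishes by Lemma \ref{lemma:general_lemma} and the integral is $0$.

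For the diagonal case $\la = \mu$ the lemma is silent, and this is where the main work lies. Revisiting the arrow-diagram argument in the proof of Lemma \ref{lemma:general_lemma}, one sees that the only $\sigma$ whose summand can be nonzero are those in the stabilizer $\mathrm{Stab}(\la) \subset \Sym_k$ of $\la$, i.e., those permuting indices only within the clusters $\{i : \la_i = j\}$ of sizes $n_0, n_1, n_2, \ldots$ corresponding to $\la = 0^{n_0} 1^{n_1} 2^{n_2} \cdots$. For such $\sigma$ the integrand factorizes across clusters, and inside each cluster of size $n_j$ one is left with an integral of $\prod_i \frac{1}{(1-su_i)(u_i-s)}$ against a symmetric rational cross-factor. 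Iteratively extracting residues at $u_i = s$ (legitimate because the $q$-nesting of the contours together with the $(u_\aind-u_\bind)$ factors in the numerator prevents any residues at $u_\aind = q u_\bind$ from being picked up), and using the classical $q$-symmetrization identity
\begin{align*}
\sum_{\omega \in \Sym_{n_j}} \prod_{1\le\aind<\bind\le n_j}\frac{u_{\omega(\aind)} - q u_{\omega(\bind)}}{u_{\omega(\aind)} - u_{\omega(\bind)}} = \frac{(q;q)_{n_j}}{(1-q)^{n_j}},
\end{align*}
each cluster should contribute a factor $(q;q)_{n_j}(s^2;q)_{n_j}^{-1}(1-q)^{-n_j}$. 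Multiplying the cluster contributions by $\conj(\la)(1-q)^k = (1-q)^k \prod_j (s^2;q)_{n_j}/(q;q)_{n_j}$ then collapses the product to $1$.

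The main obstacle is precisely this diagonal bookkeeping: one must verify that the iterated residue extraction at $u_i = s$ produces only simple residues (rather than higher-order contributions from the clusters), and that the resulting per-cluster products telescope correctly against $\conj(\la)$. A cleaner alternative, which I would attempt first, is induction on $k$: extract the residue at the outermost variable $u_1 = s$ (the contour $\contq{s}1$ being the largest), use the Pieri-type identity of Corollary \ref{cor:Pieri} or the branching formula of Proposition \ref{prop:branching} to recognize the resulting $(k-1)$-variable integral as the same orthogonality statement one level down, and close the induction. Either route hinges on the $q$-nesting of the contours, which is the essential structural reason why the only accessible poles are at $u_i = s$ and why the residues combine into the Pochhammer ratios characteristic of the $\conj$ weights.
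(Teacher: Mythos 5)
Your proposal follows essentially the same route as the paper: substitute the symmetrization formula \eqref{F_symm_formula}, recognize the integrand as the one in Lemma~\ref{lemma:general_lemma} with $P_1=\{s\}$, $P_2=\{s^{-1}\}$ (your $f_m,g_\ell$ differ from the paper's only by harmless $(1-q)$ factors), conclude vanishing for $\la\ne\mu$, and for $\la=\mu$ reduce the $\sigma$-sum to the cluster stabilizer, symmetrize within clusters, and evaluate the resulting factorized integrals. This is the paper's proof, identified correctly down to the structure of the diagonal computation.

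The gap is in the bookkeeping of the cluster integrals, which you rightly flag as the main work but then describe incorrectly. ``Iteratively extracting residues at $u_i=s$'' is not what happens: if you peel from the inside ($u_k$ first, in the smallest contour), the first residue is indeed at $u_k=s$, but it contributes a factor $\prod_{\aind<k}\frac{u_\aind-s}{u_\aind-qs}$ which shifts the pole of the remaining integrand in $u_{k-1}$ from $s$ to $qs$; the successive residues are taken at $s,qs,q^2s,\ldots$ (or, in the paper's outer-to-inner peel, at $s^{-1},(qs)^{-1},\ldots$), producing $\prod_{j=0}^{\ell-1}(1-q^js^2)^{-1}=1/(s^2;q)_\ell$ per cluster. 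Taking every residue ``at $s$'' does not telescope to the Pochhammer ratio. Moreover, your parenthetical justification is wrong: the factors $u_\aind-u_\bind$ in the numerator vanish at $u_\aind=u_\bind$, not at $u_\aind=qu_\bind$, so they do \emph{not} cancel the poles of the cross denominator. What actually prevents those poles from contributing is the $q$-nesting of the contours and the \emph{order} of extraction --- for the innermost variable $u_k$, the cross poles sit at $u_k=q^{-1}u_\aind$, which lie outside $\contq{s}k$. This ordering constraint also breaks your suggested alternative: extracting first at $u_1=s$ over the \emph{outermost} contour $\contq{s}1$ would require dealing with the poles $u_1=qu_\bind$, $\bind>1$, all of which lie inside $\contq{s}1$, so that step is not a simple residue at $s$. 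Either peel from the inside taking residues at the geometric string $s,qs,\ldots$, or from the outside at $s^{-1},(qs)^{-1},\ldots$ as in the paper.
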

An immediate corollary of Theorem \ref{thm:orthogonality_F}
is the following ``spatial'' biorthogonality property of the 
functions $\F_\mu$:
\begin{corollary}\label{cor:spatial_biorth}
	For any $k\in\Z_{\ge1}$
	and $\la,\mu\in\signp k$, we have
	\begin{align}\label{orthogonality_formulation_cor}
		\frac{\conj(\la)}{(1-q)^{k}k!}
		\oint\limits_{\contn {s;0}}\frac{d u_1}{2\pi\i u_1}
		\ldots
		\oint\limits_{\contn {s;0}}\frac{d u_k}{2\pi\i u_k}
		\prod_{1\le \aind\ne \bind\le k}\frac{u_\aind-u_\bind}{u_\aind-qu_\bind}\,
		\F_{\la}(u_1,\ldots,u_k)
		\F_{\mu}(u_1^{-1},\ldots,u_k^{-1})=
		\mathbf{1}_{\la=\mu},
	\end{align}
	with $\conj(\cdot)$ as in \eqref{conj_definition}.
\end{corollary}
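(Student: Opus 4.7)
My plan is to reduce the identity of Corollary \ref{cor:spatial_biorth} directly to Theorem \ref{thm:orthogonality_F} by substituting the symmetrization formula \eqref{F_symm_formula} for $\F_\mu(u_1^{-1},\ldots,u_k^{-1})$, exploiting the symmetry of the rest of the integrand to undo the permutation sum, and finally deforming the contour.

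First I will use \eqref{F_symm_formula} and a direct substitution $v_i=u_i^{-1}$ (rewriting $1-su_i^{-1}=(u_i-s)/u_i$, $\frac{u_i^{-1}-s}{1-su_i^{-1}}=\frac{1-su_i}{u_i-s}$, and $\frac{u_\alpha^{-1}-qu_\beta^{-1}}{u_\alpha^{-1}-u_\beta^{-1}}=\frac{u_\beta-qu_\alpha}{u_\beta-u_\alpha}$) to get
\begin{align*}
\F_\mu(u_1^{-1},\ldots,u_k^{-1})=(1-q)^k\prod_{i=1}^k\frac{u_i}{u_i-s}\sum_{\sigma\in\Sym_k}\sigma\Bigg(\prod_{\alpha<\beta}\frac{u_\beta-qu_\alpha}{u_\beta-u_\alpha}\prod_{i=1}^k\left(\frac{1-su_i}{u_i-s}\right)^{\mu_i}\Bigg).
\end{align*}
Substituting into the left hand side of \eqref{orthogonality_formulation_cor}, the factor $(1-q)^k$ and $\prod u_i/\prod u_i$ cancel.

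Next I observe that the remaining factors outside the sum over $\sigma$, namely $\prod_{\alpha\ne\beta}\frac{u_\alpha-u_\beta}{u_\alpha-qu_\beta}$, $\F_\lambda(\vec u)$, $\prod\frac1{u_i-s}$, the common contour, and the measure, are all symmetric in $(u_1,\ldots,u_k)$. Relabelling the integration variables by $\sigma^{-1}$ therefore shows that every term in the $\sigma$-sum contributes the same integral, yielding a factor $k!$ that cancels the $k!$ in the denominator. Using $\conj(\lambda)\F_\lambda=\F_\lambda^{\conj}$ and the identity
\begin{align*}
\prod_{\alpha\ne\beta}\frac{u_\alpha-u_\beta}{u_\alpha-qu_\beta}\cdot\prod_{\alpha<\beta}\frac{u_\beta-qu_\alpha}{u_\beta-u_\alpha}=\prod_{\alpha<\beta}\frac{u_\alpha-u_\beta}{u_\alpha-qu_\beta},
\end{align*}
the left hand side of \eqref{orthogonality_formulation_cor} collapses to the integral
\begin{align*}
\oint\limits_{\contn{s;0}}\frac{du_1}{2\pi\i}\ldots\oint\limits_{\contn{s;0}}\frac{du_k}{2\pi\i}\prod_{\alpha<\beta}\frac{u_\alpha-u_\beta}{u_\alpha-qu_\beta}\,\F_\lambda^{\conj}(u_1,\ldots,u_k)\prod_{i=1}^k\frac1{u_i-s}\left(\frac{1-su_i}{u_i-s}\right)^{\mu_i},
\end{align*}
which is precisely the integrand of Theorem \ref{thm:orthogonality_F}, only over the common large contour $\contn{s;0}$ instead of the nested $\contq{s}i$.

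Finally I will deform the contours. The integrand has poles only at $u_i=s^{-1}$ (outside both $\contn{s;0}$ and each $\contq{s}i$), $u_i=s$ (inside both), and $u_\alpha=qu_\beta$ for $\alpha<\beta$. The inclusion $q\contn{s;0}\subset\contn{s;0}$ from Definition \ref{def:orthogonality_contours} ensures the starting contours are $q$-nested, and the same property for the target contours $\contq{s}1,\ldots,\contq{s}k$ allows the $u_k$-contour to be shrunk first, then the $u_{k-1}$-contour, etc., while keeping $q\contq{s}\beta\subset\contq{s}\alpha$ for $\alpha<\beta$ throughout; in particular no pole $u_\alpha=qu_\beta$ is ever crossed. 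The integral is thus unchanged and equals $\mathbf{1}_{\la=\mu}$ by Theorem \ref{thm:orthogonality_F}. The only real bookkeeping step is the cross-term identity and the contour deformation; I expect the former to be the main place where a slip could occur, but it is a routine verification once the substitution $v_i=u_i^{-1}$ is carried out carefully.
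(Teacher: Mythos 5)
Your proof is correct and is essentially the paper's argument read in reverse: where the paper starts from Theorem \ref{thm:orthogonality_F}, deforms the nested contours $\contq{s}j$ out to $\contn{s;0}$, and then symmetrizes to recognize the permutation sum as $\F_\mu(u_1^{-1},\ldots,u_k^{-1})$ via \eqref{F_symm_formula}, you expand $\F_\mu$ by the same formula, use the permutation symmetry of the rest of the integrand to absorb the $k!$, and deform the contours back inward. The cross-term simplification and the pole analysis for the contour deformation are both carried out correctly.
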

We call this property \emph{spatial biorthogonality}
because for, say, fixed $\la$ and varying $\mu$, 
the right-hand side 
is the delta function in the \emph{spatial} variables
$\mu$. This should be compared to the \emph{spectral biorthogonality}
of Theorem \ref{thm:spec_orthogonality}
with delta functions
in spectral variables in the right-hand side.

Theorem \ref{thm:orthogonality_F} and Corollary \ref{cor:spatial_biorth}
were conjectured in \cite{Povolotsky2013} 
and proved in \cite[\S3]{BCPS2014}
(see also \cite[Thm.\;7.2]{Borodin2014vertex}).
Here we present an outline of the proof
in which the abstract combinatorial part
(Lemma \ref{lemma:general_lemma})
is separated from a concrete form of integration contours.
This makes the proof applicable in more general situations
which will be discussed in a subsequent publication.
\begin{proof}[Proof of Corollary \ref{cor:spatial_biorth}]
	Assuming Theorem \ref{thm:orthogonality_F}, 
	deform the integration contours 
	$\contq {s}1,\ldots,\contq {s}k$ (in this order)
	to $\contn {s;0}$. One readily sees that this does not lead to  
	any additional residues.
	Next, observe that
	the integral over all $u_j\in\contn {s;0}$ 
	is invariant under permutations of the $u_j$'s, and thus
	one can perform the symmetrization and divide by $k!$. This leads to
	\begin{multline*}
		\mathbf{1}_{\la=\mu}=
		\frac{(1-q)^{-k}}{k!}
		\oint\limits_{\contn {s;0}}\frac{d u_1}{2\pi\i u_1}
		\ldots
		\oint\limits_{\contn {s;0}}\frac{d u_k}{2\pi\i u_k}
		\prod_{1\le \aind\ne \bind\le k}\frac{u_\aind-u_\bind}{u_\aind-qu_\bind}
		\F_{\la}^{\conj}(u_1,\ldots,u_k)
		\\\times
		\underbrace{\sum_{\sigma\in \Sym_k}
		\prod_{1\le \bind<\aind\le k}\frac{u_{\sigma(\aind)}-qu_{\sigma(\bind)}}{u_{\sigma(\aind)}-u_{\sigma(\bind)}}
		\prod_{i=1}^{k}
		\frac{1-q}{1-su_{\sigma(i)}^{-1}}
		\bigg(\frac{1-su_{\sigma(i)}}{u_{\sigma(i)}-s}\bigg)^{\mu_i}
		}_
		{\textnormal{$\F_{\mu}(u_1^{-1},\ldots,u_k^{-1})$ 
		by \eqref{F_symm_formula}}},
	\end{multline*}
	as desired.
\end{proof}

\begin{proof}[Proof of Theorem \ref{thm:orthogonality_F}]
	Fix $k$ and signatures $\la$ and $\mu$.
	In order to apply Lemma \ref{general_lemma_integral}, set 
	\begin{align*}
		f_m(u):= 
		\frac{1-q}{u-s}\left(\frac{1-su}{u-s}\right)^{m},
		\qquad
		g_\ell(u):=\frac{1-q}{1-su}\left(\frac{u-s}{1-su}\right)^{\ell},
	\end{align*}
	and use $P_1=\{s\}$ and $P_2=\{s^{-1}\}$.
	If $m>\ell$, then all singularities of
	$f_m(u)g_\ell(u)$
	are in $P_1$, and if $m<\ell$, then all singularities of 
	this product are in $P_2$.
	By virtue of the symmetrization formula for $\F_\la$ \eqref{F_symm_formula}, 
	we see that the integrand in \eqref{orthogonality_formulation}
	is 
	(up to a multiplicative constant)
	the same as the one in Lemma \ref{lemma:general_lemma}
	(with the above specialization of $f_m$ and $g_\ell$). 
	The structure of our integration contours $\contq {s}j$
	and the fact that $\infty\notin P_1\cup P_2$
	(so
	the contours can be dragged through infinity without picking the residues)
	implies that the third condition of Lemma \ref{lemma:general_lemma} is also 
	satisfied.
	Thus, we 
	conclude that the integral in \eqref{orthogonality_formulation} vanishes unless $\mu=\la$.

	\begin{example}
		To better illustrate the application of Lemma 
		\ref{lemma:general_lemma} here, consider contours 
		$\contq {s}1,\contq {s}2$, and $\contq {s}3$
		as in Fig.~\ref{fig:contours}. Depending on $\sigma$, 
		the denominator 
		in the integrand in \eqref{general_lemma_integral} contains 
		some of the factors $u_1-q u_2$, $u_1-q u_3$, and $u_2-q u_3$.
		The contour $\contq {s}3$ can always be shrunk to $P_1$ without picking residues at
		$u_3=q^{-1}u_1$ and $u_3=q^{-1}u_2$ (this is the assumption 
		on the contours in Lemma~\ref{lemma:general_lemma}). 
		Moreover, if, for example, the permutation
		$\sigma$ provides a cancellation of the factor 
		$u_2-qu_3$ in the denominator, then the contour $\contq {s}2$
		can also be shrunk to $P_1$ without picking the 
		residue at $u_2=qu_3$. Similarly, the contour
		$\contq {s}1$ can always be expanded (``shrunk'' on the Riemann sphere)
		to $P_2$ 
		(recall that infinity does not supply any residues),
		and if the factor $u_1-qu_2$ in the denominator 
		is canceled for a certain $\sigma$,
		then $\contq {s}2$ can also be expanded to $P_2$ without picking the residue at $u_2=q^{-1}u_1$.
	\end{example}
	
	Now we must consider the 
	case when
	$\mu=\la$, that is, evaluate the ``squared norm''
	of $\F_\mu$. 
	Arguing similarly to the example in the proof of 
	Lemma \ref{general_lemma_integral}
	(see \cite[Lemma\;3.5]{BCPS2014} for more detail), we see that the
	integral 
	\begin{align}
		\sum_{\sigma\in\Sym_k}
		\oint\limits_{\contq {s}1}\frac{d u_1}{2\pi\i}
		\ldots
		\oint\limits_{\contq {s}k}\frac{d u_k}{2\pi\i}
		\prod_{1\le \aind<\bind\le k}
		\bigg(\frac{u_\aind-u_\bind}{u_\aind-qu_\bind}
		\frac{u_{\sigma(\aind)}-qu_{\sigma(\bind)}}{u_{\sigma(\aind)}-u_{\sigma(\bind)}}\bigg)
		\prod_{i=1}^{k}
		\frac{1}{(u_i-s)(1-su_i)}\left(\frac{u_i-s}{1-su_i}\right)
		^{\la_{\sigma^{-1}(i)}-\la_i}
		\label{orthogonality_F_proof1}
	\end{align}
	(this is the same as the left-hand side of \eqref{orthogonality_formulation} with $\mu=\la$,
	up to the constant $(1-q)^{k}\conj(\la)$)
	vanishes unless $\sigma\in\Sym_k$ permutes within
	clusters of the signature $\la$, that is,
	$\sigma$ must preserve each maximal 
	set of indices $\{a,a+1,\ldots,b\}\subseteq\{1,\ldots,k\}$
	for which $\la_{a}=\la_{a+1}=\ldots=\la_{b}$. Let $c_\la$ be the 
	number of such clusters in $\la$. 
	Denote the set of all permutations 
	permuting within
	clusters of $\la$
	by $\Sym_k^{(\la)}$. 
	Any permutation $\sigma\in\Sym_k^{(\la)}$
	can be represented as a product of $c_\la$ permutations
	$\sigma_1,\ldots,\sigma_{c_\la}$, with each $\sigma_{i}$ fixing all elements
	of $\{1,\ldots,k\}$ except those belonging to the $i$-th cluster of $\la$.
	We will denote
	the set of indices within the $i$-th cluster by $C_i(\la)$,
	and write $\sigma_i\in \Sym_k^{(\la,i)}$.
	For example, if $k=5$ and $\la_1=\la_2=\la_3>\la_4=\la_5$,
	then $c_\la=2$, and the permutation $\sigma\in \Sym_k^{(\la)}$ must stabilize the sets
	$C_1(\la)=\{1,2,3\}$ and $C_2(\la)=\{4,5\}$; $\Sym_k^{(\la,1)}$ is the
	set of all permutations of $\{1,2,3\}$, and $\Sym_k^{(\la,2)}$
	is the set of all permutations of $\{4,5\}$.

	Therefore, the sum in \eqref{orthogonality_F_proof1}
	is only over $\sigma\in\Sym_k^{(\la)}$. Let us now compute it. We have
	\begin{multline*}
		\sum_{\sigma\in \Sym_k^{(\la)}}
		\prod_{1\le \aind<\bind\le k}
		\frac{u_{\sigma(\aind)}-qu_{\sigma(\bind)}}{u_{\sigma(\aind)}-u_{\sigma(\bind)}}
		\prod_{i=1}^{k}\frac{1}{(u_i-s)(1-su_i)}\left(\frac{u_i-s}{1-su_i}\right)
		^{\la_{\sigma^{-1}(i)}-\la_i}
		\\=
		\prod_{r=1}^{k}\frac{1}{(u_r-s)(1-su_r)}\cdot
		\prod_{1\le i<j\le c_\la}
		\prod_{\substack{\aind\in C_i(\la)\\ \bind\in C_j(\la)}}
		\frac{u_\aind-qu_\bind}{u_\aind-u_\bind}
		\cdot
		\prod_{i=1}^{c_\la}
		\sum_{\sigma_i\in \Sym_k^{(\la,i)}}
		\prod_{\substack{1\le \aind<\bind\le k\\
		\aind,\bind\in C_i(\la)}}
		\frac{u_{\sigma_i(\aind)}-qu_{\sigma_i(\bind)}}
		{u_{\sigma_i(\aind)}-u_{\sigma_i(\bind)}}.
	\end{multline*}
	For each sum 
	over $\sigma_i\in \Sym_k^{(\la,i)}$
	we can use the symmetrization identity
	(footnote$^{\ref{symm_footnote}}$).
	Thus, \eqref{orthogonality_F_proof1}
	becomes
	\begin{multline*}
		\sum_{\sigma\in\Sym_k}
		\oint\limits_{\contq {s}1}\frac{d u_1}{2\pi\i}
		\ldots
		\oint\limits_{\contq {s}k}\frac{d u_k}{2\pi\i}
		\prod_{\aind<\bind}
		\bigg(\frac{u_\aind-u_\bind}{u_\aind-qu_\bind}
		\frac{u_{\sigma(\aind)}-qu_{\sigma(\bind)}}{u_{\sigma(\aind)}-u_{\sigma(\bind)}}\bigg)
		\prod_{i=1}^{k}\frac{1}{(u_i-s)(1-su_i)}\left(\frac{u_i-s}{1-su_i}\right)
		^{\la_{\sigma^{-1}(i)}-\la_i}
		\\=
		(1-q)^{-k}\prod_{i\ge0}(q;q)_{\ell_i}
		\oint\limits_{\contq {s}1}\frac{d u_1}{2\pi\i}
		\ldots
		\oint\limits_{\contq {s}k}\frac{d u_k}{2\pi\i}
		\prod_{i=1}^{c_\la}
		\prod_{\substack{1\le \aind<\bind\le k\\
		\aind,\bind\in C_i(\la)}}\frac{u_\aind-u_\bind}{u_\aind-qu_\bind}
		\prod_{r=1}^{k}\frac{1}
		{(u_r-s)(1-su_r)},
	\end{multline*}
	where we have used the usual multiplicative notation 
	$\la=0^{\ell_0}1^{\ell_1}2^{\ell_2}\ldots$.
	The integration variables above corresponding to each cluster are now independent, and thus
	the integral reduces to a product of
	$c_\la$ smaller nested contour integrals of similar form.
	Each of these smaller integrals can be computed
	as follows:\footnote{In fact, a more general 
	integral of this sort can also be computed, see \cite[Prop.\;3.7]{BCPS2014}.}
	\begin{align*}
		&\oint\limits_{\contq {s}1}\frac{du_1}{2\pi\i}
		\oint\limits_{\contq {s}2}\frac{du_2}{2\pi\i}
		\ldots
		\oint\limits_{\contq {s}\ell}\frac{du_\ell}{2\pi\i}
		\prod_{1\le \aind<\bind\le \ell}\frac{u_\aind-u_\bind}{u_\aind-qu_\bind}
		\prod_{i=1}^{\ell}\frac{1}{( u_i-s)(1-s u_i)}
		\\&\hspace{40pt}=\frac{1}{1-s^2}
		\oint\limits_{\contq {s}2}\frac{du_2}{2\pi\i}
		\ldots
		\oint\limits_{\contq {s}\ell}\frac{du_\ell}{2\pi\i}
		\prod_{j=2}^{\ell}\frac{1-s u_j}{1-qs  u_j}
		\prod_{2\le \aind<\bind\le \ell}\frac{u_\aind-u_\bind}{u_\aind-qu_\bind}
		\prod_{i=2}^{\ell}\frac{1}{( u_i-s)(1-s  u_i)}
		\\&\hspace{40pt}=
		\frac{1}{1-s^2}
		\oint\limits_{\contq {s}2}\frac{du_2}{2\pi\i}
		\ldots
		\oint\limits_{\contq {s}\ell}\frac{du_\ell}{2\pi\i}
		\prod_{2\le \aind<\bind\le \ell}\frac{u_\aind-u_\bind}{u_\aind-qu_\bind}
		\prod_{i=2}^{\ell}\frac{1}{( u_i-s)(1-qs  u_i)}
		\\&\hspace{40pt}=
		\textnormal{etc.}
		\\&\hspace{40pt}=
		\frac{1}{(s^{2};q)_{\ell}}.
	\end{align*}
	Indeed, there is only one $u_1$-pole 
	outside the contour $\contq {s}1$, namely, $u_1=s^{-1}$.
	Evaluating the integral over $u_1$ by taking the 
	minus residue at $u_1=s^{-1}$ leads to a smaller similar integral 
	with the outside pole $s^{-1}$ replaced by $(qs)^{-1}$.
	Continuing in the same way with integration over $u_2,\ldots,u_\ell$, we obtain
	${1}/{(s^{2};q)_{\ell}}$.
	Putting together all of the above components, we see that 
	we have established the desired claim.
\end{proof}


\subsection{Plancherel isomorphisms and completeness} 
\label{sub:plancherel_isomorphisms_and_completeness}

Here we discuss Plancherel isomorphism results
related to the functions $\F_\la$.
This subsection is essentially a citation from
\cite{BCPS2014}
and \cite{Borodin2014vertex}.

Let us fix the number of variables $n\in\Z_{\ge1}$
and assume \eqref{stochastic_weights_condition_qsxi},
as usual.
Extend the definition of the functions
$\F_\la(u_1,\ldots,u_n)$ to all $\la\in\sign n$ 
using 
the same symmetrization
formula \eqref{F_symm_formula}.
These functions also satisfy
the shifting property \eqref{F_shifts}.

\begin{definition}[Function spaces]\label{def:function_spaces}
	Denote the space of functions $f(\la)$ on $\sign n$ with finite support
	by $\funspat{n}$. 
	Also, denote by $\funspec{n}$ the space of symmetric Laurent polynomials
	$R(u_1,\ldots,u_n)$ 
	in the variables $\frac{u_i-s}{1-su_i}$, $i=1,\ldots,n$,
	with the additional requirement that $R(u_1,\ldots,u_n)$ converges to zero as $|u_i|\to\infty$
	for any $i$.
\end{definition}

Note that as functions in $\la$, the $\F_\la(u_1,\ldots,u_n)$'s clearly do not belong to 
$\funspat{n}$. However, as functions in the $u_i$'s, they belong to $\funspec{n}$, 
because we can use
\begin{align*}
	\frac{1}{1-su}=\frac{1}{1-s^2}+\frac{s}{1-s^2}\frac{u-s}{1-su}
\end{align*}
to rewrite the prefactor in \eqref{F_symm_formula} containing $\prod_{i=1}^{n}(1-su_i)^{-1}$
as a Laurent polynomial in the desired variables.
(A Laurent polynomial in $\frac{u_i-s}{1-su_i}$ is clearly bounded at infinity, 
and this simple computation shows that it can decay, too.)
The sum over $\sigma$ readily produces a symmetric Laurent polynomial in 
$\frac{u_i-s}{1-su_i}$ because
all the factors in the denominator of the form $u_\aind-u_\bind$ cancel out.
Finally, $\F_\la(u_1,\ldots,u_n)$ clearly goes to zero as $|u_i|\to\infty$.

\begin{definition}[Plancherel transforms]\label{def:Plancherel_transforms}
	The direct transform $\Pltrans n$ maps a function $f$ from $\funspat n$ 
	to $\Pltrans n f\in\funspec n$ and acts as
	\begin{align*}
		(\Pltrans n f)(u_1,\ldots,u_n):=\sum_{\la\in\sign n}f(\la)\F_\la(u_1,\ldots,u_n).
	\end{align*}
	The inverse transform $\Pltransi n$ takes $R\in\funspec n$ to 
	$\Pltransi n R\in\funspat n$ and acts as
	\begin{align*}
		(\Pltransi n R)(\la):=
		\conj(\la)
		\oint\limits_{\contq {s}1}\frac{d u_1}{2\pi\i}
		\ldots
		\oint\limits_{\contq {s}n}\frac{d u_n}{2\pi\i}
		\prod_{1\le \aind<\bind\le n}\frac{u_\aind-u_\bind}{u_\aind-qu_\bind}
		R(u_1,\ldots,u_n)
		\prod_{i=1}^{n}
		\frac{1}{u_i-s}\left(\frac{1-su_i}{u_i-s}\right)^{\la_i},
	\end{align*}
	where $\conj(\la)$ is defined by \eqref{conj_definition}.
	Let us explain why $\Pltransi n R$ has finite support in $\la$. 
	If $\la_1\ge M$ 
	for sufficiently large $M>0$, then the integrand has no pole at $s^{-1}$
	outside $\contq {s}1$, and thus vanishes. (It is crucial 
	that $R$ vanishes at $u_1=\infty$, so that
	the integrand has no residue at $u_1=\infty$.)
	Similarly,
	if $\la_n\le -M$, then 
	there is no $u_n$-pole at
	$s$
	inside 
	$\contq {s}n$, and so the integral also vanishes. 
	Clearly, the bound $M$ depends on the function $R$.
\end{definition}

\begin{remark}\label{rmk:bilinear_pairing_Tinv}
	Similarly to \cite[Proposition\;3.2]{BCPS2014}, the
	nested contours in the
	transform $\Pltransi n$ can be replaced by 
	two different families of 
	identical contours, which allows to 
	symmetrize under the integral and 
	interpret $\Pltransi n R$ as a bilinear 
	pairing between $R$ and $\F^{\conj}_\la(u_1^{-1},\ldots,u_n^{-1})$.
	One of the choices of these identical contours
	is $\contn{s;0}$, cf. Corollary \ref{cor:spatial_biorth}.
	Another one is the small contour $\contq {s}1$ 
	around $s$, but the formula 
	for $\Pltransi n$ would then involve string specializations of the $u_i$'s.
	We refer to \cite{BorodinCorwinPetrovSasamoto2013} 
	and \cite{BCPS2014} for details.
\end{remark}

\begin{theorem}[Plancherel isomorphisms {\cite[Theorems\;3.4 and\;3.9]{BCPS2014}}]\label{thm:Plancherel}
	The operator $f\mapsto \Pltransi n (\Pltrans n f)$ acts as the identity 
	on $\funspat n$. The operator
	$R\mapsto \Pltrans n (\Pltransi n R)$
	acts as the identity on $\funspec n$.
\end{theorem}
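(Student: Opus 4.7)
The plan is to prove the two identities in sequence, treating the second as a consequence of a completeness statement derived from the first.

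For Part 1, given $f \in \funspat{n}$ with finite support, I would interchange the finite sum over $\la$ with the contour integral to obtain
\begin{align*}
(\Pltransi{n}(\Pltrans{n} f))(\mu)
&= \sum_{\la} f(\la)\, \conj(\mu) \oint_{\contqe{1}} \!\!\tfrac{du_1}{2\pi\i} \cdots \oint_{\contqe{n}} \!\!\tfrac{du_n}{2\pi\i}\,
\prod_{\aind<\bind}\tfrac{u_\aind - u_\bind}{u_\aind - qu_\bind}\,
\F_{\la}(u_1,\ldots,u_n)
\prod_{i=1}^{n}\tfrac{1}{u_i - s}\Bigl(\tfrac{1-su_i}{u_i - s}\Bigr)^{\mu_i}.
\end{align*}
Theorem \ref{thm:orthogonality_F} was stated for $\la,\mu\in\signp{n}$, but the shifting identity \eqref{F_shifts} immediately extends it to all of $\sign{n}$: a simultaneous shift of $\la$ and $\mu$ by a common integer $r$ multiplies the factor from $\F_{\la+r^n}$ by $\prod_i\bigl(\tfrac{u_i-s}{1-su_i}\bigr)^r$ and the factor carrying $\mu$ by the reciprocal. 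Biorthogonality then collapses the sum to $f(\mu)$, since $\conj(\mu)/\conj(\la)\cdot\mathbf{1}_{\la=\mu}=\mathbf{1}_{\la=\mu}$.

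For Part 2, the route I would take is to combine Part 1 with a completeness statement: $\{\F_\la\}_{\la\in\sign{n}}$ is a basis of $\funspec{n}$. Changing variables to $y_i := \tfrac{u_i-s}{1-su_i}$, the symmetrization formula \eqref{F_symm_formula} expresses $\F_\la$ as $\prod_i(1+sy_i)$ times the symmetrization of $y_1^{\la_1}\cdots y_n^{\la_n}$ against a rational cross-factor whose symmetrized form is polynomial. Establishing a triangular expansion of the form $\F_\la = c_\la\,\prod_i(1+sy_i)\,m_\la(y) + \sum_{\mu\prec\la}c^\la_\mu\,\prod_i(1+sy_i)\,m_\mu(y)$ with $c_\la\ne 0$ in an appropriate partial order on $\sign{n}$ shows that every element of $\funspec{n}$ (which, by the vanishing-at-infinity condition combined with symmetry, is automatically divisible by $\prod_i(1+sy_i)$) admits a unique finite expansion $R=\sum_\la c_\la(R)\F_\la$. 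Given such an expansion, Theorem \ref{thm:orthogonality_F} applied term by term yields $(\Pltransi{n}R)(\la)=c_\la(R)$, and hence $\Pltrans{n}(\Pltransi{n}R)=\sum_\la c_\la(R)\F_\la = R$.

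The main obstacle I expect is the triangularity claim. Because the individual summands in \eqref{F_symm_formula} carry denominators $\prod(u_\aind-u_\bind)$ that cancel only after symmetrization, identifying the ``leading monomial'' of $\F_\la$ in the $y_i$-variables requires careful bookkeeping; one must verify not just the dominant coefficient but also that all lower terms respect the assumed partial order, and also that the factor $\prod_i(1+sy_i)$ is genuinely extracted. An alternative approach, used in \cite{BCPS2014}, is to deform all nested contours in $\Pltransi{n}R$ to a single contour $\contn{s;0}$ as in Remark \ref{rmk:bilinear_pairing_Tinv} and interpret the result as a bilinear pairing with $\F^{\conj}_\la(u_1^{-1},\ldots,u_n^{-1})$; the composition $\Pltrans{n}\Pltransi{n}$ then reduces to a symmetric rational function identity which can be verified by a residue calculation, thus avoiding an explicit spanning argument.
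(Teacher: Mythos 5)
Your Part~1 matches the paper's argument exactly: extend Theorem~\ref{thm:orthogonality_F} to all of $\sign n$ via the shift identity \eqref{F_shifts}, interchange the finite sum with the integral, and apply biorthogonality; nothing to add there.

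Your Part~2 takes a genuinely different route from the paper, and the route has a real gap that is not just ``careful bookkeeping.'' The paper deduces the second Plancherel identity from spectral biorthogonality (Theorem~\ref{thm:spec_orthogonality}), which it then imports from \cite{BCPS2014}, \cite{Borodin2014vertex} rather than reproving. You instead attempt a spanning argument by triangularity against the basis $\prod_i(1+sy_i)\,m_\mu(y)$ of $\funspec{n}$. The trouble is that the cross factor, written in the variables $y_i=\tfrac{u_i-s}{1-su_i}$, is
\begin{align*}
\frac{u_\aind-qu_\bind}{u_\aind-u_\bind}
=\frac{(1-qs^2)\,y_\aind+(s^2-q)\,y_\bind+s(1-q)(1+y_\aind y_\bind)}{(1-s^2)(y_\aind-y_\bind)},
\end{align*}
whose numerator contains the quadratic term $s(1-q)\,y_\aind y_\bind$. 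Each cross factor can therefore raise the total $y$-degree, so no leading-monomial triangularity compatible with a size-respecting order can hold for generic $s$. Concretely, for $n=2$ one computes
\begin{align*}
\F_{(1,0)}=\frac{(1-q)^2}{(1-s^2)^3}\,(1+sy_1)(1+sy_2)\Bigl[(1-qs^2)\,m_{(1,0)}(y)+s(1-q)\,m_{(1,1)}(y)+s(1-q)\,m_{(0,0)}(y)\Bigr],
\end{align*}
and $m_{(1,1)}$ has larger total degree than $m_{(1,0)}$, so the usual dominance-type orders on signatures give the wrong inequality. (In the Hall--Littlewood degeneration $s=0$ the offending term vanishes, which is why that triangularity is classical; your sketch implicitly imports the $s=0$ picture.) If a suitable order exists for generic $s$ at all, exhibiting it and verifying nonvanishing of all ``diagonal'' coefficients is itself a nontrivial piece of work, and until it is supplied the spanning claim --- and hence Part~2 --- is unproven. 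Your closing remark correctly points to the route the paper actually takes (deform to $\contn{s;0}$, view $\Pltransi n$ as a bilinear pairing, and invoke spectral biorthogonality as a residue computation), but you leave it as an untreated alternative rather than carrying it out; to fill the gap you should either complete that computation or cite it, as the paper does.
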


The first statement is clearly equivalent to Theorem \ref{thm:orthogonality_F} established above 
(note that by \eqref{F_shifts}, identities
\eqref{orthogonality_formulation} and \eqref{orthogonality_formulation_cor}
are invariant under simultaneous shifts in $\la$ and $\mu$, and 
thus also hold for all $\la,\mu\in\sign n$).
The second statement
follows from the \emph{spectral biorthogonality} 
which can be established independently:

\begin{theorem}\label{thm:spec_orthogonality}
	The functions $\F_\la$ satisfy the spectral biorthogonality relation
	\begin{multline*}
		\sum_{\la\in\sign n}\mathop{\oint \ldots \oint}
		\frac{d\UU}{(2\pi\i)^{n}}
		\mathop{\oint \ldots \oint}
		\frac{d\VV}{(2\pi\i)^{n}}
		\varphi(\UU)\psi(\VV)
		\F_\la(\UU)
		\F^{\conj}_{\la}(\bar\VV)
		\prod_{1\le\aind<\bind\le n}
		(u_{\aind}-u_{\bind})
		(v_{\aind}-v_{\bind})
		\\=
		(-1)^{\frac{n(n-1)}2}\mathop{\oint \ldots \oint}
		\frac{d\UU}{(2\pi\i)^{n}}
		\prod_{1\le \aind,\bind\le n}(u_{\aind}-qu_{\bind})\cdot
		\varphi(\UU)\sum_{\sigma\in\Sym_n}
		\sgn(\sigma)\psi(\sigma \UU),
	\end{multline*}
	where $\varphi$ and $\psi$ are test functions,
	and each integration 
	above is performed over 
	one and the same sufficiently small contour 
	encircling $s$
	while leaving $s^{-1}$ outside.\footnote{Throughout the text
	we will use the abbreviated notation
	$\bar\VV=(v_1^{-1},\ldots,v_n^{-1})$,
	and $d\VV$ stands for
	$dv_1 \ldots dv_n$.
	Similarly for $\bar\UU$ and $d\UU$.}
\end{theorem}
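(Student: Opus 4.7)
The plan is to mirror the derivation of the spatial biorthogonality theorem but now in the spectral variables. First I would substitute the symmetrization formula \eqref{F_symm_formula} into both $\F_\la(\UU)$ and $\F^{\conj}_\la(\bar\VV) = \conj(\la)\F_\la(\bar\VV)$, so that the left-hand side becomes a double sum over $\sigma,\tau\in\Sym_n$ under the integrals. Because all $u$-integrations share a common contour and $\varphi$ is an arbitrary test function, the Vandermonde factor $\prod_{\aind<\bind}(u_\aind - u_\bind)$ supplies antisymmetrizing weights that let me collapse the sum over $\sigma$ to the identity term at the cost of a factor $n!$. The same move on the $v$-side, combined with the second Vandermonde $\prod_{\aind<\bind}(v_\aind - v_\bind)$, converts the sum over $\tau$ into the antisymmetrized combination $\sum_{\tau}\sgn(\tau)\psi(\tau\UU)$ once the $v$-integration is performed and the $v$-variables are matched to the $u$-variables, thereby producing the right-hand side structure.

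Second, after these reductions the integrand contains the product
\[
\prod_{i=1}^n\left(\frac{u_i-s}{1-su_i}\cdot\frac{1-sv_i}{v_i-s}\right)^{\la_i}
\]
multiplied by $\conj(\la)$, and one has to perform the sum $\sum_{\la\in\sign n}$. Exploiting the shift property \eqref{F_shifts}, I would decompose $\la\in\sign n$ into a uniform shift $m\in\Z$ plus a bounded nonnegative signature; the sum over $m$ formally yields a multiplicative delta function that, when interacted with the $v$-integration, forces $v_i$ to coincide with $u_i$ up to a permutation, while the remaining finite sum over the bounded part evaluates to $q$-Pochhammer ratios that combine with the residue weights to reproduce the factor $\prod_{1\le\aind,\bind\le n}(u_\aind - qu_\bind)$ on the right-hand side.

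The hard part will be justifying the interchange of this divergent summation with the contour integrals. On small contours around $s$, the quantity $|(u-s)/(1-su)|$ is small while $|(1-sv)/(v-s)|$ is large (or vice versa after the substitution $v\mapsto v^{-1}$), so the series over the half $\la_i<0$ does not converge absolutely. The standard remedy, carried out in detail in \cite[Section 3]{BCPS2014}, is a careful contour deformation: one splits $\la$ into positive and negative parts, deforms the $v$-contours outward past $s^{-1}$ for the divergent half, and collects residues at $v_i = u_{\tau(i)}^{-1}$ which are precisely what supply the permutation contributions. The remaining integral along the deformed contour vanishes in the limit of truncation, and the residue bookkeeping matches the antisymmetrization of $\psi$ on the right-hand side. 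This contour surgery, rather than the algebraic manipulation of the symmetrization formulas, is the principal technical obstacle and the reason this result is substantially more delicate than the spatial biorthogonality statement of Theorem \ref{thm:orthogonality_F}.
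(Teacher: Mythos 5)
The paper does not write a proof of this theorem in the text; it points the reader to two proofs, one in \cite[\S4]{BCPS2014} (your citation should read \S4, not \S3 --- the latter treats the spatial orthogonality of Theorem~\ref{thm:orthogonality_F}) and one in \cite[\S7]{Borodin2014vertex}, remarks that these "differ significantly," and illustrates the latter with a one-variable example. Your proposal sketches the BCPS2014-style argument. The paper's $n=1$ example, which is the closest thing to a proof given in the text, shows the other route: one begins from the truncated Cauchy identity $\sum_{m\geq -M}z^m/w^{m+1} = (w/z)^M/(w-z)$ (a finite, convergent object), integrates both sides against Laurent-polynomial test functions, observes that the left-hand side stabilizes once $M$ is large while the right-hand side is computed by shrinking the $w$-contour onto the pole $w=z$, and only then removes the truncation. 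The gain is that the dangerous interchange of the divergent $\la$-sum with the contour integrals --- the step you single out as the principal obstacle --- is sidestepped entirely: at each finite truncation level the sum is already a rational function, and one need only argue stability of both sides under $M\to\infty$. One also never has to open up the symmetrization formula, since the relevant rational structure is supplied wholesale by the Cauchy identity of Corollary~\ref{cor:usual_Cauchy}.

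Two concrete slips in your third paragraph deserve attention. First, the residues sit at $v_i = u_{\tau(i)}$, not at $v_i = u_{\tau(i)}^{-1}$. The informal delta stated right after the theorem is $\det[\delta(v_i - u_j)]$, and indeed with $\bar\VV = (v_1^{-1},\ldots,v_n^{-1})$ the factor carried by $\la_i$ in the combined integrand is $\left(\frac{u_i-s}{1-su_i}\cdot\frac{1-sv_i}{v_i-s}\right)^{\la_i}$, whose one-sided geometric sum equals $\frac{(1-su_i)(v_i-s)}{(1-s^2)(v_i-u_i)}$ --- a pole at $v_i=u_i$, consistent with both contours being small circles around $s$. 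Consequently the contour move that produces the determinantal structure is the shrinking of a $v$-contour onto the $u$-contour (exactly as the $w$-contour is shrunk onto $w=z$ in the paper's example), not a deformation "outward past $s^{-1}$." Second, after removing a uniform shift $m^n$ the residual sum over $\mu\in\signp n$ with $\mu_n=0$ is still infinite (the other coordinates are unbounded above), so "the remaining finite sum over the bounded part" must be reinterpreted; in the correct picture it is a convergent geometric-type tail once the shifted sum has already localized $v$ near $u$.
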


Informally, the spectral biorthogonality can be written as
\begin{align*}
	\prod_{1\le\aind<\bind\le n}
	(u_{\aind}-u_{\bind})
	(v_{\aind}-v_{\bind})
	\sum_{\la\in\sign n}
	\F_\la(\UU)
	\F^{\conj}_{\la}(\bar\VV)=
	(-1)^{\frac{n(n-1)}2}\prod_{1\le \aind,\bind\le n}(u_{\aind}-qu_{\bind})\cdot
	\det[\delta(v_i-u_j)]_{i,j=1}^{n}.
\end{align*}
Two different 
ways of making Theorem \ref{thm:spec_orthogonality} 
precise (by choosing suitable classes of test functions $\varphi$ and $\psi$)
are presented in 
\cite[\S4]{BCPS2014}
and \cite[\S7]{Borodin2014vertex},
along with two corresponding proofs.
These proofs differ significantly, and
the proof in \cite{Borodin2014vertex}
directly utilizes the 
Cauchy identity (Proposition \ref{cor:usual_Cauchy}).

\begin{example}[$n=1$]
	Let us illustrate the connections between spatial and spectral biorthogonality statements
	and the Cauchy identity in the simplest one-variable case.
	For that, let us consider the following variant of the Cauchy identity:
	\begin{align*}
		\sum_{n=0}^{\infty}\frac{z^{n}}{w^{n+1}}=\frac{1}{w-z},\qquad \left|\frac{z}{w}\right|<1.
	\end{align*}
	By shifting the summation index towards $-\infty$, we can write
	\begin{align*}
		\sum_{n=-M}^{\infty}\frac{z^{n}}{w^{n+1}}=\frac{w^{M}}{z^{M}}\frac{1}{w-z},\qquad \left|\frac{z}{w}\right|<1.
	\end{align*}
	Now take contour integrals in $z$ and $w$ (over positively oriented
	circles with $|z|<|w|$) of both sides of this relation multiplied by 
	$P(z)Q(w)$, where $P$ and $Q$ are Laurent polynomials. Then in the left-hand side 
	we obtain the same sum for any $M\gg 1$, and in the right-hand side the 
	$w$ contour can be shrunk to zero, thus picking the residue at $w=z$. Therefore, we have
	\begin{align*}
		\sum_{n=-\infty}^{\infty}\oint\oint \frac{z^{n}}{w^{n+1}}\,P(z)Q(w)\frac{dz}{2\pi\i}\frac{dw}{2\pi\i}
		=
		\oint P(z)Q(z)\frac{dz}{2\pi\i}.
	\end{align*}
	The convergence condition $|z|<|w|$ is irrelevant for the left-hand side because the 
	sum over $n$ now contains only finitely many terms.
	The resulting spectral biorthogonality can be informally written as
	\begin{align*}
		\sum_{n=-\infty}^{\infty}\frac{z^{n}}{w^{n+1}}=\delta(w-z).
	\end{align*}

	To get the other biorthogonality relation, integrate both sides of the above identity against 
	$w^{m}\frac{dw}{2\pi\i}$,
	$m\in\Z$. Since $\{z^{n}\}_{n\in\Z}$ are linearly independent, 
	we obtain
	\begin{align*}
		\oint_{|w|=\textnormal{const}}w^{m}\frac{1}{w^{n+1}}\frac{dw}{2\pi\i}=\mathbf{1}_{m=n}.
	\end{align*}
	This is the spatial biorthogonality relation. This identity also
	readily follows from the Cauchy's integral formula.

	Similar considerations work for Cauchy identities in several variables. The second type of biorthogonality 
	relations can often be verified independently in a simpler fashion (as in the proof of 
	Theorem~\ref{thm:orthogonality_F}).
\end{example}

Plancherel isomorphism results (Theorem \ref{thm:Plancherel})
imply that the (coordinate) Bethe ansatz yielding the 
eigenfunctions $\F_\la$ of the transfer matrices is \emph{complete}.
That is, any function $f\in\funspat n$ 
can be mapped into the spectral space, and then reconstructed back from 
its image. One of the ways 
to write down this completeness statement 
(using the orthogonality relation \eqref{orthogonality_formulation_cor})
is the following:
\begin{align*}
	f(\la)=
	\frac{1}{(1-q)^{n}n!}
	\oint\limits_{\contn {s;0}}\frac{d u_1}{2\pi\i u_1}
	\ldots
	\oint\limits_{\contn {s;0}}\frac{d u_n}{2\pi\i u_n}
	\prod_{1\le \aind\ne\bind\le n}\frac{u_\aind-u_\bind}{u_\aind-qu_\bind}
	\,(\Pltrans n f)(u_1,\ldots,u_n)
	\F^{\conj}_{\la}(u_1^{-1},\ldots,u_n^{-1}).
\end{align*}

\begin{remark}[Spectral decomposition of $\Qe_{\PI m;v}$]
	\label{rmk:spec_decomp}
	Relation \eqref{orthogonality_formulation_cor} also
	implies a spectral decomposition
	of the operator $\Qe_{\PI m;v}$ acting on functions 
	on $\signp m$, cf. Remark \ref{rmk:EF_relation_Qe}:
	\begin{multline}\label{spec_decomp}
		\Qe_{\PI m;v}(\mu\to\nu)=
		\frac{(-s)^{|\nu|-|\mu|}}{(1-q)^{m}m!}
		\oint\limits_{\contn {s;0}}\frac{d z_1}{2\pi\i z_1}
		\ldots
		\oint\limits_{\contn {s;0}}\frac{d z_m}{2\pi\i z_m}
		\prod_{1\le \aind\ne \bind\le m}
		\frac{z_\aind-z_\bind}{z_\aind-qz_\bind}
		\\\times
		\bigg(\prod_{i=1}^{m}\frac{1-qz_iv}{1-z_iv}\bigg)
		\F_{\mu}(z_1,\ldots,z_m)
		\F_{\nu}^{\conj}(z_1^{-1},\ldots,z_m^{-1}).
	\end{multline}
	Indeed, by \eqref{EF_relation_Qe} this operator has eigenfunctions
	$\EF{}\la(z_1,\ldots,z_m)=(-s)^{-|\la|}\F_\la(z_1,\ldots,z_m)$
	with eigenvalues 
	$\displaystyle\prod\nolimits_{i=1}^{m}\frac{1-qz_iv}{1-z_iv}$
	(the constant $(q;q)_{m}$ can be ignored).
	Thus, 
	\eqref{spec_decomp} follows
	by multiplying the eigenrelation \eqref{EF_relation_Qe} by 
	$(-s)^{|\nu|}\F^{\conj}_\nu(z_1^{-1},\ldots,z_m^{-1})$
	and integrating as in \eqref{orthogonality_formulation_cor}. 
	Since the identity \eqref{EF_relation_Qe}
	requires the admissibility 
	$\adm{z_i}v$
	(Definition \ref{def:admissible}) before the integration, 
	in \eqref{spec_decomp}
	the point $v^{-1}$ should be outside the integration
	contour $\contn {s;0}$ (the argument
	for this is similar to the proof of
	Proposition \ref{prop:skew_G_integral_formula} below).
\end{remark}

\begin{remark}
	Function spaces 
	$\funspat n$ and $\funspec n$
	are far from being optimal.
	This is because we only address
	algebraic aspects of Plancherel isomorphisms.
	An extension of the first Plancherel isomorphism
	to larger spaces is described in \cite[Appendix A]{CorwinPetrov2015}.
\end{remark}


\subsection{An integral representation for $\G_\mu$} 
\label{sub:integral_formula_for_g_mu_}

Using the orthogonality result of Theorem \ref{thm:orthogonality_F} and the Cauchy identity, 
we can obtain relatively simple nested contour integral formulas for the 
skew functions $\G_{\mu/\kappa}$
(and, in particular, for $\G_{\mu}$), which will
be useful later in \S \ref{sec:observables_of_interacting_particle_systems}
and \S \ref{sec:_q_moments_of_the_height_function_of_interacting_particle_systems}.

\begin{proposition}\label{prop:skew_G_integral_formula}
	For any $k,N\in\Z_{\ge1}$, 
	$\mu,\kappa\in\signp k$, and $v_1,\ldots,v_N$ such that the $v_i^{-1}$'s are outside 
	of all the integration
	contours $\contq {s}j$ of Definition \ref{def:orthogonality_contours}, we have 
	\begin{multline}\label{skew_G_integral_formula}
		\G_{\mu/\kappa}(v_1,\ldots,v_N)=
		\oint\limits_{\contq {s}1}\frac{d u_1}{2\pi\i}
		\ldots
		\oint\limits_{\contq {s}k}\frac{d u_k}{2\pi\i}
		\prod_{1\le \aind<\bind\le k}\frac{u_\aind-u_\bind}{u_\aind-qu_\bind}
		\\\times\F_{\kappa}^{\conj}(u_1,\ldots,u_k)
		\prod_{i=1}^{k}
		\frac{1}{u_i-s}\left(\frac{1-su_{i}}{u_{i}-s}\right)^{\mu_i}
		\prod_{\substack{1\le i\le k\\1\le j\le N}}
		\frac{1-qu_iv_j}{1-u_iv_j}.
	\end{multline}
\end{proposition}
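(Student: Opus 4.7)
The plan is to invert the iterated Pieri rule (Corollary \ref{cor:Pieri}, part 1) against the spatial biorthogonality of Theorem \ref{thm:orthogonality_F}, so as to extract the single coefficient $\G^{\conj}_{\mu/\kappa}(v_1,\dots,v_N)$ from the Pieri expansion. Iterating \eqref{Pieri1} in the $v_j$'s gives the eigenrelation
\begin{equation*}
\sum_{\nu\in\signp{k}}\G^{\conj}_{\nu/\kappa}(v_1,\ldots,v_N)\,\F_{\nu}(u_1,\ldots,u_k)=\prod_{i=1}^{k}\prod_{j=1}^{N}\frac{1-qu_iv_j}{1-u_iv_j}\cdot\F_{\kappa}(u_1,\ldots,u_k),
\end{equation*}
valid whenever every pair $(u_i,v_j)$ is admissible in the sense of Definition~\ref{def:admissible}. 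Thus $\F_{\kappa}$ plays the role of an eigenvector of the ``$\G$-transfer matrix'' in the spectral $u$-variables, and the task is to pair this eigenrelation against the dual basis provided by Theorem \ref{thm:orthogonality_F}.

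Concretely, I would multiply both sides of the above identity by
\[
\prod_{1\le\aind<\bind\le k}\frac{u_\aind-u_\bind}{u_\aind-qu_\bind}\,\prod_{i=1}^{k}\frac{1}{u_i-s}\Bigl(\frac{1-su_i}{u_i-s}\Bigr)^{\mu_i}
\]
and integrate each $u_i$ over the nested contour $\contq{s}{i}$. On the left, after interchanging sum and integral, Theorem \ref{thm:orthogonality_F} (rewritten as $\conj(\nu)\oint\!\cdots\F_{\nu}(u)\cdots=\mathbf{1}_{\nu=\mu}$ via $\F^{\conj}_{\nu}=\conj(\nu)\F_{\nu}$) collapses the sum onto the single term $\nu=\mu$, giving $\G^{\conj}_{\mu/\kappa}(v)/\conj(\mu)$. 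Using $\G^{\conj}_{\mu/\kappa}/\conj(\mu)=\G_{\mu/\kappa}/\conj(\kappa)$ and absorbing $\conj(\kappa)$ into $\conj(\kappa)\F_{\kappa}=\F^{\conj}_{\kappa}$ yields exactly \eqref{skew_G_integral_formula}.

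The main technical hurdle is justifying the interchange of sum and integral, since the Pieri series only converges under $\adm{u_i}{v_j}$. To handle this, I would first shrink the contours $\contq{s}{i}$ until $\bigl|(u_i-s)/(1-su_i)\bigr|$ is uniformly small on each; this simultaneously enforces $\adm{u_i}{v_j}$ for all $i,j$ (making the series absolutely and uniformly convergent on the contours) and keeps the hypothesis that every $v_j^{-1}$ lies outside the $\contq{s}{i}$'s in force, so the factor $\prod_{i,j}(1-qu_iv_j)/(1-u_iv_j)$ remains regular inside. Fubini then applies and the orthogonality step goes through as above. Finally, both sides of the resulting identity are rational in $v_1,\dots,v_N$, with singularities in $v_j$ only along the set $\{v_j=u_i^{-1}\colon u_i\in\contq{s}{i}\}$, so the identity extends by analytic continuation to the full range of $v_j$'s allowed in the proposition --- any $v_j^{-1}$ outside every $\contq{s}{i}$ --- by expanding the contours as needed without crossing these $v_j^{-1}$'s.
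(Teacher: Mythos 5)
Your argument is correct and is essentially the paper's own proof: the paper multiplies the orthogonality relation \eqref{orthogonality_formulation} by $\G_{\la/\kappa}(v_1,\dots,v_N)$, sums over $\la$, interchanges sum and integral, and identifies the inner sum with the iterated Pieri rule (Corollary~\ref{cor:Pieri}, part~1), whereas you integrate the iterated Pieri identity against the orthogonality kernel --- the same manipulation of the same double sum/integral in the opposite order. The contour-shrinking justification of the interchange and the final extension to general $v_j$ via rationality likewise match the paper's proof.
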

When $\kappa=(0^{k})$, 
with the 
help of \eqref{F_at_zero_signature},
formula \eqref{skew_Cauchy_good} reduces to
\begin{corollary}[{\cite[Prop.\;7.3]{Borodin2014vertex}}]
\label{cor:G_integral_formula}
	Under the assumptions of
	Proposition \ref{prop:skew_G_integral_formula} above,
	\begin{multline}\label{G_integral_formula}
		\G_\mu(v_1,\ldots,v_N)=
		(s^{2};q)_{k}
		\oint\limits_{\contq {s}1}\frac{d u_1}{2\pi\i}
		\ldots
		\oint\limits_{\contq {s}k}\frac{d u_k}{2\pi\i}
		\prod_{1\le \aind<\bind\le k}\frac{u_\aind-u_\bind}{u_\aind-qu_\bind}\\\times
		\prod_{i=1}^{k}\frac{1}{(u_i-s)(1-su_i)}
		\left(\frac{1-su_{i}}{u_{i}-s}\right)^{\mu_i}
		\prod_{\substack{1\le i\le k\\1\le j\le N}}
		\frac{1-qu_iv_j}{1-u_iv_j}.
	\end{multline}
\end{corollary}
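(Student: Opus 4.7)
The plan is to derive the corollary as a direct specialization of Proposition \ref{prop:skew_G_integral_formula} at $\kappa = 0^k$, combined with the known evaluation \eqref{F_at_zero_signature} of $\F$ at the zero signature.

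First, recall that by Definition \ref{def:G}, the non-skew function $\G_\mu$ is simply the abbreviation for $\G_{\mu/0^k}$ when $\mu \in \signp k$. So I will substitute $\kappa = 0^k$ into the skew formula \eqref{skew_G_integral_formula}. The only piece that needs to be evaluated is the prefactor $\F^{\conj}_{0^k}(u_1,\ldots,u_k)$. By the definition of the conjugated function, $\F^{\conj}_{0^k} = \frac{\conj(0^k)}{\conj(\varnothing)}\F_{0^k} = \conj(0^k)\F_{0^k}$, where $\conj(\varnothing)=1$ is the empty product and, from \eqref{conj_definition} applied to $0^k = 0^{k}1^{0}2^{0}\cdots$,
\[
\conj(0^k) = \frac{(s^2;q)_k}{(q;q)_k}.
\]

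Next, I would apply \eqref{F_at_zero_signature} to obtain $\F_{0^k}(u_1,\ldots,u_k) = \frac{(q;q)_k}{\prod_{i=1}^k (1-su_i)}$. Multiplying these two ingredients together, the $(q;q)_k$ factors cancel and I get
\[
\F^{\conj}_{0^k}(u_1,\ldots,u_k) = \frac{(s^2;q)_k}{\prod_{i=1}^k (1-su_i)}.
\]

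Plugging this expression back into \eqref{skew_G_integral_formula} with $\kappa = 0^k$ and observing that the resulting factor $\prod_i \tfrac{1}{(1-su_i)}$ combines with the existing $\prod_i \tfrac{1}{u_i - s}$ to give the denominator $\prod_i (u_i-s)(1-su_i)$ as displayed in \eqref{G_integral_formula}, yields precisely the asserted formula for $\G_\mu$. The admissibility of the contours transfers verbatim from the hypotheses of Proposition \ref{prop:skew_G_integral_formula}, since the conditions on the $v_j^{-1}$ lying outside the $\contq{s}{j}$ are unchanged. There is no real obstacle here: the corollary is essentially a one-line specialization, with the only mild bookkeeping being the correct tracking of the $\conj$-factor in passing from $\F_{0^k}$ to $\F^{\conj}_{0^k}$.
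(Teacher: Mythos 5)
Your proposal is correct and matches the paper's own (one-line) derivation: the paper likewise obtains the corollary by setting $\kappa=0^{k}$ in Proposition \ref{prop:skew_G_integral_formula} and invoking \eqref{F_at_zero_signature}, and your bookkeeping of the factor $\conj(0^{k})=(s^{2};q)_{k}/(q;q)_{k}$ yielding $\F^{\conj}_{0^{k}}=(s^{2};q)_{k}/\prod_{i}(1-su_i)$ is exactly right.
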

\begin{proof}[Proof of Proposition \ref{prop:skew_G_integral_formula}]
	Fix $\mu,\kappa\in\signp k$, 
	multiply both sides of \eqref{orthogonality_formulation}
	by $\G_{\la/\kappa}(v_1,\ldots,v_N)$, and sum over $\la\in\signp k$.
	The right-hand side obviously equals $\G_{\mu/\kappa}(v_1,\ldots,v_N)$, 
	while in the left-hand side we 
	have
	\begin{align*}
		\sum_{\la\in\signp k}
		\oint\limits_{\contq {s}1}\frac{d u_1}{2\pi\i}
		\ldots
		\oint\limits_{\contq {s}k}\frac{d u_k}{2\pi\i}
		\prod_{1\le \aind<\bind\le k}\frac{u_\aind-u_\bind}{u_\aind-qu_\bind}
		\prod_{i=1}^{k}\frac{1}{u_i-s}\left(\frac{1-su_{i}}{u_{i}-s}\right)^{\mu_i}
		\F_{\la}^{\conj}(u_1,\ldots,u_k)
		\G_{\la/\kappa}(v_1,\ldots,v_N).
	\end{align*}
	If one can perform the (infinite) 
	summation over $\la$
	inside the integral,
	then by the (iterated)
	Corollary \ref{cor:Pieri}.1 (which follows from the Cauchy identity),
	one readily gets the desired formula for the symmetric function
	$\G_{\mu/\kappa}(v_1,\ldots,v_N)$. It remains to justify 
	that we indeed can interchange summation and integration.

	The (absolutely convergent) summation can be performed inside the integral
	if $\adm{u_i}{v_j}$ for $u_i$ on the contours, i.e.,
	\begin{align*}
		\Big|\frac{u_i-s}{u_i-s^{-1}}\Big|\cdot\Big|\frac{v_j^{-1}-s^{-1}}{v_j^{-1}-s}\Big|<1.
	\end{align*}
	Therefore, deforming the contours so that all $u_i$ are closer to $s$ than to $s^{-1}$,
	and taking $|v_j|$ sufficiently small ensures the above inequality.
	Performing the summation over $\la$, we obtain the desired identity 
	\eqref{skew_G_integral_formula} for these deformed contours and 
	restricted $v_j$. However, 
	we may deform contours and drop these restrictions
	as long as the right-hand side 
	of \eqref{skew_G_integral_formula} represents the same rational function,
	because the left-hand side is a priori rational. This implies that the
	desired formula holds as long as the $v_j^{-1}$
	are outside the integration contours, which establishes the proposition.
\end{proof}

As shown in 
\cite[Prop.\;7.3]{Borodin2014vertex},
the nested contour integral formula for $\G_\mu$
\eqref{G_integral_formula}
may be used as an alternative way to derive the 
symmetrization formula 
for $\G_\mu$
of Theorem \ref{thm:symmetrization}.
Note that 
\eqref{G_integral_formula}
in turn follows from the Cauchy identity
plus the
spatial orthogonality of the $\F_\la$'s,
and the latter is implied by the 
symmetrization formula for $\F_\la$.
We will not reproduce the argument here.

Another use of formula \eqref{G_integral_formula}
is a straightforward alternative proof of
Proposition \ref{prop:RHO_spec}
(computation of the specialization
$\G_\mu(\RHO)$),
which can also be generalized to 
other specializations of $\G_\mu$.
This will be a starting point for averaging of observables in \S \ref{sub:computation_of_gnurhow}
below.


\section{$q$-correlation functions} 
\label{sec:observables_of_interacting_particle_systems}

In this section we compute $q$-correlation 
functions of the stochastic dynamics $\Xp_{\{u_t\};\RHO}$
of \S \ref{sub:interacting_particle_systems}
assuming it starts from the empty initial configuration.

\subsection{Computing observables via the Cauchy identity} 
\label{sub:computing_observables_via_cauchy_identity}

Let us first briefly 
explain main ideas behind our computations.
We are interested only in single-time observables
(i.e., the ones which depend 
on the state of $\Xp_{\{u_t\};\RHO}$
at a single time moment, say,~$t=n$),
and getting them is equivalent to 
computing expectations 
$\E_{\UU;\RHO}f(\nu)$
of certain functions
$f(\nu)$ of the configuration $\nu\in\signp n$
with respect to the probability measure
\begin{align}\label{MM_U_RHO_particular}
	\MM_{\UU;\RHO}(\nu)
	=
	\frac{1}{Z(\UU;\RHO)}\,\F_\nu(u_1,\ldots,u_n)\G_\nu^{\conj}(\RHO)
	=\mathbf{1}_{\nu_n\ge1}
	\cdot (-s)^{|\nu|-n}\cdot\F^{\conj}_{\nu-1^{n}}(u_1,\ldots,u_n),
\end{align}
where $\UU=(u_1,\ldots,u_n)$.
The measure $\MM_{\UU;\VV}$ \eqref{MM_measure}
takes the above form for $\VV=\RHO$ due to \eqref{Qp_RHO}.

The weights \eqref{MM_U_RHO_particular} are nonnegative if the parameters
satisfy \eqref{stochastic_weights_condition_qsxi}--\eqref{stochastic_weights_condition_u}.
To ensure that \eqref{MM_U_RHO_particular} defines a probability distribution
on the infinite set $\signp n$, we need to impose 
admissibility conditions (cf.~Definition~\ref{def:MM}).
The latter are implied by
\begin{align}\label{admissibility_RHO_conditions}
	\left|s\frac{u_j-s}{1-su_j}\right|<1.
\end{align}
Indeed, these conditions ensure \eqref{admissible_good_HOM} 
for very small $v$ (limit $v\to0$ is a part of the 
specialization~$\RHO$). Alternatively, 
interpret the probability weight
$(-s)^{|\nu|-n}\cdot\F^{\conj}_{\nu-1^{n}}(u_1,\ldots,u_n)$ in
\eqref{MM_U_RHO_particular}
as a partition function of path collections,
and fix $\nu$ with large $\nu_1$ (other parts can be large, too).
The only vertex weight
which enters 
the weight of a particular path collection 
a large number of times is
$\Lmatr_{u_j}(0,1;0,1)=(-su_j+s^{2})/(1-su_j)$,
which is bounded in absolute value by \eqref{admissibility_RHO_conditions}.
One readily sees that conditions 
\eqref{stochastic_weights_condition_qsxi}--\eqref{stochastic_weights_condition_u} 
(which, in particular, require $u_i\ge0$)
automatically imply
\eqref{admissibility_RHO_conditions}.

Cauchy identity \eqref{usual_Cauchy} suggests a large family of 
observables of the measure 
$\MM_{\UU;\RHO}$ whose averages can be computed right away. Namely, let 
us fix variables $w_1,\ldots,w_k$, and set
\begin{align}\label{fnu_GRHO_www}
	f(\nu):=
	\frac{\G_\nu(\RHO,w_1,\ldots,w_k)}{\G_\nu(\RHO)},
\end{align}
where $(\RHO,w_1,\ldots,w_k)$ means that we add
$w_1,\ldots,w_k$ to the specialization $(\epsilon,q \epsilon,\ldots,q^{J-1}\epsilon)$, 
then set $q^{J}=1/(s \epsilon)$, and finally send $\epsilon\to0$,
cf. \eqref{RHO_spec}.
Note that one can replace both $\G_\nu$ in \eqref{fnu_GRHO_www} by $\G_{\nu}^{\conj}$
without changing $f(\nu)$. The $\E_{\UU;\RHO}$ expectation of the function \eqref{fnu_GRHO_www} takes the form
\begin{multline}\label{EGoverG_computation}
	\E_{\UU;\RHO}f(\nu)=\sum_{\nu\in\signp n}
	\frac{1}{Z(\UU;\RHO)}\F_\nu(u_1,\ldots,u_n)\G_\nu^{\conj}(\RHO)
	\frac{\G_\nu^{\conj}(\RHO,w_1,\ldots,w_k)}{\G_\nu^{\conj}(\RHO)}
	=
	\frac{Z(\UU;\RHO,w_1,\ldots,w_k)}{Z(\UU;\RHO)}
	\\=\prod_{\substack{1\le i\le n\\1\le j\le k}}\frac{1-qu_iw_j}{1-u_iw_j},
\end{multline}
where the ratio of the partition functions $Z(\cdots)$
is computed via the corresponding $\RHO$ limit
of \eqref{Z_formula}. We will discuss admissibility conditions
(necessary for the convergence of the above sum)
in \S\ref{sub:_q_correlation_functions} below.

One now needs to understand 
the dependence of \eqref{fnu_GRHO_www} on $\nu$.
Using the integral formula 
\eqref{G_integral_formula}
for $\G_\nu$, we can compute for $k=1$:
\begin{align}\label{GoverG_as_sum_k1}
	\frac{\G_\nu(\RHO,w)}{\G_\nu(\RHO)}
	=
	q^{n}+
	\sum_{i=1}^{n}\frac{q^{i-1}}{(-s)^{\nu_i}}
	\frac{1-q}{1-s^{-1}w}\left(\frac{w-s}{1-s w}\right)^{\nu_i},\qquad
	\nu_n\ge1.
\end{align}
A general result of this sort is given in Proposition \ref{prop:Gnu_spec_RHO_w} below.

Next, by a suitable contour integration in $w$
one can 
extract the term in \eqref{GoverG_as_sum_k1}
with $\nu_i=m$
for any fixed $m\ge1$.
Therefore, the same integration 
of the right-hand side of \eqref{EGoverG_computation} 
will yield a contour integral formula for 
\begin{align*}
	\E_{\UU;\RHO}\sum_{i=1}^{n}q^{i}\mathbf{1}_{\nu_i=m},
\end{align*}
which can be viewed as a \emph{$q$-analogue of the density function}
of the random configuration $\nu$.
Higher \emph{$q$-correlation functions} (defined in \S \ref{sub:_q_correlation_functions} below)
can be computed in a similar way by working with \eqref{fnu_GRHO_www} with 
general $k$. Therefore, for general $k$
the right-hand side of \eqref{EGoverG_computation} 
should be regarded as a generating function (in $w_1,\ldots,w_k$) for 
the $q$-correlation functions,
and the latter can be extracted by integrating over the $w_j$'s.


\subsection{Computation of $\G_\nu(\RHO,w_1,\ldots,w_k)$} 
\label{sub:computation_of_gnurhow}

In this subsection we 
fix $n\ge k\ge0$ and a signature $\nu\in\signp n$,
and compute the specialization $\G_\nu(\RHO,w_1,\ldots,w_k)$.
The result of this computation is
a general $k$ version of \eqref{GoverG_as_sum_k1},
and it is given in Proposition 
\ref{prop:Gnu_spec_RHO_w} below.

For the computation we will assume that $w_p$, $p=1,\ldots,k$,
are pairwise distinct
and are such that the points $w_p^{-1}$
are outside the integration contours $\contq {s}j$
of Definition \ref{def:orthogonality_contours}. 
We can readily take the $\RHO$
limit \eqref{RHO_spec} in \eqref{G_integral_formula}, and 
write
\begin{multline}
	\G_\nu(\RHO,w_1,\ldots,w_k)=
	(s^{2};q)_{n}
	\oint\limits_{\contq {s}1}\frac{d u_1}{2\pi\i}
	\ldots
	\oint\limits_{\contq {s}n}\frac{d u_n}{2\pi\i}
	\prod_{1\le \aind<\bind\le n}\frac{u_\aind-u_\bind}{u_\aind-qu_\bind}
	\\\times
	\prod_{i=1}^{n}\bigg(\frac{1}{-s(1-su_i)}
	\left(\frac{1-su_i}{u_i-s}\right)^{\nu_i}
	\prod_{j=1}^{k}\frac{1-qu_iw_j}{1-u_iw_j}\bigg).
	\label{G_nu_RHO_w_integral1}
\end{multline}
If $\nu_n=0$, then the integral \eqref{G_nu_RHO_w_integral1} vanishes because
there are no $u_n$-poles inside $\contq {s}n$. 
We will thus assume that $\nu_n\ge1$
(so all $\nu_i\ge1$),
and explicitly compute this integral.
Denote it by
$\II^{w_1,\ldots,w_k}_{1,\ldots,n}$.

We aim to peel off the contours $\contq {s}1,\ldots,\contq {s}n$ (in this order),
and take minus residues at poles outside these contours.
Observe that the integrand in $u_1$ has only two types of simple
poles outside $\contq {s}1$,
namely, $u_1=\infty$ and $u_1=w_p^{-1}$ for $p=1,\ldots,k$ (there is no singularity at $s^{-1}$).
We have 
\begin{align*}
	-\Res_{u_1=\infty}\bigg(
	\frac{1}{-s(u_1-s)}
	\left(
	\frac{1-su_1}{u_1-s}
	\right)^{\nu_1-1}
	\bigg)=(-s)^{\nu_1-2}.
\end{align*}
Thus, the whole minus residue of \eqref{G_nu_RHO_w_integral1} at $u_1=\infty$ is equal to 
$(1-s^2q^{n-1})(-s)^{\nu_1-2}q^{k}\cdot
\II^{w_1,\ldots,w_k}_{2,\ldots,n}$, where $q^{k}$ comes from the product involving the $w_p$'s.

Next, for any $p=1,\ldots,k$, the pole $w_p^{-1}$ yields
\begin{multline*}
	-\Res_{u_1=w_p^{-1}}=
	(1-s^{2}q^{n-1})
	(-s)^{-2}
	\frac{1-q}{1-s^{-1}w_p}
	\left(
	\frac{w_p-s}{1-sw_p}\right)^{\nu_1}
	\prod_{\substack{1\le j\le k\\j\ne p}}\frac{w_p-qw_j}{w_p-w_j}
	\\\times
	(s^{2};q)_{n-1}
	\oint\limits_{\contq {s}2}\frac{d u_2}{2\pi\i}
	\ldots
	\oint\limits_{\contq {s}n}\frac{d u_n}{2\pi\i}
	\prod_{2\le \aind<\bind\le n}\frac{u_\aind-u_\bind}{u_\aind-qu_\bind}
	\\\times
	\prod_{i=2}^{n}\bigg(
	\frac{1}{-s(u_i-s)}
	\left(
	\frac{1-s u_i}{u_i-s}
	\right)^{\nu_i-1}
	\prod_{\substack{1\le j\le k\\j\ne p}}\frac{1-qu_iw_j}{1-u_iw_j}\bigg).
\end{multline*}
Therefore, taking the minus residue at $u_1=w_p^{-1}$
leads to
\begin{align*}
	(1-s^{2}q^{n-1})
	(-s)^{-2}
	\frac{1-q}{1-s^{-1}w_p}
	\left(\frac{w_p-s}{1-sw_p}\right)^{\nu_1}
	\prod_{\substack{1\le j\le k\\j\ne p}}\frac{w_p-qw_j}{w_p-w_j}
	\cdot \II^{w_1,\ldots,w_{p-1},w_{p+1},\ldots,w_k}_{2,\ldots,n}.
\end{align*}
One can continue with similar computations for $\contq {s}2,\ldots,\contq {s}n$.
Let us write down the final integral
with the only remaining contour $\contq {s}n$:
\begin{multline*}
	\II^{w_1,\ldots,w_k}_{n}=
	(1-s^{2})
	\oint\limits_{\contq {s}n}\frac{d u_n}{2\pi\i}
	\frac{1}{-s(u_n-s)}
	\left(\frac{1-s u_n}{u_n-s}\right)^{\nu_n-1}
	\prod_{j=1}^{k}\frac{1-qu_nw_j}{1-u_nw_j}
	\\=
	(1-s^{2})(-s)^{\nu_n-2}q^{k}
	+
	(1-s^{2})
	(-s)^{-2}\sum_{p=1}^{k}
	\frac{1-q}{1-s^{-1}w_p}
	\left(\frac{w_p-s}{1-sw_p}\right)^{\nu_n}
	\prod_{\substack{1\le j\le k\\j\ne p}}\frac{w_p-qw_j}{w_p-w_j}.
\end{multline*}

In general, the integral in \eqref{G_nu_RHO_w_integral1}
is equal to a summation of the following sort. 
For every $\ell=0,\ldots,k$, choose two collections
of indices
$\IS=\{i_1<\ldots<i_\ell\}\subseteq\{1,\ldots,n\}$
and 
$\JS=(j_1,\ldots,j_\ell)\subseteq\{1,\ldots,k\}$
(note that the order of the $j_p$'s in $\JS$ matters, while the $i_p$'s
are assumed already ordered).
We will take residues at
$u_{i_p}=w_{j_p}^{-1}$, $1\le p\le \ell$,
and the remaining residues at $u_i=\infty$ for $i\notin\IS$.
Denote the summand corresponding to these residues by $\Res_{\IS,\JS}$.
All these summands have a common prefactor 
$(-s)^{-2n}(s^{2};q)_{n}$. The contribution to $\Res_{\IS,\JS}$
from residues at infinity is equal to
\begin{multline}
	q^{k(i_1-1)+(k-1)(i_2-i_1-1)+\ldots+(k-\ell+1)(i_{\ell}-i_{\ell-1}-1)
	+(k-\ell)(n-i_{\ell})}\prod_{i\notin\IS}(-s)^{\nu_i}
	\\=
	q^{-\frac12\ell(2k+1-\ell)+n(k-\ell)}
	q^{i_1+\ldots+i_\ell}
	\prod_{i\notin\IS}(-s)^{\nu_i}.
	\label{G_nu_RHO_infty}
\end{multline}
The residues at $u_{i_p}=w_{j_p}^{-1}$ contribute
\begin{align}
	\prod_{p=1}^{\ell}
	\bigg(
	\frac{1-q}{1-s^{-1}w_{j_p}}
	\left(\frac{w_{j_p}-s}{1-sw_{j_p}}\right)^{\nu_{i_p}}
	\prod_{j\in\{1,\ldots,k\}\setminus\JS}
	\frac{w_{j_p}-qw_j}{w_{j_p}-w_j}
	\bigg)
	\prod_{1\le \aind<\bind\le \ell}
	\frac{w_{j_{\aind}}-qw_{j_{\bind}}}{w_{j_{\aind}}-w_{j_{\bind}}}
	.
	\label{G_nu_RHO_wp}
\end{align}
We see that \eqref{G_nu_RHO_infty} depends only on the choice of $\IS$,
while \eqref{G_nu_RHO_wp} depends on both $\IS$ and $\JS$.
Thus, for a fixed $\IS$, one can first sum 
$\Res_{\IS,\JS}$
over all 
subsets
$\JS=\{j_1<\ldots<j_\ell\}\subseteq\{1,\ldots,k\}$, 
and, for a fixed such $\JS$, over all its permutations.
This summation over $\JS$ is performed using the following lemma:
\begin{lemma}\label{lemma:partial_symmetrization}
	Let $f_i(\zeta)$ be arbitrary functions in $\zeta\in\C$. 
	For any $m\ge1$ and $\ell\le m$, we have
	\begin{multline*}
		\sum_{\sigma\in \Sym_m}\sigma
		\bigg(
		\prod_{i=1}^{\ell}f_i(\zeta_i)
		\prod_{1\le \aind<\bind\le m}\frac{\zeta_\aind-q \zeta_\bind}{\zeta_\aind- \zeta_\bind}
		\bigg)\\=
		\frac{(q;q)_{m-\ell}}{(1-q)^{m-\ell}}
		\sum_{\JS=\{j_1<\ldots<j_\ell\}\subseteq\{1,\ldots,m\}}
		\bigg(
		\prod_{\substack{j\in \JS\\r\notin \JS}}
		\frac{\zeta_{j}-q \zeta_r}{\zeta_{j}- \zeta_r}
		\sum_{\sigma'\in \Sym_\ell}
		\prod_{i=1}^{\ell}f_{i}(\zeta_{j_{\sigma'(i)}})
		\prod_{1\le \aind<\bind\le \ell}
		\frac{\zeta_{j_{\sigma'(\aind)}}-q \zeta_{j_{\sigma'(\bind)}}}{\zeta_{j_{\sigma'(\aind)}}-\zeta_{j_{\sigma'(\bind)}}}
		\bigg),
	\end{multline*}
	where the permutation $\sigma$ in the left-hand side
	acts on the variables $\zeta_j$.
\end{lemma}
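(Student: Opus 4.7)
The plan is to exploit the fact that the $f_i$'s only depend on $\zeta_1, \ldots, \zeta_\ell$, and then split a permutation $\sigma \in \Sym_m$ into three pieces of data: the subset $\JS := \{\sigma(1), \ldots, \sigma(\ell)\} \subseteq \{1, \ldots, m\}$ (the ``image'' set), the bijection $\{1, \ldots, \ell\} \to \JS$ encoded by a permutation $\sigma' \in \Sym_\ell$ via $\sigma(i) = j_{\sigma'(i)}$, and an arbitrary bijection $\tau$ from $\{\ell+1, \ldots, m\}$ to the complement $\JSB := \{1, \ldots, m\} \setminus \JS$. Under this bijection of data, the sum over $\Sym_m$ becomes a triple sum over $\JS$, $\sigma'$, and $\tau$.

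Next, I would break the big cross product by the position of $\aind,\bind$ relative to $\ell$:
\begin{align*}
\prod_{1 \leq \aind < \bind \leq m} \frac{\zeta_{\sigma(\aind)} - q\zeta_{\sigma(\bind)}}{\zeta_{\sigma(\aind)} - \zeta_{\sigma(\bind)}}
&= \prod_{1 \leq \aind < \bind \leq \ell} \frac{\zeta_{j_{\sigma'(\aind)}} - q\zeta_{j_{\sigma'(\bind)}}}{\zeta_{j_{\sigma'(\aind)}} - \zeta_{j_{\sigma'(\bind)}}} \\
&\quad \times \prod_{i=1}^{\ell}\prod_{j=\ell+1}^{m} \frac{\zeta_{\sigma(i)} - q\zeta_{\sigma(j)}}{\zeta_{\sigma(i)} - \zeta_{\sigma(j)}} \\
&\quad \times \prod_{\ell+1 \leq \aind < \bind \leq m} \frac{\zeta_{\tau(\aind)} - q\zeta_{\tau(\bind)}}{\zeta_{\tau(\aind)} - \zeta_{\tau(\bind)}}.
\end{align*}
The first factor depends only on $\JS$ and $\sigma'$, which matches the inner structure on the right-hand side. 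The middle (mixed) factor is the key observation: since $\sigma(i)$ ranges over all of $\JS$ (as $i$ ranges over $\{1,\ldots,\ell\}$) and $\sigma(j)$ ranges over all of $\JSB$ (as $j$ ranges over $\{\ell+1,\ldots,m\}$), this double product equals $\prod_{j\in\JS,\, r\in\JSB} \frac{\zeta_j - q\zeta_r}{\zeta_j - \zeta_r}$, which depends only on $\JS$ and is independent of both $\sigma'$ and $\tau$.

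It then remains to sum the third (complementary) factor over all $(m-\ell)!$ bijections $\tau$. Relabeling the values of $\tau$ as a permutation of $\JSB$, this is exactly the classical symmetrization identity recalled in footnote$^{\ref{symm_footnote}}$:
\begin{equation*}
\sum_{\omega \in \Sym_{m-\ell}} \prod_{1 \leq \aind < \bind \leq m-\ell} \frac{\eta_{\omega(\aind)} - q\eta_{\omega(\bind)}}{\eta_{\omega(\aind)} - \eta_{\omega(\bind)}} = \frac{(q;q)_{m-\ell}}{(1-q)^{m-\ell}},
\end{equation*}
which produces precisely the prefactor in the right-hand side of the lemma. Assembling the three factors yields the claimed identity.

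The whole argument is bookkeeping rather than a deep computation; the only place requiring care is verifying that the mixed factor really is $\tau$- and $\sigma'$-independent, which amounts to the trivial remark that $\prod_{i,j} F(\sigma(i), \sigma(j))$ depends only on the two sets $\sigma(\{1,\ldots,\ell\})$ and $\sigma(\{\ell+1,\ldots,m\})$, not on how the individual elements are matched up. Once that factorization is in place, the classical symmetrization lemma does all the work, and no delicate pole analysis or $q$-identities beyond footnote$^{\ref{symm_footnote}}$ are needed.
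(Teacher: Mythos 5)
Your proposal is correct and follows essentially the same approach as the paper: decompose $\sigma$ into the image set $\JS$, an inner permutation, and an outer permutation; factor the cross-product into within-$\JS$, mixed, and within-$\JSB$ pieces; note the mixed factor depends only on $\JS$; and sum the outer piece via the symmetrization identity of footnote$^{\ref{symm_footnote}}$. Your write-up is a bit more explicit about the triple-sum bookkeeping than the paper's compressed sketch, but there is no substantive difference.
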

\begin{proof}
	For each $\sigma\in \Sym_m$,
	let $\JS$ be the ordered list of elements of
	the set
	$\{\sigma(1),\ldots,\sigma(\ell)\}$.
	The left-hand side of the desired claim 
	contains
	\begin{multline*}
		\prod_{i=1}^{\ell}f_i(\zeta_{\sigma(i)})
		\prod_{1\le \aind<\bind\le m}\frac{\zeta_{\sigma(\aind)}-q \zeta_{\sigma(\bind)}}{\zeta_{\sigma(\aind)}-\zeta_{\sigma(\bind)}}
		\\=
		\prod_{i=1}^{\ell}f_i(\zeta_{j_i})
		\prod_{1\le \aind<\bind\le \ell}
		\frac{\zeta_{j_{\aind}}-q \zeta_{j_{\bind}}}{\zeta_{j_{\aind}}-\zeta_{j_{\bind}}}
		\underbrace{\prod_{\substack{j\in \JS\\r\notin \JS}}
		\frac{\zeta_{j}-q \zeta_r}{\zeta_{j}-\zeta_r}}_{\text{symmetric in $\JS$ and $\JSB$}}
		\prod_{\ell+1\le \aind<\bind\le m}
		\frac{\zeta_{\sigma(\aind)}-q \zeta_{\sigma(\bind)}}
		{\zeta_{\sigma(\aind)}-\zeta_{\sigma(\bind)}}.
	\end{multline*}
	Symmetrizing over $\sigma\in \Sym_m$ can be done in two steps: first, choose a subset $\JS$ 
	of $\{1,\ldots,m\}$ of size $\ell$, and then symmetrize 
	over indices inside and outside $\JS$. 
	For the symmetrization outside $\JS$ 
	we can use the symmetrization identity 
	of footnote$^{\ref{symm_footnote}}$.
	This yields the result.
\end{proof}
Applying this lemma to \eqref{G_nu_RHO_wp} 
with $m=k$, we 
arrive at the following formula for 
our specialization, which is the first step towards
$q$-correlation functions:
\begin{proposition}\label{prop:Gnu_spec_RHO_w}
	For $n\ge k\ge0$ and $\nu\in\signp n$, we have
	\begin{multline}\label{Gnu_spec_RHO_w_sum}
		\G_\nu(\RHO,w_1,\ldots,w_k)\\=
		\underbrace{\mathbf{1}_{\nu_n\ge1}
		(-s)^{|\nu|}\frac{(s^{2};q)_{n}}{s^{2n}}}_{\G_\nu(\RHO)}
		\sum_{\ell=0}^{k}
		\frac{q^{-\frac12\ell(2k+1-\ell)+n(k-\ell)}(1-q)^{k-\ell}}{(q;q)_{k-\ell}}
		\sum_{\IS=\{i_1<\ldots<i_\ell\}\subseteq\{1,\ldots,n\}}
		q^{i_1+\ldots+i_\ell}
		\prod_{i\in\IS}(-s)^{-\nu_i}
		\\\times\sum_{\sigma\in\Sym_k}
		\sigma\bigg(
		\prod_{1\le \aind<\bind\le k}\frac{w_\aind-qw_\bind}{w_\aind-w_\bind}
		\prod_{p=1}^{\ell}
		\frac{1-q}{1-s^{-1}w_{p}}
		\left(\frac{w_p-s}{1-sw_p}\right)^{\nu_{i_p}}
		\bigg),
	\end{multline}
	where the permutation $\sigma$ acts on $w_1,\ldots,w_k$.
\end{proposition}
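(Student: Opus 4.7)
My plan is to start from Corollary~\ref{cor:G_integral_formula} and take the $\RHO$-specialization of the $v$-variables inside the integral, which gives precisely the expression \eqref{G_nu_RHO_w_integral1} for $\G_\nu(\RHO,w_1,\ldots,w_k)$ as an $n$-fold nested contour integral over the contours $\contq{s}1,\ldots,\contq{s}n$ of Definition \ref{def:orthogonality_contours}. Under the assumption that the $w_p^{-1}$ lie outside these contours (and $w_p$ are pairwise distinct), the $u_1$-integrand has only two families of simple poles outside $\contq{s}1$, namely $u_1=\infty$ and $u_1=w_p^{-1}$ for $p=1,\ldots,k$, and there is no pole at $s^{-1}$ because the factor $1-su_1$ in the denominator of the Cauchy-like kernel is canceled by a matching factor in the $\RHO$-limit. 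If $\nu_n=0$ (equivalently, any $\nu_i=0$), then after swapping the order and peeling down to $\contq{s}n$ one finds no pole of the $u_n$-integrand inside $\contq{s}n$, so the integral vanishes, matching the prefactor $\mathbf{1}_{\nu_n\ge 1}$ on the right-hand side. Thus we may assume $\nu_n\ge 1$.

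Next I peel the contours in the order $\contq{s}1,\contq{s}2,\ldots,\contq{s}n$. At each step I pick up the minus residues at the poles that lie outside the contour being shrunk, and it is crucial that no extra residues appear from the nested pole structure $u_\aind=qu_\bind$, since the outer contours contain $q$ times the inner ones. The net effect is a sum over subsets $\IS=\{i_1<\cdots<i_\ell\}\subseteq\{1,\ldots,n\}$ and ordered tuples $\JS=(j_1,\ldots,j_\ell)$ of distinct indices in $\{1,\ldots,k\}$, where one takes the residue at $u_{i_p}=w_{j_p}^{-1}$ for each $p$ and the residue at $u_i=\infty$ for every $i\notin\IS$. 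The residues at infinity produce the product $\prod_{i\notin\IS}(-s)^{\nu_i}$ together with a $q$-power whose exponent counts, for each $i_p$, the factors $\frac{1-qu_{i_p}w_j}{1-u_{i_p}w_j}$ that degenerate to $q$ when $u_i=\infty$ and the factors $\frac{u_\aind-u_\bind}{u_\aind-qu_\bind}$ that degenerate to $1/q$; collecting these gives exactly \eqref{G_nu_RHO_infty}. The finite residues at $u_{i_p}=w_{j_p}^{-1}$ produce the factors in \eqref{G_nu_RHO_wp}, including the cross-terms $\prod_{\aind<\bind}\frac{w_{j_\aind}-qw_{j_\bind}}{w_{j_\aind}-w_{j_\bind}}$ that come from the Vandermonde-like kernel evaluated at the residue points.

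After this residue calculation I have an expression of the form $(s^2;q)_n(-s)^{-2n}\sum_{\IS}\sum_{\JS}\Res_{\IS,\JS}$, where for fixed $\IS$ the sum over ordered $\JS$ of size $\ell$ in $\{1,\ldots,k\}$ decomposes into a choice of an $\ell$-subset of $\{1,\ldots,k\}$ followed by a permutation. Applying Lemma \ref{lemma:partial_symmetrization} with $m=k$ and $f_i(\zeta)=\frac{1-q}{1-s^{-1}\zeta}\left(\frac{\zeta-s}{1-s\zeta}\right)^{\nu_{i_p}}$ converts the subset sum into a full symmetrization over $\Sym_k$ at the cost of a prefactor $(q;q)_{k-\ell}/(1-q)^{k-\ell}$; this is the step that matches the exact form of the inner sum in \eqref{Gnu_spec_RHO_w_sum}. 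Finally, normalizing by the explicit value $\G_\nu(\RHO)=\mathbf{1}_{\nu_n\ge 1}(-s)^{|\nu|}(s^2;q)_n/s^{2n}$ from Proposition \ref{prop:RHO_spec}, we absorb the prefactor $(s^2;q)_n(-s)^{-2n}\prod_{i\notin\IS}(-s)^{\nu_i}$ into $\G_\nu(\RHO)\cdot\prod_{i\in\IS}(-s)^{-\nu_i}$, which gives the displayed formula.

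The main obstacle is bookkeeping: correctly tracking both the power of $q$ contributed by the infinity residues (which requires carefully counting, for each $i_p\in\IS$, how many $u_{i_q}$ with $q>p$ also lie in $\IS$, and how many factors $(1-qu_iw_j)/(1-u_iw_j)$ with $j\in\JS$ have already been consumed) and the sign/power-of-$s$ contributions. Once the counting is organized as in the telescoping expression $k(i_1-1)+(k-1)(i_2-i_1-1)+\cdots+(k-\ell)(n-i_\ell)$ and simplified to $-\tfrac12\ell(2k+1-\ell)+n(k-\ell)+i_1+\cdots+i_\ell$, the identification with \eqref{Gnu_spec_RHO_w_sum} is immediate, and the remaining step is just the symmetrization via Lemma \ref{lemma:partial_symmetrization}.
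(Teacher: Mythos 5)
Your proposal reproduces the paper's proof of Proposition~\ref{prop:Gnu_spec_RHO_w} essentially step for step: take the $\RHO$-limit of Corollary~\ref{cor:G_integral_formula} to obtain the $n$-fold nested integral \eqref{G_nu_RHO_w_integral1}, peel the contours $\contq{s}1,\ldots,\contq{s}n$ by collecting residues at $u_i=\infty$ and $u_i=w_p^{-1}$, organize the sum over pairs $(\IS,\JS)$ with $\JS$ an ordered tuple, symmetrize over $\JS$ via Lemma~\ref{lemma:partial_symmetrization}, and normalize by $\G_\nu(\RHO)$, which is exactly the route the paper takes. A small bookkeeping caveat: the cross-terms $\prod_{\aind<\bind}\frac{w_{j_\aind}-qw_{j_\bind}}{w_{j_\aind}-w_{j_\bind}}$ in \eqref{G_nu_RHO_wp} do not actually come from the kernel $\prod_{\aind<\bind}\frac{u_\aind-u_\bind}{u_\aind-qu_\bind}$ evaluated at the residue points --- once $u_{i_p}=w_{j_p}^{-1}$ is substituted, the factors $\frac{u_{i_p}-u_\bind}{u_{i_p}-qu_\bind}$ cancel identically against the surviving Cauchy factors $\frac{1-qu_\bind w_{j_p}}{1-u_\bind w_{j_p}}$ for $\bind>i_p$, so the cross-terms arise instead from evaluating $\prod_{j\ne j_p}\frac{1-qu_{i_p}w_j}{1-u_{i_p}w_j}$ at the pole; for the same reason the $q$-power in \eqref{G_nu_RHO_infty} comes entirely from the Cauchy factors, and the regularity of the integrand at $u_i=s^{-1}$ follows from the assumption $\nu_i\ge1$ rather than from the $\RHO$-specialization itself.
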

\begin{proof}
	Identity \eqref{Gnu_spec_RHO_w_sum}
	is established above in this subsection
	under certain restrictions on the $w_p$'s.
	Observe
	that both sides of the identity
	\eqref{Gnu_spec_RHO_w_sum}
	are a priori rational functions.
	This is clear for the right-hand side,
	and the left-hand side of \eqref{Gnu_spec_RHO_w_sum} is also rational
	because using
	the branching rule of Proposition \ref{prop:branching}
	one can
	separate the specialization $\RHO$ and the variables $w_p$ (skew
	$\G$-functions in the $w_j$'s are rational by the very definition), 
	and then evaluate the specialization $\RHO$ by Proposition~\ref{prop:RHO_spec}.
	Thus,
	we can drop any restrictions on the $w_p$'s.
\end{proof}
Note that in the particular case $k=0$ 
the above proposition reduces to Proposition \ref{prop:RHO_spec}.


\subsection{Extracting terms by integrating over $w_i$} 
\label{sub:extracting_terms_by_integrating_over_w_i_}

Observe now that when $k=\ell$, the summation over $\sigma$
in \eqref{Gnu_spec_RHO_w_sum} above produces 
$(-s)^{k}\F_{(\nu_{i_1}-1,\ldots,\nu_{i_{\ell}}-1)}(w_1,\ldots,w_k)$.
Indeed, see \eqref{F_symm_formula} 
and recall that
each $\nu_{i_p}\ge1$.
This observation
motivates our next step in computation of the $q$-correlation functions:
we will utilize orthogonality of the 
functions $\F_\mu$ (similar to Theorem~\ref{thm:orthogonality_F}),
and integrate \eqref{Gnu_spec_RHO_w_sum} over the $w_i$'s
to extract certain terms.
We will need the following nested integration contours:

\begin{definition}\label{def:bar_cont}
	For any $k\ge1$,
	let $\contqi {s^{-1}}1,\ldots,\contqi {s^{-1}}k$
	be 
	positively oriented closed contours
	such that
	\begin{itemize}
		\item 
		Each contour
		$\contqi {s^{-1}}\aind$ encircles
		$s^{-1}$, 
		while leaving $s$ outside.
		\item For any $\bind>\aind$, the interior of $\contqi {s^{-1}}\bind$ contains 
		the contour $q^{-1}\contqi {s^{-1}}\aind$.
		\item The contour $\contqi {s^{-1}}1$ is sufficiently small so that it does 
		not intersect with $q^{-1}\contqi {s^{-1}}1$.
	\end{itemize}
	See Fig.~\ref{fig:bar_contours}. 
	The superscript ``$-$'' refers to the property that the 
	contours are $q^{-1}$-nested.
\end{definition}

\begin{figure}[htbp]
	\begin{center}
	\begin{tikzpicture}
		[scale=2.9]
		\def\pt{0.02}
		\def\q{.7}
		\def\ss{.56}
		\draw[->, thick] (-4.8,0) -- (.6,0);
	  	\draw[->, thick] (0,-.9) -- (0,.9);
	  	\draw[fill] (-1,0) circle (\pt) node [below, xshift=7pt] {$s\phantom{^{-1}}$};
	  	\draw[fill] (-1/\ss,0) circle (\pt) node [below, xshift=5pt] {$s^{-1}$};
	  	\draw[fill] (0,0) circle (\pt) node [below left] {$0$};
	  	\draw (-2.86,0) ellipse (1.47 and .95) node [below,xshift=-40,yshift=95] {$\contqie{3}$};
	  	\draw (-2.23,0) ellipse (.78 and .65) node [below,xshift=-20,yshift=71] {$\contqie{2}$};
	  	\draw (-1.78,0) circle (.29) node [below,xshift=-10,yshift=41] {$\contqie{1}$};
	  	\draw[dotted] (-1.78/\q,0) circle (.29/\q);
	  	\draw[dotted] (-1.78/\q/\q,0) circle (.29/\q/\q);
	\end{tikzpicture}
	\end{center}
  	\caption{A possible choice of nested integration contours 
  	$\contqie i=\contqi {s^{-1}}i$, $i=1,2,3$ 
  	(Definition~\ref{def:bar_cont}).
  	Contours $q^{-1}\contqi {s^{-1}}1$ and $q^{-2}\contqi {s^{-1}}1$ are shown dotted.
	}
  	\label{fig:bar_contours}
\end{figure}
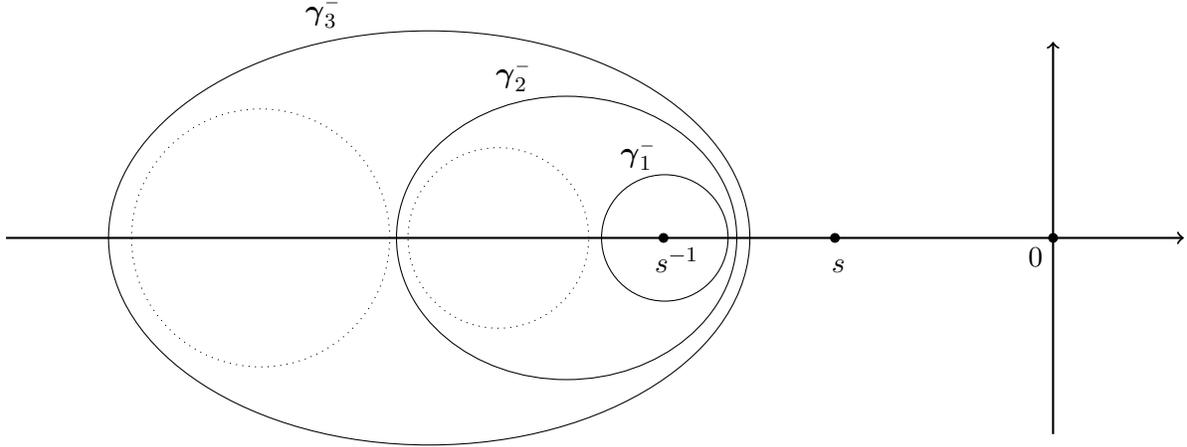

\begin{remark}\label{rmk:drag_through_infinity}
	The integration contours $\contqi {s^{-1}}j$
	can be obtained from the contours
	$\contq {s}j$ of Definition 
	\ref{def:orthogonality_contours}
	by dragging 
	$\contq {s}1,\ldots,\contq {s}k$ (in this order)
	through
	infinity,
	if this operation is allowed for a particular integrand 
	(i.e., it must have no residues at infinity). 
\end{remark}

We will use the following integral transform:
\begin{definition}\label{def:int_transform}
	For $k\ge1$, let $R(w_1,\ldots,w_k)$ be a symmetric rational function 
	with singularities occurring only
	when some of the $w_j$'s belong to $\{s^{\pm1}\}$.
	Let $\mm\in\signp k$. Define
	\begin{multline*}
		\big(\Pli k R\big)(\mm):=
		(-1)^{k}\conj(\mm)
		\oint\limits_{\contqi {s^{-1}}1}\frac{d w_1}{2\pi\i}
		\ldots
		\oint\limits_{\contqi {s^{-1}}k}\frac{d w_k}{2\pi\i}
		\prod_{1\le \aind<\bind\le k}\frac{w_\aind-w_\bind}{w_\aind-qw_\bind}
		\\\times
		R(w_1,\ldots,w_k)
		\prod_{i=1}^{k}
		\frac{1}{w_i-s}\left(\frac{1-sw_i}{w_i-s}\right)^{\mm_i},
	\end{multline*}
	where the integration contours are described in Definition \ref{def:bar_cont}.
\end{definition}

Let us denote for any $\la\in\signp \ell$:
\begin{align*}
	R^{(\ell)}_{\la}(w_1,\ldots,w_k):=
	\mathbf{1}_{\la_{\ell}\ge1}
	\sum_{\sigma\in\Sym_k}
	\sigma\bigg(
	\prod_{1\le \aind<\bind\le k}\frac{w_\aind-qw_\bind}{w_\aind-w_\bind}
	\prod_{p=1}^{\ell}
	\frac{1-q}{1-s^{-1}w_{p}}
	\left(\frac{w_p-s}{1-sw_p}\right)^{\la_p}
	\bigg);
\end{align*}
these are the $w_j$-dependent summands in \eqref{Gnu_spec_RHO_w_sum}.
As mentioned above, for $\ell=k$,
\begin{align*}
	R^{(k)}_{\la}(w_1,\ldots,w_k)=(-s)^{k}\F_{\la-1^{k}}(w_1,\ldots,w_k).
\end{align*}
Moreover, the action of the transform $\Pli k$
on the $R^{(\ell)}_{\la}$'s for any $\ell$ and $\la$
turns out to be very simple:
\begin{lemma}\label{lemma:JkRl}
	For any $\ell=0,\ldots,k$ and any $\la\in\signp \ell$ we have
	\begin{align*}
		\big(\Pli k R^{(\ell)}_{\la}\big)(\mm)=
		(-s)^k\mathbf{1}_{\ell=k}\mathbf{1}_{\mm=\la-1^{k}},
		\qquad \mm\in\signp k.
	\end{align*}
\end{lemma}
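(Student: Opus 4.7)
I split into the cases $\ell=k$ and $\ell<k$.

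\emph{Case $\ell=k$.} The identity $(1-s^{-1}w)^{-1}=-s(w-s)^{-1}$ lets me rewrite
\[
\frac{1-q}{1-s^{-1}w_p}\Bigl(\frac{w_p-s}{1-sw_p}\Bigr)^{\la_p}=\frac{-s(1-q)}{1-sw_p}\Bigl(\frac{w_p-s}{1-sw_p}\Bigr)^{\la_p-1}.
\]
When $\ell=k$ the prefactor $\prod_{p=1}^k\frac{-s(1-q)}{1-sw_p}$ is symmetric in all variables, pulls out of the sum over $\sigma\in\Sym_k$, and the remaining sum matches \eqref{F_symm_formula}, yielding $R^{(k)}_\la(w_1,\ldots,w_k)=(-s)^k\F_{\la-1^k}(w_1,\ldots,w_k)$. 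The indicator $\mathbf{1}_{\la_\ell\ge 1}$ built into $R^{(\ell)}_\la$ ensures $\la-1^k\in\signp k$, so the claim reduces to
\[
(\Pli k\F_{\la-1^k})(\mm)=\mathbf{1}_{\mm=\la-1^k}.
\]
I would deduce this from Theorem~\ref{thm:orthogonality_F}: the two integrals differ only in the prefactor ($(-1)^k\conj(\mm)$ for $\Pli k$ versus $\conj(\la-1^k)$ for the orthogonality integral) and in the contour family ($\contqi{s^{-1}}j$ versus $\contq{s}j$). A degree count using \eqref{F_symm_formula} shows that the full integrand decays like $w_i^{-2}$ in each variable at infinity, so there is no residue at $\infty$. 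Applying the Riemann-sphere residue theorem one variable at a time, and using the matching $q$- and $q^{-1}$-nested structures of $\contq{s}$ and $\contqi{s^{-1}}$, the contour swap contributes exactly one sign $-1$ per variable, for a total $(-1)^k$ that cancels the prefactor in the definition of $\Pli k$. Combined with the equality $\conj(\mm)=\conj(\la-1^k)$ when $\mm=\la-1^k$, this yields the identity.

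\emph{Case $\ell<k$.} I expand $R^{(\ell)}_\la=\sum_{\sigma\in\Sym_k}\sigma(\cdots)$ and analyze each summand $T_\sigma$ separately. In the $\sigma$-summand only $\ell<k$ of the variables carry a nontrivial $\bigl(\frac{w-s}{1-sw}\bigr)^{\la_p}$ factor; on the remaining $k-\ell$ variables the $g$-factor is trivial. I would focus on the innermost contour $\contqi{s^{-1}}1$, whose only possible pole inside is at $s^{-1}$, since the $s$-poles lie outside and the poles $qw_i$ for $i>1$ satisfy $qw_i\in q\contqi{s^{-1}}i\supset\contqi{s^{-1}}1$ (hence are also outside). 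Two mechanisms eliminate the $s^{-1}$-pole depending on whether the $g$-factor is attached to $w_1$: if it is, the cancellation $\frac{1}{(1-s^{-1}w_1)(w_1-s)}=\frac{-s}{(w_1-s)^2}$ relocates the candidate pole entirely to $s$ once paired with one factor from $f_{\mm_1}$, leaving a pole only at $s^{-1}$ of order $\la_{\sigma^{-1}(1)}-\mm_1$, which is nonpositive by the signature inequalities when $\ell<k$ forces the relevant shift to fail; if it is not, then the pure $f_{\mm_1}(w_1)$ contributes a zero at $s^{-1}$ of order $\mm_1\ge 0$ (and regularity when $\mm_1=0$). In each case the innermost integration annihilates $T_\sigma$, and summing over $\sigma$ gives zero. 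This recovers the vanishing predicted by Lemma~\ref{lemma:general_lemma} applied with $P_1=\{s\}$, $P_2=\{s^{-1}\}$, whose $q^{-1}$-nesting hypothesis is exactly the dual formulation satisfied by our $\contqi{s^{-1}}j$.

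\textbf{The hard part} will be the contour interchange $\contqi{s^{-1}}\leftrightarrow\contq{s}$ in Case $\ell=k$: tracking the many simultaneous poles $w_\aind=qw_\bind$ during the deformation and matching them against the prescribed nestings so that the bookkeeping collapses to a single overall sign $(-1)^k$ is delicate, though this is exactly the type of residue accounting underlying the proofs of Theorem~\ref{thm:orthogonality_F} and Corollary~\ref{cor:spatial_biorth}.
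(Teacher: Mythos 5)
Your Case $\ell=k$ matches the remark the paper places immediately after its proof: since every $g$-factor is nontrivial, the integrand is regular at $w_i=\infty$ for all $i$, so the contours $\contq{s}{j}$ of Theorem~\ref{thm:orthogonality_F} can be dragged through $\infty$ to the negatively oriented $\contqi{s^{-1}}{j}$, and the resulting $(-1)^k$ cancels the prefactor in $\Pli{k}$. That half of your argument is correct.

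The Case $\ell<k$ has a genuine gap. Your mechanism~(1) asserts that when a nontrivial $g$-factor sits on $w_1$, the $s^{-1}$-pole of the $w_1$-integrand has nonpositive order $\la_{\sigma^{-1}(1)}-\mm_1$, so the innermost $w_1$-integration already kills $T_\sigma$. That is false. Take $\ell=1$, $k=2$, $\la=(\la_1)$ padded to $(\la_1,0)$ with $\la_1\ge 1$, $\mm=(\la_1-1,\mm_2)\in\signp{2}$, and $\sigma=\mathrm{id}$. After the cross-terms cancel, the $w_1$-dependent factor of the integrand reduces to
\begin{align*}
\frac{-s(1-q)}{(w_1-s)(1-sw_1)},
\end{align*}
which has a simple pole at $w_1=s^{-1}$ inside $\contqi{s^{-1}}{1}$, so the $w_1$-integral is a nonzero residue. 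The summand $T_{\mathrm{id}}$ still vanishes, but only at the subsequent $w_2$-integration (the trivial $g_0$-factor there leaves the $w_2$-integrand with poles only at $s$, outside $\contqi{s^{-1}}{2}$). So ``the innermost integration annihilates $T_\sigma$'' is not a valid step, and the inequality $\la_{\sigma^{-1}(1)}\le\mm_1$ does not follow from $\ell<k$; the genuine implication of an $s^{-1}$-shrink is only the constraint $\la_{\sigma^{-1}(1)}-1\le\mm_1$, one of many constraints that Lemma~\ref{lemma:general_lemma} then combines via the running-maxima/minima, arrow-diagram bookkeeping.

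Relatedly, invoking Lemma~\ref{lemma:general_lemma} with $P_1=\{s\}$, $P_2=\{s^{-1}\}$ does not satisfy its hypotheses. When $\ell<k$, some $g$-factor is trivial, $g_0\equiv 1$, and the corresponding product $f_m(w)g_0(w)=f_m(w)\sim (-s)^{m-1}w^{-1}$ has a nonzero residue at $w=\infty$; the lemma requires every singularity of the $f_m$'s and $g_l$'s to lie in $P_1\cup P_2$, so $\infty$ must be included. The paper takes $P_1'=\{s,\infty\}$, $P_2'=\{s^{-1}\}$, pads $\la$ with zeros to length $k$, and applies the lemma once: the only nonvanishing possibility is $\la_i=\mm_i+1$ for all $i=1,\ldots,k$, which is impossible when $\ell<k$ because the padded $\la_k=0<1\le\mm_k+1$. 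This single application covers both of your cases at once, so the case split (while harmless in principle) becomes unnecessary as soon as $\infty$ is placed in $P_1'$.
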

\begin{proof}
	We may assume that $\la_{\ell}\ge1$.
	Let us complement $\la$ by zeros
	so that it has length $k$, i.e., write
	$\la=(\la_1,\ldots,\la_\ell,0,\ldots,0)$.
	We have
	\begin{multline}
		\big(\Pli k R^{(\ell)}_{\la}\big)(\mm)=
		(-1)^{k}\conj(\mm)
		\sum_{\sigma\in\Sym_k}\,
		\oint\limits_{\contqi {s^{-1}}1}\frac{d w_1}{2\pi\i}
		\ldots
		\oint\limits_{\contqi {s^{-1}}k}\frac{d w_k}{2\pi\i}
		\prod_{1\le \aind<\bind\le k}\frac{w_\aind-w_\bind}{w_\aind-qw_\bind}
		\frac{w_{\sigma(\aind)}-qw_{\sigma(\bind)}}{w_{\sigma(\aind)}-w_{\sigma(\bind)}}
		\\\times
		(-s)^{\ell}
		\prod_{p=1}^{\ell}
		\underbrace{
		\frac{1-q}{1-sw_p}
		\left(\frac{w_p-s}{1-sw_p}\right)^{\la_p-1}}_{g_{\la_p}(w_{\sigma(p)})}\cdot
		\prod_{i=1}^{k}\underbrace{
		\frac{1}{w_i-s}
		\left(\frac{1-sw_i}{w_i-s}\right)^{\mm_i}
		}_{f_{\mm_i+1 }(w_i)}.
		\label{JkRl_computation_proof}
	\end{multline}
	We now wish to apply Lemma \ref{lemma:general_lemma} with 
	\begin{align*}
		f_m(w)=
		\begin{cases}
			\dfrac{1}{w-s}
			\left(\dfrac{1-sw}{w-s}\right)^{m-1}
			,&m\ge1;\\
			1,&m=0,
		\end{cases}
		\qquad \qquad
		g_l(w)=\begin{cases}
			\dfrac{1-q}{1-sw}
			\left(\dfrac{w-s}{1-sw}\right)^{l-1}
			,&l\ge1;
			\\
			1,&l=0,
		\end{cases}
	\end{align*}
	and $P_1'=\{s,\infty\}$,
	$P_2'=\{s^{-1}\}$ (we use notation $P_{1,2}'$ to distinguish from $P_{1,2}$
	in Definition \ref{def:orthogonality_contours}). 
	Indeed, since in our integral we always have $m\ge1$, 
	all singularities of $f_m(w)g_l(w)$ are in $P_1'\cup P_2'$.
	Moreover, for $m<l$, all poles of this product are in $P_2'$,
	and for $m>l$ (which may include $l=0$)
	all poles are in $P_1'$. Finally, observe that 
	we can deform the integration contours
	as in the hypothesis of Lemma \ref{lemma:general_lemma}.

	Therefore, since $\mm_i+1\ge1$ for all $i$
	in our integral, we can apply Lemma \ref{lemma:general_lemma},
	and conclude that it must be that $\la_i=\mm_i+1$ for all $i=1,\ldots,k$,
	in order for the integral to be nonzero. In particular, 
	the integral can be nonzero only for
	$\ell=k$.

	When $\ell=k$, the desired claim for $\mm=\la-1^{k}$
	follows by analogy with the last computation in the proof of Theorem~\ref{thm:orthogonality_F}.
	Namely, we first sum over $\sigma$
	(the integral vanishes unless $\sigma$
	permutes within clusters of $\mm$),
	and then
	compute the resulting smaller integrals by taking residues at 
	$w_1=s^{-1}$,
	$w_2=q^{-1}s^{-1}$, etc. This leads to the desired result.
\end{proof}
\begin{remark}
	Note that for $\ell=k$
	in the above proof we could simply
	drag the integration contours
	$\contqi {s^{-1}}j$ 
	through infinity 
	to the negatively oriented $\contq {s}j$
	(cf. Remark \ref{rmk:drag_through_infinity}).
	Indeed, this is because for $\ell=k$ the integrand 
	in \eqref{JkRl_computation_proof} 
	is regular at $w_i=\infty$ for all $i$.
	The passage to the contours 
	$\contq {s}j$
	eliminates the sign $(-1)^{k}$,
	and the desired claim for $\mm=\la-1^{k}$
	directly follows from Theorem~\ref{thm:orthogonality_F}.
	In other words,
	the transform $\Pli k$ 
	acts essentially as the inverse Plancherel transform
	(Definition \ref{def:Plancherel_transforms}). 
	It is however crucial that 
	the former is defined using the contours $\contqi {s^{-1}}j$ and not $\contq {s}j$
	because of nontrivial residues 
	at infinity in
	\eqref{JkRl_computation_proof} for $\ell<k$.
\end{remark}

Therefore, applying the transform $\Pli k$ to
the rational function
$\G_\nu(\RHO,w_1,\ldots,w_k)$
and using Proposition \ref{prop:Gnu_spec_RHO_w}, we  
arrive at the following statement summarizing the second step of the computation
of the $q$-correlation functions:
\begin{proposition}\label{prop:JkG}
	For any $n\ge k\ge0$, $\nu\in\signp n$, and $\mm\in\signp k$, we have
	\begin{align}
		\big(\Pli k \G_\nu(\RHO,\bullet)\big)(\mm)
		=
		\G_\nu(\RHO)\,
		\frac{q^{-\frac12k(k+1)}}{(-s)^{|\mm|}}
		\sum_{\substack{\IS=\{i_1<\ldots<i_k\}\subseteq\{1,\ldots,n\}\\
		\nu_{i_1}=\mm_1+1,\ldots,\nu_{i_k}=\mm_k+1}}
		q^{i_1+\ldots+i_k},
		\label{computed_q_def_correlator}
	\end{align}
	where ``$\bullet$'' stands for the variables $w_1,\ldots,w_k$
	in which the transform $\Pli k$ is applied.
	Note that for \eqref{computed_q_def_correlator} to be nontrivial
	one must have
	$\nu_n\ge1$.
\end{proposition}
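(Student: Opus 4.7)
The plan is to apply the integral transform $\Pli k$ directly to the explicit formula for $\G_\nu(\RHO,w_1,\ldots,w_k)$ furnished by Proposition \ref{prop:Gnu_spec_RHO_w}, and then invoke Lemma \ref{lemma:JkRl} to collapse the resulting double sum. Proposition \ref{prop:Gnu_spec_RHO_w} decomposes the specialization as a sum over $\ell=0,1,\ldots,k$ and subsets $\IS=\{i_1<\ldots<i_\ell\}\subseteq\{1,\ldots,n\}$, with the entire $w$-dependence for each term being exactly the function $R^{(\ell)}_{(\nu_{i_1},\ldots,\nu_{i_\ell})}(w_1,\ldots,w_k)$ (after padding by zeros if $\ell<k$). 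Since $\Pli k$ is a linear integral operator in the $w$-variables, applying it termwise yields a sum of quantities $\Pli k R^{(\ell)}_\la(\mm)$ weighted by the scalar prefactors.

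Next I would use Lemma \ref{lemma:JkRl}, which asserts that $\Pli k R^{(\ell)}_\la(\mm)=(-s)^k\mathbf{1}_{\ell=k}\mathbf{1}_{\mm=\la-1^k}$. Thus all terms with $\ell<k$ drop out, and only the $\ell=k$ term survives. In that surviving term, $\la=(\nu_{i_1},\ldots,\nu_{i_k})$, so the indicator forces $\nu_{i_p}=\mm_p+1$ for every $p$. The scalar prefactor in front of $R^{(k)}$ in Proposition \ref{prop:Gnu_spec_RHO_w} simplifies, for $\ell=k$, to
\begin{equation*}
\frac{q^{-\frac12k(k+1)}(1-q)^{0}}{(q;q)_{0}}=q^{-\frac12k(k+1)},
\end{equation*}
which is already the $q$-power appearing in the claim.

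The remaining bookkeeping is just tracking the power of $-s$. Combining the factor $\prod_{i\in\IS}(-s)^{-\nu_i}$ from Proposition \ref{prop:Gnu_spec_RHO_w} with the factor $(-s)^k$ produced by Lemma \ref{lemma:JkRl}, and using $\nu_{i_p}=\mm_p+1$ on the support of the indicator, I get
\begin{equation*}
(-s)^{k}\prod_{p=1}^{k}(-s)^{-(\mm_p+1)}=(-s)^{-|\mm|},
\end{equation*}
which is exactly the $(-s)^{-|\mm|}$ factor in the right-hand side of \eqref{computed_q_def_correlator}. Putting this together with the overall factor $\G_\nu(\RHO)$ and the subset sum $\sum_{\IS}q^{i_1+\ldots+i_k}$ restricted by the matching conditions $\nu_{i_p}=\mm_p+1$ reproduces the stated formula.

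No genuine obstacle arises: the hard analytic work was already done in proving Proposition \ref{prop:Gnu_spec_RHO_w} (which rests on Corollary \ref{cor:G_integral_formula} and a careful residue calculus in the $u_i$) and in Lemma \ref{lemma:JkRl} (which uses the vanishing Lemma \ref{lemma:general_lemma} with $P_1'=\{s,\infty\}$ and $P_2'=\{s^{-1}\}$ to kill the $\ell<k$ terms). The only thing one must be careful about is the sign and power bookkeeping described above, and the fact that $\Pli k$ commutes with the finite sum over $\ell$ and $\IS$ — which is automatic since each $R^{(\ell)}_\la$ has its singularities only at $s^{\pm1}$, so the contour conditions in Definition \ref{def:int_transform} are satisfied uniformly. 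This makes the proof essentially a two-line invocation of the preceding two results.
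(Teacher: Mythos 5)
Your proposal is correct and follows exactly the route the paper takes: apply $\Pli k$ termwise to the decomposition of Proposition~\ref{prop:Gnu_spec_RHO_w}, use Lemma~\ref{lemma:JkRl} to kill the $\ell<k$ terms, and verify the scalar bookkeeping for the surviving $\ell=k$ term. The sign and power tracking you show (the $q^{-k(k+1)/2}$ prefactor at $\ell=k$ and the cancellation $(-s)^k\cdot\prod_p(-s)^{-(\mm_p+1)}=(-s)^{-|\mm|}$) is exactly what the paper leaves implicit, and it is correct.
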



\subsection{$q$-correlation functions} 
\label{sub:_q_correlation_functions}

The structure of formula \eqref{computed_q_def_correlator}
suggests the following definition. 
For any $n\ge k\ge0$, 
any 
$\mm=(\mm_1\ge\mm_2\ge \ldots\ge\mm_k\ge0)\in\signp k$
and any $\nu\in\signp n$, set
\begin{align}\label{q_def_correlation_thing}
	\corr\mm\nu q:=
	\sum_{\substack{\IS=\{i_1<\ldots<i_k\}\subseteq\{1,\ldots,n\}
	\\\nu_{i_1}=\mm_1,
	\ldots,
	\nu_{i_k}=\mm_k}}q^{i_1+\ldots+i_k}.
\end{align}
If $n<k$, then the above expression is zero, by agreement.
In this subsection we will employ Proposition~\ref{prop:JkG} to 
compute the expectations
\begin{align}\label{q_def_correlation}
	\E_{\UU;\RHO}\big(\corre\mm q\big)=
	\sum_{\nu\in\signp n}\MM_{\UU;\RHO}(\nu)\corr\mm\nu q.
\end{align}
Note that by \eqref{RHO_spec}, 
the above summation
ranges only over $\nu\in\signp n$ with $\nu_n\ge1$.
The above sum converges if 
the parameters $u_i$ satisfy \eqref{admissibility_RHO_conditions}.
\begin{remark}
	Expectations \eqref{q_def_correlation}
	can be viewed as \emph{$q$-analogues of the correlation functions} 
	of $\MM_{\UU;\RHO}$.
	Indeed, when $q=1$ and all $\mm_i$'s are distinct, 
	\eqref{q_def_correlation_thing} turns into
	\begin{align*}
		\corr\mm\nu 1\Big\vert_{q=1}=\prod_{i=1}^{k}\#\{j\colon 1\le j\le n\textnormal{ and } \nu_j=\mm_i\}.
	\end{align*}
	When all the $\nu_j$'s are also pairwise distinct, 
	let us interpret them as coordinates of distinct particles 
	on $\Z_{\ge0}$. In this case
	the above expression further simplifies to
	\begin{align}\label{simple_correlation_thing}
		\corr\mm\nu 1\Big\vert_{q=1}=\mathbf{1}_{\{\textnormal{there is a particle of the configuration $\nu$
			at each of the locations $\mm_1,\ldots,\mm_k$}\}}.
	\end{align}
	A probability distribution on
	configurations $\nu$
	of $n$ distinct particles on $\Z_{\ge0}$
	is often referred to as the ($n$-)\emph{point process} on $\Z_{\ge0}$.
	An expectation of \eqref{simple_correlation_thing} with respect to 
	a point process on $\Z_{\ge0}$ is known as the 
	($k$-th) \emph{correlation function} of this point process.
\end{remark}

For the purpose of analytic continuation
in the parameter space, 
it is useful to establish that
our $q$-correlation functions are a priori rational:
\begin{lemma}\label{lemma:qcorr_rational}
	Fix $n\in\Z_{\ge0}$, and let \eqref{admissibility_RHO_conditions} hold.
	Then for any fixed $k=0,\ldots,n$ and $\mm\in\signp k$,
	the expectation
	$\E_{\UU;\RHO}\big(\corre\mm q\big)$
	is a rational function in $u_1,\ldots,u_n$ 
	and the 
	parameters $q$ and $s$.
\end{lemma}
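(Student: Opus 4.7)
The plan is to apply the integral transform $\Pli k$ of Definition~\ref{def:int_transform} to the Cauchy identity of Corollary~\ref{cor:usual_Cauchy}, which will express the expectation as a contour integral of a manifestly rational function.

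If $\mm_k = 0$ the expectation vanishes, since $\MM_{\UU;\RHO}$ is supported on signatures $\nu$ with $\nu_n \ge 1$ by Proposition~\ref{prop:RHO_spec}, while $\corr\mm\nu q$ vanishes on any such $\nu$ when some $\mm_j = 0$. So I assume $\mm_k \ge 1$, set $\mm' := \mm - 1^k \in \signp k$, and rewrite Proposition~\ref{prop:JkG} as
\begin{equation*}
    \corr\mm\nu q \;=\; \frac{(-s)^{|\mm'|}\,q^{k(k+1)/2}}{\G_\nu(\RHO)} \, \big(\Pli k \G_\nu(\RHO,\bullet)\big)(\mm'),
\end{equation*}
valid for every $\nu \in \signp n$ (both sides being zero when $\nu_n = 0$). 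Multiplying by $\MM_{\UU;\RHO}(\nu) = \F_\nu(\UU)\G^{\conj}_\nu(\RHO)/Z(\UU;\RHO)$, using $\G^{\conj}_\nu(\RHO) = \conj(\nu)\G_\nu(\RHO)$, summing over $\nu$, and interchanging the summation with the $w$-integration inside $\Pli k$, the Cauchy identity \eqref{usual_Cauchy} applied to the specialization $(\UU;\RHO \cup \WWW)$ collapses the inner sum to $Z(\UU;\RHO)\prod_{i,j}(1-qu_iw_j)/(1-u_iw_j)$, yielding
\begin{equation*}
    \E_{\UU;\RHO}(\corre\mm q) \;=\; (-s)^{|\mm'|}\,q^{k(k+1)/2}\, \Pli k\!\left(\prod_{i=1}^n\prod_{j=1}^k\frac{1-qu_iw_j}{1-u_iw_j}\right)\!(\mm').
\end{equation*}
The right-hand side is a $k$-fold integral over compact contours of a rational function of $u_1,\ldots,u_n,\,q,\,s,\,w_1,\ldots,w_k$, whose evaluation by residues produces a finite sum of rational functions in $u_1,\ldots,u_n,\,q,\,s$, giving the rationality claim.

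The hard part will be justifying the interchange of summation and contour integration. The Cauchy series $\sum_\nu \F_\nu(\UU)\G^{\conj}_\nu(\RHO,\WWW)$ converges absolutely only where the admissibility bound \eqref{admissible_good_HOM} holds on every pair $(u_i,w_j)$, and on the contours $\contqi{s^{-1}}j$ of Definition~\ref{def:bar_cont} this can fail for generic $u_i$. I plan to circumvent the issue by first establishing the displayed formula on the complex-analytic subdomain where all $u_i$'s lie sufficiently close to $s$ so that admissibility holds uniformly on $\contqi{s^{-1}}j$; there dominated convergence legitimizes all the manipulations. Since on the admissibility subdomain \eqref{admissibility_RHO_conditions} the expectation is a uniformly (and absolutely) convergent series of rational functions in $\UU$, and in particular a real-analytic function of the parameters, it must agree with the rational right-hand side throughout, establishing rationality globally.
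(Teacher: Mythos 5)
Your argument is correct but takes a genuinely different route from the paper's. The paper's proof of this lemma is purely combinatorial: it writes $\E_{\UU;\RHO}\big(\corre{\mm}q\big)$ as a finite sum over $\IS\subseteq\{1,\ldots,n\}$, and for each fixed $\IS$ interprets the inner sum over $\nu$ (with $\nu_{i_1}=\mm_1,\ldots,\nu_{i_k}=\mm_k$) as a sum of products of stochastic vertex weights $\Lmatr_{u_j}$ over path collections in a strip. Because the positions $\nu_{i_1},\ldots,\nu_{i_k}$ are pinned, the number of coordinates of $\nu$ ranging over an infinite set is bounded by $i_1-1$, and after splitting the vertex configuration at the column $\mm_1$, the leftover infinite sum is simply the total mass of a probability distribution, hence equals $1$; what remains is a finite sum of rational vertex weights. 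This is an elementary argument that needs no contour integration, no Cauchy identity, and no analytic continuation, and in particular establishes rationality in $q$ and $s$ at the same stroke since the vertex weights are rational in those parameters.

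By contrast, you derive (on a subdomain where the $u_i$'s are close to $s$) the explicit contour-integral expression of Proposition~\ref{prop:prelim_qcorr} via $\Pli k$ applied to the Cauchy identity, and then invoke the identity theorem to spread the rational formula over the full admissibility domain. This is sound: uniform absolute convergence of the series of rational summands gives holomorphy of the expectation in $\UU$ on the connected domain \eqref{admissibility_RHO_conditions}, and agreement with a rational function on an open subset forces global agreement (ruling out interior poles of that rational function). It is not circular, since you never invoke the lemma's rationality claim on the way. But it is considerably heavier, and it effectively absorbs the content of Proposition~\ref{prop:prelim_qcorr} into the lemma. The paper deliberately separates the two: it proves rationality combinatorially and then \emph{uses} that rationality to justify the analytic continuation that yields Proposition~\ref{prop:prelim_qcorr}. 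You should also say a word about rationality in $q,s$ (the contour family depends on $q,s$), though this is readily handled by computing residues, which are manifestly rational in all parameters; and it would be cleaner to say ``holomorphic'' rather than ``real-analytic'' when appealing to the identity theorem.
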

\begin{proof}
	Write 
	\begin{align*}
		\E_{\UU;\RHO}\big(\corre\mm q\big)=
		\sum_{\IS=\{i_1<\ldots<i_k\}\subseteq\{1,\ldots,n\}}
		q^{i_1+\ldots+i_k}
		\sum_{\substack{\nu\in\signp n\\
		\nu_{i_1}=\mm_1,
		\ldots,
		\nu_{i_k}=\mm_k}}
		\MM_{\UU;\RHO}(\nu)
		,
	\end{align*}
	and observe that only the second sum is infinite.
	Therefore, we may fix $\IS$
	and consider only the summation over $\nu$.
	By \eqref{MM_U_RHO_particular} and \eqref{Qp_RHO},
	the sum over $\nu$ 
	is the same as 
	the sum of products of stochastic vertex weights 
	$\Lmatr_{u_j}$
	over certain collections of $n$
	paths in 
	$\{0,1,2,\ldots\}\times \{1,2,\ldots,n\}$,
	as in Definition \ref{def:F}.
	Namely, these paths start with $n$ horizontal edges $(-1,t)\to(0,t)$, $t=1,\ldots,n$,
	end with $n$ vertical edges 
	$(\nu_i,n)\to(\nu_i,n+1)$ (note that $\nu_n\ge1$),
	and the end edges are partially fixed
	by the condition
	$\nu_{i_1}=\mm_1,\ldots,\nu_{i_k}=\mm_k$. Therefore, only the 
	coordinates
	$\nu_1,\ldots,\nu_{i_1-1}$
	belong to the infinite range $\{\mm_1+1,\mm_1+2,\ldots\}$.
	
	\begin{figure}[htbp]
			\begin{tikzpicture}
				[scale=.6,thick]
				\foreach \xxx in {0,1,2,3,4,5,6,7,8,9,10}
				{
					\draw[dotted] (\xxx,6.5)--++(0,-6);
				}
				\node [below] at (0,.5) {0};
				\foreach \xxx in {1,2,3,4,5,6}
				{
					\draw[dotted] (0,\xxx)--++(10.5,0);
					\node[left] at (-1,\xxx) {$\xxx$};
					\draw[->, line width=1.7pt] (-1,\xxx)--++(.9,0);
				}
				\draw[->, densely dotted, line width=1.7pt] (1,6)--++(0,1) node[above] {$\mm_3$};
				\draw[->, densely dotted, line width=1.7pt] (3.9,6)--++(0,1);
				\draw[->, densely dotted, line width=1.7pt] (4.1,6)--++(0,1);
				\draw[line width=1.7pt] (0,6)--(3.9,6);
				\draw[->, line width=1.7pt] (0,5)--(4.9,5);
				\draw[line width=1.7pt] (0,4)--++(1,0)--++(0,2);
				\draw[->, line width=1.7pt] (0,3)--++(1,0)--++(0,1)--++(1,0)--++(0,3)
				node [above] {$\nu_5$};
				\draw[line width=1.7pt] (0,2)--++(2.9,0)--++(0,1)--++(.1,0)--++(0,1)
				--++(1,0)--++(0,2);
				\draw[line width=1.7pt] (0,1)--++(3,0)--++(0,1);
				\draw[->, line width=1.7pt] (5,5)--++(1.9,0)--++(0,1)--++(.1,0)--++(0,1)
				node [above] {$\nu_2$};
				\draw[->, line width=1.7pt] (5,3)--++(1,0)--++(0,1)--++(1,0)--++(0,1)
				--++(.1,0)--++(0,1)--++(2,0)--++(0,1)
				node [above] {$\nu_1$};
				\draw[->, line width=1.7pt] (3.1,2)--++(0,1)--++(1.8,0);
				\node [above] at (4,7) {$\mm_2=\mm_1$};
				\draw[dashed] (4.5,6.5)--++(0,-6); 
				\foreach \ptt in {
				(0,1),(0,2),(0,3),(0,4),(0,5),(0,6),
				(1,1),(1,2),(1,3),(1,4),(1,5),(1,6),
				(2,1),(2,2),(2,4),(2,5),(2,6),
				(3,1),(3,2),(3,3),(3,4),(3,5),(3,6),
				(4,3),(4,4),(4,5),(4,6),
				(5,3),(5,5),
				(6,3),(6,4),(6,5),
				(7,4),(7,5),(7,6),
				(8,6),(9,6)
				}
				{
					\draw[fill] \ptt circle(5pt);
				}
			\end{tikzpicture}
		\caption{Splitting of summation over path collections 
		in the proof of Lemma \ref{lemma:qcorr_rational}
		for $n=6$, $k=3$, and $r=2$.
		The dotted arrows on the top correspond to fixed
		vertical edges prescribed by $\mm$,
		and here $\nu_3=\mm_1$, $\nu_4=\mm_2$, $\nu_6=\mm_3$,
		and $\JS=\{3,5\}$.}
		\label{fig:path_splitting}
	\end{figure}

	Assume that 
	$r\le i_1-1$
	out of our $n$ paths
	go strictly to the right of $\mm_1$ 
	(i.e., we have $\nu_r>\mm_1$ and $\nu_{r+1}=\ldots=\nu_{i_1}=\mm_1$).
	Let these paths contain edges 
	$(\mm_1,j_i)\to(\mm_1+1,j_i)$ 
	for some $\JS=\{j_1<\ldots<j_r\}\subseteq\{1,\ldots,n\}$.
	Fixing $r\le i_1-1$ and such $\JS$ (there are only finitely many ways to 
	choose this data), we may now
	split the summation over our $n$
	paths to paths in 
	$\{0,1,\ldots,\mm_1\}\times\{1,\ldots,n\}$
	and in 
	$\{\mm_1+1,\mm_1+2,\ldots\}\times \{1,\ldots,n\}$.
	The first sum over paths is also finite.
	See Fig.~\ref{fig:path_splitting} for an illustration of this splitting of paths.

	Since finite sums clearly produce rational functions,
	it now suffices to fix $r$ and $\JS$ as above,
	and consider the corresponding
	sum over collections of $r$ paths in 
	$\{\mm_1+1,\mm_1+2,\ldots\}\times \{1,\ldots,n\}$
	starting with 
	horizontal edges $(\mm_1,j_i)\to(\mm_1+1,j_i)$
	and ending with vertical edges
	$(\nu_i,n)\to(\nu_i,n+1)$, $i=1,\ldots,r$.
	Because this final infinite sum 
	involves stochastic vertex weights and 
	is over all unrestricted path collections, it is simply equal to $1$ 
	(recall that the $u_i$'s satisfy \eqref{admissibility_RHO_conditions}, so this sum converges).
	This completes the proof of the lemma.
\end{proof}

We are now in a position to compute the 
$q$-correlation functions \eqref{q_def_correlation}.
First, we will obtain a nested contour integration formula
when the points $u_i^{-1}$, $i=1,\ldots,n$, are inside 
the integration contour
$\contqi {s^{-1}}1$ of Definition \ref{def:bar_cont}.
Note that this requires $\Re(u_i)<0$ for all $i$, which is  
incompatible with our usual assumption $u_i\ge0$.
However, in the former case the correlation functions \eqref{q_def_correlation} 
are clearly well-defined as sums of possibly negative terms, and we have the following formula for them:
\begin{proposition}\label{prop:prelim_qcorr}
	Fix $n\ge k\ge0$,
	and let $u_1,\ldots,u_n$ satisfy \eqref{admissibility_RHO_conditions}
	and be such that 
	the 
	points $u_i^{-1}$ are inside the integration contour $\contqi {s^{-1}}1$.
	Then for any $\mm=(\mm_1\ge\mm_2\ge \ldots\ge\mm_k\ge0)\in\signp k$ we have
	\begin{multline}\label{qcorr_prelim}
		\E_{\UU;\RHO}\big(\corre{\mm+1^{k}} q\big)=
		(-1)^{k}q^{\frac{k(k+1)}{2}}
		(-s)^{|\mm|}\conj(\mm)
		\oint\limits_{\contqi {s^{-1}}1}\frac{d w_1}{2\pi\i}
		\ldots
		\oint\limits_{\contqi {s^{-1}}k}\frac{d w_k}{2\pi\i}
		\prod_{1\le \aind<\bind\le k}\frac{w_\aind-w_\bind}{w_\aind-qw_\bind}
		\\\times
		\prod_{i=1}^{k}\bigg(
		\frac{1}{w_i-s}
		\left(\frac{1-sw_i}{w_i-s}\right)^{\mm_i}
		\prod_{j=1}^{n}\frac{1-qu_jw_i}{1-u_jw_i}
		\bigg),
	\end{multline}
	where $\conj(\mm)$ is defined by \eqref{conj_definition}.
\end{proposition}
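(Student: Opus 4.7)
My plan is to obtain the formula by applying the integral transform $\Pli k$ (Definition~\ref{def:int_transform}) in the $w$-variables to both sides of the Cauchy-type identity
\begin{align*}
\E_{\UU;\RHO}\left[\frac{\G_\nu(\RHO,w_1,\ldots,w_k)}{\G_\nu(\RHO)}\right]=\prod_{\substack{1\le i\le n\\1\le j\le k}}\frac{1-qu_iw_j}{1-u_iw_j},
\end{align*}
established in~\eqref{EGoverG_computation}, and matching the two sides via Proposition~\ref{prop:JkG}.

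On the left-hand side, after justifying the interchange of the finite-dimensional contour integral $\Pli k$ with the (infinite) expectation $\E_{\UU;\RHO}$, I can move $\Pli k$ inside the sum over $\nu$. The ratio $\G_\nu(\RHO,\bullet)/\G_\nu(\RHO)$ differs from $\G_\nu^{\conj}(\RHO,\bullet)/\G_\nu^{\conj}(\RHO)$ only by cancellation of the $\conj(\nu)$ factor, so Proposition~\ref{prop:JkG} yields
\begin{align*}
\big(\Pli k (\G_\nu(\RHO,\bullet)/\G_\nu(\RHO))\big)(\mm)=\frac{q^{-\frac12k(k+1)}}{(-s)^{|\mm|}}\,\corr{\mm+1^{k}}\nu q.
\end{align*}
Therefore the transformed left-hand side equals $q^{-k(k+1)/2}(-s)^{-|\mm|}\,\E_{\UU;\RHO}\big(\corre{\mm+1^{k}}q\big)$. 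On the right-hand side I can simply unfold the definition of $\Pli k$: the product $\prod_{i,j}(1-qu_iw_j)/(1-u_iw_j)$ is rational in the $w_i$, so Definition~\ref{def:int_transform} produces exactly the nested contour integral in~\eqref{qcorr_prelim} (up to the explicit prefactors $(-1)^{k}\conj(\mm)$). Rearranging gives the claim.

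The only delicate point is justifying the interchange of summation and integration. This requires convergence of the Cauchy sum underlying~\eqref{EGoverG_computation}, which in turn demands the admissibility $\adm{u_i}{w_j}$ for all $w_j$ on the chosen contours. Under the stated hypothesis the points $u_i^{-1}$ are inside $\contqi{s^{-1}}1$, i.e.\ $|u_i|$ is large, so admissibility on the original contours $\contqi{s^{-1}}j$ may fail. The plan is to first prove the identity in an auxiliary regime where admissibility holds on suitable contours (e.g.\ shrink the $w$-contours very close to $s^{-1}$ so that $|w_j|$ is close to $|s^{-1}|$ while the $u_i$ satisfy~\eqref{admissibility_RHO_conditions}), and then argue that both sides are rational functions of $u_1,\dots,u_n,q,s$: the left-hand side is rational by Lemma~\ref{lemma:qcorr_rational}, and the right-hand side is manifestly rational once the contours are fixed. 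This lets me invoke analytic continuation to reach the parameter region where $u_i^{-1}$ lies inside $\contqi{s^{-1}}1$, deforming the contours continuously without crossing the poles $u_j^{-1}$ or $s$, which is possible because the $q$-nesting of the $\contqi{s^{-1}}j$ is preserved throughout. Provided this analytic-continuation step is carried out carefully (tracking which residues are allowed to be crossed and which are not), the identity established in the auxiliary regime propagates to the stated one and yields~\eqref{qcorr_prelim}.
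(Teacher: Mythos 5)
Your proposal is correct and follows essentially the same route as the paper: apply $\Pli k$ to both sides of the Cauchy summation~\eqref{EGoverG_computation}, use Proposition~\ref{prop:JkG} to identify the transformed left-hand side with the $q$-correlation function (times the explicit prefactor), unfold Definition~\ref{def:int_transform} on the right, and then handle the interchange of summation and integration by first restricting to a regime where $\adm{u_i}{w_j}$ holds and then invoking rationality of both sides (Lemma~\ref{lemma:qcorr_rational} on the left) to remove the restriction. One small point worth tightening: the admissibility $\adm{u_i}{w_j}$ is not implied by $|w_j|$ being close to $|s^{-1}|$ together with~\eqref{admissibility_RHO_conditions} alone; what makes it work is that the $u_i^{-1}$ are forced \emph{strictly inside} the shrunken contours and hence so close to $s^{-1}$ that $|(u_i^{-1}-s^{-1})/(u_i^{-1}-s)|$ becomes small enough to dominate the large value of $|(w_j-s)/(w_j-s^{-1})|$ on the nearby contour -- this is precisely the balancing act the paper carries out.
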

After proving this proposition, 
we will relax the conditions on the $u_i$'s
(to include $u_i\ge0$)
by suitably deforming the integration contours.
\begin{proof}
	The desired identity \eqref{qcorr_prelim}
	formally follows 
	from Proposition \ref{prop:JkG}
	combined with the Cauchy identity summation
	\eqref{EGoverG_computation}.	
	However, one needs to justify that this 
	summation can be performed inside the integral.
	This is possible if 
	$\adm{u_i}{w_j}$ for all $i,j$, which can be written as
	\begin{align*}
		\left|\frac{u_i^{-1}-s^{-1}}{u_i^{-1}-s}\cdot \frac{w_j-s}{w_j-s^{-1}}\right|<1.
	\end{align*}
	In other words, the above inequality suggests that
	the $u_i^{-1}$'s should be closer to $s^{-1}$ that to $s$, while 
	the $w_j$'s should be closer to $s$. Moreover, 
	\eqref{admissibility_RHO_conditions} is equivalent to 
	the first factor being less than $1$ in the absolute value. Therefore, 
	if
	\begin{align*}
		\left|\frac{u_i^{-1}-s^{-1}}{u_i^{-1}-s}\right|<r,\qquad
		\left|\frac{w_j-s}{w_j-s^{-1}}\right|<\frac1r,\qquad
		\textnormal{for some $0<r<1$,}
	\end{align*}
	then we can sum under the integral, and \eqref{admissibility_RHO_conditions} also holds.
	Thus, the points $u_i^{-1}$ must be inside a certain disc around $s^{-1}$,
	while the contours $\contqi{s^{-1}}j$ must encircle this disc. 
	However, for the contours to exist, this disc must not intersect with itself multiplied by $q^{-1}$.
	By restricting the values of $u_i$ so that the $u_i^{-1}$'s
	are sufficiently close to $s^{-1}$, one can ensure that the latter condition
	holds. Therefore, for the restricted values of the $u_i$'s,
	we can perform the summation inside the integral,
	and obtain the desired formula~\eqref{qcorr_prelim}.

	Since the left-hand side of
	\eqref{qcorr_prelim}
	is a rational function in the $u_i$'s,
	we can continue identity \eqref{qcorr_prelim}
	of rational functions by dropping the restrictions on the $u_i$'s,
	as long as \eqref{admissibility_RHO_conditions} holds
	and the integral in the right-hand side of 
	\eqref{qcorr_prelim} represents the same rational function.
	The latter requirement leads to the assumption that the 
	$u_i^{-1}$'s must be inside the integration contours.
\end{proof}

To state our final result for the $q$-correlation
functions, we 
need the following integration contour:
\begin{definition}\label{def:cont_gat_U}
	Let $u_1,\ldots,u_n>0$, and assume that $u_i\ne q u_j$
	for any $i,j$. Define the 
	contour $\contn{\bar\UU}$ to be a union of sufficiently
	small positively oriented circles around all the points 
	$\{u_i^{-1}\}$, such that the interior of 
	$\contn{\bar\UU}$ does not intersect 
	with $q^{\pm1}\contn{\bar\UU}$, and
	$s$ is
	outside the contour $\contn{\bar\UU}$.
\end{definition}

For $u_i>0$ and
$q\cdot \max_{i}u_i<\min_i u_i$,
let the $q^{-1}$-nested contours $\contqi{\bar\UU}j$, $j=1,\ldots,k$, be 
defined analogously to 	
$\contqi{s^{-1}}j$ of Definition \ref{def:bar_cont}
(but the $\contqi{\bar\UU}j$'s encircle the points $u_i^{-1}$).
In this case the contour $\contqi{\bar\UU}1$
can also play the role of
$\contn{\bar\UU}$ of Definition
\ref{def:cont_gat_U}.

With these contours we can now formulate the final result of
the computation in 
\S \ref{sub:computation_of_gnurhow}--\S \ref{sub:_q_correlation_functions}:
\begin{theorem}\label{thm:qcorr}
	Assume \eqref{stochastic_weights_condition_qsxi}.
	The nested contour integral formula 
	\eqref{qcorr_prelim} 
	for the $q$-correlation functions of the 
	dynamics $\Xp_{\{u_t\};\RHO}$ at $\text{time}=n$
	holds in each of the following three cases:
	\begin{enumerate}[\bf1.]
		\item 
		Let the points
		$u_i^{-1}$ be inside the integration contour $\contqi{s^{-1}}1$,
		and the $u_i$'s satisfy \eqref{admissibility_RHO_conditions}.
		Then \eqref{qcorr_prelim} holds with the 
		integration contours $w_j\in\contqi{s^{-1}}j$
		of Definition \ref{def:bar_cont}.
		\item 
		Let $u_i>0$ for all $i$ and $u_i\ne qu_j$ for any $i,j$.
		Then 
		\eqref{qcorr_prelim} holds when all the integration
		contours are the same, $w_j\in\contn{\bar\UU}$ (described in 
		Definition \ref{def:cont_gat_U}).
		In this case we can symmetrize the integrand
		similarly to the proof of Corollary \ref{cor:spatial_biorth}, and 
		the formula takes the form
		\begin{multline}\label{qcorr_symmetrized}
			\E_{\UU;\RHO}\big(\corre{\mm+1^{k}} q\big)=
			\frac{(-1)^{k}q^{\frac{k(k+1)}{2}}}{(1-q)^{k}k!}
			\oint\limits_{\contn{\bar\UU}}\frac{d w_1}{2\pi\i}
			\ldots
			\oint\limits_{\contn{\bar\UU}}\frac{d w_k}{2\pi\i}
			\prod_{1\le \aind\ne \bind\le k}\frac{w_\aind-w_\bind}{w_\aind-qw_\bind}
			\\\times
			(-s)^{|\mm|}\frac{\F^{\conj}_{\mm}
			(w_1^{-1},\ldots,w_k^{-1})}{w_1 \ldots w_k}
			\prod_{i=1}^{k}
			\prod_{j=1}^{n}\frac{1-qu_jw_i}{1-u_jw_i}.
		\end{multline}
		\item 
		Let $\UU$ have the form 
		\begin{multline}
			\UU=(u_1,qu_1,\ldots,q^{J-1}u_1,u_2,qu_2,\ldots,q^{J-1}u_2,
			\ldots,u_{n'},qu_{n'},\ldots,q^{J-1}u_{n'}),\\
			\textnormal{
			where
			$n=Jn'$ with some $J\in\Z_{\ge1}$, \quad
			$u_i>0$,\quad and \quad
			$q\cdot \max_{i}u_i<\min_i u_i$.}
			\label{U_fused_for_contours}
		\end{multline}
		Then 
		\eqref{qcorr_prelim} holds with the integration
		contours
		$w_j\in\contqi{\bar\UU}j$.
		In this case the double product in the integrand
	 	takes the form
		$\displaystyle\prod\nolimits_{i=1}^{k}\prod\nolimits_{j=1}^{n'}\frac{1-q^{J}u_jw_i}{1-u_jw_i}$.\footnote{Since the definition of the contours $\contqi{\bar\UU}j$ does not depend
		on $J\in\Z_{\ge1}$, we can analytically continue
		the nested contour integral formula
		in $q^{J}$ (similarly to the 
		discussion in \S \ref{sub:general_j_dynamics_and_q_hahn_degeneration}).
		We will employ this continuation in 
		\S \ref{sub:moments_of_q_hahn_and_q_boson_systems}
		below.}
	\end{enumerate}
\end{theorem}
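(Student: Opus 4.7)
The plan is to use analytic continuation together with contour deformation, taking Case~1 (which is already established as Proposition~\ref{prop:prelim_qcorr}) as the starting point. The essential input is Lemma~\ref{lemma:qcorr_rational}, which asserts that $\E_{\UU;\RHO}\big(\corre{\mm+1^{k}} q\big)$ is a rational function of $u_1,\ldots,u_n$; the right-hand side of \eqref{qcorr_prelim} is likewise a rational function on any open parameter regime where the contours can be defined unambiguously. Thus once the two sides are matched on a subregion where both the Case~1 contour prescription and the target one are simultaneously valid, rationality will propagate the identity to the full ranges claimed in Cases~2 and~3. A useful preliminary observation is that the integrand has no singularity at $w_i=s^{-1}$: the factor $(1-sw_i)^{\mm_i}/(w_i-s)^{\mm_i+1}$ vanishes there (to a sufficient order) while the remaining factors are regular. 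Consequently, deforming the $\contqi{s^{-1}}j$ contours so as to release $s^{-1}$ does not produce new residues.

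For Case~2, I would first start with the Case~1 formula and collapse the nested contours $\contqi{s^{-1}}j$ onto a common contour enclosing the $u_j^{-1}$'s but not $s$; this is analogous to the expansion argument in the proof of Corollary~\ref{cor:spatial_biorth} and is legitimate because no residue at $s^{-1}$ is crossed and the $q^{-1}$-nesting of the original contours prevents residues at $w_\aind=qw_\bind$ during the intermediate deformation. With a common contour in place I would then symmetrize the integrand over $\Sym_k$: the factor $\prod_{\aind<\bind}(w_\aind-w_\bind)/(w_\aind-qw_\bind)$ together with $\prod_i(w_i-s)^{-1}\bigl((1-sw_i)/(w_i-s)\bigr)^{\mm_i}$ symmetrizes, via the identification \eqref{F_symm_formula} applied with $u_i\mapsto w_i^{-1}$ and the definition of $\F^{\conj}$, to the combination $(-s)^{|\mm|}\F^{\conj}_\mm(w_1^{-1},\ldots,w_k^{-1})/(w_1\cdots w_k)$ that appears in \eqref{qcorr_symmetrized}. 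Finally, I would analytically continue the parameters $u_j$ from the Case~1 regime (where $u_j^{-1}$ is close to $s^{-1}$) into the regime $u_j>0$, continuously deforming $\contn{\bar\UU}$ so that the moving poles at $u_j^{-1}$ always remain enclosed and $s$ always stays outside; during this deformation the integral equals the rational $q$-correlation function throughout, giving \eqref{qcorr_symmetrized}.

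For Case~3, the argument is parallel. Under the fused specialization \eqref{U_fused_for_contours} the product telescopes as
\[
\prod_{j=1}^{n}\frac{1-qu_jw_i}{1-u_jw_i}=\prod_{j=1}^{n'}\frac{1-q^{J}u_jw_i}{1-u_jw_i},
\]
so the integrand has only $n'$ poles in each $w_i$, located at the points $u_j^{-1}$. The condition $q\max_j u_j<\min_j u_j$ is precisely what allows the existence of $q^{-1}$-nested contours $\contqi{\bar\UU}j$ enclosing all of these points, and permits a continuous interpolation of parameters from the Case~1 regime to the fused one that preserves both the $q^{-1}$-nesting and the enclosure of the $u_j^{-1}$ poles. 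The main obstacle in both cases is verifying that the contour deformations do not cross any pole at $w_\aind=qw_\bind$; in Case~3 this is handled directly by keeping the $q^{-1}$-nesting intact along the whole deformation, while in Case~2 it is handled by symmetrizing before deforming, so that the offending denominator factors are paired with an antisymmetric numerator and the apparent singularities cancel. I expect this pole-tracking bookkeeping during the continuation in $u_j$ to be the main technical chore, while the algebraic identification of the integrand in \eqref{qcorr_symmetrized} with a symmetrization of Case~1 is a direct application of \eqref{F_symm_formula}.
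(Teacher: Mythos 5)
Your overall plan---start from Case 1 (Proposition~\ref{prop:prelim_qcorr}), deform contours, and use rationality (Lemma~\ref{lemma:qcorr_rational}) to analytically continue---matches the paper's route, and your reading of the symmetrization step via \eqref{F_symm_formula} is fine. However, the central step of Case~2 has a genuine gap. You claim that ``the $q^{-1}$-nesting of the original contours prevents residues at $w_\aind=qw_\bind$ during the intermediate deformation,'' but this is false. The contours $\contqi{s^{-1}}1,\ldots,\contqi{s^{-1}}k$ are $q^{-1}$-nested with $\contqi{s^{-1}}1$ innermost, so to reach a common contour you must shrink each $\contqi{s^{-1}}j$ ($j\ge 2$) inward; in doing so you \emph{do} cross the pole at $w_j=q^{-1}w_i$ for $i<j$, since by the nesting condition $q^{-1}\contqi{s^{-1}}i$ lies strictly between $\contqi{s^{-1}}i$ and $\contqi{s^{-1}}j$. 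The numerator $w_i-w_j$ does not vanish at $w_j=q^{-1}w_i$, so nothing cancels at the level of the integrand. What actually saves the argument, as the paper shows explicitly, is that the residue at $w_j=q^{-1}w_i$, viewed as a function of the remaining variable $w_i$, is \emph{regular} on $\contqi{s^{-1}}1$ (resp. $\contn{\bar\UU}$): the substituted factor $(1-qu_\ell w_j)/(1-u_\ell w_j)\big|_{w_j=q^{-1}w_i} = q(1-u_\ell w_i)/(q-u_\ell w_i)$ has numerator $1-u_\ell w_i$ cancelling the pole of $1/(1-u_\ell w_i)$ from the $w_i$ factors, leaving only a pole at $w_i=qu_\ell^{-1}$ that lies outside the contour. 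Hence each such residue integrates to zero. Your proposal has no substitute for this computation.

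Your proposed fallback (``symmetrize before deforming, so that the offending denominator factors are paired with an antisymmetric numerator and the apparent singularities cancel'') does not close the gap either: symmetrizing the integrand over $\Sym_k$ presupposes identical contours, which is precisely what you are trying to justify, and in any case $w_\aind - w_\bind$ is not a factor that vanishes at $w_\aind = q w_\bind$. Once you supply the residue-vanishing calculation, the rest of your Case~2 argument goes through. Your Case~3 strategy---continue directly from Case~1 keeping the $q^{-1}$-nesting intact---is a legitimate alternative to the paper's route through Case~2, though you should verify that a continuously varying family of $q^{-1}$-nested contours around the moving $\{u_j^{-1}\}$ exists all along the interpolation (in the paper this is handled by first establishing the common-contour version and then expanding back out to nested contours).
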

\begin{proof}
	{\bf1.\/} This is Proposition \ref{prop:prelim_qcorr}. 
	
	\smallskip
	\noindent
	{\bf2.\/}
	To prove the second case, 
	start with 
	\eqref{qcorr_prelim} 
	with contours $w_j\in\contqi{s^{-1}}j$ and
	$q$, $s$, and $\{u_i\}$ fixed, and observe the following effect.
	The integrand 
	\begin{align}
		\prod_{1\le \aind<\bind\le k}\frac{w_\aind-w_\bind}{w_\aind-qw_\bind}
		\prod_{i=1}^{k}\bigg(
		\frac{1}{w_i-s}
		\left(\frac{1-sw_i}{w_i-s}\right)^{\mm_i}
		\prod_{j=1}^{n}\frac{1-qu_jw_i}{1-u_jw_i}\bigg)
		\label{qcorr_proof_integrand}
	\end{align}
	has 
	only the poles $w_1=u_j^{-1}$, $j=1,\ldots,n$,
	inside the contour $\contqi{s^{-1}}1$, because the other poles $\infty$
	and $s$ are outside $\contqi{s^{-1}}1$.
	Deform the integration contour 
	$\contqi{s^{-1}}2$ so that it becomes the same as $\contqi{s^{-1}}1$,
	thus picking the residue at $w_2=q^{-1}w_1$. We see that 
	\begin{align*}
		\Res_{w_2=q^{-1}w_1}
		\frac{w_1-w_2}{w_1-qw_2}
		\bigg(\prod_{j=1}^{n}\frac{1-qu_jw_1}{1-u_jw_1}
		\frac{1-qu_jw_2}{1-u_jw_2}\bigg)
		=(1-q)q^{-2}w_1
		\prod_{j=1}^{n}\frac{q(1-qu_jw_1)}{q-u_jw_1}.
	\end{align*}
	Hence, the residue at 
	$w_2=q^{-1}w_1$ is regular in $w_1$ on the contour 
	$\contqi{s^{-1}}1$, and thus vanishes after the $w_1$
	integration. 
	Continuing this argument in a similar way, we may deform all integration contours to be $\contqi{s^{-1}}1$.
	In other words, we see that the integral \eqref{qcorr_prelim}
	can be computed by taking only the residues at points
	$w_i=u_{j_i}^{-1}$, where $i=1,\ldots,k$ and 
	$\{j_1,\ldots,j_k\}\subseteq\{1,\ldots,n\}$.
	
	Next, observe that $s^{-1}$ is not a
	pole of the integrand \eqref{qcorr_proof_integrand},
	and so the requirement that the contour
	$\contqi{s^{-1}}1$ encircles this point can be dropped.
	Thus, we may take $u_i>0$, and the 
	integration contours to be $\contn{\bar\UU}$ instead of $\contqi{s^{-1}}1$.
	Symmetrizing the integration variables finishes the second case.

	\smallskip
	\noindent
	{\bf3.\/}
	This case can be obtained as a limit of the second case. 
	Namely, 
	under assumptions of case 2 and also assuming 
	$q\cdot \max_{i}u_i<\min_i u_i$,
	let us first pass to the $q^{-1}$--nested contours
	$\contqi{\bar\UU}j$,
	which start from $\contn{\bar\UU}=\contqi{\bar\UU}1$.
	This can be done following the above argument in case 2,
	because the integration in both cases
	$\contn{\bar\UU}$ and $\contqi{\bar\UU}j$ involves the same residues.

	Next, if $u_2\ne q u_1$, 
	then the integrand in \eqref{qcorr_prelim}
	has nonzero residues at both
	$(w_1,w_2)=(u_1^{-1},u_2^{-1})$
	and
	$(w_1,w_2)=(u_2^{-1},u_1^{-1})$,
	and so 
	both $u_{1,2}^{-1}$ must be inside
	$\contqi{\bar\UU}1$ to produce the correct rational function.
	However, if $u_2=qu_1$, then the second residue vanishes due to the presence
	of the factor $u_2-qu_1$. 
	One can readily check that the same effect occurs 
	when we first move $u_2^{-1}$
	outside the contour $\contqi{\bar\UU}1$
	(but still inside $\contqi{\bar\UU}2$),
	and then set $u_2=qu_1$. 
	This agrees with the 
	presence of the factors 
	$\frac{1-q^{2}u_1w_i}{1-u_1w_i}$ in the integrand
	after setting $u_2=qu_1$,
	which do not have poles at $u_2^{-1}=(qu_1)^{-1}$.
	One also sees that after taking residue at $w_1=u_1^{-1}$,
	the pole at $w_2=u_1^{-1}$ disappears, 
	but there is a new pole at $w_2=(qu_1)^{-1}$.
	
	Continuing on,
	we see that for the 
	contours $\contqi{\bar\UU}j$
	we can specialize $\UU$ to \eqref{U_fused_for_contours},
	and the contour integration will still yield the 
	correct rational function
	(i.e., the corresponding specialization of the left-hand
	side of \eqref{qcorr_prelim}).
	This establishes the
	third case.
\end{proof}


\subsection{Remark. From observables to duality, and back} 
\label{sub:duality_from_observables}

Formula \eqref{qcorr_symmetrized} for the $q$-correlation functions
readily suggests a certain \emph{self-duality} 
relation associated with the stochastic higher spin six vertex model.
Denote
$H(\nu;\mm):=\corre{\mm+1^{k}} q(\nu)$.
Then \eqref{qcorr_symmetrized}
implies
\begin{align}\label{duality_from_observables}
	\sum_{\eta\in\signp k}T(\mm\to\eta)\E_{\UU\cup u;\RHO}H(\bullet;\eta)=
	\E_{\UU;\RHO}H(\bullet;\mm),
\end{align}
where
$T(\mm\to\eta):=
q^{-k}(-s)^{|\mm|-|\eta|}\G_{\eta/\mm}\big((qu)^{-1}\big)$.
In \eqref{duality_from_observables} by ``$\bullet$'' we mean the variables in which the expectation is applied.
To see \eqref{duality_from_observables},
apply the operator $T$ inside the integral, and note that 
\eqref{Pieri1} is equivalent to 
\begin{align}\label{Pieri1_equivalent}
	\sum_{\eta\in\signp{k}}
	T(\mm\to\eta)
	(-s)^{|\eta|}\F^{\conj}_{\eta}(w_1^{-1},\ldots,w_k^{-1})
	=\prod_{i=1}^{k}
	\frac{1-u w_i}{1-quw_i}
	(-s)^{|\mm|}\F^{\conj}_{\mm}(w_1^{-1},\ldots,w_k^{-1}).
\end{align}
	
On the other hand, adding the new parameter $u$ to the specialization $\UU$
in \eqref{duality_from_observables} corresponds to time evolution, i.e., 
to the application of the operator $\Qp_{u;\RHO}$ \eqref{Qp_RHO}. That is, 
the left-hand side of \eqref{duality_from_observables}
can be written as
\begin{multline*}
	\sum_{\la\in\signp {n+1}}\sum_{\eta\in\signp k}T(\mm\to\eta)\MM_{\UU\cup u;\RHO}(\la)H(\la;\eta)
	\\=
	\sum_{\mu\in\signp n}
	\MM_{\UU;\RHO}(\mu)
	\sum_{\eta\in\signp k}
	T(\mm\to\eta)
	\sum_{\la\in\signp {n+1}}
	\Qp_{u;\RHO}(\mu\to\la)
	H(\la;\eta).
\end{multline*}
Since the right-hand side of \eqref{duality_from_observables}
involves the expectation with respect to the 
same measure $\MM_{\UU;\RHO}$ and since 
identity \eqref{duality_from_observables} holds
for arbitrary $u_i$'s, this suggests the following \emph{duality relation}:
\begin{align}\label{duality_relation}
	\Qp_{u;\RHO}HT^{\textnormal{transpose}}=H,
\end{align}
where the operators
$\Qp_{u;\RHO}$
and 
$T^{\textnormal{transpose}}$ 
are applied in the first and the second variable in $H$, respectively.
Similar duality relations can be written down by considering $q$-moments which are 
computed in Theorem \ref{thm:multi_moments} below.

\medskip

It is worth noting that (self-)dualities like \eqref{duality_relation} can sometimes be independently proven from the very definition of the 
dynamics, and then utilized to 
produce nested contour integral formulas for the 
observables of these dynamics. This can be thought of as 
an alternative way to 
proving results like Theorem \ref{thm:qcorr}. Let us outline 
this argument.
Applying $(\Qp_{u;\RHO})^{n}$ to \eqref{duality_relation} gives
\begin{align*}
	(\Qp_{u;\RHO})^{n+1}HT^{\textnormal{transpose}}
	=(\Qp_{u;\RHO})^{n}H.
\end{align*}
Taking the expectation in both sides above, we arrive back at
our starting point \eqref{duality_from_observables}:
\begin{align}\label{duality_relation_2}
	(\E_{(u^{n+1});\RHO}H)T^{\textnormal{transpose}}
	=\E_{(u^{n});\RHO}H,\qquad
	(u^{m}):=(\underbrace{u,\ldots,u}_{\textnormal{$m$ times}}),
\end{align}
where, as before, the expectation of $H=H(\nu;\mm)$ is taken with 
respect to the probability distribution 
in $\nu$, and the operator 
$T^{\textnormal{transpose}}$ acts on $\mm$.
Thus, knowing \eqref{duality_relation} and passing to 
\eqref{duality_relation_2}, one gets a closed system 
of linear equations for the observables
$\E_{u^{n};\RHO}H(\bullet;\mm)$,
where $n$ runs over $\Z_{\ge0}$,
and $\mm$ --- over $\signp k$.
This system can sometimes be reduced to 
a simpler system of free evolution equations
subject to certain two-body boundary conditions, 
and the latter can be solved explicitly in terms of 
nested contour integrals.

This alternative route towards explicit formulas for 
averaging of observables was taken
(for various degenerations of the higher spin six vertex model)
in \cite{BorodinCorwinSasamoto2012}, \cite{BorodinCorwin2013discrete},
\cite{Corwin2014qmunu}.
Duality for the higher spin six vertex model started from infinitely
many particles at the leftmost location
was considered in~\cite{CorwinPetrov2015}.

\begin{remark}
	An advantage of this alternative 
	route starting from duality \eqref{duality_relation} is that 
	it implies
	equations
	\eqref{duality_relation_2} for 
	arbitrary (sufficiently nice) initial conditions,
	because one can take an arbitrary expectation
	in the last step leading to 
	\eqref{duality_relation_2}.\footnote{This is 
	in contrast with 
	\eqref{duality_from_observables} which
	is implied by 
	\eqref{qcorr_symmetrized}, and thus holds only 
	for the dynamics 
	$\Xp$
	started from the empty initial configuration.}
	This argument could lead to nested contour integral formulas
	for arbitrary initial conditions,
	similarly to what is done in
	\cite{BorodinCorwinPetrovSasamoto2013}
	and \cite{BCPS2014}.
	We will not discuss duality relations
	or formulas with
	arbitrary initial conditions here.
\end{remark}



\section{$q$-moments of the height function} 
\label{sec:_q_moments_of_the_height_function_of_interacting_particle_systems}

In this section we compute 
another type of observables
of the stochastic dynamics $\Xp_{\{u_t\};\RHO}$
started from the empty initial configuration --- the $q$-moments
of its height function. 
These moment formulas could be viewed as the main result of the 
present notes.

\subsection{Height function and its $q$-moments} 
\label{sub:height_function_and_its_q_moments}

Let $\nu\in\signp n$. Define the \emph{height function}
corresponding to $\nu$ as follows:
\begin{align*}
	\nonumber
	\HT_\nu(x):=\#\{j\colon \nu_j\ge x\},\qquad x\in\Z.
\end{align*}
Clearly, $\HT_\nu(x)$ is a nonincreasing function of $x$,
$\HT_\nu(0)=n$, and $\HT_\nu(+\infty)=0$.
In this section we will compute the (multi-point) $q$-moments
\begin{align*}
	\E_{\UU;\RHO}\prod_{i=1}^{\ell}q^{\HT_{\nu}(x_i)}
	=
	\sum_{\nu\in\signp n}\MM_{\UU;\RHO}(\nu)
	\prod_{i=1}^{\ell}q^{\HT_{\nu}(x_i)}
	\nonumber
\end{align*}
of the height function,
where $x_1\ge \ldots\ge x_{\ell}\ge1$ are arbitrary.
Note that the above summation ranges only over
signatures with
$\nu_n\ge1$.

\begin{lemma}\label{lemma:qmom_rational}
	Fix $n\in\Z_{\ge0}$ and $u_1,\ldots,u_n$ satisfying
	\eqref{admissibility_RHO_conditions}. Then for any 
	$\ell$ and $x_1\ge \ldots\ge x_{\ell}\ge1$,
	the $q$-moments
	$\E_{\UU;\RHO}\prod_{i=1}^{\ell}q^{\HT_{\nu}(x_i)}$
	are rational functions in the $u_i$'s
	and the parameters $q$ and $s$. 
\end{lemma}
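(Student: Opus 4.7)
The plan is to adapt the path-splitting argument from the proof of Lemma \ref{lemma:qcorr_rational}. First I would decompose the expectation by the height profile: since each $\HT_\nu(x_i)$ lies in $\{0,1,\ldots,n\}$ and the tuple $(\HT_\nu(x_i))_{i=1}^{\ell}$ is weakly increasing in $i$ (because the $x_i$'s are weakly decreasing), the sum
\[
\E_{\UU;\RHO}\prod_{i=1}^{\ell}q^{\HT_{\nu}(x_i)} = \sum_{0 \le H_1 \le \ldots \le H_\ell \le n} q^{H_1 + \cdots + H_\ell}\, \Prob_{\UU;\RHO}\big(\HT_\nu(x_i) = H_i \text{ for all }i\big)
\]
is a \emph{finite} sum, so it suffices to show rationality of each profile probability.

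For a fixed profile $(H_1,\ldots,H_\ell)$, the event $\{\HT_\nu(x_i)=H_i\}_{i=1}^{\ell}$ forces $\nu_j$ to lie in the finite range $\{x_{i+1},\ldots,x_i-1\}$ for $H_i < j \le H_{i+1}$ (with the conventions $H_0:=0$, $H_{\ell+1}:=n$, $x_0:=+\infty$, $x_{\ell+1}:=1$), so only the coordinates $\nu_1,\ldots,\nu_{H_1}$ are a priori unbounded. I would then interpret $\MM_{\UU;\RHO}(\nu)$ as a partition function of $n$ up-right paths in $\Z_{\ge 0}\times\{1,\ldots,n\}$ with stochastic vertex weights $\Lmatr_{u_j}$ in row $j$ (obtained by iterating \eqref{Qp_RHO}) and split the lattice along the vertical line between columns $x_1-1$ and $x_1$. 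The left strip $\{0,\ldots,x_1-1\}\times\{1,\ldots,n\}$ is bounded, so path configurations inside it --- subject to the prescribed top exits of the short paths and to at most $n$ horizontal arrows crossing into the right strip --- admit only finitely many possibilities.

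The crucial step is to sum out the right strip $\{x_1,x_1+1,\ldots\}\times\{1,\ldots,n\}$. With fixed horizontal inputs from the left (at the split line) and no vertical inputs from below, iterating the unit-sum identity \eqref{L_sum_to_one} column by column and row by row shows that the sum over all configurations in the right strip telescopes to~$1$. What then remains is a finite sum over left-strip configurations of finite products of the vertex weights $\Lmatr_{u_j}$, each of which is a rational function of $u_j$, $q$, $s$; hence each profile probability, and therefore the whole $q$-moment, is rational. The one delicate point --- which is also where one relies on the hypothesis \eqref{admissibility_RHO_conditions} --- is justifying that the infinite sum over the right strip converges absolutely, so that the formal telescoping actually computes the limiting value; but this parallels the corresponding step at the end of the proof of Lemma~\ref{lemma:qcorr_rational} and requires no new ideas.
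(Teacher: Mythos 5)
Your proposal is correct and takes essentially the same path-splitting route as the paper's own proof, which simply cites Lemma~\ref{lemma:qcorr_rational} and observes that once $\HT_\nu(x_1)\in\{0,\dots,n\}$ is fixed, only finitely many coordinates of $\nu$ range over an infinite set, and the sum over those is handled by the same stochastic-normalization argument. Your choice to condition on the full profile $(H_1,\dots,H_\ell)$ rather than only on $\HT_\nu(x_1)$ is a slightly cleaner bookkeeping — it makes the observable $\prod_i q^{\HT_\nu(x_i)}$ constant on each piece of the decomposition — but it is the same argument, not a different one.
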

\begin{proof}
	This is established similarly to Lemma \ref{lemma:qcorr_rational},
	because if $\HT_\nu(x_1)\in\{0,1,\ldots,n\}$ is fixed, then there is a 
	fixed number of the coordinates of $\nu$
	belonging to an infinite range, and the summation 
	over them produces a rational function.
\end{proof}

We will first use the $q$-correlation functions
discussed in \S \ref{sec:observables_of_interacting_particle_systems}
to compute one-point $q$-moments 
$\E_{\UU;\RHO}q^{\ell\,\HT_{\nu}(x)}$. 
The formula for these one-point $q$-moments
allows to formulate an analogous multi-point
statement, and we will then present its verification proof.
Thus, the one-point formula will be proven in two different ways.


\subsection{One-point $q$-moments from $q$-correlations } 
\label{sub:one_point_q_moments_from_q_correlations_}

Let us first establish an algebraic
identity connecting one-point
$q$-moments with $q$-correlation functions. 
In fact,
the identity holds even before taking the expectation:
\begin{lemma}\label{lemma:moments_from_qcorr}
	For any $x\ge 1$, $\ell\ge0$, and a signature
	$\nu$, we have
	\begin{align}\label{moments_from_qcorr}
		q^{\ell\,\HT_\nu(x)}
		=
		\sum_{k=0}^{\ell}
		(-q)^{-k}\binom{\ell}{k}_{q}
		(q;q)_{k}
		\sum_{\mm_1\ge \ldots\ge \mm_k\ge x}
		\corr{(\mm_1,\ldots,\mm_k)}\nu q.
	\end{align}
\end{lemma}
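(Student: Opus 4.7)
The statement is purely combinatorial: it concerns a fixed signature $\nu\in\signp n$, with no averaging involved. My plan is to reduce \eqref{moments_from_qcorr} to a classical $q$-binomial identity in two integer parameters.

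First, I would unpack the definition \eqref{q_def_correlation_thing} and interchange the order of summation over $\mm$ and over index tuples. For a fixed tuple $1\le i_1<\ldots<i_k\le n$, the requirement $\nu_{i_j}=\mm_j$ uniquely determines $\mm$, and since $\nu$ is weakly decreasing the inequalities $\mm_1\ge\ldots\ge\mm_k$ are then automatic. The remaining constraint $\mm_k\ge x$ is equivalent to $\nu_{i_k}\ge x$, i.e.\ to $i_k\le H$, where $H:=\HT_\nu(x)$. Hence the double sum collapses to
\[
\sum_{\mm_1\ge\ldots\ge\mm_k\ge x}\corr{(\mm_1,\ldots,\mm_k)}\nu q=\sum_{1\le i_1<\ldots<i_k\le H}q^{i_1+\ldots+i_k}=e_k(q,q^2,\ldots,q^H).
\]

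Second, I would evaluate this elementary symmetric polynomial by the $q$-binomial theorem: comparing coefficients of $t^k$ in $\prod_{i=1}^{H}(1+tq^i)=\sum_{k\ge 0}q^{\binom{k+1}{2}}\binom{H}{k}_{q}t^k$ gives $e_k(q,q^2,\ldots,q^H)=q^{\binom{k+1}{2}}\binom{H}{k}_{q}$. Using $(-q)^{-k}q^{\binom{k+1}{2}}=(-1)^k q^{\binom{k}{2}}$ to simplify the prefactor, the desired identity \eqref{moments_from_qcorr} becomes
\[
q^{\ell H}=\sum_{k=0}^{\ell}(-1)^{k}q^{\binom{k}{2}}\binom{\ell}{k}_{q}(q;q)_{k}\binom{H}{k}_{q}.
\]

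Third, using the elementary identity $(-1)^k q^{\binom{k}{2}}(q;q)_k\binom{H}{k}_{q}=(q^{-H};q)_k q^{Hk}$ (which follows from $(q^{-H};q)_k=(-1)^k q^{-Hk+\binom{k}{2}}(q;q)_H/(q;q)_{H-k}$), the claim is equivalent to
\[
q^{\ell H}=\sum_{k=0}^{\ell}\binom{\ell}{k}_{q}(q^{-H};q)_{k}q^{Hk},
\]
which I would prove by induction on $\ell$ via the $q$-Pascal rule $\binom{\ell}{k}_{q}=\binom{\ell-1}{k}_{q}+q^{\ell-k}\binom{\ell-1}{k-1}_{q}$. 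The first piece contributes $q^{(\ell-1)H}$ by the inductive hypothesis; the second, after shifting $j=k-1$ and using $(q^{-H};q)_{j+1}=(1-q^{-H})(q^{-(H-1)};q)_{j}$, becomes $(1-q^{-H})q^{H+\ell-1}\cdot q^{(\ell-1)(H-1)}=q^{\ell H}-q^{(\ell-1)H}$ by the inductive hypothesis applied with $(\ell,H)\mapsto(\ell-1,H-1)$. The two contributions telescope to $q^{\ell H}$.

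There is no genuine obstacle here: once the sum over $\mm$ is recognized as collapsing (because $\nu$ is already a signature, so the ordering on $\mm$ is automatic), everything that remains is standard $q$-series bookkeeping.
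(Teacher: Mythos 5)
Your proof is correct, and it takes a genuinely different and more economical route than the paper. The paper first derives a closed-form expression for $\corr{\mm}{\nu}{q}$ for an arbitrary signature $\mm=(x_1^{\ell_1},\ldots,x_m^{\ell_m})$ in terms of the increments $\Delta\HT_\nu$ of the height function (its formula \eqref{corr_via_HT}), then establishes the generating-function inversion $A^nR_n=\sum_{k}T_k(A)R_{n-k}$ via a double application of the $q$-binomial theorem, iterates it over infinitely many indeterminates $A_j$, and finally matches the resulting product against \eqref{corr_via_HT}. You instead bypass any per-$\mm$ evaluation: since the lemma only involves the \emph{aggregated} sum $\sum_{\mm_1\ge\cdots\ge\mm_k\ge x}\corr{\mm}{\nu}{q}$, a Fubini swap over $\mm$ and over the index tuples collapses it immediately to $e_k(q,q^2,\ldots,q^{\HT_\nu(x)})=q^{\binom{k+1}{2}}\binom{\HT_\nu(x)}{k}_q$ — the ordering constraint on $\mm$ is automatic because $\nu$ is a signature, and the constraint $\mm_k\ge x$ just truncates the largest index. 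This reduces the whole lemma to a single two-parameter $q$-binomial identity, which your induction (on $\ell$, with the integer $H$ free, so that the passage to $H-1$ in the inductive step is unproblematic) settles cleanly. The identity you end up proving is in fact equivalent to the paper's one-step inversion $A^\ell R_\ell=\sum_k T_k(A)R_{\ell-k}$ specialized at $A=q^H$, but you arrive at it without the intermediate machinery of \eqref{corr_via_HT} and the infinite iteration. The trade-off is that the paper's route also yields the standalone formula \eqref{corr_via_HT} for arbitrary $\mm$, which is of some independent interest; for the lemma itself your argument is shorter and cleaner.
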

\begin{proof}
	Denote
	$\Delta\HT_\nu(x):=\HT_\nu(x)-\HT_\nu(x+1)$;
	this is the number of parts of $\nu$ that are equal to $x$.
	First, let us express the quantities 
	$\corr\mm\nu q$ through the height function. 
	We start with the case $\mm=(x^\ell)=(x,\ldots,x)$.
	We have
	\begin{align*}
		\corr{(x^{\ell})}\nu q&=
		\sum_{\substack{\IS=\{i_1<\ldots<i_\ell\}\subseteq\{1,\ldots,n\}
		\\\nu_{i_1}=\ldots=
		\nu_{i_\ell}=x}}q^{i_1+\ldots+i_\ell}
		\\&=q^{\frac{\ell(\ell+1)}{2}}
		q^{\ell\,\HT_\nu(x+1)}
		\binom{\Delta\HT_{\nu}(x)}{\ell}_{q}
		\\&=
		q^{\frac{\ell(\ell+1)}{2}}
		\frac{\big(q^{\Delta\HT_{\nu}(x)};q^{-1}\big)_{\ell}}{(q;q)_{\ell}}\,q^{\ell\,\HT_\nu(x+1)},
	\end{align*}
	where the second equality follows similarly
	to the computation of the partition function
	\eqref{Zjj_formula}.

	For general
	\begin{align*}
	 	\mm=(x_1^{\ell_1},\ldots,x_m^{\ell_m}):=
	 	(\underbrace{x_1,\ldots,x_1}_{\textnormal{$\ell_1$ times}},
	 	\ldots,\underbrace{x_m,\ldots,x_m}_{\textnormal{$\ell_m$ times}}),
	\end{align*}
	where $x_1>\ldots>x_m\ge0$ and $\boldsymbol\ell
	=(\ell_1,\ldots,\ell_m)\in\Z_{\ge0}^{m}$,
	the summation over $\IS$ in \eqref{q_def_correlation_thing}
	is clearly equal to the product of individual summations 
	corresponding to each $x_j$, $j=1,\ldots,m$. Therefore, 
	\begin{align}\label{corr_via_HT}
		\corr{(x_1^{\ell_1},\ldots,x_m^{\ell_m})}\nu q=
		\prod_{j=1}^{m}
		q^{\frac{\ell_j(\ell_j+1)}{2}}
		\frac{\big(q^{\Delta\HT_{\nu}(x_j)};q^{-1}\big)_{\ell_j}}{(q;q)_{\ell_j}}\,q^{\ell_j\HT_\nu(x_j+1)}.
	\end{align}

	Our next goal is to invert relation \eqref{corr_via_HT}.
	Let us write down certain abstract inversion formulas which will
	lead us to the desired statement.
	In these formulas, we will assume that 
	$A,B,A_0,A_1,A_2,\ldots$ are indeterminates.
	Let us also denote
	\begin{align*}
		T_i(A):=q^{\frac{i(i+1)}2}\frac{(A;q^{-1})_{i}}{(q;q)_i},
		\qquad
		R_i:=\frac{(-q)^{i}}{(q;q)_{i}}.
	\end{align*}
	Note that by the very definition,
	$T_i(A)=0$ for $i<0$, 
	$T_0(A)=1$,
	and $T_i(1)=\mathbf{1}_{i=0}$.
	Moreover, 
	$R_i=0$ for $i<0$, and $R_0=1$. 
	
	The first inversion formula is
	\begin{align}\label{basic_inversion_formula}
		A^n R_n=\sum_{k=0}^{n}T_k(A)R_{n-k}.
	\end{align}
	Indeed, multiply the above identity by $B^{n}$, and sum over $n\ge0$.
	The left-hand side gives, by the $q$-Binomial Theorem,
	\begin{align*}
		\sum_{n=0}^{\infty}(AB)^{n}\frac{(-q)^{n}}{(q;q)_{n}}=
		\frac{1}{(-ABq;q)_{\infty}},
	\end{align*}
	and in the right-hand side we first sum over $n\ge k$ and then over $k\ge0$, which yields
	\begin{multline*}
		\sum_{k=0}^{\infty}
		q^{\frac{k(k+1)}2}\frac{(A;q^{-1})_{k}}{(q;q)_k}
		B^{k}
		\sum_{n=k}^{\infty}
		\frac{(-q)^{n-k}}{(q;q)_{n-k}}
		B^{n-k}=
		\frac{1}{(-Bq;q)_{\infty}}
		\sum_{k=0}^{\infty}
		q^{\frac{k(k+1)}2}\frac{(A;q^{-1})_{k}}{(q;q)_k}
		B^{k}
		\\=
		\frac{1}{(-Bq;q)_{\infty}}
		\sum_{k=0}^{\infty}
		\frac{(A^{-1},q)_{k}}{(q;q)_k}
		(-q)^k A^k B^{k}
		=\frac{1}{(-Bq;q)_{\infty}}
		\frac{(-Bq;q)_{\infty}}{(-ABq;q)_{\infty}}=
		\frac{1}{{(-ABq;q)_{\infty}}},
	\end{multline*}
	where we have used the $q$-Binomial Theorem twice.
	This establishes \eqref{basic_inversion_formula},
	because the generating series of both its sides coincide.\footnote{In 
	the above manipulations with infinite series 
	we assume that $0\le q<1$ and that $A$ and $B$ are sufficiently small.
	Alternatively, it is enough to think that 
	we are working with formal power series.}

	Replace $A$ by $A_1$ in
	\eqref{basic_inversion_formula}, 
	multiply it by $A_2=A_2^{k}A_2^{n-k}$,
	and apply \eqref{basic_inversion_formula} to 
	$A_2^{n-k}R_{n-k}$ in the right-hand side. 
	Continuing this process with $A_3,\ldots,A_N$, we obtain for any
	$\ell\ge0$ and $N\ge1$:
	\begin{align}\label{inversion_formula_1}
		(A_1 \ldots A_N)^{\ell}=
		\sum_{\mathbf{k}\in\Z^{N}_{\ge0}}\frac{R_{\ell-|\mathbf{k}|}}{R_\ell}\prod_{j=1}^{N}T_{k_j}(A_j)
		\big(A_{j+1}A_{j+2}\ldots A_{N}\big)^{k_j},
	\end{align}
	where the sum is over all (unordered)
	nonnegative integer vectors $\mathbf{k}=(k_1,\ldots,k_N)$ of length $N$.
	Here and below $|\mathbf{k}|$ stands for 
	$k_1+\ldots+k_N$.
	Clearly, the sum ranges only over $\mathbf{k}$ with $|\mathbf{k}|\le\ell$.
	Note that if only finitely many of
	the indeterminates $A_j$
	differ from $1$, then one can send $N\to+\infty$ in \eqref{inversion_formula_1}
	and sum over integer vectors $\mathbf{k}$ of arbitrary length.
	
	If we set $A_j:=q^{\Delta\HT_\nu(x+j-1)}$ and send $N\to+\infty$, 
	the left-hand side of \eqref{inversion_formula_1} becomes
	$q^{\ell\,\HT_\nu(x)}$, and in the right-hand side we obtain
	\begin{align*}
		T_{k_j}(A_j)\big(A_{j+1}A_{j+2}\ldots \big)^{k_j}=
		q^{\frac{k_j(k_j+1)}2}\frac{(q^{\Delta\HT_{\nu}(x+j-1)};q^{-1})_{k_j}}{(q;q)_{k_j}}\,
		q^{k_j\HT_\nu(x+j)}.
	\end{align*}
	Therefore, the product of these quantities
	in \eqref{inversion_formula_1} matches formula
	\eqref{corr_via_HT} for 
	$\corr{\mm}\nu q$,
	where the point $x+j-1$ enters the signature
	$\mm$ with multiplicity $k_j\ge0$.
	This yields the desired formula.
\end{proof}
\begin{remark}
	Using a similar approach as in the above lemma, 
	one can write down more complicated formulas expressing
	$\prod_{i=1}^{\ell}q^{\HT_{\nu}(x_i)}$
	for any $x_1\ge \ldots\ge x_{\ell}\ge1$
	through the quantities $\corr{\mm}\nu q$. 
	However, except for the one-point case,
	these expressions do not seem to be convenient for
	computing the $q$-moments.
	Therefore, in \S \ref{sub:verification_for_multi_point_q_moments} 
	below we present a verification-style 
	proof for the multi-point $q$-moments.
\end{remark}

\begin{definition}\label{def:circular_contours_around_0}
	Fix $\ell\in\Z_{\ge1}$.
	Let $u_i>0$ and $u_i\ne qu_j$ for any $i,j$.
	Then the integration contour $\contn{\bar\UU}$ 
	encircling all $u_i^{-1}$
	is well-defined (see
	Definition \ref{def:cont_gat_U}).
	Let also $c_0$ be a positively oriented circle around
	zero
	which is sufficiently small.
	Let $r>q^{-1}$ be such that 
	$q\contn{\bar\UU}$ does not intersect
	$r^{\ell}c_0$, and $r^{\ell}c_{0}$ does not encircle
	$s$. Denote
	$\contnz{\bar\UU}j:=\contn{\bar\UU}\cup r^{j}c_0$, where $j=1,\ldots,\ell$.
	See Fig.~\ref{fig:big_Gamma_contours}.
\end{definition}

\begin{figure}[htbp]
	\begin{center}
	\begin{tikzpicture}
		[scale=2.9]
		\def\pt{0.02}
		\def\q{.7}
		\def\ss{.56}
		\draw[->, thick] (-2.2,0) -- (2.6,0);
	  	\draw[->, thick] (0,-1) -- (0,1);
	  	\draw[fill] (-1,0) circle (\pt) node [below, xshift=7pt] 
	  	{$s\phantom{^{-1}}$};
	  	\draw[fill] (-1/\ss,0) circle (\pt) node [below, xshift=4pt] {$s^{-1}$};
	  	\draw[fill] (1.5,0) circle (\pt) node [below, xshift=7pt] {$u_j^{-1}$};
	  	\draw[fill] (1.7,0) circle (\pt) ;
	  	\draw[fill] (1.4,0) circle (\pt) ;
	  	\draw[fill] (0,0) circle (\pt) node [below left] {$0$};
	  	\draw (0,0) circle (.23) node [xshift=18,yshift=10] {$rc_0$};
	  	\def\rrrrr{1.9}
	  	\draw (0,0) circle (.23*\rrrrr) node [xshift=40,yshift=28] {$r^{2}c_0$};
	  	\draw[dotted] (0,0) circle (.24*\rrrrr*\q);
	  	\draw (0,0) circle (.23*\rrrrr*\rrrrr) node [xshift=53,yshift=61] {$r^{3}c_0$};
	  	\draw[dotted] (0,0) circle (.24*\rrrrr*\rrrrr*\q);
	  	\draw (1.6,0) circle (.26) node [below,xshift=-10,yshift=38] {$\contn{\bar\UU}$};
	  	\draw[dotted] (1.6*\q,0) circle (.26*\q);
	\end{tikzpicture}
	\end{center}
  	\caption{A possible choice of integration contours 
  	$\contnz{\bar\UU}1=\contn{\bar\UU}\cup rc_0$, 
  	$\contnz{\bar\UU}2=\contn{\bar\UU}\cup r^{2}c_0$, 
  	and 
  	$\contnz{\bar\UU}3=\contn{\bar\UU}\cup r^{3}c_0$
  	for $\ell=3$ in 
  	Definition \ref{def:circular_contours_around_0}.
  	Contours $q\contn{\bar\UU}$ and $q\contnz{\bar\UU}2$, $q\contnz{\bar\UU}3$ are shown dotted.
	}
  	\label{fig:big_Gamma_contours}
\end{figure}

We are now in a position to compute the one-point $q$-moments:
\begin{proposition}\label{prop:one_moment}
	Let $u_i>0$ and $u_i\ne qu_j$ for any $i,j$.
	Then for any $\ell\in\Z_{\ge0}$ and $x\in\Z_{\ge1}$ 
	we have
	\begin{multline}\label{one_moment}
		\E_{\UU;\RHO}q^{\ell\,\HT_{\nu}(x)}=
		q^{\frac{\ell(\ell-1)}2}
		\oint\limits_{\contnz{\bar\UU}1}\frac{d w_1}{2\pi\i}
		\ldots
		\oint\limits_{\contnz{\bar\UU}\ell}\frac{d w_\ell}{2\pi\i}
		\prod_{1\le \aind<\bind\le \ell}\frac{w_\aind-w_\bind}{w_\aind-qw_\bind}
		\\\times
		\prod_{i=1}^{\ell}\bigg(
		w_i^{-1}
		\left(\frac{1-sw_i}{1-s^{-1}w_i}\right)^{x-1}
		\prod_{j=1}^{n}\frac{1-qu_jw_i}{1-u_jw_i}
		\bigg).
	\end{multline}
\end{proposition}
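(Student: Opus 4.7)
The plan is to reduce Proposition \ref{prop:one_moment} to Theorem \ref{thm:qcorr} via Lemma \ref{lemma:moments_from_qcorr}, then to recognize the resulting finite sum of $k$-fold integrals (for $k=0,1,\ldots,\ell$) as a single $\ell$-fold integral over the assembled contours $\contnz{\bar\UU}{j}$. In other words, the $(\ell-k)$-fold residue contribution from variables on the small circles $r^j c_0$ in Definition \ref{def:circular_contours_around_0} should reproduce the combinatorial coefficient $(-q)^{-k}\binom{\ell}{k}_q(q;q)_k$ that comes out of Lemma \ref{lemma:moments_from_qcorr}, while the $k$-fold integral over $\contn{\bar\UU}$ reproduces Theorem \ref{thm:qcorr}.

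\textbf{Step 1: reduction to $q$-correlations.} By Lemma \ref{lemma:moments_from_qcorr},
\[
\E_{\UU;\RHO}q^{\ell\,\HT_{\nu}(x)}
=\sum_{k=0}^{\ell}(-q)^{-k}\binom{\ell}{k}_{q}(q;q)_k\sum_{\mm_1\ge\ldots\ge\mm_k\ge x}\E_{\UU;\RHO}\corr{\mm}\nu q.
\]
I would reparametrize $\mm=\tilde\mm+1^k$, so that $\tilde\mm\in\signp k$ with $\tilde\mm_k\ge x-1$, and substitute formula \eqref{qcorr_prelim} from Theorem \ref{thm:qcorr} for $\E_{\UU;\RHO}\corre{\tilde\mm+1^k}q$.

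\textbf{Step 2: summing over $\tilde\mm$ under the integral.} Restricting temporarily to parameters for which the geometric series converges (I will remove this restriction at the end via Lemma \ref{lemma:qmom_rational}), I would interchange the sum over $\tilde\mm$ with the $k$-fold contour integral. The resulting inner sum is
\[
\sum_{\tilde\mm\in\signp k:\tilde\mm_k\ge x-1}(-s)^{|\tilde\mm|}\conj(\tilde\mm)\prod_{i=1}^{k}\left(\frac{1-sw_i}{w_i-s}\right)^{\tilde\mm_i}.
\]
Writing $\tilde\mm_i=(x-1)+n_i+n_{i+1}+\ldots+n_k$ with $n_j\in\Z_{\ge0}$ converts this into an unrestricted sum over $(n_1,\ldots,n_k)\in\Z_{\ge0}^{k}$, and the clustering weight $\conj(\tilde\mm)$ produces, at each cluster boundary, a $q$-binomial factor that reassembles the geometric series into a closed-form product involving $\prod_i\left(\tfrac{1-sw_i}{1-s^{-1}w_i}\right)^{x-1}$ together with explicit factors of $w_i$ and $s$.

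\textbf{Step 3: assembling the contours and matching.} After Step 2, the expression for $\E_{\UU;\RHO}q^{\ell\HT_\nu(x)}$ is a finite sum over $k=0,\ldots,\ell$ of $k$-fold integrals over $\contn{\bar\UU}$ whose integrands already carry the ``bulk'' factors $w_i^{-1}\left(\frac{1-sw_i}{1-s^{-1}w_i}\right)^{x-1}\prod_j\frac{1-qu_jw_i}{1-u_jw_i}$ appearing in \eqref{one_moment}. I would then interpret the combinatorial prefactor $(-q)^{-k}\binom{\ell}{k}_q(q;q)_k$ as encoding residues at $w_i=0$: if we append to a $k$-fold integral over $\contn{\bar\UU}$ a further $(\ell-k)$-fold integral of the \emph{same} symmetrized integrand (complete with the full cross-factors $\prod_{\aind<\bind}\frac{w_\aind-w_\bind}{w_\aind-qw_\bind}$) over the small circles $r^{k+1}c_0,\ldots,r^{\ell}c_0$, then the residues at $w_i=0$ on the small circles evaluate (via a direct computation using $\left(\frac{1-sw}{1-s^{-1}w}\right)^{x-1}\big|_{w=0}=1$ and $\frac{1-qu_jw}{1-u_jw}\big|_{w=0}=1$) to exactly the required $q$-binomial coefficient, with the prefactor $q^{\ell(\ell-1)/2}$ arising from the cross-terms among the small-circle variables. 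The nesting $r>q^{-1}$ in Definition \ref{def:circular_contours_around_0} is precisely what prevents spurious residues at $w_\aind=qw_\bind$ between distinct small circles, and between small circles and $\contn{\bar\UU}$.

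\textbf{Main obstacle.} I expect the most delicate step to be the evaluation of the inner sum in Step 2, where the clustering weight $\conj(\tilde\mm)$ must be handled without expanding into an $\ell$-fold symmetrization (which would obscure the eventual contour structure). Once that closed form is in hand, the contour-assembly argument in Step 3 should follow by a relatively mechanical (if intricate) residue calculus combined with a final appeal to Lemma \ref{lemma:qmom_rational} to remove the convergence restrictions on the $u_i$'s.
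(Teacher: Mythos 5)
Your overall strategy matches the paper's: reduce to $q$-correlations via Lemma \ref{lemma:moments_from_qcorr}, push the sum over $\mm$ under the contour integral, and reassemble the finite $k$-sum of integrals into a single $\ell$-fold integral over $\contnz{\bar\UU}{j}$ using Lemma \ref{lemma:four_twenty_one}. However, there is a genuine gap in your Step 2, and it sits exactly where you flag the ``main obstacle'' without resolving it.

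You substitute \eqref{qcorr_prelim}, the nested-contour version of Theorem \ref{thm:qcorr}, which places the prefactor $\conj(\tilde{\mm})$ outside the integral. The inner sum you then need,
\[
\sum_{\tilde{\mm}\in\signp k,\ \tilde{\mm}_k\ge x-1}(-s)^{|\tilde{\mm}|}\,\conj(\tilde{\mm})\,\prod_{i=1}^{k}\left(\frac{1-sw_i}{w_i-s}\right)^{\tilde{\mm}_i},
\]
does not reduce to a product of geometric series under the substitution $\tilde{\mm}_i=(x-1)+n_i+\cdots+n_k$: the factor $\conj(\tilde{\mm})$ depends on the cluster (multiplicity) structure of the values of $\tilde{\mm}$, not additively on the $n_j$'s, so you cannot sum the $n_j$'s independently. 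Carrying the $q$-Pochhammer cluster weights through in closed form is tantamount to re-deriving the symmetrization identity of Theorem \ref{thm:symmetrization}; it is not the mechanical residue calculus you anticipate.

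The paper avoids this entirely by starting from \eqref{qcorr_symmetrized} (case~2 of Theorem \ref{thm:qcorr}), whose integrand retains $(-s)^{|\mm|}\F^{\conj}_{\mm}(w_1^{-1},\ldots,w_k^{-1})$ intact. After factoring out the shift by $(x-1)$, which produces exactly $\prod_i\big(\tfrac{1-sw_i}{1-s^{-1}w_i}\big)^{x-1}$, the remaining $\mm$-dependent quantity is recognized via \eqref{MM_U_RHO_particular} as the probability weight $\MM_{(w_1^{-1},\ldots,w_k^{-1});\RHO}\big(\mm-(x-2)^k\big)$, and the sum over $\mm$ collapses to $1$ because it is a sum of probability weights. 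This single observation — a normalization-to-one of a probability measure, which is itself a hidden use of the Cauchy identity — replaces the entire cluster-weight bookkeeping you would otherwise face. After that the paper symmetrizes the Vandermonde factors and cites Lemma \ref{lemma:four_twenty_one} directly rather than re-deriving it by residues, which your Step 3 proposes; that re-derivation would work but is unnecessary. In short, the skeleton is right, the starting form of the $q$-correlation formula is a suboptimal choice, and the load-bearing summation identity in Step~2 is missing.
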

\begin{proof}
	Taking the expectation with respect to 
	$\MM_{\UU;\RHO}$ in both sides of
	\eqref{moments_from_qcorr} and using \eqref{qcorr_symmetrized} 
	in the right-hand side,
	we obtain 
	\begin{multline*}
		\E_{\UU;\RHO}q^{\ell\,\HT_{\nu}(x)}=
		\sum_{k=0}^{\ell}
		\binom{\ell}{k}_{q}
		\frac{q^{\frac{k(k-1)}{2}}(q;q)_{k}}{(1-q)^{k}k!}
		\sum_{\mm_1\ge \ldots\ge \mm_k\ge x-1}\,
		\oint\limits_{\contn{\bar\UU}}\frac{d w_1}{2\pi\i}
		\ldots
		\oint\limits_{\contn{\bar\UU}}\frac{d w_k}{2\pi\i}
		\prod_{1\le \aind\ne \bind\le k}\frac{w_\aind-w_\bind}{w_\aind-qw_\bind}
		\\\times
		(-s)^{|\mm|}\frac{\F^{\conj}_{\mm}
		(w_1^{-1},\ldots,w_k^{-1})}{w_1 \ldots w_k}
		\prod_{i=1}^{k}
		\prod_{j=1}^{n}\frac{1-qu_jw_i}{1-u_jw_i}.
	\end{multline*}
	Because $\mm_k\ge x-1$, we can subtract $(x-1)$ from all parts of 
	$\mm$. We readily have
	\begin{align*}
		(-s)^{|\mm|}
		\F^{\conj}_{\mm}
		(w_1^{-1},\ldots,w_k^{-1})
		=
		(-s)^{k(x-1)}
		\prod_{i=1}^{k}
		\left(\frac{w_i^{-1}-s}{1-sw_i^{-1}}\right)^{x-1}
		\underbrace{
		(-s)^{|\mm|-k(x-1)}\F^{\conj}_{\mm-(x-1)^{k}}
		(w_1^{-1},\ldots,w_k^{-1})
		}_
		{
		\textnormal{
		$\MM_{(w_1^{-1},\ldots,w_{k}^{-1});\RHO}
		\big(\mm-(x-2)^{k}\big)$
		by \eqref{MM_U_RHO_particular}}},
	\end{align*}
	where we have also used the fact that $\mm_k-(x-2)\ge1$.
	The probability weight 
	$\MM_{(w_1^{-1},\ldots,w_{k}^{-1});\RHO}$
	above is the only thing which now depends on $\mm$,
	and the summation over all $\mm$ of these weights
	gives $1$. 
	This summation can be performed 
	under the integral 
	because on the contour $\contn{\bar\UU}$ we have $\Re(w_j)>0$,
	and so conditions \eqref{admissibility_RHO_conditions}
	with $u_j$ replaced by $w_j^{-1}$
	hold.
	We see that this summation over $\mm$ yields
	\begin{multline*}
		\E_{\UU;\RHO}q^{\ell\,\HT_{\nu}(x)}=
		\sum_{k=0}^{\ell}
		\binom{\ell}{k}_{q}
		q^{\frac{k(k-1)}{2}}
		\oint\limits_{\contn{\bar\UU}}\frac{d w_1}{2\pi\i}
		\ldots
		\oint\limits_{\contn{\bar\UU}}\frac{d w_k}{2\pi\i}
		\prod_{1\le \aind<\bind\le k}\frac{w_\aind-w_\bind}{w_\aind-qw_\bind}
		\\\times
		\frac{1}{w_1 \ldots w_k}
		\prod_{i=1}^{k}
		\left(\frac{1-sw_i}{1-s^{-1}w_i}\right)^{x-1}
		\prod_{i=1}^{k}
		\prod_{j=1}^{n}\frac{1-qu_jw_i}{1-u_jw_i}.
	\end{multline*}
	Here we have applied the symmetrization formula (footnote$^{\ref{symm_footnote}}$)
	to rewrite $(q;q)_k/(1-q)^k$, which canceled
	the factor $1/k!$ and half of the product over $\aind\ne \bind$.

	Finally, the summation over $k$ in the above formula can be eliminated 
	by changing the integration contours
	with the
	help of \cite[Lemma\;4.21]{BorodinCorwinSasamoto2012} 
	(which we recall as Lemma \ref{lemma:four_twenty_one} below for convenience). 
	This completes the proof of the desired identity \eqref{one_moment}.
\end{proof}
\begin{lemma}[{\cite[Lemma\;4.21]{BorodinCorwinSasamoto2012}}]\label{lemma:four_twenty_one}
	Let $\ell\ge1$ and $f(w)$ with $f(0)=1$
	be a meromorphic function in $\C$
	having no poles in a disc around $0$.
	Then we have
	\begin{multline}
		\oint\limits_{\contnz{\bullet}1}\frac{dw_1}{2\pi\i}
		\ldots
		\oint\limits_{\contnz{\bullet}\ell}\frac{dw_\ell}{2\pi\i}
		\prod_{1\le \aind<\bind\le \ell}\frac{w_\aind-w_\bind}{w_\aind-qw_\bind}
		\prod_{i=1}^{\ell}\frac{f(w_i)}{w_i}
		\\=\sum_{k=0}^{\ell}
		\binom{\ell}{k}_{q}
		q^{\frac12k(k-1)-\frac12\ell(\ell-1)}
		\oint\limits_{\contn{\bullet}}\frac{dw_1}{2\pi\i}
		\ldots
		\oint\limits_{\contn{\bullet}}\frac{dw_k}{2\pi\i}
		\prod_{1\le \aind<\bind\le k}\frac{w_\aind-w_\bind}{w_\aind-qw_\bind}
		\prod_{i=1}^{k}\frac{f(w_i)}{w_i},
		\nonumber
	\end{multline}
	where 
	as $\contn{\bullet}$ we can take an arbitrary closed contour not encircling $0$,
	and all other contours and conditions on them are analogous to 
	Definition \ref{def:circular_contours_around_0}.
\end{lemma}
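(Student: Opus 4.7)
The plan is to expand each composite contour $\contnz{\bullet}j=\contn{\bullet}\cup r^j c_0$ as a sum of two contour integrals and then collect the resulting $2^\ell$ terms by which variables sit on the small circle. For each subset $S\subseteq\{1,\ldots,\ell\}$ (the set of indices $i$ for which $w_i\in r^i c_0$), call the resulting contribution $I_S$. I will show that $\sum_{|S|=\ell-k}I_S=\binom{\ell}{k}_q q^{\tfrac{k(k-1)}{2}-\tfrac{\ell(\ell-1)}{2}}J_k$, where $J_k$ denotes the $k$-variable integral over $\contn{\bullet}^k$ appearing on the right-hand side; summing over $k$ then matches the claimed identity.

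For a fixed $S=\{i_1<\cdots<i_m\}$ with $m=\ell-k$, I would evaluate the small-circle integrals in order of increasing $p$. At step $p$, the variables $w_{i_1},\ldots,w_{i_{p-1}}$ have already been replaced by $0$, and one integrates $w_{i_p}$ over $r^{i_p}c_0$. The key claim is that the only pole of the remaining integrand inside $r^{i_p}c_0$ is $w_{i_p}=0$: for each $\bind\in S$ with $\bind>i_p$ still live, the pole $w_{i_p}=qw_\bind$ has modulus $qr^\bind$, which strictly exceeds $r^{i_p}$ since $r>q^{-1}$ forces $qr^{\bind-i_p}>1$; for each variable currently on $\contn{\bullet}$, the poles $w_{i_p}=w_\aind/q$ and $w_{i_p}=qw_\bind$ lie outside $r^{i_p}c_0$ as a consequence of the hypothesis that $q\contn{\bullet}$ is disjoint from $r^\ell c_0$ (since $r^{i_p}\le r^\ell$). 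Taking the residue at $w_{i_p}=0$: since $f(0)=1$, the factor $f(w_{i_p})/w_{i_p}$ contributes $1$; every factor $(w_\aind-w_{i_p})/(w_\aind-qw_{i_p})$ with $\aind<i_p$ tends to $1$; and every factor $(w_{i_p}-w_\bind)/(w_{i_p}-qw_\bind)$ with $\bind>i_p$ tends to $q^{-1}$. Hence step $p$ contributes $q^{-(\ell-i_p)}$, and after all $m$ steps the remaining integrand in the variables $\{w_i:i\notin S\}$ is precisely the $(\ell-m)$-variable version of the original integrand, giving $I_S=q^{-\sum_{p=1}^m(\ell-i_p)}\cdot J_{\ell-m}$.

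It remains to sum over $S$ of fixed size $m$:
\[
\sum_{|S|=m} I_S \;=\; q^{-m\ell}\Bigg(\sum_{\substack{S\subseteq\{1,\ldots,\ell\}\\ |S|=m}} q^{\sum_{i\in S}i}\Bigg)J_{\ell-m}.
\]
The inner sum is the classical identity $\sum_{|S|=m} q^{\sum_{i\in S}i}=q^{m(m+1)/2}\binom{\ell}{m}_q$, obtained as the coefficient of $x^m$ in $\prod_{i=1}^\ell(1+q^i x)$. A direct arithmetic check $m(m+1)/2-m\ell=k(k-1)/2-\ell(\ell-1)/2$ with $k=\ell-m$ then produces the claimed coefficient, which establishes the lemma. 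The main obstacle is the case-by-case verification in the middle paragraph that $w_{i_p}=0$ is truly the only pole inside $r^{i_p}c_0$ at each step; this depends jointly on the $q$-nesting $r>q^{-1}$ and on the separation of $q\contn{\bullet}$ from $r^\ell c_0$ built into the hypotheses, and the bookkeeping must carefully distinguish live variables from those already set to~$0$.
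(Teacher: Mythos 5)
Your proof is correct. Since the paper cites this lemma from \cite{BorodinCorwinSasamoto2012} without reproducing a proof, there is no in-paper argument to compare against; your argument is the natural residue computation and matches the approach in the cited reference. The decomposition $\contnz{\bullet}j = \contn{\bullet}\cup r^j c_0$, the residue bookkeeping (taking the small-circle residues in order of increasing index so that previously-processed variables having been set to $0$ turns those cross-factors into the constant $q^{-1}$ and removes their poles), the pole-location checks using $r>q^{-1}$ and the separation of $q\contn{\bullet}$ from $r^\ell c_0$, and the final appeal to $\sum_{|S|=m}q^{\sum_{i\in S}i}=q^{m(m+1)/2}\binom{\ell}{m}_q$ together with the exponent identity $m(m+1)/2-m\ell=\binom{\ell-m}{2}-\binom{\ell}{2}$ all check out.
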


\begin{remark}[Fredholm determinants]\label{rmk:Fredholm}
	Using a general approach outlined in \cite{BorodinCorwinSasamoto2012},
	the
	one-point $q$-moment formula
	of Proposition \ref{prop:one_moment}
	(as well as its degenerations discussed in \S \ref{sec:degenerations_of_moment_formulas})
	can be employed to obtain
	Fredholm determinantal
	expressions for the $q$-Laplace transform 
	$\E_{\UU;\RHO}\left({1}/{(\zeta q^{\HT_{\nu}(x)};q)_{\infty}}\right)$
	of the 
	height function,
	which may be suitable for asymptotic analysis.
	We will not pursue this direction here.
\end{remark}


\subsection{Multi-point $q$-moment formula} 
\label{sub:verification_for_multi_point_q_moments}

By analogy with existing multi-point
$q$-moment formulas for
related systems,\footnote{Namely, $q$-TASEPs
\cite{BorodinCorwinSasamoto2012},
\cite{BorodinCorwin2013discrete},
$q$-Hahn TASEP
\cite{Corwin2014qmunu}, and
stochastic higher spin six vertex model
\cite{CorwinPetrov2015}. Note that
however all these systems start with infinitely many particles at
the leftmost location, and in our system a new particle is always
added at location $1$, so that the corresponding 
degenerations of
Theorem \ref{thm:multi_moments} do not follow from those works.}
we can formulate a generalization 
of Proposition \ref{prop:one_moment}:
\begin{theorem}\label{thm:multi_moments}
	Let $u_i>0$ and $u_i\ne qu_j$ for any $i,j=1,\ldots,n$.
	Then for any integers $x_1\ge \ldots\ge x_{\ell}\ge1$
	the corresponding $q$-moment of the
	dynamics $\Xp_{\{u_t\};\RHO}$ at $\text{time}=n$ is given by
	\begin{multline}\label{multi_moments}
		\E_{\UU;\RHO}\prod_{i=1}^{\ell}q^{\HT_{\nu}(x_i)}=
		q^{\frac{\ell(\ell-1)}2}
		\oint\limits_{\contnz{\bar\UU}1}\frac{d w_1}{2\pi\i}
		\ldots
		\oint\limits_{\contnz{\bar\UU}\ell}\frac{d w_\ell}{2\pi\i}
		\prod_{1\le \aind<\bind\le \ell}\frac{w_\aind-w_\bind}{w_\aind-qw_\bind}
		\\\times
		\prod_{i=1}^{\ell}\bigg(
		w_i^{-1}
		\left(\frac{1-sw_i}{1-s^{-1}w_i}\right)^{x_i-1}
		\prod_{j=1}^{n}\frac{1-qu_jw_i}{1-u_jw_i}
		\bigg),
	\end{multline}
	where the integration contours
	are described in Definition \ref{def:circular_contours_around_0}.
\end{theorem}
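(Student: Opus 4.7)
The plan is to prove Theorem \ref{thm:multi_moments} by verification, using induction on $n$ combined with the Markov structure of $\Xp_{\{u_t\};\RHO}$. By Lemma \ref{lemma:qmom_rational} the left-hand side of \eqref{multi_moments} is rational in $u_1,\dots,u_n$, and the right-hand side is manifestly rational; hence it suffices to work in the positivity regime $u_i>0$ with $u_i\neq qu_j$, where the contours of Definition \ref{def:circular_contours_around_0} are well-defined. Using the identity $\MM_{\UU\cup u;\RHO}=\MM_{\UU;\RHO}\,\Qp_{u;\RHO}$ from \eqref{MM_and_Q}, I will show that both sides of \eqref{multi_moments} transform compatibly when a new parameter $u$ is appended to $\UU$. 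The base case $n=0$ is immediate: the empty measure forces $\HT\equiv 0$, so the LHS equals $1$; on the RHS the integrand has no $u_j$-factors and collapses, via residues at $w_i=0$, to $1$ when combined with the $q^{\ell(\ell-1)/2}$ prefactor and the $w_i^{-1}$ factors.

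For the inductive step, I first analyze the effect of one step of $\Xp_{u;\RHO}$ on the observable. A single step adds exactly one horizontal arrow that enters site $0$ and travels rightward to a random terminal site $X$; the telescoping of $n_y-m_y=h_y-h_{y+1}$ gives $\HT_\nu(x)=\HT_\mu(x)+\mathbf{1}_{X\geq x}$ for every $x\geq 1$. With the ordering $x_1\geq\cdots\geq x_\ell\geq 1$, a direct computation (summing the geometric weights across the intervals $[x_{k+1},x_k)$) yields
\begin{align*}
\E\bigl[q^{\#\{i\colon X\geq x_i\}}\bigm|\mu\bigr]=1-(1-q)\sum_{k=1}^{\ell}q^{\ell-k}\pi(x_k\mid\mu),\qquad
\pi(x\mid\mu):=\prod_{y=1}^{x-1}\Lmatr_u(m_y,1;m_y,1),
\end{align*}
where $m_y=m_y(\mu)$ is the multiplicity of $y$ in $\mu$. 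Consequently,
\begin{align*}
\E_{\UU\cup u;\RHO}\prod_{i=1}^{\ell}q^{\HT(x_i)}
=\E_{\UU;\RHO}\prod_{i=1}^{\ell}q^{\HT(x_i)}
-(1-q)\sum_{k=1}^{\ell}q^{\ell-k}\E_{\UU;\RHO}\biggl[\prod_{i=1}^{\ell}q^{\HT(x_i)}\,\pi(x_k\mid\mu)\biggr],
\end{align*}
which is the Markov recursion to be matched.

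On the contour integral side, augmenting $\UU$ by $u$ multiplies the integrand by $\prod_{i=1}^{\ell}\frac{1-quw_i}{1-uw_i}$. Expanding $\frac{1-quw}{1-uw}=1+\frac{(1-q)uw}{1-uw}$ yields a sum over subsets $S\subseteq\{1,\dots,\ell\}$: the empty subset reproduces the $n$-th right-hand side, matching the leading term of the Markov recursion. The main obstacle, which will occupy the bulk of the argument, is to match the remaining subset-$S$ contributions with $-(1-q)\sum_{k}q^{\ell-k}\E_{\UU;\RHO}\bigl[\prod_i q^{\HT(x_i)}\pi(x_k\mid\mu)\bigr]$. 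The crux is to recognize the product $\pi(x_k\mid\mu)=\bigl(\tfrac{-s}{1-su}\bigr)^{x_k-1}\prod_{y=1}^{x_k-1}(u-sq^{m_y(\mu)})$ as the residue picked up at $w_i=u^{-1}$ for $i\in S$ when one deforms the outer components $r^ic_0$ inward inside $\contnz{\bar\UU}i$; the grading of radii $r^i$ ensures that contributions from different $|S|=k$ cancel with the off-shell contributions, so that, paired with the inductive hypothesis applied at level $n$, one recovers precisely the observable $\E_{\UU;\RHO}[\prod_i q^{\HT(x_i)}\pi(x_k\mid\mu)]$ with the prefactor $q^{\ell-k}(1-q)$. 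This residue/contour bookkeeping, controlled by the nested structure of $\contnz{\bar\UU}i$, is where the verification becomes delicate and where the multi-point nature of the statement (as opposed to $\ell$ copies of the one-point formula from Proposition \ref{prop:one_moment}) genuinely enters.
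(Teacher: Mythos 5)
Your derivation of the one-step Markov recursion on the probability side is correct: writing $\HT_\nu(x)=\HT_\mu(x)+h_x$ with $h_x=\mathbf{1}_{X\ge x}$ and summing over the intervals where the entering arrow terminates indeed gives
$\E_{\UU\cup u;\RHO}\prod_i q^{\HT(x_i)}=\E_{\UU;\RHO}\prod_i q^{\HT(x_i)}-(1-q)\sum_k q^{\ell-k}\E_{\UU;\RHO}\bigl[\prod_i q^{\HT(x_i)}\,\pi(x_k\mid\mu)\bigr]$,
with $\pi(x\mid\mu)=\prod_{y=1}^{x-1}\Lmatr_u(m_y,1;m_y,1)$. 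The difficulty is what comes next, and here the argument has a real gap, not just unfinished bookkeeping. The new observables that the recursion generates are \emph{not} in the class handled by the theorem: since $m_y=\HT(y)-\HT(y+1)$, the quantity $\prod_{y=1}^{x_k-1}(u-sq^{m_y})$ expands into monomials in $q^{\pm\HT(1)},\dots,q^{\pm\HT(x_k)}$, so $\E_{\UU;\RHO}\bigl[\prod_i q^{\HT(x_i)}\pi(x_k\mid\mu)\bigr]$ is a linear combination of $\E_{\UU;\RHO}\bigl[q^{\sum_y a_y\HT(y)}\bigr]$ with some $a_y<0$. The inductive hypothesis at level $n$ only gives formulas for nonnegative exponents (that is what $x_1\ge\cdots\ge x_\ell\ge1$ with repetitions allows), so the induction does not close without substantially strengthening the statement being proved. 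Moreover, the claim that the residue of the integrand at $w_i=u^{-1}$ for $i\in S$ ``recovers precisely'' $\E_{\UU;\RHO}\bigl[\prod q^{\HT(x_i)}\pi(x_k\mid\mu)\bigr]$ is asserted, not checked; computing even the $\ell=1$ residue yields $-(1-q)\bigl(\tfrac{-s(u-s)}{1-su}\bigr)^{x-1}\prod_j\tfrac{u-qu_j}{u-u_j}$, and showing this equals the corresponding expectation is itself a nontrivial identity that your inductive hypothesis does not supply.

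For contrast, the paper's proof is not an induction on $n$ at all. It works at fixed $n$: it first shows (Lemma \ref{lemma:rid_of_around_0}) that the circles around $0$ in the contours are equivalent, via a formal polynomial identity, to replacing $\prod_i q^{\HT(x_i)}$ by the factored product $\prod_i(q^{i-1}-q^{\HT(x_i)})$ on the left; then it expands the right-hand integral in the basis $\{\F_\la^{\conj}\}$ (Lemma \ref{lemma:expansion_r_la_exists}), extracts the coefficients by applying the inverse Plancherel transform of Definition \ref{def:Plancherel_transforms} and evaluating residues (Lemma \ref{lemma:computing_multi_moment_integral}), and finally uses a combinatorial bijection (Lemma \ref{lemma:about_inversions_summation}) to show the coefficient of $\F_\la^\conj$ is exactly $(-s)^{|\la|}\prod_i(q^{i-1}-q^{\HT_\la(x_i-1)})$, which is the weight appearing in $\MM_{\UU;\RHO}$. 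That route leans on the orthogonality machinery of \S\ref{sec:orthogonality_relations} rather than the Markov semigroup, and avoids ever needing to control observables with mixed-sign exponents. If you want to pursue your Markov-recursion plan, you will need to enlarge the inductive statement to cover all observables of the form $q^{\sum_y a_y\HT(y)}$ (including negative $a_y$) with an explicit contour formula, and then actually carry out the residue matching; as written the key step is missing.
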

\begin{corollary}\label{cor:multi_moments_fused}
	Let the parameters $\UU$ have the form \eqref{U_fused_for_contours}. Then the 
	$q$-moments 
	$\E_{\UU;\RHO}\prod_{i=1}^{\ell}q^{\HT_{\nu}(x_i)}$
	are given by the same formula as \eqref{multi_moments},
	but with integration contours 
	$w_j\in\contqiz{\bar\UU}j:=\contqi{\bar\UU}j\cup r^{j}c_0$.
\end{corollary}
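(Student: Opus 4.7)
The strategy is to piggyback directly on Theorem \ref{thm:multi_moments} via analytic continuation together with a contour deformation, following the same template that case 3 of Theorem \ref{thm:qcorr} uses to pass from the generic $\UU$ formula to its fused specialization. By Lemma \ref{lemma:qmom_rational} the left-hand side is a rational function of $(u_1,\ldots,u_{n'},q,s)$, and the right-hand side (as an integral of a rational function over fixed cycles) is rational as well, so it suffices to establish the identity as an equality of rational functions.

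First, I would apply Theorem \ref{thm:multi_moments} to a \emph{generic} tuple $\UU=(u_1,\ldots,u_n)$ with $n=Jn'$ parameters satisfying $u_i>0$ and $u_i\ne qu_j$. This yields formula \eqref{multi_moments} with integration contours $\contnz{\bar\UU}j=\contn{\bar\UU}\cup r^j c_0$, where $\contn{\bar\UU}$ is a union of small circles around each $u_i^{-1}$. Next, I would deform the $u^{-1}$-component of each contour from $\contn{\bar\UU}$ to the $q^{-1}$-nested contour $\contqi{\bar\UU}j$, leaving the circle $r^j c_0$ around the origin untouched. The in-variable singularities of the integrand are $w_i=0$, $w_i=s$ (from $(1-s^{-1}w_i)^{-(x_i-1)}$), $w_i=u_j^{-1}$ and the cross-poles $w_a=qw_b$; the first two points stay outside $\contqi{\bar\UU}j$ by the hypothesis $q\max u_i<\min u_i$ and by choosing $\contqi{\bar\UU}j$ not to enclose $s$, while the cross-pole residues contribute terms that are regular in the remaining $w$-variables on those variables' contours (exactly as in case 3 of Theorem \ref{thm:qcorr} and the analogous computation in the proof of its case 2) and therefore vanish after integration. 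This step yields the identity with contours $\contqiz{\bar\UU}j=\contqi{\bar\UU}j\cup r^j c_0$ for generic $\UU$.

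Third, I would specialize $\UU$ to the fused form \eqref{U_fused_for_contours}. Two things happen simultaneously: (a) the double product $\prod_{j=1}^{n}\frac{1-qu_jw_i}{1-u_jw_i}$ telescopes to $\prod_{j=1}^{n'}\frac{1-q^J u_j w_i}{1-u_j w_i}$, removing the poles at the intermediate points $(q^k u_j)^{-1}$ for $k=1,\dots,J-1$; (b) the contours $\contqi{\bar\UU}j$, which depend only on the list of distinct points $u_1^{-1},\ldots,u_{n'}^{-1}$ and on the ratio $\min u_i/\max u_i$, remain well-defined and unchanged. The surviving residues at $w_i=u_j^{-1}$ are precisely those needed to produce the claimed rational function, matching the left-hand side by the rationality argument above. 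The same residue-bookkeeping that established case 3 of Theorem \ref{thm:qcorr} applies verbatim: as one successively sets $u_{j+1}=qu_j$, a would-be pole at $(qu_j)^{-1}$ moves outside $\contqi{\bar\UU}j$ at the same moment that the telescoping kills its residue, so the integral continues to represent the correct rational function throughout the specialization.

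The one technical point that requires real care is justifying the disappearance of the cross-residues $w_a=qw_b$ during the deformation from $\contn{\bar\UU}$ to $\contqi{\bar\UU}j$, since the multi-point integrand (with distinct $x_i$ and the kinematic factors $(1-sw_i)^{x_i-1}(1-s^{-1}w_i)^{-(x_i-1)}$) is heavier than in the proof of Theorem \ref{thm:qcorr}. The expected argument is that the residue at $w_a=qw_b$ converts the antisymmetric Cauchy-type factor $\prod_{\aind<\bind}\frac{w_\aind-w_\bind}{w_\aind-qw_\bind}$ into a factor that is regular at the remaining poles on the $w_b$-contour in the same manner as in Lemma~\ref{lemma:four_twenty_one} and the proof of Theorem~\ref{thm:qcorr}, so a single residue evaluation combined with Cauchy's theorem closes the gap. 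Everything else is bookkeeping.
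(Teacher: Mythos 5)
Your proposal is correct and follows essentially the same route as the paper: start from Theorem \ref{thm:multi_moments} for generic $\UU$ with $u_i>0$ and $q\max_i u_i<\min_i u_i$, pass from $\contnz{\bar\UU}j$ to the $q^{-1}$-nested contours $\contqiz{\bar\UU}j$ by the same residue-vanishing argument as in cases 2--3 of Theorem \ref{thm:qcorr}, then specialize to the fused form \eqref{U_fused_for_contours} step by step (moving each $u_{j+1}^{-1}$ across $\contqi{\bar\UU}j$ just as its residue is killed by the telescoping), and conclude by rationality. Your extra care about the cross-residues $w_\aind=qw_\bind$ in the presence of the kinematic factors and the circles $r^jc_0$ is a correct elaboration of a point the paper leaves implicit.
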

\begin{proof}[Proof of Corollary \ref{cor:multi_moments_fused}]
	Assume that Theorem \ref{thm:multi_moments} holds.
	We argue as in the proof of
	case 3 in 
	Theorem \ref{thm:qcorr}, by
	first taking $\UU$ with $u_i>0$
	and
	$q\cdot \max_{i}u_i<\min_i u_i$,
	which allows to immediately pass to 
	the nested contours $\contqiz{\bar\UU}j$
	in \eqref{multi_moments}. 
	Then we can move $u_2^{-1}$ outside 
	$\contqi{\bar\UU}1$
	but still inside
	$\contqi{\bar\UU}2$, set $u_2=qu_1$,
	and continue specializing the rest of 
	$\UU$ to \eqref{U_fused_for_contours} in a similar way.
	This specialization inside the integral will coincide
	with the same specialization of the 
	left-hand side of \eqref{multi_moments}, 
	and thus the corollary is established.
\end{proof}

The rest of this subsection is devoted to the proof of 
Theorem \ref{thm:multi_moments}. The proof is of verification type: we start with 
the nested contour integral in the  
right-hand side of \eqref{multi_moments}, and  
rewrite it as an expectation with respect to
$\MM_{\UU;\RHO}$.

\begin{lemma}\label{lemma:rid_of_around_0}
	Under the assumptions of Theorem \ref{thm:multi_moments}, 
	the collection of
	identities \eqref{multi_moments} 
	(for all $\ell\ge1$ and all $x_1\ge \ldots\ge x_\ell\ge1$)
	follows from a collection of identities of the following form:
	\begin{multline}\label{multi_moments_rid_of_around_0}
		\E_{\UU;\RHO}
		\prod_{i=1}^{\ell}
		\big(q^{i-1}-q^{\HT_\nu(x_i)}\big)
		=
		(-1)^{\ell}q^{\frac{\ell(\ell-1)}2}
		\oint\limits_{\contn{\bar\UU}}\frac{d w_1}{2\pi\i}
		\ldots
		\oint\limits_{\contn{\bar\UU}}\frac{d w_\ell}{2\pi\i}
		\prod_{1\le \aind<\bind\le \ell}\frac{w_\aind-w_\bind}{w_\aind-qw_\bind}
		\\\times
		\prod_{i=1}^{\ell}\bigg(
		w_i^{-1}
		\left(\frac{1-sw_i}{1-s^{-1}w_i}\right)^{x_i-1}
		\prod_{j=1}^{n}\frac{1-qu_jw_i}{1-u_jw_i}
		\bigg).
	\end{multline}
	That is, removing the parts of the contours around $0$
	leads to a modification of 
	the left-hand side, as shown above.
\end{lemma}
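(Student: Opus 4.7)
The plan is to split each integration contour $\contnz{\bar\UU}{i} = \contn{\bar\UU} \cup r^{i} c_{0}$ in the right-hand side of \eqref{multi_moments} into its two pieces, and to expand the resulting $\ell$-fold integral as a sum indexed by subsets $S \subseteq \{1,\ldots,\ell\}$ recording which variables $w_{i}$ are integrated over the small circle $r^{i} c_{0}$ around the origin rather than over $\contn{\bar\UU}$.

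For each $S = \{i_{1} < \cdots < i_{k}\}$, I would first analyze the poles of the integrand in the variables $w_{i_{p}}$ on the small contours. The condition $r > q^{-1}$ on the nesting of the circles $r^{i}c_{0}$, together with the smallness of $c_{0}$, ensures that the only pole lying inside $r^{i_{p}} c_{0}$ is $w_{i_{p}} = 0$: the factors $((1-sw_{i_{p}})/(1-s^{-1}w_{i_{p}}))^{x_{i_{p}}-1}$ and $\prod_{j}(1-qu_{j}w_{i_{p}})/(1-u_{j}w_{i_{p}})$ are regular at the origin; the points $s^{\pm 1}$ and $u_{j}^{-1}$ stay outside; and for cross-factors $(w_{i_{p}}-w_{\beta})/(w_{i_{p}}-qw_{\beta})$ with $\beta > i_{p}$ (whether $\beta \in S$ or not) the pole $qw_{\beta}$ lies outside. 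Evaluating the residues at $w_{i_{p}} = 0$ sequentially in increasing order $p = 1,\dots,k$, each step contributes a factor $q^{-1}$ from every cross-term with $\beta > i_{p}$, giving an overall residue factor
\begin{align*}
\prod_{p=1}^{k} q^{-(\ell - i_{p})} = q^{-k\ell + \sum_{p} i_{p}}.
\end{align*}

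The residues kill the factors carrying $w_{i_{p}}$, leaving an integral over the remaining variables $w_{j}$, $j \in T := \{1,\ldots,\ell\} \setminus S = \{j_{1} < \cdots < j_{\ell-k}\}$, on the contour $\contn{\bar\UU}^{\ell-k}$. By construction, this integrand has exactly the shape of the right-hand side of \eqref{multi_moments_rid_of_around_0} with $\ell$ replaced by $\ell - k$ and the heights replaced by $(x_{j_{1}}, \ldots, x_{j_{\ell-k}})$ (which are still weakly decreasing since $j_{1} < \cdots < j_{\ell-k}$). Invoking \eqref{multi_moments_rid_of_around_0} for this smaller integral and simplifying the combined $q$-exponent via
\begin{align*}
\tfrac{\ell(\ell-1)}{2} - k\ell - \tfrac{(\ell-k)(\ell-k-1)}{2} = -\tbinom{k+1}{2},
\end{align*}
one obtains
\begin{align*}
\text{RHS of } \eqref{multi_moments} = \sum_{S \subseteq \{1,\ldots,\ell\}} (-1)^{\ell - |S|}\, q^{\sum_{p} i_{p} - \binom{|S|+1}{2}} \, \E_{\UU;\RHO} \prod_{a=1}^{\ell - |S|} \bigl(q^{a-1} - q^{\HT_{\nu}(x_{j_{a}})}\bigr).
\end{align*}

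The last step is the purely algebraic identity, valid as an identity of polynomials in independent indeterminates $y_{1},\ldots,y_{\ell}$:
\begin{align*}
\prod_{i=1}^{\ell} y_{i} = \sum_{S \subseteq \{1,\ldots,\ell\}} (-1)^{\ell - |S|}\, q^{\sum_{p} i_{p} - \binom{|S|+1}{2}} \prod_{a=1}^{\ell - |S|} \bigl(q^{a-1} - y_{j_{a}}\bigr),
\end{align*}
after which setting $y_{i} = q^{\HT_{\nu}(x_{i})}$ and taking $\E_{\UU;\RHO}$ recovers \eqref{multi_moments}. I would prove this identity by induction on $\ell$, separating the contributions with $\ell \in S$ (where the product on the right is unchanged from its $\ell - 1$ version, but the $q$-power and sign shift in a controlled way) from those with $\ell \in T$ (where the last factor of the product is $q^{\ell - |S| - 1} - y_{\ell}$), or equivalently by expanding each $\prod_{a}(q^{a-1} - y_{j_{a}})$ and showing that the coefficient of any monomial $\prod_{i \in B} y_{i}$ on the right collapses via a telescoping $q$-sum to $\mathbf{1}_{B = \{1,\ldots,\ell\}}$. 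The main obstacle is this combinatorial cancellation: the bookkeeping of positions $a$ versus indices $j_{a}$ and the matching of $q$-powers is delicate, but once carried out it closes the argument cleanly.
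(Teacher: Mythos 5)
Your proposal is correct and follows the paper's argument essentially step for step: you split each $\contnz{\bar\UU}{i}$ into its two pieces, shrink the circles around $0$ to residues (correctly getting the factor $q^{-(\ell-i_p)}$ per residue because $r>q^{-1}$ keeps the poles $qw_\beta$, $u_j^{-1}$, $s^{\pm1}$ outside), invoke \eqref{multi_moments_rid_of_around_0} on the surviving nested integral, and reduce to a polynomial identity in $q$ and $y_1,\dots,y_\ell$. The only differences are cosmetic: you index by the residued-out set $S$ while the paper uses the kept set $\IS$ (your identity and the paper's \eqref{rid_of_around_0_proof2} are the same identity after the substitution $X_i=q^{-i}y_i$, $\hat q=q^{-1}$), and your induction peels off index $\ell$ while the paper exploits linearity in $X_1$; both inductions go through.
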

This lemma should also hold in the opposite direction (that identities
\eqref{multi_moments} imply \eqref{multi_moments_rid_of_around_0}), 
but we do not need this statement.
\begin{proof}
	The right-hand side of \eqref{multi_moments} can be written as
	\begin{align}\label{rid_of_around_0_proof}
		q^{\frac{\ell(\ell-1)}2}\oint\limits_{\contnz{\bar\UU}1}\frac{dw_1}{2\pi\i}
		\ldots
		\oint\limits_{\contnz{\bar\UU}\ell}\frac{dw_\ell}{2\pi\i}
		\prod_{1\le \aind<\bind\le \ell}\frac{w_\aind-w_\bind}{w_\aind-qw_\bind}
		\prod_{i=1}^{\ell}\frac{f_{x_i}(w_i)}{w_i},
	\end{align}
	where each
	$f_x(w)$, $x\in\Z_{\ge1}$,
	is a meromorphic (in fact, rational) function without 
	poles in a disc around $0$,
	and $f_x(0)=1$.

	Split the integral in \eqref{rid_of_around_0_proof}
	into $2^{n}$ integrals indexed by subsets $\IS\subseteq\{1,\ldots,\ell\}$
	determining that $w_i$ for $i\notin \IS$
	are integrated around $0$, while other $w_i$'s
	are integrated over $\contn{\bar\UU}$. 
	Let $|\IS|=k$, $k=0,\ldots,\ell$, and also denote
	$\|\IS\|:=\sum_{i\in \IS}i$.
	Let 
	$\IS=\{i_1<\ldots<i_k\}$ and 
	$\{1,2,\ldots,\ell\}\setminus \IS=\{p_1<\ldots<p_{\ell-k}\}$.
	The contours around $0$ 
	(corresponding to $w_{p_{j}}$) 
	can be
	shrunk to $0$ in the order $p_1,\ldots,p_{\ell-k}$ 
	without crossing any other poles, 
	and each such 
	integration produces the factor $q^{-(\ell-p_j)}$ coming from the 
	cross-product over $\aind<\bind$.
	Thus, 
	\eqref{rid_of_around_0_proof} becomes 
	(after renaming $w_{i_j}=z_j$)
	\begin{align}\label{rid_of_around_0_proof1}
		\sum_{k=0}^{\ell}\,\sum_{{\IS=\{i_1<\ldots<i_k\}\subseteq\{1,\ldots,\ell\}}}
		q^{k\ell-
		\|\IS\|}
		\oint\limits_{\contn{\bar\UU}}\frac{dz_1}{2\pi\i}
		\ldots
		\oint\limits_{\contn{\bar\UU}}\frac{dz_{k}}{2\pi\i}
		\prod_{1\le \aind<\bind\le k}\frac{z_\aind-z_\bind}{z_\aind-qz_\bind}
		\prod_{j=1}^{k}\frac{f_{x_{i_j}}(z_j)}{z_j}.
	\end{align}
	The above summation now involves integrals as in the 
	right-hand side of \eqref{multi_moments_rid_of_around_0}. 
	If the latter identity holds, then we can 
	rewrite each such integral as a certain expectation as in the 
	left-hand side of \eqref{multi_moments_rid_of_around_0}. 
	Relation \eqref{multi_moments} 
	now follows from a formal identity 
	in indeterminates $\hat{q},X_1,\ldots,X_\ell$:
	\begin{align}\label{rid_of_around_0_proof2}
		\sum_{k=0}^{\ell}\,\sum_{{\IS=\{i_1<\ldots<i_k\}\subseteq\{1,\ldots,\ell\}}}
		\hat {q}^{\frac{(\ell-k)(\ell-k+1)}2}
		(X_{i_1}-\hat{q}^{i_1})(X_{i_2}-\hat{q}^{i_2-1})
		\ldots(X_{i_k}-\hat{q}^{i_k-k+1})
		=X_1 \ldots X_\ell,
	\end{align}
	where we have matched $\hat{q}$ to $q^{-1}$ in \eqref{rid_of_around_0_proof}
	and \eqref{rid_of_around_0_proof1}.
	To establish \eqref{rid_of_around_0_proof2}, 
	observe that both its sides are linear in $X_1$,
	and so it suffices to show that the identity holds 
	at two points, say, $X_1=\hat q$ and $X_1=\infty$. 
	Substituting each of these values into \eqref{rid_of_around_0_proof2}
	leads to an equivalent
	identity with $\ell$ replaced by $\ell-1$.
	Namely, for $X_1=\hat q$ we obtain
	\begin{align*}
		\sum_{k=0}^{\ell-1}\,\sum_{{\IS=\{1<i_1<\ldots<i_k\}\subseteq\{1,\ldots,\ell\}}}
		\hat {q}^{\frac{(\ell-1-k)(\ell-1-k+1)}2}
		(\hat q^{-1}X_{i_1}-\hat{q}^{i_1-1})
		\ldots(\hat q^{-1}X_{i_k}-\hat{q}^{i_k-k})
		=\hat q^{1-\ell}X_2 \ldots X_\ell,
	\end{align*}
	which becomes \eqref{rid_of_around_0_proof2} with $\ell-1$
	after setting $Y_i=\hat q^{-1}X_{i+1}$.
	Dividing \eqref{rid_of_around_0_proof2} 
	by $X_1$ and letting $X_1\to\infty$, we obtain
	\begin{align*}
		\sum_{k=1}^{\ell}\,\sum_{{\IS=\{1=i_1<\ldots<i_k\}\subseteq\{1,\ldots,\ell\}}}
		\hat {q}^{\frac{((\ell-1)-(k-1))((\ell-1)-(k-1)+1)}2}
		(X_{i_2}-\hat{q}^{i_2-1})
		\ldots(X_{i_k}-\hat{q}^{i_k-k+1})
		=X_2 \ldots X_\ell,
	\end{align*}
	which becomes \eqref{rid_of_around_0_proof2} 
	with $\ell-1$
	after
	setting $Y_i=X_{i+1}$.
	Thus,
	\eqref{rid_of_around_0_proof2} follows by induction,
	which implies the lemma.
\end{proof}

Below in this subsection we will assume that the $u_j$'s are pairwise distinct. 
If \eqref{multi_moments} and 
\eqref{multi_moments_rid_of_around_0} 
hold for distinct $u_j$'s, then
when some of the $u_j$'s
coincide the same formulas can be obtained by 
a simple substitution. Indeed, this is because both sides of each of the
identities are
a priori rational functions in
the $u_j$'s belonging to a certain domain 
(cf. the proof of Proposition \ref{prop:skew_G_integral_formula}).

Denote the right-hand side of 
\eqref{multi_moments_rid_of_around_0}
by $R(u_1,\ldots,u_n)$. First, 
let us show that there exists a decomposition of
$R(u_1,\ldots,u_n)$ into the functions $\F_\la^{\conj}$:
\begin{lemma}\label{lemma:expansion_r_la_exists}
	If $q\in(0,1)$ is sufficiently close to $1$ and all the $u_i$'s are sufficiently 
	close to $s$, then
	the integral in the right-hand side of \eqref{multi_moments_rid_of_around_0} can
	be written as
	\begin{align}\label{expancoeff}
		R(u_1,\ldots,u_n)=
		\sum_{\la\in\signp n}
		\expancoeff{\la} \F_\la^{\conj}(u_1,\ldots,u_n),
	\end{align}
	where the sum over $\la$ converges uniformly in 
	$u_j\in\contq{s}j$, $j=1,\ldots,n$.
\end{lemma}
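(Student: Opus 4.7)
The natural strategy is to apply the Cauchy identity (Corollary \ref{cor:usual_Cauchy}) to the coupling factor $\prod_{i=1}^{\ell}\prod_{j=1}^{n}\frac{1-qu_jw_i}{1-u_jw_i}$ inside the integrand of $R$, and then to interchange summation with integration. Using the identity $\F_\mu\G_\mu^\conj=\F_\mu^\conj\G_\mu$ (both equal $\conj(\mu)\F_\mu\G_\mu$), Corollary \ref{cor:usual_Cauchy} can be rewritten as
\[
\prod_{i,j}\frac{1-qu_jw_i}{1-u_jw_i}=\frac{\prod_j(1-su_j)}{(q;q)_n}\sum_{\mu\in\signp n}\F_\mu^\conj(\UU)\,\G_\mu(\WWW),
\]
valid under the admissibility condition $\adm{u_j}{w_i}$ for all $(i,j)$.

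The first step is to verify uniform admissibility for $u_j$ close to $s$ (making $\bigl|(u_j-s)/(1-su_j)\bigr|$ uniformly small) and $w_i\in\contn{\bar\UU}$, where the latter can be chosen as a union of small circles around $\{u_i^{-1}\}$ kept uniformly away from the singular point $s^{-1}$; the hypothesis that $q$ is close to $1$ provides the freedom to choose these contours consistently. Under these bounds the Cauchy series converges uniformly on $\contn{\bar\UU}^\ell$, which justifies interchanging summation with the $\WWW$-integration. Substitution yields
\[
R(\UU)=\frac{(-1)^\ell q^{\ell(\ell-1)/2}\prod_j(1-su_j)}{(q;q)_n}\sum_{\mu\in\signp n}\F_\mu^\conj(\UU)\,J_\mu,
\]
where each $J_\mu$ is a $\UU$-independent $\ell$-fold integral over $\WWW\in\contn{\bar\UU}^\ell$ of $\prod_{\aind<\bind}\frac{w_\aind-w_\bind}{w_\aind-qw_\bind}\prod_i w_i^{-1}\bigl(\frac{1-sw_i}{1-s^{-1}w_i}\bigr)^{x_i-1}\G_\mu(\WWW)$.

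The hard part will be reorganizing this into the claimed form $\sum_\la r_\la\F_\la^\conj(\UU)$ with $\UU$-independent coefficients: the prefactor $\prod_j(1-su_j)$ still depends on $\UU$, and multiplying $\F_\mu^\conj(\UU)$ by it produces a function that lies in the span of $\{\F_\la^\conj\}$ only as an infinite series. The resolution is to introduce $\xi_j=(u_j-s)/(1-su_j)$, use the identity $1-su_j=(1-s^2)/(1+s\xi_j)$ together with the geometric series $(1+s\xi_j)^{-1}=\sum_k(-s\xi_j)^k$, and combine this with the symmetrization formula \eqref{F_symm_formula} (whose explicit monomial structure in the $\xi_i$'s makes the bookkeeping tractable) to obtain a Pieri-type expansion
\[
\prod_j(1-su_j)\,\F_\mu^\conj(\UU)=\sum_\la c_{\mu,\la}\F_\la^\conj(\UU)
\]
with geometrically decaying transition coefficients. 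The desired coefficients are then $r_\la=\text{const}\cdot\sum_\mu J_\mu\,c_{\mu,\la}$.

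Finally, uniform convergence of $\sum_\la r_\la\F_\la^\conj(\UU)$ on $u_j\in\contq{s}j$ follows by combining the geometric decay of $\F_\la^\conj(\UU)$ in the $\xi_j$'s (visible from \eqref{F_symm_formula} through the factors $\prod_i\xi_i^{\la_i}$) with uniform bounds on $J_\mu$ and on the $c_{\mu,\la}$; the smallness of $|\xi_j|$ on $\contq{s}j$, together with $q$ close to $1$, ensures that every geometric series encountered in the argument converges uniformly.
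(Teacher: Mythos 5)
Your opening move diverges from the paper's in a way that creates real trouble downstream. You apply the plain Cauchy identity
\[
\sum_{\mu\in\signp n}\F_\mu(\UU)\G^\conj_\mu(\WWW)=\frac{(q;q)_n}{\prod_j(1-su_j)}\prod_{i,j}\frac{1-qu_jw_i}{1-u_jw_i},
\]
whereas the paper applies the Cauchy identity with the specialization $\VV=(\RHO,w_1,\ldots,w_\ell)$ built in. The $\RHO$-specialized normalizer contributes an extra factor $\prod_i(1-s^{-1}u_i)$, so the $\UU$-dependent prefactor that is pulled out becomes $\prod_i\frac{1-su_i}{1-s^{-1}u_i}=(-s)^n\prod_i\bigl(\tfrac{u_i-s}{1-su_i}\bigr)^{-1}$, which by the shift property \eqref{F_shifts} simply turns $\F^\conj_\mu(\UU)$ into $(-s)^n\F^\conj_{\mu-1^n}(\UU)$. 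The expansion \eqref{expancoeff} then drops out in one clean step with coefficients given by an explicit $\WWW$-integral. This is the whole point of choosing the $\RHO$-version of the identity, and it matches the fact that the relevant measure $\MM_{\UU;\RHO}$ is supported on $\nu$ with $\nu_n\ge1$.

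Because your prefactor is $\prod_j(1-su_j)$ rather than $\prod_j\frac{1-su_j}{1-s^{-1}u_j}$, it does not absorb via a shift. You recognize the problem and propose to fix it through $(1-su_j)=(1-s^2)(1+s\xi_j)^{-1}$ plus geometric expansion, leading to a claimed "Pieri-type expansion" $\prod_j(1-su_j)\F_\mu^\conj=\sum_\la c_{\mu,\la}\F_\la^\conj$. This is the genuine gap: that expansion is asserted, not established. Multiplying the symmetrized monomial $\prod_i\xi_i^{\mu_i}$ by $\prod_i\sum_{k_i}(-s\xi_i)^{k_i}$ does not immediately produce a sum over decreasing sequences, so the bookkeeping into $\F_\la$'s is nontrivial; moreover, $\prod_j(1-su_j)\F_\mu$ grows as $|u_j|\to\infty$, so the expansion cannot be finite, and you need convergence bounds on the $c_{\mu,\la}$ and on the resulting rearranged double sum over $\mu$ and $\la$ before you can write $r_\la=\text{const}\cdot\sum_\mu J_\mu c_{\mu,\la}$ and claim uniform convergence. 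None of that is supplied. Two further smaller points: the identity $\F_\mu\G^\conj_\mu=\F^\conj_\mu\G_\mu$ you invoke is off by the constant $\conj(0^n)=(s^2;q)_n/(q;q)_n$ (so your normalizing factor $(q;q)_n$ should be $(s^2;q)_n$), and before you may integrate the Cauchy series term by term you must first deform $\contn{\bar\UU}$ to a $\UU$-independent contour on which the admissibility $\adm{u_j}{w_i}$ holds uniformly — for the bare circles around $u_j^{-1}$ the individual terms $\G_\mu(\WWW)$ have no poles inside, so term-by-term integration would falsely give zero. Following the paper's route with the $\RHO$-specialized Cauchy identity avoids the hard Pieri step entirely and is the argument to use here.
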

\begin{proof}
	Write the product 
	in \eqref{multi_moments_rid_of_around_0} 
	as a sum over $\mu\in\signp n$
	using the 
	Cauchy identity (Corollary \ref{cor:usual_Cauchy}):
	\begin{align}
		\prod_{i=1}^{\ell}\prod_{j=1}^{n}\frac{1-qu_jw_i}{1-u_jw_i}=
		\frac1{(s^{2};q)_{n}}
		\prod_{i=1}^{n}\frac{1-su_i}{1-s^{-1}u_i}
		\sum_{\mu\in\signp n\colon \mu_n\ge1}
		\F^{\conj}_{\mu}(u_1,\ldots,u_n)
		\G_{\mu}(\RHO,w_1,\ldots,w_\ell),
		\label{expancoeff_proof}
	\end{align}
	where we also used the specialization $\RHO$, cf. 
	\eqref{RHO_spec} and
	Proposition \ref{prop:Gnu_spec_RHO_w}.
	This is possible if $\adm{u_i}{w_j}$ for all $i,j$,
	and the $u_i$'s satisfy \eqref{admissibility_RHO_conditions}.
	These conditions 
	can be achieved by a deformation
	of the contour $\contn{\bar\UU}$ because the $u_i$'s can be taken close to $s$
	(the argument is similar to the proof of Proposition \ref{prop:prelim_qcorr}).
	
	This also implies that the sum over $\mu$ in \eqref{expancoeff_proof} 
	converges uniformly in $w_i$ belonging to the deformed contours. 
	Thus, the integration
	in the $w_j$'s can be performed for each $\mu$ separately.
	These integrals 
	involving $\G_{\mu}(\RHO,w_1,\ldots,w_\ell)$
	obviously do not introduce any new 
	dependence on the $u_i$'s. 

	Therefore, the right-hand side of \eqref{expancoeff_proof}
	depends on the $u_i$'s only through $\F^{\conj}_{\mu-1^{n}}(u_1,\ldots,u_n)$
	(cf.~\eqref{F_shifts}), which yields expansion \eqref{expancoeff}.
	Moreover, we see that
	\begin{multline*}
		\expancoeff{\mu-1^{n}}=
		\frac{(-1)^{\ell}(-s)^{n}q^{\frac{\ell(\ell-1)}2}}{(s^{2};q)_{n}}
		\oint\limits_{\contn{s^{-1}}}\frac{d w_1}{2\pi\i}
		\ldots
		\oint\limits_{\contn{s^{-1}}}\frac{d w_\ell}{2\pi\i}
		\prod_{1\le \aind<\bind\le \ell}\frac{w_\aind-w_\bind}{w_\aind-qw_\bind}
		\\\times
		\G_{\mu}(\RHO,w_1,\ldots,w_\ell)
		\prod_{i=1}^{\ell}
		w_i^{-1}
		\left(\frac{1-sw_i}{1-s^{-1}w_i}\right)^{x_i-1}.
	\end{multline*}
	One can readily check that these coefficients grow in $\mu$
	not faster than of order $\exp\{c|\mu|\}$ for some constant $c>0$.
	Thus, by taking the $u_i$'s closer to $s$ if necessary, 
	one can ensure that the sum in \eqref{expancoeff} converges uniformly
	in the $u_i$'s. If $q$ is sufficiently close to $1$,
	these $u_i$'s can be chosen on the contours $\contq{s}i$.
\end{proof}
The integral formula for the coefficients $\expancoeff\la$ in the proof of the above lemma 
does not seem to be convenient for their direct computation. 
We will instead rewrite 
$R(u_1,\ldots,u_n)$
by integrating over the $w_i$'s in \eqref{multi_moments_rid_of_around_0},
and then employ orthogonality of the functions $\F_\la$ 
to extract the $\expancoeff\la$'s. This will imply that 
$R(u_1,\ldots,u_n)$ is equal to the left-hand side of \eqref{multi_moments_rid_of_around_0},
yielding Theorem \ref{thm:multi_moments}.

\medskip

The 
integral in \eqref{multi_moments_rid_of_around_0}
can be computed by taking residues
at $w_i=u_{\sigma(i)}^{-1}$ for all $i=1,\ldots,\ell$,
where $\sigma$ runs over all maps $\{1,\ldots,\ell\}\to\{1,\ldots,n\}$
(we will see below that other residues do not participate).
Denote the residue corresponding to $\sigma$ by $\Res_{\sigma}$,
and also denote
$\JS:=\sigma(\{1,\ldots,\ell\})$. Because of the factors $w_{\aind}-w_{\bind}$,
the same $u_{\sigma(i)}^{-1}$ cannot participate twice, so $\sigma$ must be injective.
Thus,
in contrast with 
\eqref{multi_moments},
the integral in the right-hand side of 
\eqref{multi_moments_rid_of_around_0} vanishes if $\ell>n$.
Note however that since $\HT_\nu(x)\le n$ for all $x\in\Z_{\ge1}$, 
the product in the left-hand side also vanishes for $\ell>n$,
as it should be. Therefore, it suffices to consider only the case~$\ell\le n$.

The integral in \eqref{multi_moments_rid_of_around_0} can be
written in the form
\begin{align*}
	(-1)^{\ell}q^{\frac{\ell(\ell-1)}2}
	\oint\limits_{\contn{\bar\UU}}\frac{d w_1}{2\pi\i}
	\ldots
	\oint\limits_{\contn{\bar\UU}}\frac{d w_\ell}{2\pi\i}
	\prod_{1\le \aind<\bind\le \ell}\frac{w_\aind-w_\bind}{w_\aind-qw_\bind}
	\prod_{i=1}^{\ell}\bigg(
	f_{x_i}(w_i;\sigma)
	\prod_{j=1}^{\ell}\frac{1-qu_{\sigma(j)}w_i}{1-u_{\sigma(j)}w_i}
	\bigg),
\end{align*}
with $f_x(w;\sigma)=w^{-1}
\left(\frac{1-sw}{1-s^{-1}w}\right)^{x-1}
\prod\limits_{j\notin
\JS}\frac{1-qu_jw}{1-u_jw}$.
Taking residues at $w_1=u_{\sigma(1)}^{-1},\ldots,
w_\ell=u_{\sigma(\ell)}^{-1}$ (in this order),
we see that
\begin{align}
	\Res\nolimits_{\sigma}&=q^{\frac{\ell(\ell-1)}2}
	f_{x_1}(u_{\sigma(1)}^{-1};\sigma)
	\frac{1-q}{u_{\sigma(1)}}(-1)^{\ell-1}
	\prod_{j=2}^{\ell}
	\frac{u_{\sigma(1)}-qu_{\sigma(j)}}{u_{\sigma(1)}-u_{\sigma(j)}}
	\nonumber\\&\hspace{70pt}\times
	\Res_{w_\ell=u_{\sigma(\ell)}^{-1}}
	\ldots
	\Res_{w_2=u_{\sigma(2)}^{-1}}\,
	\prod_{2\le \aind<\bind\le \ell}\frac{w_\aind-w_\bind}{w_\aind-qw_\bind}
	\prod_{i=2}^{\ell}\bigg(
	f_{x_i}(w_i;\sigma)
	\prod_{j=2}^{\ell}\frac{1-qu_{\sigma(j)}w_i}{1-u_{\sigma(j)}w_i}
	\bigg)
	\nonumber\\&=q^{\frac{\ell(\ell-1)}2}
	f_{x_1}(u_{\sigma(1)}^{-1};\sigma)
	f_{x_2}(u_{\sigma(2)}^{-1};\sigma)
	\frac{(1-q)^{2}}{u_{\sigma(1)}u_{\sigma(2)}}
	(-1)^{\ell-2}
	\prod_{j=2}^{\ell}
	\frac{u_{\sigma(1)}-qu_{\sigma(j)}}{u_{\sigma(1)}-u_{\sigma(j)}}
	\prod_{j=3}^{\ell}
	\frac{u_{\sigma(2)}-qu_{\sigma(j)}}{u_{\sigma(2)}-u_{\sigma(j)}}
	\nonumber\\&\hspace{70pt}\times
	\Res_{w_\ell=u_{\sigma(\ell)}^{-1}}
	\ldots
	\Res_{w_3=u_{\sigma(3)}^{-1}}\,
	\prod_{3\le \aind<\bind\le \ell}\frac{w_\aind-w_\bind}{w_\aind-qw_\bind}
	\prod_{i=3}^{\ell}\bigg(
	f_{x_i}(w_i;\sigma)
	\prod_{j=3}^{\ell}\frac{1-qu_{\sigma(j)}w_i}{1-u_{\sigma(j)}w_i}
	\bigg)
	\nonumber\\&=\textnormal{etc.}
	\nonumber\\&=
	(1-q)^{\ell}q^{\frac{\ell(\ell-1)}2}\prod_{i=1}^{\ell}
	\frac{f_{x_i}(u_{\sigma(i)}^{-1};\sigma)}{u_{\sigma(i)}}
	\prod_{1\le \aind<\bind\le \ell}
	\frac{u_{\sigma(\aind)}-qu_{\sigma(\bind)}}{u_{\sigma(\aind)}-u_{\sigma(\bind)}}
	\nonumber\\&=
	(1-q)^{\ell}q^{\frac{\ell(\ell-1)}2}
	\prod_{i=1}^{\ell}
	\left(\frac{u_{\sigma(i)}-s}{u_{\sigma(i)}-s^{-1}}\right)^{x_i-1}
	\prod_{\aind\in \JS,\;
	\bind\notin \JS}
	\frac{u_{\aind}-qu_{\bind}}
	{u_{\aind}-u_{\bind}}
	\prod_{1\le \aind<\bind\le \ell}
	\frac{u_{\sigma(\aind)}-qu_{\sigma(\bind)}}{u_{\sigma(\aind)}-u_{\sigma(\bind)}}.
	\nonumber
\end{align}
In particular, we see that each step does not introduce 
any new poles inside
the integration contours besides $u_{j}^{-1}$.
Therefore,
\begin{align}\label{R_function_definition}
	R(u_1,\ldots,u_n)=\sum_{\sigma\colon
	\{1,\ldots,\ell\}\to\{1,\ldots,n\}}
	\Res\nolimits_\sigma(u_1,\ldots,u_n),
\end{align}
with 
$\Res\nolimits_\sigma$ as above.
This identity clearly holds for generic complex $u_1,\ldots,u_n$
(and not only for $u_i>0$)
because both
sides are rational functions in the $u_i$'s.

\medskip

Let us now apply the inverse Plancherel
transform $\Pltransi n$ 
(Definition \ref{def:Plancherel_transforms}) 
without the factor $\conj(\la)$ 
to $R(u_1,\ldots,u_n)$ to recover the coefficients
$\expancoeff\la$ in \eqref{expancoeff}. This is possible for $q$ sufficiently close to $1$
because the series in the right-hand side 
of \eqref{expancoeff} converges uniformly in the $u_i$'s on the contours involved
in $\Pltransi n$.
The application of this slightly modified transform to $R(u_1,\ldots,u_n)$ written as \eqref{R_function_definition}
can be 
performed separately for each $\sigma$, and 
the result is the following:
\begin{lemma}\label{lemma:computing_multi_moment_integral}
	For any $\sigma\colon
	\{1,\ldots,\ell\}\to\{1,\ldots,n\}$
	and $\la\in\signp n$, we have
	\begin{multline}\label{computing_multi_moment_integral}
		\oint\limits_{\contq {s}1}\frac{d u_1}{2\pi\i}
		\ldots
		\oint\limits_{\contq {s}n}\frac{d u_n}{2\pi\i}
		\prod_{1\le \aind<\bind\le n}\frac{u_\aind-u_\bind}{u_\aind-qu_\bind}
		\Res\nolimits_\sigma(u_1,\ldots,u_n)
		\prod_{i=1}^{n}
		\frac{1}{u_i-s}
		\left(\frac{1-su_i}{u_i-s}\right)^{\la_i}
		\\=\mathbf{1}_{\{\textnormal{$\la_{\sigma(i)}\ge x_i-1$ 
		for all $i$}\}}
		\cdot (-s)^{|\la|}
		q^{\inv(\sigma)}q^{\sigma(1)+\ldots+\sigma(\ell)}q^{-\ell}(1-q)^{\ell}.
	\end{multline}
	Here the integration contours are described
	in Definition \ref{def:orthogonality_contours},
	and $\inv(\sigma)$ is the number of 
	inversions in $\sigma$, i.e., the 
	number of pairs $(i,j)$ with 
	$i<j$ and $\sigma(i)>\sigma(j)$.
\end{lemma}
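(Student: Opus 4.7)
The plan is to recognize the integrand in \eqref{computing_multi_moment_integral} as one specific summand in the symmetrization \eqref{F_symm_formula} of an orthogonality-type integral for the $\F_\la$'s, and then combine the pole-counting vanishing mechanism from Lemma~\ref{lemma:general_lemma} with the iterated-residue computation at the end of the proof of Theorem~\ref{thm:orthogonality_F}.

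Concretely, let $\JS=\sigma(\{1,\ldots,\ell\})$ with ordered elements $j_1<\ldots<j_\ell$, and extend $\sigma$ to a permutation $\rho\in\Sym_n$ by setting $\rho|_{\{1,\ldots,\ell\}}=\sigma$ and letting $\rho|_{\{\ell+1,\ldots,n\}}$ be order-preserving onto $\{1,\ldots,n\}\setminus\JS$. A direct algebraic rearrangement shows that the cross-term product inside $\Res_\sigma$ decomposes as
\[
\prod_{\aind\in\JS,\,\bind\notin\JS}\frac{u_\aind-qu_\bind}{u_\aind-u_\bind}\cdot\prod_{1\le\aind<\bind\le\ell}\frac{u_{\sigma(\aind)}-qu_{\sigma(\bind)}}{u_{\sigma(\aind)}-u_{\sigma(\bind)}}=\frac{\displaystyle\prod_{1\le\aind<\bind\le n}\frac{u_{\rho(\aind)}-qu_{\rho(\bind)}}{u_{\rho(\aind)}-u_{\rho(\bind)}}}{\displaystyle\prod_{\substack{\aind<\bind\\\aind,\bind\notin\JS}}\frac{u_\aind-qu_\bind}{u_\aind-u_\bind}},
\]
since every ordered pair of indices in $\{1,\ldots,n\}$ either lies in $\JS\times\JS$ (giving the $\sigma$-internal block), in $\JS\times(\{1,\ldots,n\}\setminus\JS)$ (giving the mixed block), or in $(\{1,\ldots,n\}\setminus\JS)^2$ (cancelled by the right-hand quotient). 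Combining this with the outer Vandermonde factor $\prod_{\aind<\bind\le n}\frac{u_\aind-u_\bind}{u_\aind-qu_\bind}$ via the identity \eqref{general_lemma_proof1} brings the integrand to a form that matches the hypotheses of Lemma~\ref{lemma:general_lemma}.

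At each position $j=\sigma(i)\in\JS$, using $\bigl(\tfrac{u-s}{u-s^{-1}}\bigr)^{x_i-1}=(-s)^{x_i-1}\bigl(\tfrac{u-s}{1-su}\bigr)^{x_i-1}$, the $u_j$-dependence combines into $(1-su_{\sigma(i)})^{\la_{\sigma(i)}-(x_i-1)}/(u_{\sigma(i)}-s)^{\la_{\sigma(i)}-(x_i-1)+1}$, while for $j\notin\JS$ it is simply $(1-su_j)^{\la_j}/(u_j-s)^{\la_j+1}$. Running the contour-shrinking argument from Lemma~\ref{lemma:general_lemma} with $P_1=\{s\}$ and $P_2=\{s^{-1}\}$ then forces the integral to vanish unless the effective exponent $\la_{\sigma(i)}-(x_i-1)\ge 0$ at every position $\sigma(i)\in\JS$; the parallel condition $\la_j\ge 0$ at $j\notin\JS$ is automatic. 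When the surviving condition holds, the integral is evaluated by taking iterated residues at $u_j=s$ for $j\in\JS$ in the order permitted by the nesting $\contq{s}{1}\supset\ldots\supset\contq{s}{n}$, after which the remaining integration over $\{u_j\}_{j\notin\JS}$ reduces to the telescoping evaluation at $s,qs,q^{2}s,\ldots$ exactly as in the squared-norm computation at the end of the proof of Theorem~\ref{thm:orthogonality_F}.

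The main obstacle is tracking the final prefactor, whose four pieces have distinct origins: $(1-q)^{\ell}q^{\ell(\ell-1)/2}$ comes directly from $\Res_\sigma$; the power $q^{\sigma(1)+\ldots+\sigma(\ell)-\ell}$ accumulates when each substitution $u_{\sigma(i)}=s$ is plugged into the mixed cross-product $\prod_{\aind\in\JS,\,\bind\notin\JS}\frac{u_\aind-qu_\bind}{u_\aind-u_\bind}$ (the explicit count $\sigma(i)-1$ coming from factors with $\bind$ ranging over $\{1,\ldots,n\}\setminus\JS$); the inversion factor $q^{\inv(\sigma)}$ arises from the sign/inversion count in \eqref{general_lemma_proof1} applied to $\rho$, where all surviving inversions of $\rho$ lie within $\JS$ and therefore coincide with $\inv(\sigma)$; and finally $(-s)^{|\la|}$ emerges from a careful assembly of the higher-order residue expansions at $u_j=s$ for $j\in\JS$ together with the leftover telescoping factors at $s,qs,q^{2}s,\ldots$ for $j\notin\JS$. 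Reconciling these four interacting contributions (and matching $\sgn(\rho)$ with the residue signs) is the delicate combinatorial heart of the argument.
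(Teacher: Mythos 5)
Your reduction of the cross-products via the extended permutation $\rho$ is a correct algebraic observation, but it does not bring the integrand to the form required by Lemma~\ref{lemma:general_lemma}: after multiplying by the Vandermonde, you are left with an extra Vandermonde-type factor supported on the pairs outside $\JS$ (the $1/C$ in your display, where $C=\prod_{\aind<\bind,\,\aind,\bind\notin\JS}\frac{u_\aind-qu_\bind}{u_\aind-u_\bind}$), and Lemma~\ref{lemma:general_lemma} does not allow such extraneous factors. Moreover, the combinatorics that actually makes the pole-shrinking succeed in the paper is the running-maximum argument: only for positions $\sigma(i)$ that are running maxima of $\sigma$ do the $\Res_\sigma$ cross-products cancel exactly the denominators $u_{\sigma(i)}-qu_\bind$, $\bind>\sigma(i)$, so that $\contq{s}{\sigma(i)}$ can be shrunk to $s$; the constraint $\la_{\sigma(i)}\ge x_i-1$ then propagates to all $i$ by monotonicity of $\la$ and of $(x_i)$. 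You assert the conclusion but give neither the cancellation nor the propagation argument.

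The more serious problem is the evaluation when the indicator is $1$. The paper does \emph{not} shrink contours to $s$; it expands all $n$ contours outward, in the order $\contq{s}1,\dots,\contq{s}n$, and picks up only the (simple) minus-residues at $u_j=\infty$. That is what yields $(-s)^{|\la|}$ cleanly from the single-variable factors, and the cross-products then contribute the power of $q$ via their limiting values as each $u_j\to\infty$. Your alternative — taking iterated residues at $u_j=s$ for $j\in\JS$ and a ``telescoping evaluation at $s,qs,q^2s,\dots$'' for $j\notin\JS$ — is not viable as stated: the poles at $u_j=s$ have order $\la_j+1$ or $\la_{\sigma(i)}+2-x_i$ (not simple), so one cannot just plug $u_{\sigma(i)}=s$ into the cross-products as you do when deriving $q^{\sigma(1)+\dots+\sigma(\ell)-\ell}$; and the telescoping in the proof of Theorem~\ref{thm:orthogonality_F} in fact proceeds by expanding outward and taking residues at $s^{-1},(qs)^{-1},(q^2s)^{-1},\dots$, not at $s,qs,q^2s,\dots$. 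Consequently the entire prefactor accounting in your last paragraph rests on an evaluation that does not hold, and the claim that $(-s)^{|\la|}$ ``emerges from higher-order residue expansions at $u_j=s$'' is unsupported. The fix is to replace the inward-shrinking evaluation by the outward expansion to $\infty$ with simple residues, and to supply the running-maximum/monotonicity argument for the vanishing step rather than citing Lemma~\ref{lemma:general_lemma}.
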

\begin{proof}
	We need to compute
	\begin{multline*}
		\oint\limits_{\contq {s}1}\frac{d u_1}{2\pi\i}
		\ldots
		\oint\limits_{\contq {s}n}\frac{d u_n}{2\pi\i}
		\prod_{1\le \aind<\bind\le n}\frac{u_\aind-u_\bind}{u_\aind-qu_\bind}
		\prod_{\aind\in \JS,\;
		\bind\notin \JS}
		\frac{u_{\aind}-qu_{\bind}}
		{u_{\aind}-u_{\bind}}
		\prod_{1\le \aind<\bind\le \ell}
		\frac{u_{\sigma(\aind)}-qu_{\sigma(\bind)}}{u_{\sigma(\aind)}-u_{\sigma(\bind)}}\\\times
		\prod_{i=1}^{\ell}\bigg(
		\left(\frac{u_{\sigma(i)}-s}{u_{\sigma(i)}-s^{-1}}\right)^{x_i-1}
		\frac{1}{u_{\sigma(i)}-s}
		\left(\frac{1-su_{\sigma(i)}}{u_{\sigma(i)}-s}\right)^{\la_{\sigma(i)}}
		\bigg)
		\prod_{\bind\notin\JS}
		\frac{1}{u_\bind-s}
		\left(\frac{1-su_\bind}{u_\bind-s}\right)^{\la_{\bind}}.
	\end{multline*}
	Observe the following:
	\begin{itemize}
		\item If $x_i\ge\la_{\sigma(i)}+2$, then the integrand does not have a pole
		at $s$
		inside the contour $\contq {s}{\sigma(i)}$. 
		In this case, if we can shrink this contour
		without picking residues at $u_{\sigma(i)}=qu_\bind$ for any $\bind>\sigma(i)$,
		then the whole integral vanishes.

		\item If $x_i\le\la_{\sigma(i)}+1$, then the integrand does not have a pole at
		$s^{-1}$ outside the contour $\contq {s}{\sigma(i)}$. 
		Note also that for $\bind\notin\JS$,
		the integrand also 
		does not have a pole at
		$s^{-1}$ outside $\contq {s}{\bind}$.
		The integrand, however, has simple poles at each
		$u_{i}=\infty$.
	\end{itemize}
	If $\sigma(i)>\max\big(\sigma(1),\ldots,\sigma(i-1)\big)$ is a running
	maximum, then the contour $\contq {s}{\sigma(i)}$ can be shrunk without picking
	residues at $u_{\sigma(i)}=qu_\bind$. Indeed,
	the factors $u_{\sigma(i)}-qu_\bind$ in the denominator with $\bind\notin\JS$ are
	canceled out by the product over $\aind\in\JS$ and $\bind\notin\JS$,
	and all the factors 
	$u_{\sigma(i)}-qu_{\sigma(j)}$ with $\sigma(i)<\sigma(j)$
	are present in the other product
	over $1\le\aind<\bind\le \ell$.
	Therefore, the whole integral vanishes unless
	$x_i\le\la_{\sigma(i)}+1$ for each such running maximum $\sigma(i)$.

	Next, if the latter condition holds, then we also have 
	$x_j\le\la_{\sigma(j)}+1$ for all $j=1,\ldots,\ell$. Indeed, if $\sigma(j)$
	is not a running maximum, then there exists $i<j$ with $\sigma(i)>\sigma(j)$ (as $\sigma(i)$
	we can take the previous running maximum),
	and it remains to recall that both the $\la_p$'s and the $x_p$'s are ordered:
	\begin{align*}
		x_j\le x_i\le \la_{\sigma(i)}+1\le \la_{\sigma(j)}+1.
	\end{align*}
		
	Assuming now that $x_j\le\la_{\sigma(j)}+1$ for all $j$, we can expand 
	the contours $\contq {s}1,\ldots,\contq {s}n$ (in this order) to infinity,
	and evaluate the integral by taking minus residues at that point.
	The single products over $i=1,\ldots,\ell$ and $\bind\notin\JS$
	produce
	the factor $(-s)^{|\la|}$.
	Let the ordered sequence of 
	elements of $\JS$ be
	$\JS=\{j_1<\ldots<j_\ell\}$.
	One can readily see that
	the 
	three remaining cross-products lead to the factor
	\begin{align*}
		q^{\inv(\sigma)}q^{\ell(j_1-1)+(\ell-1)(j_2-j_1-1)+\ldots+(j_\ell-j_{\ell-1}-1)}=
		q^{\inv(\sigma)}q^{j_1+\ldots+j_\ell}q^{-\frac{\ell(\ell+1)}{2}}.
	\end{align*}
	This completes the proof.
\end{proof}

The coefficient $\expancoeff{\la}$ in \eqref{expancoeff} 
is thus equal to the sum of
the right-hand sides of \eqref{computing_multi_moment_integral}
over all $\sigma\colon\{1,\ldots,\ell\}\to\{1,\ldots,n\}$. 
This sum can be computed using the following lemma:

\begin{lemma}\label{lemma:about_inversions_summation}	
	Let $X_1,X_2,\ldots$ be indeterminates, $k\in\Z_{\ge1}$,
	and $n_1,\ldots,n_k\in\Z_{\ge1}$ be arbitrary. 
	We have the following identity:
	\begin{multline*}
		\sum_{\substack{1\le i_1 \le n_1,\ldots,1\le i_k\le n_k\\
		\textnormal{$(i_1,\ldots,i_k)$ pairwise distinct}}}
		X_{i_1}X_{i_2+\inv_{\le 2}}\ldots X_{i_k+\inv_{\le k}}
		\\=(X_1+\ldots+X_{n_1})
		(X_2+\ldots+X_{n_2})
		\ldots
		(X_k+\ldots+X_{n_k}),
	\end{multline*}
	where $\inv_{\le p}:=\#\{j< p\colon i_j>i_p\}$.
	By agreement, the right-hand side is 
	zero if one of the sums is empty.
\end{lemma}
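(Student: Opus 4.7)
The strategy is a bijective proof: I will exhibit a bijection between the index sets of the two sides that matches monomials term by term. Before starting I note that, for the identity to hold as stated, one implicitly needs $n_1 \le n_2 \le \cdots \le n_k$; this is automatic in the intended application in the proof of Theorem~\ref{thm:multi_moments}, where $n_p$ is the largest index $j$ with $\la_j \ge x_p - 1$ for weakly decreasing $\la$ and weakly decreasing $x$.

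Expanding the right-hand side, it equals $\sum X_{a_1} X_{a_2} \cdots X_{a_k}$ ranging over tuples $(a_1, \ldots, a_k)$ with $a_p \in \{p, p+1, \ldots, n_p\}$. On the left-hand side, setting $a_p := i_p + \inv_{\le p}$ converts each summand into $X_{a_1} \cdots X_{a_k}$. Thus it suffices to show that
\begin{equation*}
\Phi : (i_1, \ldots, i_k) \longmapsto (a_1, \ldots, a_k)
\end{equation*}
is a bijection from the LHS index set to the RHS index set.

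The central observation I plan to use is the identity
\begin{equation*}
a_p - p + 1 \;=\; i_p - \#\{j < p : i_j < i_p\} \;=\; \#\bigl\{\, y \in \Z_{\ge 1} \setminus \{i_1, \ldots, i_{p-1}\} : y \le i_p \,\bigr\},
\end{equation*}
which follows from the rewrite $\inv_{\le p} = (p-1) - \#\{j<p : i_j < i_p\}$ (using pairwise distinctness). In words, $a_p - p + 1$ is the rank from below of $i_p$ among the positive integers not used in the earlier steps. From this, the inverse map is immediate: given $(a_1, \ldots, a_k)$ with $a_p \in [p, n_p]$, one recovers $i_p$ sequentially as the $(a_p - p + 1)$-th smallest positive integer not in $\{i_1, \ldots, i_{p-1}\}$.

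It remains to verify that both directions land in the claimed sets. For the inverse, the hypothesis $n_1 \le \cdots \le n_k$ ensures $\{i_1, \ldots, i_{p-1}\} \subseteq [1, n_{p-1}] \subseteq [1, n_p]$, leaving $n_p - (p-1)$ available integers in $[1, n_p]$; since $1 \le a_p - p + 1 \le n_p - p + 1$, the requested rank exists and yields $i_p \in [1, n_p]$. The forward direction uses the same rank interpretation to give $p \le a_p \le n_p$. The only real obstacle is spotting the correct combinatorial meaning of $a_p - p + 1$; once that is in place, the bijection is transparent and the identity follows by matching monomials.
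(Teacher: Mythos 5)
Your proof is correct and rests on the same key combinatorial observation as the paper's own: that $a_p-p+1$ equals the rank of $i_p$ among the positive integers not yet used, which makes the map invertible step by step. The paper packages this as an inductive argument with the ``stacking'' picture of its Fig.~\ref{fig:bijection}, while you construct the inverse explicitly and check membership, but the content is identical. Your preliminary remark that the identity requires the tacit hypothesis $n_1\le n_2\le\cdots\le n_k$ is a genuine and worthwhile addition: the lemma as stated is false without it (already $k=2$, $n_1=3$, $n_2=2$ produces a spurious term $X_3^2$ on the left from $(i_1,i_2)=(3,2)$), and the paper's inductive step quietly writes ``$1\le i_1,\ldots,i_{k-1}\le n_{k-1}$'' where the lemma's hypotheses only give $i_j\le n_j$. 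You also correctly observe that monotonicity holds in the one place the lemma is invoked, where $n_p=\HT_\la(x_p-1)$ with $\HT_\la$ nonincreasing and $x_1\ge\cdots\ge x_\ell$. One minor point worth making explicit: the forward inclusion $a_p\le n_p$ likewise uses $\{i_1,\ldots,i_{p-1}\}\subseteq[1,n_p]$, hence monotonicity, so the forward direction is not merely ``the same rank interpretation'' but also relies on the added hypothesis.
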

\begin{proof}
	It suffices to show that the map
	\begin{align*}
		(i_1,\ldots,i_k)\mapsto(i_1,i_2+\inv_{\le2},\ldots,i_k+\inv_{\le k})
	\end{align*}
	is a bijection between the sets
	\begin{multline*}
		\{1\le i_1 \le n_1,\ldots,1\le i_k\le n_k,
		\; \textnormal{$(i_1,\ldots,i_k)$ pairwise distinct}
		\}
		\\\textnormal{and}\quad
		\{1\le j_1\le n_1,\,2\le j_2\le n_2,\ldots,k\le j_k\le n_k\}.
	\end{multline*}
	By induction, this statement will follow if 
	we show that for any pairwise distinct
	$1\le i_1,\ldots,i_{k-1}\le n_{k-1}$,
	the map
	$i_k\mapsto i_k+\inv_{\le k}$ is a bijection between
	\begin{align*}
		\{1\le j\le n_k\colon j\notin\{i_1,\ldots,i_{k-1}\}\}
		\quad\textnormal{and}\quad
		\{k,k+1,\ldots,n_k\}.
	\end{align*}
	But the latter fact is evident from Fig.~\ref{fig:bijection},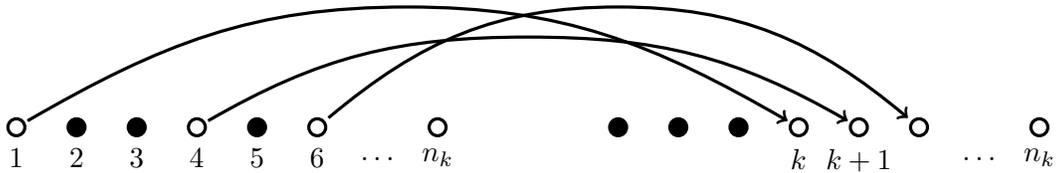
\begin{figure}[htpb]
		\begin{tikzpicture}
			[scale=.8,very thick]
			\draw (0,0) circle(4pt) node[below, yshift=-4] {$1$};
			\draw[fill] (1,0) circle(4pt) node[below, yshift=-4] {$2$};
			\draw[fill] (2,0) circle(4pt) node[below, yshift=-4] {$3$};
			\draw (3,0) circle(4pt) node[below, yshift=-4]  {$4$};
			\draw[fill] (4,0) circle(4pt) node[below, yshift=-4] {$5$};
			\draw (5,0) circle(4pt) node[below, yshift=-4] {$6$};
			\node [below, yshift=-7] at (6,0) {$\ldots$};
			\draw (7,0) circle(4pt) node[below, yshift=-4] {$n_k$};
			\draw[fill] (10,0) circle(4pt) node[below, yshift=-4] {};
			\draw[fill] (11,0) circle(4pt) node[below, yshift=-4] {};
			\draw[fill] (12,0) circle(4pt) node[below, yshift=-4] {};
			\draw (13,0) circle(4pt) node[below, yshift=-4] {$k$};
			\draw (14,0) circle(4pt) node[below, yshift=-4] {$k+1$};
			\draw (15,0) circle(4pt) node[below, yshift=-4] {};
			\node [below, yshift=-7] at (16,0) {$\ldots$};
			\draw (17,0) circle(4pt) node[below, yshift=-4] {$n_k$};
			\node (i1) at (0,0) {};
			\node (i2) at (3,0) {};
			\node (i3) at (5,0) {};
			\node (j1) at (13,0) {};
			\node (j2) at (14,0) {};
			\node (j3) at (15,0) {};
			\draw[->] (i1) [out=30,in=180] to (6.5,2) [out=0,in=150] to (j1);
			\draw[->] (i2) [out=30,in=180] to (8.5,1.5) [out=0,in=150] to (j2);
			\draw[->] (i3) [out=40,in=180] to (10,2) [out=0,in=140] to (j3);
		\end{tikzpicture}
		\caption{A bijection used in the proof of Lemma \ref{lemma:about_inversions_summation}.}
		\label{fig:bijection}
	\end{figure}
	as the map $i_k\mapsto i_k+\inv_{\le k}$
	simply corresponds to stacking together the elements of
	$\{1\le j\le n_k\colon j\notin\{i_1,\ldots,i_{k-1}\}\}$.
\end{proof}

By Lemma \ref{lemma:computing_multi_moment_integral},
the right-hand side of \eqref{computing_multi_moment_integral}
takes the form
\begin{multline*}
	\expancoeff{\la}=q^{-\ell}(1-q)^{\ell}(-s)^{|\la|}
	\sum_{\substack{\sigma(1),\ldots,\sigma(\ell)\in\{1,\ldots,n\}\\
	\textnormal{pairwise distinct}}}
	q^{\sigma(1)+\ldots+\sigma(\ell)+\inv(\sigma)}\mathbf{1}_{\textnormal{$\la_{\sigma(i)}\ge x_i-1$ for all $i$}}
	\\=q^{-\ell}(1-q)^{\ell}(-s)^{|\la|}
	\sum_{\substack{1\le \sigma(1)\le \HT_{\la}(x_1-1),\ldots,1\le \sigma(\ell)\le \HT_{\la}(x_\ell-1)
	\\\textnormal{pairwise distinct}}}q^{\sigma(1)+\ldots+\sigma(\ell)+\inv(\sigma)}.
\end{multline*}
where we have recalled that the height function
is defined as $\HT_{\la}(x-1)=\max\{j\colon \la_j\ge x-1\}$.
We can now apply Lemma \ref{lemma:about_inversions_summation}
with $X_i=q^{i}$, and conclude that the above sum 
factorizes as
\begin{align}\label{expancoeff_computed}
	\expancoeff{\la}=q^{-\ell}(1-q)^{\ell}(-s)^{|\la|}\prod_{i=1}^{\ell}
	(q^{i}+q^{2}+\ldots+q^{\HT_\la(x_i-1)})=
	(-s)^{|\la|}\prod_{i=1}^{\ell}
	(q^{i-1}-q^{\HT_\la(x_i-1)}).
\end{align}
Therefore, we have finally computed the right-hand side of \eqref{multi_moments_rid_of_around_0},
and it is equal to 	
\begin{align}
	\sum_{\la\in\signp n}
	(-s)^{|\la|}
	\F_\la^{\conj}(u_1,\ldots,u_n)
	\prod_{i=1}^{\ell}
	(q^{i-1}-q^{\HT_\la(x_i-1)})
	=
	\sum_{\la\in\signp n}
	\prod_{i=1}^{\ell}
	(q^{i-1}-q^{\HT_\la(x_i)})
	\MM_{\UU;\RHO}(\la)\label{expancoeff_computed2}
\end{align}
(because $\MM_{\UU;\RHO}$ is given by \eqref{MM_U_RHO_particular}),
which is the same as the left-hand side of \eqref{multi_moments_rid_of_around_0} by the very definition. 
Identity \eqref{multi_moments_rid_of_around_0} is thus established
for $q$ close to $1$ and the $u_i$'s close to $s$.
However, as both sides of this identity are rational functions
in the $u_i$'s and $q$ (cf. Lemma \ref{lemma:qmom_rational} for the left-hand side and
formula \eqref{R_function_definition} for the right-hand side),
we conclude that these restrictions can be dropped as long as the sum over $\la$ 
in \eqref{expancoeff_computed2} converges. 
This implies Theorem \ref{thm:multi_moments}.



\section{Degenerations of moment formulas} 
\label{sec:degenerations_of_moment_formulas}

Here we apply $q$-moment formulas from 
\S \ref{sec:_q_moments_of_the_height_function_of_interacting_particle_systems}
to rederive 
$q$-moment formulas for the
stochastic six vertex model, the ASEP, 
$q$-Hahn, and $q$-Boson systems 
obtained earlier in the
literature (see references below).

\subsection{Moment formulas for the stochastic six vertex model and the ASEP} 
\label{sub:moments_of_six_vertex_model_and_asep}

Recall the stochastic six vertex model described in \S \ref{sub:asep_degeneration}. 
That is, we consider the 
dynamics $\Xp_{\{u_t\};\RHO}$ 
in which at each discrete time step, a new particle 
is born at location $1$ (\S \ref{sub:interacting_particle_systems}).
For this dynamics to be an honest Markov process (i.e., with nonnegative transition probabilities), 
we require that
\begin{align}\label{six_vertex_nontrivial_nonnegativity}
	\textnormal{$0<q<1$,\quad and \quad $u_i>q^{-\frac12}$.}
\end{align}
(Another range with $q>1$ will lead to a trivial limit shape
for the stochastic six vertex model, see the discussion in \S \ref{sub:asep_degeneration}.)
\begin{corollary}\label{cor:six_vertex_moments}
	Assume that \eqref{six_vertex_nontrivial_nonnegativity} holds. 
	Moreover, let $u_i\ne q u_j$ for any $i,j=1,\ldots,n$. 
	Then the $q$-moments of the height function of the stochastic
	six vertex model 
	$\Xp_{\{u_t\};\RHO}$ at $\text{time}=n$ are given by
	\begin{multline}\label{six_vertex_moments}
		\E_{\UU;\RHO}^{\textnormal{six vertex}}
		\prod_{i=1}^{\ell}q^{\HT_{\nu}(x_i)}=
		q^{\frac{\ell(\ell-1)}2}
		\oint\limits_{\contnz {\bar\UU}1}\frac{d w_1}{2\pi\i}
		\ldots
		\oint\limits_{\contnz {\bar\UU}\ell}\frac{d w_\ell}{2\pi\i}
		\prod_{1\le \aind<\bind\le \ell}\frac{w_\aind-w_\bind}{w_\aind-qw_\bind}
		\\\times
		\prod_{i=1}^{\ell}\bigg(
		w_i^{-1}
		\left(\frac{1-q^{-\frac12}w_i}{1-q^{\frac12}w_i}\right)^{x_i-1}
		\prod_{j=1}^{n}\frac{1-qu_jw_i}{1-u_jw_i}
		\bigg)
	\end{multline}
	for any $\ell\in\Z_{\ge1}$ and $x_1\ge \ldots\ge x_\ell\ge1$.
	The integration contours above are as in Definition \ref{def:circular_contours_around_0}.
\end{corollary}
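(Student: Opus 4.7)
The plan is to derive Corollary \ref{cor:six_vertex_moments} as a direct specialization of Theorem \ref{thm:multi_moments}, using the identification (discussed in \S\ref{sub:asep_degeneration}) between the stochastic six vertex model and the dynamics $\Xp_{\{u_t\};\RHO}$ at $s=q^{-\frac12}$. Indeed, for $s^2=q^{-1}$ the vertical arrow multiplicities get capped at $1$, and the stochastic weights $\Lmatr_u(i_1,j_1;i_2,j_2)$ with $i_{1,2},j_{1,2}\in\{0,1\}$ reduce to the six-vertex weights displayed in Fig.~\ref{fig:six_vertex}. Under this specialization one has
\[
\left(\frac{1-sw_i}{1-s^{-1}w_i}\right)^{x_i-1}=\left(\frac{1-q^{-\frac12}w_i}{1-q^{\frac12}w_i}\right)^{x_i-1},
\]
so the integrand of \eqref{multi_moments} becomes exactly that of \eqref{six_vertex_moments}, and the rest of the formula (contour geometry, cross product, factors $w_i^{-1}\prod_j(1-qu_jw_i)/(1-u_jw_i)$) is unchanged.

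The one thing to check is that the specialization $s=q^{-\frac12}$ is legitimate, since Theorem \ref{thm:multi_moments} was proved under $s\in(-1,0)$ and $u_i>0$ (conditions \eqref{stochastic_weights_condition_qsxi}--\eqref{stochastic_weights_condition_u}), which are not literally satisfied here. I would argue this by rationality/analytic continuation. Both sides of \eqref{multi_moments} are rational functions in $(s,u_1,\ldots,u_n,q)$: the right-hand side is manifestly rational, and the left-hand side is rational by (the proof of) Lemma \ref{lemma:qmom_rational}, whose path-collection splitting argument applies verbatim to the six-vertex range (in that range every term is in $[0,1]$, and a single height value $\HT_\nu(x_1)\in\{0,\ldots,n\}$ leaves only finitely many free coordinates of $\nu$, the remaining infinite path-collection sums being identically $1$ by stochasticity). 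Since the identity of rational functions is already established on the nonempty open set of parameters used in \S\ref{sec:_q_moments_of_the_height_function_of_interacting_particle_systems}, it persists to any specialization where both sides are well-defined.

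The remaining verification is geometric: the contours of Definition \ref{def:circular_contours_around_0} depend only on $\bar\UU$, and need $u_i>0$, $u_i\ne qu_j$, and the point $s$ to lie outside $\contn{\bar\UU}$. With $s=q^{-\frac12}$ and $u_i>q^{-\frac12}$ we have $u_i^{-1}<q^{\frac12}<q^{-\frac12}=s$, so shrinking each $\contn{\bar\UU}$ component tightly enough around the $u_i^{-1}$ indeed keeps $s$ outside; the circles $r^j c_0$ around $0$ similarly can be chosen small enough to avoid $s$. Thus the right-hand side represents the same rational function in either parameter range, and the six-vertex moment identity follows.

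The main obstacle, as always with such degenerations, is the analytic-continuation step: one must be sure that the $q$-moments on the left are genuinely rational in the relevant parameters, rather than merely being defined as convergent sums, so that the algebraic identity coming from Theorem \ref{thm:multi_moments} survives leaving the original parameter domain. Once the rationality statement of Lemma \ref{lemma:qmom_rational} is seen to hold in the six-vertex regime (which boils down to the fact that $q^{\HT_\nu(x)}\in[0,1]$ and that conditioning on the values $\HT_\nu(x_i)$ leaves only a finite convex combination of stochastic weights, the rest summing to $1$), the corollary follows without further work.
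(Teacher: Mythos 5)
Your proposal is correct and follows essentially the same route as the paper's own proof: specialize Theorem \ref{thm:multi_moments} at $s=q^{-1/2}$, appeal to rationality of both sides (the paper also invokes Lemma \ref{lemma:qmom_rational} for the left-hand side and the manifest rationality of the integral for the right), and check that the conditions of \eqref{six_vertex_nontrivial_nonnegativity} force $u_i^{-1}<s^{-1}<s$ so the contours $\contnz{\bar\UU}j$ still exist, pick up the same residues, and leave $s$ outside. Your remark that the admissibility \eqref{admissibility_RHO_conditions} holds in the six-vertex regime (so the measure $\MM_{\UU;\RHO}$ and hence the rationality argument make sense) is exactly the extra check the paper's proof also singles out, so no gap remains.
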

This formula is essentially
equivalent to \cite[Thm.\;4.12]{BCG6V}, which was proven by a different method.
\begin{proof}
	The claim follows
	from Theorem \ref{thm:multi_moments} because
	both sides of the identity \eqref{multi_moments} 
	are rational functions in all parameters,
	and, moreover, 
	the integrations in the right-hand sides of \eqref{multi_moments} and \eqref{six_vertex_moments}
	are sums over the same sets of residues. Indeed, our conditions
	\eqref{six_vertex_nontrivial_nonnegativity} imply that 
	$u_i^{-1}<s^{-1}<s$ (where $s=q^{-\frac12}$), and 
	so the contours 
	$\contnz {\bar\UU}j$ exist and yield the same residues. Note also that even though now 
	$s=q^{-1/2}>1$
	instead of belonging to $(-1,0)$,
	conditions \eqref{admissibility_RHO_conditions} 
	(ensuring the existence of the measure $\MM_{\UU;\RHO}$)
	readily follow from \eqref{six_vertex_nontrivial_nonnegativity}.
\end{proof}

Let us now consider the continuous time limit of the stochastic six vertex model
to the ASEP. In \S \ref{sub:asep_degeneration} we have 
described this limit in the case of a fixed number of particles, 
but the dynamics $\Xp_{\{u_t\};\RHO}$ (in which at each time step 
a new particle is born at location $1$) in this limit
also produces a meaningful initial condition for the ASEP.
Indeed, setting $u_i= q^{-\frac12}+(1-q)q^{-\frac12}\epsilon$,
we see that for $\epsilon=0$ the configuration $\la\in\signp n$
of the stochastic six vertex model at $\text{time}=n$ is simply
$\la=(n,n-1,\ldots,1)$. For small $\epsilon>0$, at times $n=\lfloor t\epsilon^{-1}\rfloor$, the configuration of the 
stochastic six vertex model will be a finite perturbation of 
$(n,n-1,\ldots,1)$ near the diagonal. Thus, shifting the lattice coordinate 
as $\la_i=n+1+y_i$ with $y_i\in\Z$, we see that in the limit $\epsilon\searrow0$ 
the initial condition for the ASEP 
becomes $y_1(0)=-1$, $y_2(0)=-2,\ldots$, which is known as the \emph{step initial configuration}.

\begin{corollary}\label{cor:ASEP_moments}
	For $0<q<1$, any $\ell\in\Z_{\ge1}$, and $(x_1\ge \ldots\ge x_\ell)\in\Z^{\ell}$,
	the $q$-moments of the height function of the ASEP
	$\HT_{\textnormal{ASEP}}(x):=\#\{i\colon y_i\ge x\}$
	started from the step initial configuration
	$y_i(0)=-i$
	are given by
	\begin{multline}\label{multi_ASEP_moments}
		\E^{\textnormal{ASEP, step}}\prod_{i=1}^{\ell}
		q^{\HT_{\textnormal{ASEP}}(x_i)}
		=
		q^{\frac{\ell(\ell-1)}2}
		\oint\limits_{\contnz {\sqrt q}1}\frac{d w_1}{2\pi\i}
		\ldots
		\oint\limits_{\contnz {\sqrt q}\ell}\frac{d w_\ell}{2\pi\i}
		\prod_{1\le \aind<\bind\le \ell}\frac{w_\aind-w_\bind}{w_\aind-qw_\bind}
		\\\times
		\prod_{i=1}^{\ell}\bigg(
		w_i^{-1}
		\bigg(\frac{1-q^{-\frac12}w_i}{1-q^{\frac12}w_i}\bigg)^{x_i}
		\exp\Big\{-\frac{(1-q)^{2}}{(1-q^{\frac12}w_i)(1-q^{\frac12}/w_i)}\,t\Big\}
		\bigg),
	\end{multline}
	for any time $t\ge0$. 
	Each integration contour
	$\contnz {\sqrt q}j$ consists of 
	two positively oriented circles --- one is a small circle around $\sqrt q$, and 
	another one is a circle $r^{j}c_0$ around zero as in Definition \ref{def:circular_contours_around_0}.
\end{corollary}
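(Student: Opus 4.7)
The strategy is to take the $\epsilon\searrow0$ limit of the six vertex formula of Corollary \ref{cor:six_vertex_moments} under the scaling
\begin{align*}
s=q^{-\frac12},\qquad u_j\equiv u=q^{-\frac12}+(1-q)q^{-\frac12}\epsilon,\qquad n=\lfloor t\epsilon^{-1}\rfloor,
\end{align*}
and the lattice shift $\la_i=n+1+y_i$ relating six vertex coordinates to ASEP coordinates. Under this scaling, as discussed in \S\ref{sub:asep_degeneration}, the discrete time stochastic six vertex process on $\signp n$ started from the empty configuration (all particles created at location $1$) converges, after the deterministic shift by $n+1$, to the ASEP from step initial data $y_i(0)=-i$ at time $t$. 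In particular $\HT_\nu(n+1+x)$ for the six vertex model coincides with $\HT_{\textnormal{ASEP}}(x)$ before and after the limit, so assuming uniform integrability (which follows from the trivial bound $\prod_i q^{\HT_\nu(x_i)}\le 1$ since $0<q<1$), the left-hand sides match.

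On the integral side, I would first substitute $x_i^{\textnormal{sv}}=n+1+x_i$ into \eqref{six_vertex_moments} and regroup the $n$-dependent factors:
\begin{align*}
\bigg(\frac{1-q^{-\frac12}w_i}{1-q^{\frac12}w_i}\bigg)^{n+x_i}\bigg(\frac{1-quw_i}{1-uw_i}\bigg)^n=\bigg(\frac{1-q^{-\frac12}w_i}{1-q^{\frac12}w_i}\bigg)^{x_i}\left[\frac{(1-q^{-\frac12}w_i)(1-quw_i)}{(1-q^{\frac12}w_i)(1-uw_i)}\right]^n.
\end{align*}
The bracketed ratio equals $1+\epsilon\, g(w_i)+O(\epsilon^2)$ as $\epsilon\to 0$, where a direct computation using $q^{-\frac12}-q^{\frac12}=q^{-\frac12}(1-q)$ gives
\begin{align*}
g(w)=\frac{(1-q)^2\,q^{-\frac12}w}{(1-q^{\frac12}w)(1-q^{-\frac12}w)}=-\frac{(1-q)^2}{(1-q^{\frac12}w)(1-q^{\frac12}/w)},
\end{align*}
the last equality coming from $1-q^{-\frac12}w=-q^{-\frac12}w(1-q^{\frac12}/w)$. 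Raising to the power $n=\lfloor t/\epsilon\rfloor$ yields the exponential factor appearing in \eqref{multi_ASEP_moments}.

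For the contours: with all $u_j$ equal to $u$, the ``cluster'' contour $\contn{\bar\UU}$ of Definition \ref{def:cont_gat_U} is a single small positively oriented circle around $u^{-1}\to q^{\frac12}$; the small circles $r^jc_0$ around $0$ are unchanged (for the chosen $r>q^{-1}$ they can be kept fixed throughout the limit). Thus the nested contours $\contnz{\bar\UU}j$ converge to $\contnz{\sqrt q}j$ as in the statement. The remaining factors $w_i^{-1}$, the cross term $\prod_{a<b}(w_a-w_b)/(w_a-qw_b)$, and the prefactor $q^{\ell(\ell-1)/2}$ carry through unchanged.

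The only nontrivial step is justifying the exchange of $\epsilon\to 0$ and contour integration. For each fixed $\epsilon>0$ small enough the integrand is a product of bounded meromorphic factors on the compact contours (the inner contours of $\contnz{\bar\UU}j$ are at bounded distance from the singularities $u^{-1}$ and $q^{-1}u^{-1}$, uniformly in $\epsilon$ small), and the $n$-th power factor converges uniformly on the contours by standard $(1+\epsilon g)^{t/\epsilon}\to e^{tg}$ estimates with $g$ bounded on the contour. Dominated convergence then yields the formula. The process-level convergence of the shifted six vertex model to ASEP with step initial data (which is needed to identify the two sides) is well-documented in the stochastic six vertex / ASEP literature, cf.\ \cite{BCG6V}, so the hardest part is really this uniform convergence of the integrand, which is however routine. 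Finally, extension from integer $x_i$ with $x_1\ge\ldots\ge x_\ell\ge 1$ to all integer $x_1\ge\ldots\ge x_\ell$ follows by analytic continuation or by noting that the ASEP moments and the integral formula are both well-defined for arbitrary $x_i\in\Z$ and agree on a shifted copy.
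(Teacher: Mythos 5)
Your proof matches the paper's: both substitute $u_i=q^{-1/2}+(1-q)q^{-1/2}\epsilon$, $n=\lfloor t\epsilon^{-1}\rfloor$, and the shifted $x_i$ into the six-vertex moment formula of Corollary~\ref{cor:six_vertex_moments}, combine the $n$-dependent factors into an expression of the form $\big(1+\epsilon\,g(w)+O(\epsilon^2)\big)^{\lfloor t/\epsilon\rfloor}$, and use the identity $1-q^{-1/2}w=-q^{-1/2}w(1-q^{1/2}/w)$ to identify $g$ and produce the exponential. Your slightly different algebraic regrouping (collecting the $n$-th powers into a single bracket before expanding) and the extra remarks on the contour limit and on exchanging $\epsilon\to 0$ with the integration are consistent with, and a bit more explicit than, the paper's terser write-up, but the route is the same.
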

\begin{proof}
	Start with the moment formula 
	\eqref{six_vertex_moments} for 
	$u_i= q^{-\frac12}+(1-q)q^{-\frac12}\epsilon$,
	$x_i=n+1+x_i'$, 
	and $n=\lfloor t \epsilon^{-1}\rfloor$. Here $x_i'\in\Z$ are the shifted labels of moments.
	As $\epsilon\searrow0$,
	the contours $\contnz{\bar\UU}j$
	will still contain the same parts around zero, and the 
	part $\contn{\bar\UU}$ encircling the $u_i^{-1}$'s will turn into a small circle 
	$\contn{\sqrt{q}}$
	around~$\sqrt{q}$
	(because $\sqrt{q}$ is the limit of $u^{-1}$ as $\epsilon\searrow0$).
	Let us now look at the integrand. We have
	\begin{align*}
		\bigg(\frac{1-q^{-\frac12}w}{1-q^{\frac12}w}\bigg)^{x_i-1}\bigg(\frac{1-quw}{1-uw}\bigg)^{n}
		&=
		\bigg(\frac{1-q^{-\frac12}w}{1-q^{\frac12}w}\bigg)^{n+x_i'}
		\bigg(
		\frac{1-q^{\frac12}w}{1-q^{-\frac12}w}-
		\epsilon\,\frac{(1-q)^{2}}{(1-q^{-\frac12}w)(1-q^{\frac12}/w)}+O(\epsilon^{2})
		\bigg)^{n}
		\\&=
		\bigg(\frac{1-q^{-\frac12}w}{1-q^{\frac12}w}\bigg)^{x_i'}
		\bigg(
		1-\epsilon\,\frac{(1-q)^{2}}{(1-q^{\frac12}w)(1-q^{\frac12}/w)}+O(\epsilon^{2})
		\bigg)^{\lfloor t \epsilon^{-1}\rfloor}.
	\end{align*}
	In the limit as $\epsilon\searrow0$, the second factor turns into the 
	exponential. 
	Thus, renaming $x_i'$ back to $x_i$, we arrive at the desired claim.
\end{proof}

When $x_1=\ldots=x_\ell$, formula \eqref{multi_ASEP_moments} essentially coincides with 
the one obtained in \cite[Thm.\;4.20]{BorodinCorwinSasamoto2012} using 
duality. The multi-point generalization \eqref{multi_ASEP_moments}
of that formula seems to be new.

The paper \cite{BorodinCorwinSasamoto2012}
also deals with other multi-point observables. Namely, denote 
\begin{align*}
	\tilde Q_{x}:=
	q^{\HT_{\textnormal{ASEP}}(x+1)}
	\mathbf{1}_{\textnormal{there is a particle at location $x$}}.
\end{align*}
The expectations 
$\E^{\textnormal{ASEP}}\big(\tilde Q_{x_1}\ldots \tilde Q_{x_k}\big)$,
$x_1> \ldots>x_k$
(for the step, and in fact also for the step-Bernoulli initial conditions),
were computed in \cite[Cor.\;4.14]{BorodinCorwinSasamoto2012}.
The duality statement pertaining to these observables
dates back to \cite{schutz1997dualityASEP}.
Then the expectation of $q^{\ell\,\HT_{\textnormal{ASEP}}(x)}$ was recovered
from these multi-point observables \cite[Thm.\;4.20]{BorodinCorwinSasamoto2012}.

Note that for the ASEP, expectations of 
$\tilde Q_{x_1}\ldots \tilde Q_{x_k}$ are essentially the same as the 
$q$-correlation functions (\S \ref{sub:_q_correlation_functions}). 
In fact, our proof
of Proposition \ref{prop:one_moment} 
(recovering one-point $q$-moments from the $q$-correlation functions)
somewhat mimics the ASEP approach mentioned above, 
but dealing with a higher spin system introduces the need for 
the more complicated observables \eqref{q_def_correlation_thing}.


\subsection{Moment formulas for $q$-Hahn and $q$-Boson systems} 
\label{sub:moments_of_q_hahn_and_q_boson_systems}

We will now consider the $q$-Hahn particle system
$\Xih$ depending on $J\in\Z_{\ge1}$ 
which starts from infinitely many particles at $1$.
As explained in \S \ref{ssub:_q_hahn_particle_system},
this $q$-Hahn system
is obtained from the process
$\Xp_{\{u_t\};\RHO}$
by taking
the $\UU$ parameters
\begin{align*}
	(u_1,u_2,\ldots)=
	(s,qs,\ldots,q^{K-1}s,s,qs,\ldots,q^{J-1}s,s,qs,\ldots,q^{J-1}s,\ldots),
\end{align*}
and sending $K\to+\infty$. When $J$ is an integer, 
the $q$-Hahn system has nonnegative transition probabilities for $s^{2}\le0$.
This system can also be analytically continued to generic values of the parameter
$q^{J}$.

\begin{corollary}\label{cor:qHahn_moments}
	Let $J\in\Z_{\ge1}$ and $s^{2}<0$ for all $j$.
	Then
	for any $\ell\in\Z_{\ge1}$ and $x_1\ge \ldots\ge x_\ell\ge1$,
	the moments of $\Xih$ at $\text{time}=n$ have the form:
	\begin{multline}\label{qHahn_moments}
		\E^{\textnormal{$q$-Hahn}}\prod_{i=1}^{\ell}q^{\HT_{\nu}(x_i)}=
		(-1)^{\ell}q^{\frac{\ell(\ell-1)}2}
		\oint\limits_{\contq{s}1}\frac{d w_1}{2\pi\i}
		\ldots
		\oint\limits_{\contq{s}\ell}\frac{d w_\ell}{2\pi\i}
		\prod_{1\le \aind<\bind\le \ell}\frac{w_\aind-w_\bind}{w_\aind-qw_\bind}
		\\\times
		\prod_{i=1}^{\ell}\bigg(
		\frac{1}{w_i(1-sw_i)}
		\left(\frac{1-sw_i}{1-s^{-1}w_i}\right)^{x_i-1}
		\left(\frac{1-q^{J}sw_i}{1-sw_i}\right)^{n}
		\bigg),
	\end{multline}
	where the integration contours 
	$\contq{s}j$ are $q$-nested around $s$
	and leave $0$ and $s^{-1}$ outside.
\end{corollary}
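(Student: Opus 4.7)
The plan is to derive Corollary \ref{cor:qHahn_moments} from Theorem \ref{thm:multi_moments} by specializing the parameters $\UU$ to the sequence that degenerates $\Xp_{\{u_t\};\RHO}$ to $\Xih$, and then deforming the integration contours. Concretely, I would take
$$\UU = \bigl(s,qs,\ldots,q^{K-1}s,\; \underbrace{s,qs,\ldots,q^{J-1}s,\;\ldots,\;s,qs,\ldots,q^{J-1}s}_{n\ \text{blocks}}\bigr),$$
so that (as described before \eqref{qHahn_distr_infinity}) the process $\Xp_{\{u_t\};\RHO}$ at time $K+Jn$ coincides with $\Xih$ at time $n$ on the portion of the configuration away from location $1$, and this equality persists in the limit $K\to+\infty$ since $|q|<1$. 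Because the hypothesis $u_i\ne qu_j$ of Theorem \ref{thm:multi_moments} fails for this choice, I would first apply the theorem to a small generic perturbation and then pass to the specialization using that both sides are rational functions of the parameters (Lemma \ref{lemma:qmom_rational}).

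The key algebraic simplification is the telescoping identity
$$\prod_{j=0}^{m-1}\frac{1-q^{j+1}sw_i}{1-q^j sw_i}=\frac{1-q^m sw_i}{1-sw_i},$$
applied once with $m=K$ to the initial block and $n$ times with $m=J$ to the subsequent blocks. Sending $K\to+\infty$, the first factor tends to $(1-sw_i)^{-1}$, so the double product $\prod_{i,j}(1-qu_jw_i)/(1-u_jw_i)$ appearing in Theorem \ref{thm:multi_moments} collapses to
$$\prod_{i=1}^{\ell}\frac{1}{1-sw_i}\left(\frac{1-q^{J}sw_i}{1-sw_i}\right)^{n},$$
which is exactly the integrand displayed in \eqref{qHahn_moments}.

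It remains to convert the contours $\contnz{\bar\UU}j$ of Theorem \ref{thm:multi_moments} (or equivalently the $q^{-1}$-nested contours of Corollary \ref{cor:multi_moments_fused}) into the $q$-nested contours $\contq{s}j$ around $s$. After the specialization and the $K\to\infty$ limit the integrand becomes a rational function whose only poles are at $w_i\in\{0,s,s^{-1}\}$, and a direct estimate shows it decays like $w_i^{-2}$ at infinity, so it has no residue at $\infty$. The starting contour for each variable encircles $\{0,s^{-1}\}$ and excludes $s$; dragging it through infinity reverses its orientation and replaces the enclosed set by $\{s\}$, contributing a factor of $(-1)$ per variable and thus the overall prefactor $(-1)^\ell$ in \eqref{qHahn_moments}. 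Under the same deformation, the $q^{-1}$-nesting inherited from $\contqi{s^{-1}}j$ inverts into $q$-nesting, producing the contours $\contq{s}j$ of Definition~\ref{def:orthogonality_contours}.

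The main obstacle is the last step: the cross-factors $\prod_{\aind<\bind}(w_\aind-w_\bind)/(w_\aind-qw_\bind)$ introduce secondary poles at $w_\aind=qw_\bind$, and one must check that the simultaneous deformation of all $\ell$ nested contours through infinity avoids these poles. This is where the precise $q^{\pm 1}$-nesting of the contours enters: it is designed so that, before and after the deformation, no pair $(w_\aind,w_\bind)$ ever lies on the secondary pole locus. The small pieces $r^j c_0$ encircling zero are absorbed using that $0$ is only a simple pole of the integrand (coming from $w_i^{-1}$), which makes the bookkeeping clean and completes the argument.
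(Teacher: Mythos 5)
Your overall plan mirrors the paper's argument: specialize $\UU$ to a $q$-geometric progression, telescope the Cauchy factors, send $K\to\infty$ so that the extra factor $(1-s w_i)^{-1}$ kills the residue at $\infty$, and convert the $q^{-1}$-nested contours (plus pieces around $0$) into negatively oriented $q$-nested contours around $s$ by dragging through infinity. The telescoping, the $O(w_i^{-2})$ decay at $\infty$, and the $(-1)^\ell$ bookkeeping are all correct, as is the observation that one must pass through a generic perturbation and use rationality (Lemma \ref{lemma:qmom_rational}).

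What you omit — and it is exactly the step you flag as ``the main obstacle'' without resolving — is the paper's key device: an auxiliary parameter $s'\ne s$. The paper specializes $\UU$ to powers of $q$ times $s'$, takes $K\to\infty$, drags the contours $\contqiz{\bar\UU}\ell,\ldots,\contqiz{\bar\UU}1$ through infinity one at a time with $s'$ still generic, and \emph{only then} sets $s'=s$. Keeping $s'\ne s$ during the deformation keeps the Cauchy-factor poles (at $1/s'$) cleanly separated from the $x_i$-factor's pole at $s$ and its zero at $1/s$, so the final contours around $s$ — leaving $0$, $1/s$, and $1/s'$ outside — are manifestly well-defined, and the step-by-step drag through infinity visibly avoids the cross-poles $w_\aind=qw_\bind$. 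If you instead put $s'=s$ from the start, the Cauchy-factor pole at $u_j^{-1}=1/s$ collides with the zero of $(1-sw_i)^{x_i-1}$, producing a merged pole at $1/s$ whose order is $x_i$-dependent (and which may disappear entirely when $x_i>n+1$); the contours of Corollary \ref{cor:multi_moments_fused} are defined as small circles around the $u_j^{-1}$ points, and when those points sit on a zero of the integrand it is no longer automatic that the integral over those circles equals the residue sum you need. You correctly identify that the $q^{\pm 1}$-nesting controls the cross-poles, but you do not address this merged-pole issue; the paper's $s'$-trick dispatches both concerns at once by postponing the specialization until after the contours are already in their final position. This is a gap in rigor rather than an error of strategy, but it is precisely the point where the paper's proof does extra work that yours does not.
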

This formula is equivalent to the one obtained in 
\cite{Corwin2014qmunu} using duality.
\begin{proof}
	First, note that if $x_\ell=1$, then the integrand has no $w_\ell$-poles
	inside the corresponding integration contour
	and thus vanishes, as it should be for the left-hand side of \eqref{qHahn_moments} because $\HT_{\nu}(1)=+\infty$.

	Since before the $K\to+\infty$
	limit the parameters $\UU$ have the form 
	\eqref{U_fused_for_contours}, we must use Corollary \ref{cor:multi_moments_fused}
	instead of Theorem \ref{thm:multi_moments}, and 
	take the integration contours to be 
	$\contqiz{\bar\UU}j$, $j=1,\ldots,\ell$. Let us first take
	\begin{align*}
		(u_1,u_2,\ldots)=
		(s',qs',\ldots,q^{K-1}s',s',qs',\ldots,q^{J-1}s',s',qs',\ldots,q^{J-1}s',\ldots),
	\end{align*}
	for some other parameter $s'\ne s$, and take the limit as $K\to+\infty$. Then 
	the integration contours 
	$\contqiz{\bar\UU}j$
	will be $q^{-1}$-nested around $1/s'$, leave $s$ and $1/s$ outside,
	contain parts $r^{j}c_0$
	around zero (cf. Definition \ref{def:circular_contours_around_0}),
	and will not change as $K\to+\infty$.
	The limiting integrand will contain
	the product
	\begin{align*}
		\prod_{i=1}^{\ell}\frac{1}{1-s'w_i},
	\end{align*}
	which means that after taking the limit 
	$K\to+\infty$ the integrand will be regular at infinity. 
	
	Thus, one can
	drag the integration contours 
	$\contqiz{\bar\UU}\ell,\ldots,\contqiz{\bar\UU}1$
	(in this order)
	through infinity, and they turn into $q$-nested and negatively oriented
	contours 
	$\contq{s}j$
	around $s$, which leave $1/s$, $1/s'$, and $0$ outside.
	These contours allow to set $s'=s$, which brings us to
	the desired formula.
\end{proof}

\smallskip

Let us now turn to the stochastic $q$-Boson system which is obtained from the
$q$-Hahn process by setting 
$J=1$ and $s^{2}=-\epsilon$ and speeding 
up the time by a factor of $\epsilon^{-1}$
(see \S \ref{ssub:qBoson}).
\begin{corollary}\label{cor:qBoson_moments}
	The $q$-moments of the height function 
	of the $q$-Boson process (started with infinitely many particles at $1$) 
	have the form
	\begin{multline}\label{qBoson_moments}
		\E^{\textnormal{$q$-Boson}}\prod_{i=1}^{\ell}q^{\HT_{\nu}(x_i)}=
		(-1)^{\ell}q^{\frac{\ell(\ell-1)}2}
		\oint\limits_{\contq{-1}1}\frac{d w_1}{2\pi\i}
		\ldots
		\oint\limits_{\contq{-1}\ell}\frac{d w_\ell}{2\pi\i}
		\prod_{1\le \aind<\bind\le \ell}\frac{w_\aind-w_\bind}{w_\aind-qw_\bind}
		\prod_{i=1}^{\ell}
		\frac{e^{(1-q)tw_i}}{w_i(1+w_i)^{x_i-1}},
	\end{multline}
	where $t\ge0$ is the time and
	$\ell\in\Z_{\ge1}$ and $x_1\ge x_2\ge \ldots\ge x_\ell\ge1$ are arbitrary.
	The integration contours 
	$\contq{-1}j$
	are $q$-nested around $-1$
	and do not contain $0$.
\end{corollary}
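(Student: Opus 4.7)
The plan is to derive \eqref{qBoson_moments} as the continuous-time scaling limit of the $q$-Hahn moment formula \eqref{qHahn_moments} described in \S\ref{ssub:qBoson}: set $J=1$, take $s^{2}=-\epsilon$, scale the time as $n=\lfloor t\epsilon^{-1}\rfloor$, and send $\epsilon\searrow 0$. By construction of the $q$-Boson limit in \S\ref{ssub:qBoson}, the left-hand side of \eqref{qHahn_moments} converges to $\E^{\textnormal{$q$-Boson}}\prod_{i=1}^{\ell}q^{\HT_{\nu}(x_i)}$ (with the step initial condition $x_i(0)=-i$, i.e.\ infinitely many particles at $1$ in the zero-range language), so the task is to identify the limit of the right-hand side.

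First I would change variables $w_i=-s\,\tilde w_i$ in each integration variable, chosen so that $\tilde w_i$ stays of order $1$ as $\epsilon\to 0$. Then $sw_i=\epsilon\tilde w_i$ and $s^{-1}w_i=-\tilde w_i$, so the Cauchy kernel $\prod_{\aind<\bind}\tfrac{w_\aind-w_\bind}{w_\aind-qw_\bind}$ is invariant (the factors of $-s$ cancel), the measure collapses to
\begin{align*}
\frac{dw_i}{w_i(1-sw_i)}=\frac{d\tilde w_i}{\tilde w_i(1-\epsilon\tilde w_i)}\longrightarrow \frac{d\tilde w_i}{\tilde w_i},
\end{align*}
the spatial factor becomes $\bigl(\tfrac{1-\epsilon\tilde w_i}{1+\tilde w_i}\bigr)^{x_i-1}\to(1+\tilde w_i)^{-(x_i-1)}$, and a direct expansion $\log\tfrac{1-q\epsilon\tilde w_i}{1-\epsilon\tilde w_i}=(1-q)\epsilon\tilde w_i+O(\epsilon^{2})$ gives
\begin{align*}
\bigg(\frac{1-qsw_i}{1-sw_i}\bigg)^{n}=\bigg(\frac{1-q\epsilon\tilde w_i}{1-\epsilon\tilde w_i}\bigg)^{\lfloor t\epsilon^{-1}\rfloor}\longrightarrow e^{(1-q)t\tilde w_i}.
\end{align*}
The outer prefactor $(-1)^{\ell}q^{\ell(\ell-1)/2}$ is unaffected. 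The contours transform equally cleanly: $\contq{s}j$ is $q$-nested around $s$ with $0$ and $s^{-1}$ outside, so under $\tilde w_i=-w_i/s$ it becomes a contour around $-1$ with $0$ outside and with the image $-s^{-2}=\epsilon^{-1}$ of $s^{-1}$ pushed to infinity. Because $w\mapsto qw$ translates verbatim to $\tilde w\mapsto q\tilde w$, the $q$-nesting is preserved, and the limit contours are precisely the $\contq{-1}j$ of the corollary.

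The only substantive technical step is the interchange of limit and integration, which is essentially routine: the integrand converges uniformly on a fixed choice of contours $\contq{-1}j$ bounded away from the singularities $\tilde w_i\in\{0,-1,\infty\}$, and the $q$-moments of the $q$-Hahn height function converge to those of the $q$-Boson process since $q^{\HT_\nu(x_i)}\in(0,1]$ is uniformly bounded while the underlying dynamics converge in finite-dimensional distributions (by the pointwise convergence of the one-step transition probabilities computed in \S\ref{ssub:qBoson}). Combining the pointwise limit of the integrand with the limit of the left-hand side yields \eqref{qBoson_moments} as stated.
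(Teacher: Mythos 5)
Your argument is exactly the paper's proof: you make the same substitution (note that with $s^{2}=-\epsilon$ one has $\epsilon s^{-1}=-s$, so $w_i=-s\tilde w_i$ is literally the paper's $w_i=\epsilon s^{-1}w_i'$), and then take the same termwise limit of the integrand, spelled out in somewhat more detail. The only addition is your brief remark about uniform convergence on the contours, which the paper leaves implicit; otherwise the two arguments coincide.
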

The moment formula \eqref{qBoson_moments} 
first appeared
in \cite{BorodinCorwin2011Macdonald} and \cite{BorodinCorwinSasamoto2012}.
\begin{proof}
	Set $J=1$, $s^{2}=-\epsilon$, and $n=\lfloor t \epsilon^{-1} \rfloor$ in 
	\eqref{qHahn_moments}, and change the variables as 
	$w_i=\epsilon s^{-1}w'_i$. 
	Then the integral in 
	\eqref{qHahn_moments} becomes (without the prefactor)
	\begin{align*}
		\oint\limits_{\contq{-1}1}\frac{d w_1'}{2\pi\i}
		\ldots
		\oint\limits_{\contq{-1}\ell}\frac{d w_\ell'}{2\pi\i}
		\prod_{1\le \aind<\bind\le \ell}\frac{w_\aind'-w_\bind'}{w_\aind'-qw_\bind'}
		\prod_{i=1}^{\ell}\bigg(
		\frac{1}{w_i'(1- \epsilon w_i')}
		\left(\frac{1-\epsilon w_i'}{1+w_i'}\right)^{x_i-1}
		\left(\frac{1-q\epsilon w_i'}{1-\epsilon w_i'}\right)^{\lfloor t \epsilon^{-1}\rfloor}
		\bigg).
	\end{align*}
	Sending $\epsilon\searrow0$ and renaming $w_i'$
	back to $w_i$, we 
	arrive at the desired formula.
\end{proof}



\end{document}